\edef\restoreparindent{\parindent=\the\parindent\relax}
\numberwithin{equation}{section}
\definecolor{bananamania}{rgb}{0.98, 0.91, 0.71}
\newenvironment{myproof}[2]{\paragraph{\textit{Proof of {#1} }{#2}.}}{\hfill$\square$}
\theoremstyle{plain}
\newtheorem{theorem}{Theorem}[section]
\newtheorem{lemma}[theorem]{Lemma}
\newtheorem{corollary}[theorem]{Corollary}
\newtheorem{proposition}[theorem]{Proposition}
\theoremstyle{definition}
\newtheorem{definition}[theorem]{Definition}
\newtheorem{remark}[theorem]{Remark}
\newtheorem{example}[theorem]{Example}
\newcommand*\bulletsmall{\mathpalette\bulletsmall@{.5}}
\newcommand*\bulletsmall@[2]{\mathbin{\vcenter{\hbox{\scalebox{#2}{$\m@th#1\bullet$}}}}}
\def\CC{\mathbb{C}}
\def\AA{\mathbb{A}}
\def\RR{\mathbb{R}}
\def\ZZ{\mathbb{Z}}
\def\PP{\mathbb{P}}
\def\a{\alpha}
\def\b{\beta}
\def\s{\sigma}
\def\G{\Gamma}
\def\w{\omega}
\def\ra{\rightarrow}
\def\tt{\theta}
\def\O{\Omega}
\def\gg{\mathfrak{g}}
\def\hh{\mathfrak{t}}
\def\ol{\overline}
\def\bl{\bullet}
\def\ecc{q_i^{-1}|~i\in I\setminus I_P}
\def\eccc{q_i^{\pm 1}|~i\in I\setminus I_P}
\DeclareMathOperator{\fof}{Frac}
\DeclareMathOperator{\spec}{Spec}
\DeclareMathOperator{\sym}{Sym}
\DeclareMathOperator{\rep}{Rep}
\DeclareMathOperator{\pt}{pt}
\DeclareMathOperator{\lie}{Lie}
\DeclareMathOperator{\pr}{pr} 
\DeclareMathOperator{\ev}{ev}
\DeclareMathOperator{\id}{id}
\DeclareMathOperator{\im}{Im}
\DeclareMathOperator{\pd}{PD}
\DeclareMathOperator{\coker}{coker}
\DeclareMathOperator{\adjoint}{Ad}
\DeclareMathOperator{\jac}{Jac}
\def\rhod{\rho^{\vee}}
\def\ad{\alpha^{\vee}}
\def\Q{Q^{\vee}}
\def\Bd{B^{\vee}}
\def\Gd{G^{\vee}}
\def\Td{T^{\vee}}
\def\Bmd{B_-^{\vee}}
\def\Pmd{P_-^{\vee}}
\def\UPd{\mathcal{U}'}
\def\Bed{B_{e^T}^{\vee}}
\def\nd{\mathfrak{n}^{\vee}_-}
\def\RP{\mathcal{R}_P}
\def\UP{\mathcal{U}_P}
\def\UUU{\mathcal{U}}
\def\OO{\mathcal{O}}
\def\KK{\mathcal{K}}
\def\BB{\mathcal{B}}
\def\PP{\mathbb{P}}
\def\FF{\mathcal{F}}
\def\LL{\mathcal{L}}
\def\ag{\mathcal{G}r}
\def\agl{\mathcal{G}r^{
\lambda}}
\def\agll{\mathcal{G}r^{\leqslant 
\lambda}}
\def\gm{\mathbb{G}_m}
\def\ga{\mathbb{G}_a}
\def\wl{wt_{\lambda}}
\def\wp{w_Pw_0}
\def\wpd{\dot{w}_P\dot{w}_0}
\def\wdd{\omega^{\vee}}
\def\GB{G^{\vee}/B^{\vee}_-}
\def\per{\mathcal{P}_{G(\mathcal{O})}(\mathcal{G}r)}
\def\co{Q^{\vee}}
\def\cop{Q^{\vee}_+}
\def\vp{\varphi}
\def\JJ{\mathcal{J}}
\def\MM{\overline{\mathcal{M}}}
\newcommand{\bk}[1]{\left[#1\right]}
\def\al{\ad}
\newcommand{\fib}[1]{\mathcal{E}_{#1}}
\newcommand{\ffib}[1]{\mathcal{E}_{#1}(G/P)}
\def\Ld{L^{\vee}}
\def\ZL{Z(L^{\vee})}
\def\Pd{P^{\vee}}
\def\Xd{X^{\vee}_P}
\def\Xdt{X^{\vee}_t}
\def\cchh{\chi}
\def\hp{\hslash}
\def\zgd{Z_{\hslash}(\mathfrak{g}^{\vee})}
\def\utd{U_{\hslash}(\mathfrak{t}^{\vee})}
\def\ubd{U_{\hslash}(\mathfrak{b}^{\vee})}
\def\und{U_{\hslash}(\mathfrak{n}^{\vee})}
\def\unmd{U_{\hslash}(\mathfrak{n}^{\vee}_-)}
\def\ugd{U_{\hslash}(\mathfrak{g}^{\vee})}
\def\ggd{\mathfrak{g}^{\vee}}
\def\nndm{\mathfrak{n}^{\vee}_-}
\def\nnd{\mathfrak{n}^{\vee}}
\def\ttd{\mathfrak{t}^{\vee}}
\def\bbd{\mathfrak{b}^{\vee}}
\def\Ud{U^{\vee}}
\def\Umd{U^{\vee}_-}
\newcommand{\dm}[1]{D_{\hslash,#1}}
\DeclareMathOperator{\mccccc}{mc}
\def\mctd{\mccccc_{T^{\vee}}}
\def\bries{G_0(\Xd,W,\pi,p)}
\def\briesp{G_0(\Xd,W,\pi,p)}
\def\shift{\mathbb{S}}
\def\That{\widehat{T}}
\def\Ghat{\widehat{G}}
\def\Hhat{\widehat{H}}
\newcommand{\qc}[1]{\nabla^{#1}}
\def\gmpt{\bulletsmall_{GMP}^T}
\def\gmpg{\bulletsmall_{GMP}^G}
\def\gmph{\bulletsmall_{GMP}^H}
\def\pgmpt{\Phi_{GMP}^T}
\def\psgmpt{\Psi_{GMP}^T}
\def\psgmpg{\Psi_{GMP}^G}
\def\pgmph{\Phi_{GMP}^H}
\def\pgmpg{\Phi_{GMP}^G}
\def\pbfd{\Phi_{BF}^{\delta}}
\def\psbfd{\Psi_{BF}^{\delta}}
\def\pbfk{\Phi_{BF}^{\kappa}}
\def\psbfk{\Psi_{BF}^{\kappa}}
\def\ascl{\xi}
\def\aggh{ \bulletsmall_{\mathcal{G}r}^H }
\def\aggg{ \bulletsmall_{\mathcal{G}r}^G }
\def\aggt{ \bulletsmall_{\mathcal{G}r}^T }
\DeclareMathOperator{\gs}{GS}
\DeclareMathOperator{\eendo}{End}
\def\hcdot{\cdot}
\DeclareMathOperator{\fr}{Fr}
\def\hc{\mathcal{H}\mathcal{C}_{\hslash}}
\def\hcf{\mathcal{H}\mathcal{C}^{fr}_{\hslash}}
\def\hct{\widetilde{\mathcal{H}\mathcal{C}}_{\hslash}}
\def\hci{\Theta_{HC}}
\def\hcprod{*}
\DeclareMathOperator{\twist}{Tw}
\def\kf{\kappa_{\hslash}}
\def\dkf{\delta_{\hslash}}
\def\ot{\otimes}
\def\ic{\mathcal{I}\mathcal{C}}
\def\GS{\mathbb{G}\mathbb{S}}
\def\daggg{\mathbb{D}_{\mathcal{G}r}}
\def\dagl{\mathbb{D}_{\mathcal{G}r^{\leqslant\lambda }}}
\DeclareMathOperator{\Hom}{Hom}
\DeclareMathOperator{\module}{mod}
\def\bfisod{\Theta^{\delta}}
\def\bfisok{\Theta^{\kappa}}
\def\UU{M}
\def\VV{\mathcal{V}}
\def\WW{\mathcal{W}}
\def\twdot{\cdot_{-\rho^{\vee}}}
\def\bd{\beta^{\vee}}
\def\ZZZ{\mathcal{Z}}
\DeclareMathOperator{\mirror}{mir}
\def\hey{z}
\def\ffff{f}
\def\CCC{\mathcal{C}}
\def\CCCfr{\mathcal{C}^{fr}}
\def\CCCp{\mathcal{C}_+}
\def\CCChtf{\mathcal{C}_{\hslash TF}}
\def\vol{\omega_{X^{\vee}_P}}
\def\Mir{\Phi_{mir}}
\def\zzeta{\eta}
\def\ip{I_P}
\def\iip{I\setminus I_P}
\def\qi{q_i|~i\in I\setminus I_P}
\def\qim{q_i^{-1}|~i\in I\setminus I_P}
\def\dh{D_{\hslash}}
\begin{document}
\title[The $D_{\hslash}$-module mirror conjecture for flag varieties]{The $D_{\hslash}$-module mirror conjecture for flag varieties}

\author{Chi Hong Chow}
\address{Max Planck Institute for Mathematics, 53111 Bonn, Germany}
\email{chow@mpim-bonn.mpg.de}

\begin{abstract} 
Rietsch constructed a candidate $T$-equivariant mirror LG model for any flag variety $G/P$. In this paper, we prove the following mirror symmetry prediction: the small $T\times\mathbb{G}_m$-equivariant quantum cohomology of $G/P$ equipped with quantum $\hp$-connection is isomorphic as $D_{\hslash}$-modules to the Brieskorn lattice associated to the LG model equipped with Gauss-Manin $\hp$-connection.
\end{abstract}

\makeatletter
\@namedef{subjclassname@2020}{\textup{2020} Mathematics Subject Classification}
\makeatother
 
\subjclass[2020]{Primary 14J33; Secondary 14D24, 14M15, 14N35}
%\keywords{} 

\maketitle
%%%%%%%%%%%%%%%%%%%%%%%%%%%%%%%%%%%
%%%%%%%%%%%%%%%%%%%%%%%%%%%%%%%%%%%
%%%%%%%%%%%%%%%%%%%%%%%%%%%%%%%%%%%
%%%%%%%%%%%%%%%%%%%%%%%%%%%%%%%%%%%
%%%%%%%%%%%%%%%%%%%%%%%%%%%%%%%%%%%
%%%%%%%%%%%%%%%%%%%%%%%%%%%%%%%%%%%
\section{introduction}\label{1}
%%%%%%%%%%%%%%%%%%%%%%%%%%%%%%%%%%%
%%%%%%%%%%%%%%%%%%%%%%%%%%%%%%%%%%%
\subsection{Main result}\label{mainresult}
Let $G$ be a simple simply-connected complex algebraic group and $P\subseteq G$ a parabolic subgroup. Motivated by the work of Batyrev, Ciocan-Fontanine, Kim and van Straten \cite{BCFKS}, Givental \cite{Givental}, Joe and Kim \cite{JK} and Peterson (unpublished, but see \cite{Peter}), Rietsch \cite{Rietsch} constructed a candidate $T$-equivariant mirror of $G/P$ in terms of the Langlands dual group $\Gd$. It is a quintuple $(\Xd,W,\pi,p,\vol)$ where
\begin{itemize}
\item $\Xd$ is a smooth affine variety;

\item $W\in\OO(\Xd)$ is a regular function;

\item $\pi:\Xd\ra\ZL$ is a smooth morphism onto a subtorus $\ZL$ of $\Td$;

\item $p:\Xd\ra\Td$ is a morphism; and

\item $\vol\in \O^{top}(\Xd/\ZL)$ is a fiberwise volume form.
\end{itemize}

In this paper, we prove
\begin{theorem}\label{main0}
The $\dh$-module mirror conjecture holds for the above mirror pair.
\end{theorem}

\noindent That is, we prove that some $\dh$-modules associated to $G/P$ and $(\Xd,W,\pi,p,\vol)$ are isomorphic.

On the A-side, we have an $H^{\bl}_{T\times\gm}(\pt)\ot_{\CC[\hp]}\dm{(\gm)^{|\iip|}}$-module $QH^{\bl}_{T\times\gm}(G/P)[\qim]$, the \textit{small $T\times\gm$-equivariant quantum cohomology of $G/P$}, where the $\dh$-module structure is given by the \textit{quantum $\hp$-connection}
\[\nabla_{\partial_{q_i}}:=\hp\partial_{q_i}+q_i^{-1}c_1^{T\times\gm}(L_{\w_i})\star -.\]

On the B-side, we have a $\sym^{\bl}(\ttd)[\hp]\ot_{\CC[\hp]}\dm{\ZL}$-module 
\begin{align*}
 &\bries \\
 :=~& \coker\left( \sym^{\bl}(\ttd)[\hp]\ot \O^{top-1}(\Xd/\ZL)\xrightarrow{\partial}\sym^{\bl}(\ttd)[\hp]\ot  \O^{top}(\Xd/\ZL) \right)
\end{align*}
where 
\[  \partial(z\ot \w):= z\ot (\hp d\w + dW\wedge\w) - \sum_i zh_i\ot(p^*\langle h^i,\mccccc_{\Td} \rangle)\wedge\w .\]
It is called the \textit{Brieskorn lattice} and its $\dm{\ZL}$-module structure is given by the \textit{Gauss-Manin $\hp$-connection}
\[ \eta\cdot [z\ot \w]:= \left[z\ot(\hp\LL_{\widetilde{\eta}}\w + (\LL_{\widetilde{\eta}}W)\w) - \sum_i zh_i\ot (\iota_{\widetilde{\eta}} p^*\langle h^i,\mccccc_{\Td} \rangle)\w \right] .\]

In addition to the $\dh$-module structures, both $QH_{T\times\gm}^{\bl}(G/P)[\qim]$ and $\bries$ have two structures, namely the \textit{grading} and the \textit{shift operators}. At the semi-classical limit $\hp\to 0$, these modules become the \textit{$T$-equivariant quantum cohomology ring} $QH_T^{\bl}(G/P)[\qim]$ and the \textit{Jacobi ring} $\jac(\Xd,W,\pi,p)$ respectively.

\begin{theorem}\label{main}
(Precise version of Theorem \ref{main0}) After identifying the base algebras via some canonical isomorphisms
\[ \sym^{\bl}(\ttd)[\hp]\xrightarrow{\sim} H^{\bl}_{T\times\gm}(\pt)\quad\text{ and }\quad \OO(\ZL)\xrightarrow{\sim} \CC[\eccc], \]
there exists a $\sym^{\bl}(\ttd)[\hp]\ot\OO(\ZL)$-linear map
\[ \Mir:\bries\ra QH^{\bl}_{T\times\gm}(G/P)[\ecc] \]
satisfying 
\begin{enumerate}
\item it is bijective;

\item it is $\dm{\ZL}$-linear;

\item $\Mir([\vol])=1$;

\item at the semi-classical limit, it becomes a ring isomorphism
\[ \Mir^{\hp=0}: \jac(\Xd,W,\pi,p)\xrightarrow{\sim} QH_T^{\bl}(G/P)[\ecc];\]

\item it intertwines the shift operators; and

\item it is graded.
\end{enumerate}
Any $\sym^{\bl}(\ttd)[\hp]\ot\OO(\ZL)$-linear map satisfying (3) and (5) must be equal to $\Mir$.
\end{theorem}

\begin{remark}
Property (5) implies that for any character $\lambda$ of $\Td$, the isomorphism $\Mir^{\hp=0}$ from (4) sends $[\lambda\circ p]$ to the \textit{$T$-equivariant Seidel's element} \cite{Seidel} associated to the cocharacter $\lambda:\gm\ra T$.
\end{remark}

%%%%%%%%%%%%%%%%%%%%%%%%%%%%%%%%%%%
%%%%%%%%%%%%%%%%%%%%%%%%%%%%%%%%%%%
%%%%%%%%%%%%%%%%%%%%%%%%%%%%%%%%%%%
\subsection{Outline of proof} \label{outlinefriendly} We explain how to construct $\Mir$.

Let us first explain how to construct the ring isomorphism $\Mir^{\hp=0}$ based on known results from the literature. Introduce two rings:
\begin{enumerate}[(i)]
\item $\OO(\Bed)$, the coordinate ring of the base change of the universal centralizer of $\Gd$ along the quotient morphism $\ttd\ra \ttd/W\simeq \ggd/\!\!/\Gd$.

\item $H_{\bl}^T(\ag)$, the $T$-equivariant Pontryagin homology of the affine Grassmannian $\ag$ of $G$.
\end{enumerate}
Recall the following results:
\begin{enumerate}
\item Rietsch \cite{Rietsch} constructed a ring map
\[ \widetilde{\Phi}_R :\OO(\Bed)\ra \jac(\Xd,W,\pi,p)\]
which induces an isomorphism
\[ \Phi_R: \OO(\Bed\times_{\Gd}\Umd(\wpd)^{-1}\Bd_-) \xrightarrow{\sim} \jac(\Xd,W,\pi,p).\]

\item Using the geometric Satake equivalence, Yun and Zhu \cite{YZ} constructed a ring isomorphism
\[ \Phi_{YZ}:\OO(\Bed)\xrightarrow{\sim} H^T_{-\bl}(\ag).\]

\item Discovered by Peterson \cite{Peter} and proved by Lam and Shimozono \cite{LS}, there exists a ring map
\[ \Phi_{PLS}:H^T_{-\bl}(\ag) \ra QH_T^{\bl}(G/P)[\ecc]\]
which is surjective after localization and sends every affine Schubert class to either 0 or an explicit quantum Schubert class.

\item The author of the present paper \cite{me2} proved that the composition $\Phi_{PLS}\circ\Phi_{YZ}$ descends to a ring isomorphism
\[ \Phi_P :\OO(\Bed\times_{\Gd}\Umd(\wpd)^{-1}\Bd_-)  \xrightarrow{\sim} QH_T^{\bl}(G/P)[\ecc].\]
This gives the Peterson variety presentation.
\end{enumerate}

To summarize, we have the following commutative diagram
\begin{equation}\label{outlinefriendlydiagram1}
\begin{tikzpicture}
\tikzmath{\x1 = 9; \x2 = 4;}
\node (A) at (0,0) {$\OO(\Bed)$} ;
\node (B) at (\x1,0) {$H^T_{-\bl}(\ag)$} ;
\node (C) at (0,-\x2) {$\jac(\Xd,W,\pi,p)$} ;
\node (D) at (\x1,-\x2) {$QH_T^{\bl}(G/P)[\ecc]$} ;
\node (E) at (0.5*\x1,-0.5*\x2) {$\OO(\Bed\times_{\Gd}\Umd(\wpd)^{-1}\Bd_-)$};

\path[->, font=\small] (A) edge node[above]{$\Phi_{YZ}$} (B);
\path[->, font=\small] (A) edge node[left]{$\widetilde{\Phi}_R$} (C);
\path[->, font=\small] (B) edge node[right]{$\Phi_{PLS}$} (D);
\path[->, dashed, font=\small]
(C) edge node[above]{$\exists~ \Mir^{\hp=0}$} (D);
\path[->, font=\small] (A) edge node[right]{} (E);
\path[->, font=\small] (E) edge node[left]{$\Phi_R\quad$} (C);
\path[->, font=\small] (E) edge node[right]{$\quad\!\Phi_P$} (D);
\end{tikzpicture}.
\end{equation}
The composition 
\[ \Mir^{\hp=0}:=\Phi_P\circ \Phi^{-1}_R\]
gives the desired ring isomorphism.

Now, to construct $\Mir$, we ``quantize'' everything. More precisely, we replace the above rings (resp. ring maps) by $\CC[\hp]$-modules (resp. $\CC[\hp]$-linear maps) and establish the analogue of the commutative diagram \eqref{outlinefriendlydiagram1} which recovers the original one by taking $-\ot_{\CC[\hp]}\CC$. 
\begin{enumerate}
\item[($\hp$i)] We replace $\OO(\Bed)$ by the desymmetrized quantum Toda lattice $\dkf(\KK)$ of $\Gd$ in the sense of Bezrukavnikov and Finkelberg \cite{BF}.

\item[($\hp$ii)] We replace $H_{\bl}^T(\ag)$ by the $T\times\gm$-equivariant homology $H_{\bl}^{T\times\gm}(\ag)$ of $\ag$ where $\gm$ corresponds to the loop rotation.
\end{enumerate}

\begin{enumerate}
\item[($\hp$1)] We construct
\[ \widetilde{\Phi}_R^{\hp}:\dkf(\KK)\ra \bries\]
by hand. This map is somewhat canonical: for any $x_1,\ldots,x_n\in\ttd$ and $f\in\OO(\Gd)$ satisfying $f(u\cdot -)=f(-)$ whenever $u\in\Umd$, we have
\[  \widetilde{\Phi}_R^{\hp}([x_1^L\cdots x_n^Lf])=[(x_1+\hp\rhod(x_1))\cdots (x_n+\hp\rhod(x_n))\ot f|_{\Xd}\vol]\]
where $\rhod$ is the Weyl covector. We show that $\widetilde{\Phi}_R^{\hp}$ induces 
\[  \Phi_R^{\hp}:\dkf(\KK)\ot\OO(\ZL)/\WW\ra \bries\]
for some submodule $\WW$ which will be defined in Section \ref{finaltwomodw}. Unlike the semi-classical case, we are only able to show that $\Phi_R^{\hp}$ is surjective.

\item[($\hp$2)] We take $\Phi_{YZ}^{\hp}$ to be the isomorphism
\[ \pbfd: \dkf(\KK)\xrightarrow{\sim} H_{-\bl}^{T\times\gm}(\ag)\]
constructed by Bezrukavnikov and Finkelberg \cite{BF}.

\item[($\hp$3)] We define 
\[ \Phi_{PLS}^{\hp}:  H_{-\bl}^{T\times\gm}(\ag) \ra QH_{T\times\gm}^{\bl}(G/P)[\ecc]\]
to be the natural $T\times\gm$-equivariant lift of $\Phi_{PLS}$. (Recall $\Phi_{PLS}$ sends affine Schubert classes to either 0 or quantum Schubert classes and these classes have natural $T\times\gm$-equivariant lifts.)

\item[($\hp$4)] We show that $\Phi_{PLS}^{\hp}\circ\Phi_{YZ}^{\hp}$ induces 
\[ \Phi^{\hp}_P:\dkf(\KK)\ot\OO(\ZL)/\WW \ra QH_{T\times\gm}^{\bl}(G/P)[\ecc]. \]
Unlike the semi-classical case, we are only able to show that $\Phi_P^{\hp}$ is surjective.
\end{enumerate}
Although we do not know whether $\Phi_R^{\hp}$ and $\Phi_P^{\hp}$ are bijective, we are able to show that there exists a unique map $\Mir$ which is bijective such that 
\[  \Mir\circ \Phi_R^{\hp} = \Phi_P^{\hp}.\]

To conclude, we not only manage to construct $\Mir$ but also establish the analogue of the commutative diagram \eqref{outlinefriendlydiagram1}. For potential applications, let us also quote the latter result as a theorem. For simplicity, we drop $\dkf(\KK)\ot\OO(\ZL)/\WW$ from the diagram below.
\begin{theorem}\label{main2}
The mirror isomorphism $\Mir$ from Theorem \ref{main} fits into the commutative diagram
\begin{equation}\nonumber
\begin{tikzpicture}
\tikzmath{\x1 = 7; \x2 = 3;}
\node (A) at (0,0) {$\dkf(\KK)$} ;
\node (B) at (\x1,0) {$H^{T\times\gm}_{-\bl}(\ag)$} ;
\node (C) at (0,-\x2) {$\bries$} ;
\node (D) at (\x1,-\x2) {$QH_{T\times\gm}^{\bl}(G/P)[\ecc]$} ;

\path[->, font=\small] (A) edge node[above]{$\Phi^{\hp}_{YZ}$} (B);
\path[->, font=\small] (A) edge node[left]{$\widetilde{\Phi}^{\hp}_R$} (C);
\path[->, font=\small] (B) edge node[right]{$\Phi^{\hp}_{PLS}$} (D);
\path[->, font=\small]
(C) edge node[above]{$ \Mir$} (D);
\end{tikzpicture}.
\end{equation}
In fact, as an $\OO(\ZL)$-linear map, $\Mir$ is uniquely determined by this diagram.
\end{theorem}

Finally, let us explain briefly how to verify that $\Mir$ is $\dm{\ZL}$-linear. The $\OO(\ZL)$-linearity follows immediately from the construction. It remains to show that $\Mir$ intertwines the operators $\partial_{q_i}\cdot -$ arising from the source and target. According to Bezrukavnikov-Finkelberg's theory \cite{BF}, $\dkf(\KK)$ is obtained from the quantum Toda lattice $\kf(\KK)$ by extension of scalars. The latter is a ring, and hence $\dkf(\KK)$ is a right $\kf(\KK)$-module. Moreover, $\pbfd$ is obtained from an isomorphism
\[ \pbfk: \kf(\KK)\xrightarrow{\sim} H_{-\bl}^{G\times\gm}(\ag)\]
of rings by extension of scalars, and hence $\pbfd$ is a module isomorphism with respect to $\pbfk$. 

By a straightforward computation, each B-side operator $\partial_{q_i}\cdot -$ can be lifted to an operator on $\dkf(\KK)\ot\OO(\ZL)$ given by $\hp\partial_{q_i}$ plus the right multiplication by an element $x_i$ of $\kf(\KK)\ot\OO(\ZL)$. We show that after extension of scalars, $\Phi_{PLS}^{\hp}\circ\pbfk$ sends $x_i$ to $q_i^{-1}c_1^{T\times \gm}(L_{\omega_i})$, the zeroth order term of the A-side operator $\nabla_{\partial_{q_i}}$. By generalizing our recent new proof \cite{me1} of Peterson-Lam-Shimozono's theorem \cite{LS, Peter}, we show that $\Phi_{PLS}^{\hp}$ is induced by the $H_{\bl}^{G\times\gm}(\ag)$-module action on $QH^{\bl}_{G\times\gm}(G/P)[\ecc]$ (a.k.a. non-abelian shift operators) recently constructed by Gonz\'alez, Mak and Pomerleano \cite{GMP}. We conclude the proof by applying the facts that $\pbfd$ is a module isomorphism with respect to $\pbfk$ and that non-abelian shift operators commute with $\nabla_{\partial_{q_i}}$.
%%%%%%%%%%%%%%%%%%%%%%%%%%%%%%%%%%%
%%%%%%%%%%%%%%%%%%%%%%%%%%%%%%%%%%%
%%%%%%%%%%%%%%%%%%%%%%%%%%%%%%%%%%%
\subsection{Related work}
Mirror symmetry has been extensively researched and we are unable to review the literature. Let us mention however some works with which our results fit.
\begin{enumerate}
\item The $\dh$-module mirror conjecture (DMC) for toric varieties has been proved in great generality. The first non-trivial case, which follows from Givental's mirror theorem \cite{Giventalmirrortheorem}, assumes the toric varieties to be smooth, projective and semi-Fano. The result has been extended numerous times. The most general case, proved by Coates, Corti, Iritani and Tseng \cite{toricstack}, replaces the toric varieties with arbitrary toric Deligne-Mumford stacks with semi-projective coarse moduli and the small quantum $D_{\hp}$-modules with the big quantum $D_{\hp}$-modules. Prior to this, Iritani \cite{Iritanibig} has proved DMC in less generality, using shift operators. We remark that it is the last work which inspired the formulation of Theorem \ref{main}.

\item Much has been known about the case $P=B$. Generalizing Givental's work \cite{Givental} on the type A case, Kim \cite{Kim} proved that $QH_{G\times\gm}^{\bl}(G/B)[q_i^{-1}|~i\in I]$ is isomorphic to the \textit{open quantum Toda lattice} constructed by Kostant \cite{KostantToda}. We expect that after extension of scalars, the latter module is isomorphic to $\bries$. This will give an alternative (and shorter) proof of DMC for $G/B$.

\item Rietsch \cite{Rietsch} proved that $\jac(\Xd,W,\pi,p)$ is isomorphic to $\OO(\Bed\times_{\Gd}\Umd(\wpd)^{-1}\Bd_-) $. Combined with the Peterson variety presentation which is proved recently by the author of the present paper \cite{me2} in full generality, it yields the isomorphism $\Mir^{\hp=0}$ stated in Theorem \ref{main}. See Section \ref{outlinefriendly} for more details. 

\item Katzarkov, Kontsevich and Pantev \cite{KKP} proposed a version of DMC for arbitrary mirror pairs and interpreted it as a (conjectural) consequence of the homological mirror symmetry conjecture \cite{HMS}. 

\item For $G/P$ an even dimensional quadric, Pech, Rietsch and Williams \cite{PRW} constructed an explicit injective $\dm{\ZL}$-linear map from the A-model $\dh$-module to the B-model $\dh$-module.

\item For $G/P$ an odd dimensional quadric, Pech and Rietsch \cite{PR} constructed an explicit injective $\dm{\ZL}$-linear map from the A-model $\dh$-module to the B-model $\dh$-module and proved that it is bijective when $\dim G/P=3$.

\item For $G/P$ a Grassmannian, Marsh and Rietsch \cite{MR} constructed an explicit injective $\dm{\ZL}$-linear map from the A-model $\dh$-module to the B-model $\dh$-module.

\item Lam and Templier \cite{LT} proved DMC for minuscule flag varieties (Grassmannians and even dimensional quadrics, for example) based on a result from the geometric Langlands program, namely Zhu's theorem \cite{Zhu}, which identifies Frenkel-Gross' $D$-module \cite{FG} with Heinloth-Ng\^o-Yun's $D$-module \cite{HNY}.

\item Hu \cite{Hu} proved a big version of DMC for quadrics.

\item For $G=\mathbf{SL_n}$ and $P$ arbitrary, Li, Rietsch, Yang and Zhang \cite{RietschPlucker} analyzed Rietsch's version \cite{RietschJAMS, Rietsch} of the ring isomorphism
\[ \Mir^{\hp,h=0}:\jac(\Xd,W,\pi,p)_{h=0} \xrightarrow{\sim} QH^{\bl}(G/P)[\ecc]\]
and proved that it sends $[W]$ to the first Chern class $c_1(G/P)$. As a corollary, the eigenvalues of $c_1(G/P)\star -$ are equal to the critical values of $W$.
\end{enumerate}
%%%%%%%%%%%%%%%%%%%%%%%%%%%%%%%%%%%
%%%%%%%%%%%%%%%%%%%%%%%%%%%%%%%%%%%
%%%%%%%%%%%%%%%%%%%%%%%%%%%%%%%%%%%
\subsection*{Acknowledgements}
A significant part of the revision was made when I was a postdoctoral fellow at the Max Planck Institute for Mathematics in Bonn. I am grateful to it for its hospitality and financial support.
%%%%%%%%%%%%%%%%%%%%%%%%%%%%%%%%%%%
%%%%%%%%%%%%%%%%%%%%%%%%%%%%%%%%%%%
%%%%%%%%%%%%%%%%%%%%%%%%%%%%%%%%%%%
%%%%%%%%%%%%%%%%%%%%%%%%%%%%%%%%%%%

%%%%%%%%%%%%%%%%%%%%%%%%%%%%%%%%%%%
%%%%%%%%%%%%%%%%%%%%%%%%%%%%%%%%%%%
%%%%%%%%%%%%%%%%%%%%%%%%%%%%%%%%%%%
%%%%%%%%%%%%%%%%%%%%%%%%%%%%%%%%%%%
%%%%%%%%%%%%%%%%%%%%%%%%%%%%%%%%%%%
%%%%%%%%%%%%%%%%%%%%%%%%%%%%%%%%%%%
\section{Notation}\label{prelim}

%%%%%%%%%%%%%%%%%%%%%%%%%%%%%%%%%%%
%%%%%%%%%%%%%%%%%%%%%%%%%%%%%%%%%%%
\subsection{Notation for $G$}\label{notationG}
Let $G$ be a simple simply-connected complex algebraic group. Fix a maximal torus $T$ and a pair of Borel subgroups $B$ and $B_-$ such that $B\cap B_-=T$. Denote by $R$ the set of roots associated to $(G,T)$, by $\{\a_1,\ldots,\a_r\}\subset R$ the fundamental system determined by $B$, and by $R^+$ the set of roots which are positively spanned by $\a_1,\ldots,\a_r$. For any $\a\in R$, denote by $\ad$ the coroot associated to $\a$. Let $\{\w_1,\ldots,\w_r\}$ be the dual basis of $\{\ad_1,\ldots,\ad_r\}$, i.e. the fundamental weights. Denote by $W$ the Weyl group and by $w_0$ the longest element of $W$. Denote by $\Q$ the coroot lattice and by $\cop\subset \Q$ the subset consisting of $\lambda$ such that $\a(\lambda)\geqslant 0$ for any $\a\in R^+$. 

Fix a parabolic subgroup $P$ of $G$ which contains $B$. It is classified by a subset $\ip$ of $I:=\{1,\ldots, r\}$. We have 
\[ \lie(P) = \lie(B)\oplus \bigoplus_{\a\in -R^+_P}\gg_{\a}\]
where $R_P^+:= R^+\cap \sum_{i\in\ip}\ZZ\cdot\a_i$. Denote by $W_P$ the subgroup of $W$ generated by simple reflections $s_{\a_i}$ with $i\in\ip$, and by $w_P$ the longest element of $W_P$. Define $\Q_P:=\bigoplus_{i\in\ip}\ZZ\cdot\ad_i\subseteq \Q$.

Let $\ag:=G(\KK)/G(\OO)$ be the affine Grassmannian of $G$ where $\KK:=\CC((z))$ and $\OO:=\CC[[z]]$. It has a $G(\KK)\rtimes \gm$-action where $\gm$ corresponds to the loop rotation. Every $\lambda\in\Q$ is naturally a point of $G(\KK)$, and hence it gives rise to a point of $\ag$ which we denote by $t^{\lambda}$. It is known that $\{t^{\lambda}\}_{\lambda\in\Q}$ is the set of $T\times\gm$-fixed points of $\ag$ (recall $G$ is assumed to be simply-connected). For any $\lambda\in\cop$, define the spherical affine Schubert variety $\agll:=\ol{\agl}$ where $\agl:=G(\OO)\cdot t^{\lambda}$. Define $\BB:=\ev_{z=0}^{-1}(B_-)$ where $\ev_{z=0}:G(\OO)\ra G$ is the evaluation map at $z=0$. Let $W_{af}^-\subset W_{af}:=W\ltimes\Q$ be the set of minimal length coset representatives in $W_{af}/W$. Then the map $W_{af}^-\ra \Q$ defined by $\wl\mapsto w(\lambda)$ is bijective. For any $\wl\in W_{af}^-$, define the affine Schubert class
\[ \ascl_{\wl} := \left[~\ol{\BB\cdot t^{w(\lambda)}}~\right] \in H_{2\ell(\wl)}^{T\times \gm}(\ag).\]
It is known that $\{\ascl_{\wl}\}_{\wl\in W_{af}^-}$ is an $H_{T\times\gm}^{\bl}(\pt)$-basis of $H_{\bl}^{T\times\gm}(\ag)$.

Put $\That:=T\times\gm$ and $\Ghat:=G\times\gm$. There is a ring structure, called the \textit{convolution product}, defined on $H_{\bl}^{\Ghat}(\ag)$
\[ -\aggg -: H_{\bl}^{\Ghat}(\ag)\ot_{H_{\Ghat}^{\bl}(\pt)}H_{\bl}^{\Ghat}(\ag)\ra H_{\bl}^{\Ghat}(\ag) \]
which recovers the \textit{Pontryagin product} by forgetting the $\gm$-action. It is characterized by the commutative diagram
\begin{equation}\nonumber
\begin{tikzpicture}
\tikzmath{\x1 = 10; \x2 = 2;}
\node (A) at (0,0) {$H_{\bl}^{\Ghat}(\ag)\ot_{H_{\Ghat}^{\bl}(\pt)}H_{\bl}^{\Ghat}(\ag)$} ;
\node (B) at (\x1,0) {$H_{\bl}^{\Ghat}(\ag)$} ;
\node (C) at (0,-\x2) {$\left(\bigoplus_{\mu\in\Q}\mathcal{R}_{\mu}\cdot [t^{\mu}]\right)\ot_{\fof H_{\That}^{\bl}(\pt)} \left(\bigoplus_{\mu\in\Q}\mathcal{R}_{\mu}\cdot [t^{\mu}]\right)$} ;
\node (D) at (\x1,-\x2) {$\bigoplus_{\mu\in\Q}\mathcal{R}_{\mu}\cdot [t^{\mu}]$} ;

\path[->, font=\tiny] (A) edge node[above]{$-\aggg -$} (B);
\path[right hook->] (A) edge node[left]{} (C);
\path[right hook->] (B) edge node[right]{} (D);
\path[->, font=\tiny]
(C) edge node[above]{$[t^{\mu_1}]\ot [t^{\mu_2}]\mapsto [t^{\mu_1+\mu_2}] $} (D);
\end{tikzpicture}
\end{equation}
where the vertical arrows are induced by localization and $\mathcal{R}_{\mu}:=\fof H_{\That}^{\bl}(\pt)$ equipped with a $\fof H_{\That}^{\bl}(\pt)\ot_{\CC[\hp]}\fof H_{\That}^{\bl}(\pt)$-module structure defined by letting the first factor act via the identity and the second factor act via the automorphism of the $\CC[\hp]$-algebra $H_{\That}^{\bl}(\pt)\simeq \CC[\w_1,\ldots,\w_r,\hp]$
\[ \twist_{-\mu}: f(\w_1,\ldots,\w_r,\hp)\mapsto f(\w_1-\w_1(\mu)\hp, \ldots, \w_r-\w_r(\mu)\hp,\hp).\]
(Notice that $H_{\bl}^{\Ghat}(\ag)$ is a module over $H_{\Ghat}^{\bl}(\pt)\ot H_{G}^{\bl}(\pt)\simeq H_{\Ghat}^{\bl}(\pt)\ot_{\CC[\hp]} H_{\Ghat}^{\bl}(\pt)$ where the first factor corresponds to the canonical $\Ghat$-action on $\ag$ and the second corresponds to the canonical $G$-torsor on $\ag$.)

Define
\[ -\aggt -: H_{\bl}^{\That}(\ag)\ot_{H_{\Ghat}^{\bl}(\pt)}H_{\bl}^{\Ghat}(\ag)\ra H_{\bl}^{\That}(\ag) \]  
to be the map obtained from $-\aggg-$ by extension of scalars.  
%%%%%%%%%%%%%%%%%%%%%%%%%%%%%%%%%%%
%%%%%%%%%%%%%%%%%%%%%%%%%%%%%%%%%%%
\subsection{Notation for $\Gd$}\label{notationGd} 
Let $\Gd$ be the Langlands dual group of $G$, defined over $\CC$, and $\Td$ the maximal torus of $\Gd$ dual to $T$. The pair $(\Gd,\Td)$ is characterized by the property that the character (resp. cocharacter) lattice of $\Td$ is equal to the cocharacter (resp. character) lattice of $T$, and the set of roots (resp. coroots) associated to $(\Gd,\Td)$ is equal to the set of coroots (resp. roots) associated to $(G,T)$. Since $G$ is simply-connected, $\Gd$ is of adjoint type. Denote by $\Bd$ the Borel subgroup of $\Gd$ associated to the fundamental system $\{\ad_1,\ldots,\ad_r\}$, and by $\Bmd$ the Borel subgroup opposite to $\Bd$. Denote by $\Ud$ (resp. $\Umd$) the unipotent radical of $\Bd$ (resp. $\Bmd$). The Lie algebras of $\Gd$, $\Td$, $\Bd$, $\Bmd$, $\Ud$, $\Umd$ are denoted by $\ggd$, $\ttd$, $\mathfrak{b}^{\vee}$, $\mathfrak{b}^{\vee}_-$, $\mathfrak{n}^{\vee}$ and $\mathfrak{n}^{\vee}_-$ respectively.

Recall the parabolic subgroup $P$ of $G$ fixed in Section \ref{notationG}. Let $\Pd$ be the parabolic subgroup of $\Gd$ satisfying
\[ \lie(\Pd) = \lie(\Bd)\oplus \bigoplus_{\a\in -R^+_P}\ggd_{\ad}.\]
Denote by $\ZL\subseteq \Td$ the center of the Levi subgroup $\Ld$ of $\Pd$. It is the kernel of the homomorphism $(\ad_i)_{i\in\ip}:\Td\ra (\gm)^{|\ip|}$ of algebraic groups. Since $\Gd$ is of adjoint type, the morphism $(\ad_i|_{\ZL})_{i\in\iip}:\ZL\ra (\gm)^{|\iip|}$ is an isomorphism, and hence it induces an isomorphism $\CC[\eccc]\xrightarrow{\sim}\OO(\ZL)$ of $\CC$-algebras. Define
\begin{equation}\label{mirrormapdef}
\mirror:\OO(\ZL)\xrightarrow{\sim}\CC[\eccc]
\end{equation}
to be the inverse of the last isomorphism. (In the literature, $\spec( \mirror^{-1})$ is called the \textit{mirror map}.) For simplicity, we will sometimes identify these two algebras without mentioning $\mirror$.

For any $\lambda\in\cop$, define $S(\lambda):=H^0(\GB;\Gd\times^{\Bmd}\CC_{\lambda})$ which is naturally a representation of $\Gd$. By Borel-Weil's theorem, $S(\lambda)$ is irreducible and has highest weight $\lambda$. For each $\mu\in\Q$, denote by $S(\lambda)_{\mu}$ the $\mu$-weight space of $S(\lambda)$ and by $S(\lambda)_{\leqslant\mu}$ (resp. $S(\lambda)_{<\mu}$) the sum of $S(\lambda)_{\nu}$ with $\nu\leqslant\mu$ (resp. $\nu\leqslant\mu$ and $\nu\ne\mu$). (Here $\nu\leqslant\mu$ iff $\mu-\nu\in\sum_{i=1}^r\ZZ_{\geqslant 0}\cdot \ad_i$.)

For any Lie algebra $\mathfrak{h}$, let $U_{\hp}(\mathfrak{h})$ be the asymptotic universal enveloping algebra of $\mathfrak{h}$ and $Z_{\hp}(\mathfrak{h})$ the center of $U_{\hp}(\mathfrak{h})$. A well-known result of Harish-Chandra says that $\zgd$ is isomorphic to $\utd^W$ as $\CC[\hp]$-algebras. Let us recall how the isomorphism is constructed. By PBW's theorem, we have $\ugd=\utd\oplus(\mathfrak{n}^{\vee}\cdot \ugd + \ugd\cdot \mathfrak{n}^{\vee}_-)$ as $\CC[\hp]$-modules. Hence every $z\in\ugd$ can be expressed uniquely as the sum $z'+z''$ where $z'\in\utd$ and $z''\in \mathfrak{n}^{\vee}\cdot \ugd + \ugd\cdot \mathfrak{n}^{\vee}_-$. (One can show that $z''$ actually lies in $\mathfrak{n}^{\vee}\cdot \ugd \cap \ugd\cdot \mathfrak{n}^{\vee}_-$.) Let $\twist_{\rhod}$ be the automorphism of the $\CC[\hp]$-algebra $\utd$ defined by
\[ \twist_{\rhod}(x) := x +\hp\rhod(x),\quad x\in\ttd\]
where $\rhod$ is the half-sum of the positive coroots. Define the \textit{$\hp$-Harish-Chandra homomorphism}
\[ \begin{array}{ccccc}
\hci&:&\zgd&\ra& \utd\\ [.5em]
& & z &\mapsto & \twist_{\rhod}(z')
\end{array}.
\] 
Then $\hci$ is an injective homomorphism of $\CC[\hp]$-algebras whose image is equal to $\utd^W$.
%%%%%%%%%%%%%%%%%%%%%%%%%%%%%%%%%%%
%%%%%%%%%%%%%%%%%%%%%%%%%%%%%%%%%%%
%%%%%%%%%%%%%%%%%%%%%%%%%%%%%%%%%%%
%%%%%%%%%%%%%%%%%%%%%%%%%%%%%%%%%%%
%%%%%%%%%%%%%%%%%%%%%%%%%%%%%%%%%%%
%%%%%%%%%%%%%%%%%%%%%%%%%%%%%%%%%%%
\section{A-model}\label{Amodel}
\subsection{Quantum cohomology} \label{Qcohomology}
Let $P$ be the parabolic subgroup of $G$ fixed in Section \ref{notationG}. Let $H$ be either $T$ or $G$. Put $\Hhat:=H\times\gm$. Consider the $\Hhat$-action on $G/P$ where the $H$-action is the standard action and the $\gm$-action is the trivial action. We will denote by $\hp$ the equivariant parameter for this extra $\gm$-action. Recall the \textit{$\Hhat$-equivariant quantum cohomology}
\[ QH_{\Hhat}^{\bl}(G/P):= H_{\Hhat}^{\bl}(G/P)\ot\CC[\qi].\]
Here, each $q_i$ is the quantum parameter which corresponds to the effective curve class $\b_i\in\pi_2(G/P)$ satisfying $\langle c_1(L_{\rho}),\b_i\rangle=\rho(\ad_i)$ for any $\rho\in (\Q/\Q_P)^*$ where $L_{\rho}:=G\times^P\CC_{-\rho}$. In later sections, we will identify $\CC[\eccc]$ with the group algebra $\CC[\Q/\Q_P]$ via $q_i\mapsto q^{[\ad_i]}$. Grade $QH_{\Hhat}^{\bl}(G/P)$ by requiring each $q_i$ to have degree $2\langle c_1(G/P),\b_i\rangle=2\sum_{\a\in R^+\setminus R^+_P}\a(\ad_i)$. It is well-known (see e.g. \cite{Mirror}) that $ QH_{\Hhat}^{\bl}(G/P)$ is a graded commutative $H_{\Hhat}^{\bl}(\pt)$-algebra where the algebra structure, called the \textit{$\Hhat$-equivariant quantum cup product}, is defined by 
\[ y_i\star y_j := \sum_{k}\sum_{(d_i)_{i\in\iip}\in\ZZ_{\geqslant 0}^{|\iip|}} \left(\prod_{i\in\iip} q_i^{d_i}\right) \left(\int_{\MM_{0,3}(G/P,\b_{\mathbf{d}})}\ev_1^*y_i\cup\ev_2^*y_j\cup ev_3^*y^{k} \right) y_{k}.\]
Here, $\b_{\mathbf{d}}:=\sum_{i\in\iip} d_i\b_i$ and $\{y_i\}$, $\{y^i\}$ are any $H_{\Hhat}^{\bl}(\pt)$-bases of $H_{\Hhat}^{\bl}(G/P)$ which are dual to each other with respect to the pairing $\int_{G/P}-\cup-$. For $H=T$, we will take $\{y_i\}$ and $\{y^i\}$ to be the \textit{Schubert basis} $\{\s_v\}_{v\in W^P}$ and the \textit{opposite Schubert basis} $\{\s^v\}_{v\in W^P}$ defined by
\begin{align*}
 \s_v &:=\pd\bk{\ol{B_-\dot{v}P/P}} \in H_{\That}^{2\ell(v)}(G/P)\\
  \s^v & :=\pd\bk{\ol{B\dot{v}P/P}}\in H_{\That}^{\dim_{\RR}(G/P)-2\ell(v)}(G/P)
\end{align*}
where $W^P$ is the set of minimal length coset representatives in $W/W_P$. 
%%%%%%%%%%%%%%%%%%%%%%%%%%%%%%%%%%%
%%%%%%%%%%%%%%%%%%%%%%%%%%%%%%%%%%%
%%%%%%%%%%%%%%%%%%%%%%%%%%%%%%%%%%%
\subsection{Quantum $\hslash$-connection}\label{Qconnection} Let $H$ be either $T$ or $G$.
\begin{definition}\label{Qconnectiondef}
Define the \textit{quantum $\hp$-connection}
\[ \qc{H}: QH_{\Hhat}^{\bl}(G/P)[\ecc]\ra \O^1_{\spec\CC[\eccc]}\ot_{\CC[\eccc]} QH_{\Hhat}^{\bl}(G/P)[\ecc]\]
by
\[\qc{H} := \sum_{i\in\iip} dq_i\ot \qc{H}_{\partial_{q_i}}\]
with 
\[ \qc{H}_{\partial_{q_i}} := \hp\partial_{q_i} + q_i^{-1} c_1^{\Hhat}(L_{\w_i})\star -\]
where $\w_i$ is the $i$-th fundamental weight and the canonical $\Hhat$-linearization of $L_{\w_i}:=G\times^P\CC_{-\w_i}$ is used. 
\end{definition}

It is well-known (see e.g. \cite{Mirror, Manin}) that $\qc{H}$ satisfies
\[ \qc{H}_X\circ \qc{H}_Y - \qc{H}_Y\circ \qc{H}_X = \hp \qc{H}_{[X,Y]} \]
for any $X,Y\in\mathfrak{X}(\spec\CC[\eccc])$. It follows that $\qc{H}$ defines a $\dm{\spec\CC[\eccc]}$-module structure on $QH_{\Hhat}^{\bl}(G/P)[\ecc]$.
%%%%%%%%%%%%%%%%%%%%%%%%%%%%%%%%%%%
%%%%%%%%%%%%%%%%%%%%%%%%%%%%%%%%%%%
%%%%%%%%%%%%%%%%%%%%%%%%%%%%%%%%%%%
\subsection{Shift operators}\label{Ashift}
We recall shift operators arising from \cite{Shift1, Shift2, Shift3, Shift4, Shift5}. The reader is recommended to read this subsection after reading Section \ref{GMPbundle} and Section \ref{GMPnonabelianshift} where we will introduce a more general version.

For any $\lambda\in\Q$, define an automorphism $\twist_{\lambda}$ of the $\CC[\hp]$-algebra $H_{\That}^{\bl}(\pt)$ by 
\begin{equation}\label{Ashifttwistdef}
\twist_{\lambda}(f(\w_1,\ldots,\w_r,\hp)):=f(\w_1+\w_1(\lambda)\hp, \ldots, \w_r+\w_r(\lambda)\hp,\hp)
\end{equation}
for any $f(\w_1,\ldots,\w_r,\hp)\in  \CC[\w_1,\ldots,\w_r,\hp]\simeq H_{\That}^{\bl}(\pt)$. For an $H_{\That}^{\bl}(\pt)$-module $N$, denote by $\twist_{\lambda}(N)$ the $H_{\That}^{\bl}(\pt)$-module $N$ with module structure twisted by $\twist_{\lambda}$.

\begin{definition}\label{Ashifttwistedlinear} Let $N_1$ and $N_2$ be $H_{\That}^{\bl}(\pt)$-modules and $\phi:N_1\ra N_2$ a $\CC[\hp]$-linear map. Let $\lambda\in\Q$. We say that $\phi$ is \textit{$\lambda$-twisted $H_{\That}^{\bl}(\pt)$-linear} if it is $H_{\That}^{\bl}(\pt)$-linear as a map from $N_1$ to $\twist_{\lambda}(N_2)$.
\end{definition} 

\begin{definition}\label{Ashiftpredef}
Let $\lambda\in\Q$. Define
\[ \widetilde{\shift}^A_{\lambda}: QH_{\Ghat}^{\bl}(G/P)[\ecc] \ra QH_{\That}^{\bl}(G/P)[\ecc] \]
by
\[ \widetilde{\shift}^A_{\lambda}(y) := [t^{\lambda}]\gmpt y \]
for any $y\in QH_{\Ghat}^{\bl}(G/P)[\ecc]$, where $-\gmpt -$ will be defined in Definition \ref{GMPactiondef}.
\end{definition} 

\begin{lemma} \label{Ashiftwell} (Shift operators)
$\widetilde{\shift}^A_{\lambda}$ extends uniquely to a $(-\lambda)$-twisted $H_{\That}^{\bl}(\pt)$-linear map 
\[ \shift^A_{\lambda}: QH_{\That}^{\bl}(G/P)[\ecc] \ra QH_{\That}^{\bl}(G/P)[\ecc] .\]
\end{lemma}
\begin{proof}
All notations used below can be found in Section \ref{GMPbundle}. By \eqref{GMPlocaleqn}, we have
\[\widetilde{\shift}^A_{\lambda}(y)= \sum_{v\in W^P}\sum_{\eta\in\Q/\Q_P} q^{\eta} \mathcal{I}(t^{\lambda},[t^{\lambda}],y,v,\eta) \s_v\] 
where 
\[ \mathcal{I}(t^{\lambda},[t^{\lambda}],y,v,\eta)  := \int_{[\MM^{0,\infty}(t^{\lambda},\eta)]^{vir}}\ev_{t^{\lambda},\eta,0}^*m_{t^{\lambda}}([t^{\lambda}]\otimes y)  \cup \ev_{t^{\lambda},\eta,\infty}^*\s^v.\]
We have
\[ \mathcal{E}_{t^{\lambda}}(G/P)\simeq \left(\AA^1_z\times G/P\times \{0,\infty\}\right)/_{(z,x,0)\sim (z^{-1}, \lambda(z)\cdot x,\infty)}, \]
and $\That$ acts on $\mathcal{E}_{t^{\lambda}}(G/P)$ in the following way
\[ \left\{ 
\begin{array}{lcl}
(t,\zeta)\cdot [z,x,0] &=& [\zeta z, t\lambda(\zeta)^{-1}\cdot x,0]\\ [.5em]
(t,\zeta)\cdot [z,x,\infty] &=& [\zeta^{-1} z, t\cdot x,\infty]
\end{array}
\right.
\]
for any $t\in T$, $\zeta\in \gm$, $z\in \AA^1$ and $x\in G/P$. It follows that there is a canonical isomorphism $D_{t^{\lambda},0}\simeq G/P$ which is $\That$-equivariant if $\That$ acts on $G/P$ in the following way
\[ (t,\zeta)\cdot x = t\lambda(\zeta)^{-1}\cdot x\qquad t\in T,~ \zeta\in\gm, ~x\in G/P.\]
Hence there is a unique isomorphism of $H_{\That}^{\bl}(\pt)$-modules 
\begin{equation}\label{Ashiftwelleqn1}
H_{\That}^{\bl}(G/P) \xrightarrow{\sim} \twist_{-\lambda}( H_{\That}^{\bl}(D_{t^{\lambda},0}))
\end{equation}
sending the fundamental class of each torus fixed point of $G/P$ to the fundamental class of the corresponding torus fixed point of $D_{t^{\lambda},0}$. It is straightforward to see that $m_{t^{\lambda}}([t^{\lambda}]\ot -)$ is the restriction of \eqref{Ashiftwelleqn1} to $H_{\Ghat}^{\bl}(G/P)$. Therefore, this map, and hence $\widetilde{\shift}^A_{\lambda}$, extends to a $(-\lambda)$-twisted $H_{\That}^{\bl}(\pt)$-linear map as desired.
\end{proof}

\begin{remark}\label{Ashiftrmk}
Up to a constant multiple, $\shift^A_{\lambda}$ is equal to the operator defined in \cite[Definition 3.9]{Shift2}. See the proof of \cite[Lemma 3.14]{me1} for an explanation of this multiple.
\end{remark}

\begin{lemma}\label{Ashiftlemmamulti}
(\cite[Corollary 3.16]{Shift2}) We have $\shift^A_0=\id$ and 
\[ \shift^A_{\lambda_1}\circ \shift^A_{\lambda_2} = \shift^A_{\lambda_1+\lambda_2} \qquad \lambda_1,\lambda_2\in\Q.\]
In particular, each $\shift^A_{\lambda}$ is invertible.  \hfill$\square$
\end{lemma}

\begin{lemma}\label{Ashiftlemmacomm} (\cite[Corollary 3.15]{Shift2})
For any $\lambda\in\Q$ and $i\in\iip$, $\shift^A_{\lambda}$ commutes with $\qc{T}_{\partial_{q_i}}$.  \hfill$\square$
\end{lemma}

%%%%%%%%%%%%%%%%%%%%%%%%%%%%%%%%%%%
%%%%%%%%%%%%%%%%%%%%%%%%%%%%%%%%%%%
%%%%%%%%%%%%%%%%%%%%%%%%%%%%%%%%%%%
%%%%%%%%%%%%%%%%%%%%%%%%%%%%%%%%%%%
%%%%%%%%%%%%%%%%%%%%%%%%%%%%%%%%%%%
%%%%%%%%%%%%%%%%%%%%%%%%%%%%%%%%%%%
\section{B-model}\label{Rietsch}

\subsection{Rietsch mirror} \label{Rietschmirror}
We recall the definition of the Rietsch mirror. Our version is close to the one defined in \cite[Section 6]{LT}. See also \cite[Section 4]{Rietsch} and \cite[Section 6.6]{T1}.

Fix generators $e^{\vee}_i$ ($1\leqslant i \leqslant r$) of $\ggd_{\ad_i}$ and let $f^{\vee}_i$ be the generators of $\ggd_{-\ad_i}$ satisfying $[e^{\vee}_i, f^{\vee}_i]=\a_i$. Denote by $x_i,y_i:\ga\ra\Gd$ the unique group homomorphisms satisfying $\lie(x_i)(1)=e^{\vee}_i$ and $\lie(y_i)(1)=f^{\vee}_i$. Define
\[ \dot{s}_{\a_i} := y_i(-1)x_i(1)y_i(-1).\]
It is well-known that $\dot{s}_{\a_i}\in N(\Td)$ and represents $s_{\a_i}\in W\simeq N(\Td)/\Td$. Moreover, these elements satisfy the standard braid relations so that we can define $\dot{w}\in N(\Td)$ for any $w\in W$ which represents $w\in W$ such that $\dot{w} = \dot{s}_{\a_{i_1}}\cdots \dot{s}_{\a_{i_{\ell(w)}}}$ whenever $w= s_{\a_{i_1}}\cdots s_{\a_{i_{\ell(w)}}}$ is a reduced word decomposition.

There is a unique character 
\begin{equation}\label{cchhdef}
\cchh :\nd \ra \CC
\end{equation}
satisfying $\cchh(f^{\vee}_i)=1$ for any $1\leqslant i \leqslant r$. It gives rise to a group homomorphism
\[ e^{\cchh}: \Umd \ra \ga.\]
We can also define $\cchh$ in the following equivalent way. Let $Kil(-,-)$ be the Killing form of $\ggd$ and $\a_0$ the highest positive root. Define 
\begin{equation}\label{betaande}
\b(-,-):= \frac{2 Kil(-,-)}{Kil(\a_0,\a_0)} \quad\text{ and }\quad e:=\sum_{i=1}^r|\ad_i|^2e_i^{\vee}
\end{equation}
where $|\ad_i|^2:= Kil(\a_0,\a_0)/Kil(\a_i,\a_i)=2\b(\a_i,\a_i)^{-1}$. Then we have
\[ \cchh = \b(e,-)|_{\nd}.\]

\begin{remark}\label{Rietschmirrorrmk}
Notice that $\dot{w}$ and $\cchh$, and hence the Rietsch mirror we are going to define, depend on the generators $e_i^{\vee}$. But since any two sets of generators are $\adjoint$-related to each other, the Rietsch mirror does not depend on them up to isomorphism. For convenience, we will take $e_i^{\vee}$ to be $x_i$ from \cite[Proposition 5.6]{YZ} or $e_{\ad_i}$ from \cite[Section 4.1]{me2}. Roughly speaking, these generators are characterized by the property that the element $e:=\sum_{i=1}^r|\ad_i|^2 e_i^{\vee}$ corresponds to the cup product by the positive generator of $H^2(\ag;\ZZ)$ via the geometric Satake equivalence \cite{MV}.
\end{remark}

\begin{definition}\label{Rietschmirrordef}
The \textit{Rietsch mirror} of $G/P$ is a quintuple $(\Xd,W,\pi,p,\vol)$ where
\begin{enumerate}
\item \[ \Xd:= \{ (g,t)\in \Gd\times\ZL|~g\in \Bd\cap \Umd (\wpd)^{-1} t\Umd\} ;\]

\item
\[ W:\Xd\ra\AA^1,\quad \pi:\Xd\ra \ZL,\quad p:\Xd\ra \Td\]
are morphisms defined by
\[ W(x):=e^{\cchh}(u_1)+e^{\cchh}(u_2),\quad \pi(x):=t,\quad p(x):=t_0\]
for any $x=(g,t)\in \Xd$ with 
\[ g = u_0t_0=u_1(\wpd)^{-1}t u_2\]
($u_0\in\Ud$, $t_0\in\Td$, $u_1,u_2\in\Umd$ ); and 

\item 
\[\vol\in \O^{top}(\Xd/\ZL)\]
is a fiberwise volume form with respect to $\pi$ whose definition is postponed to Definition \ref{Fiberwisevolumeformdef} in Section \ref{Fiberwisevolumeform}.
\end{enumerate}

\end{definition}

\begin{remark}\label{Rietschwell}
Notice that the point $t_0$ from Definition \ref{Rietschmirrordef}(2) is unique, and hence $p$ is well-defined. On the other hand, $u_1$ and $u_2$ are not unique. But one can show that $e^{\cchh}(u_1)+e^{\cchh}(u_2)$ depends only on $x$, and hence $W$ is also well-defined.
\end{remark}

\begin{remark}\label{RietschcomparetoLT}
Lam and Templier \cite{LT} defined the Rietsch mirror to be Berenstein-Kazhdan's \textit{parabolic geometric crystal} \cite{BK}. In Appendix \ref{C}, we will show that these two definitions are equivalent.
\end{remark}

Define  
\[ \RP := \Ud\Bmd/\Bmd\cap \Umd (\wpd)^{-1}\Bmd/\Bmd \subseteq \GB.\]
Let $\Pmd$ be the parabolic subgroup of $\Gd$ satisfying
\[ \lie(\Pmd) = \lie(\Bmd)\oplus \bigoplus_{\a\in R^+_P}\ggd_{\ad}.\]
It is not difficult to see that the restriction of the projection $\GB\ra\Gd/\Pmd$ to $\RP$ is an isomorphism onto its image which is an open subscheme of $\Gd/\Pmd$. We denote the image by $\UP$. In the literature, $\RP$ (resp. $\UP$) is called an open (resp. open projected) Richardson variety.

\begin{lemma} \label{Rietschfiblemma}
The morphism $\Gd\times\ZL\ra \Gd/\Pmd\times\ZL$ defined by $(g,t)\mapsto (g\Pmd,t)$ induces an isomorphism
\[ \nu:\Xd\xrightarrow{\sim} \UP\times\ZL\]
of $\ZL$-schemes.
\end{lemma}
\begin{proof}
The proof is straightforward and left to the reader. 
\end{proof}

Finally, we define a $\gm$-action on $\Xd$ as in \cite[Section 6.21]{LT}. Let $\gm$ act on 
\begin{itemize}
\item $\Gd$ via the conjugate action by the cocharacter $2\rho = \sum_{\a\in R^+}\a$;

\item $\ZL$ via the multiplication by the cocharacter $-2\sum_{\a\in R^+\setminus R_P^+}\a$;

\item $\Td$ via the trivial action; and

\item $\AA^1$ via the multiplication of weight $-2$.
\end{itemize}

\begin{lemma}\label{Rietschmirroraction} The $\gm$-action on $\Gd\times\ZL$ induces a $\gm$-action on $\Xd$ such that $W$, $\pi$ and $p$ are equivariant.
\end{lemma}
\begin{proof}
The proof is similar to the proof of \cite[Proposition 6.24]{LT}.
\end{proof}
%%%%%%%%%%%%%%%%%%%%%%%%%%%%%%%%%%%
%%%%%%%%%%%%%%%%%%%%%%%%%%%%%%%%%%%
\subsection{Brieskorn lattice} \label{Brieskornlattice}
We define the Brieskorn lattice associated to the quadruple $(\Xd,W,\pi,p)$ defined in Definition \ref{Rietschmirrordef}. (The fiberwise volume form $\vol$ is not needed here.) It is the zeroth cohomology of $WCr^{1/\hp}_{(G,P)}$ defined in \cite{LT}. See also \cite{MR, PR, PRW}. 

\begin{definition}\label{RietschBrieskorndef}$~$
\begin{enumerate}
\item Define the \textit{Brieskorn lattice} associated to $(\Xd,W,\pi,p)$
\[ \quad \bries := \coker\left( \utd\ot \O^{top-1}(\Xd/\ZL)\xrightarrow{\partial}\utd\ot  \O^{top}(\Xd/\ZL) \right)\]
where 
\begin{itemize}
\item $\O^i(\Xd/\ZL)$ is the space of relative $i$-forms on $\Xd$ with respect to $\pi$; and

\item $\partial$ is defined by
\begin{equation}\label{Rietschpartialdef}
  \qquad \qquad \quad \partial(z\ot \w):= z\ot (\hp d\w + dW\wedge\w) - \sum_i zh_i\ot(p^*\langle h^i,\mccccc_{\Td} \rangle)\wedge\w 
\end{equation}
where $\{h_i\}\subset \ttd$ and $\{h^i\}\subset (\ttd)^*$ are dual bases and $\mccccc_{\Td}\in\Omega^1(\Td;\ttd)$ is the Maurer-Cartan form of $\Td$.
\end{itemize}
\item Define the \textit{Gauss-Manin $\hp$-connection} on $\bries$ which is a $\dm{\ZL}$-module structure by the formula
\[ \eta\cdot [z\ot \w]:= \left[z\ot(\hp\LL_{\widetilde{\eta}}\w + (\LL_{\widetilde{\eta}}W)\w) - \sum_i zh_i\ot (\iota_{\widetilde{\eta}} p^*\langle h^i,\mccccc_{\Td} \rangle)\w \right]\]
for any $\eta\in\mathfrak{X}(\ZL)$ and $[z\ot \w]\in \bries$ where $\widetilde{\eta}\in \mathfrak{X}(\Xd)$ is a lift of $\eta$. 
\end{enumerate}
\end{definition}

We define a grading on $\bries$ as in \cite[Section 11]{LT}. Recall the $\gm$-action on $\Xd$ (see Lemma \ref{Rietschmirroraction}). Since $\pi$ is equivariant, the $\gm$-action on $\Xd$ induces a linear $\gm$-action, and hence a grading, on $\O^{\bl}(\Xd/\ZL)$. (Warning: we do not take the de Rham degree into account.) Grade $\utd$ by requiring $\hp$ and every element of $\ttd$ to have degree 2. Then $\utd\ot\O^{\bl}(\Xd/\ZL)$ has a grading. Consider $\partial$ defined in \eqref{Rietschpartialdef}. Since $W$ is equivariant, $dW$ has degree 2, and since $p$ is equivariant, $p^*\langle h^i,\mccccc_{\Td} \rangle$ has degree 0. It follows that $\partial(z\ot \w)$ is homogeneous whenever $z\ot\w$ is. We have proved
\begin{lemma}\label{Brieskornlatticegrading}
The grading on $\utd\ot\O^{top}(\Xd/\ZL)$ induces a grading on $\bries$. \hfill $\square$
\end{lemma}
%%%%%%%%%%%%%%%%%%%%%%%%%%%%%%%%%%%
%%%%%%%%%%%%%%%%%%%%%%%%%%%%%%%%%%%
\subsection{Jacobi ring} \label{Jacobialgebra}

\begin{definition}\label{RietschJacobidef} 
Define the \textit{Jacobi ring} $\jac(\Xd,W,\pi,p)$ to be the coordinate ring of the scheme-theoretic zero locus of the relative 1-form 
\[ \pr_{\Xd}^*dW -\langle\pr_{\mathfrak{t}} , (p\circ \pr_{\Xd})^*\mccccc_{\Td}\rangle \in \O^1(\Xd\times\mathfrak{t}/\ZL\times\mathfrak{t})\]
where $\pr_{\Xd}:\Xd\times \mathfrak{t}\ra\Xd$ and $\pr_{\mathfrak{t}}:\Xd\times \mathfrak{t}\ra\mathfrak{t}$ are the projections.
\end{definition}

Let $\FF_{\hp=0}$ denote the functor $N\mapsto N/\hp N$.
\begin{lemma}\label{RietschJacobilemma}
Every fiberwise volume form on $\Xd$ induces an isomorphism
\[\jac(\Xd,W,\pi,p)\xrightarrow{\sim}\FF_{\hp=0}(\bries) \] 
of $\sym^{\bl}(\ttd)\ot\OO(\ZL)$-modules.
\end{lemma}
\begin{proof}
Identify $\O^{top}(\Xd/\ZL)$ with $\OO(\Xd)$ using the given fiberwise volume form. By \eqref{Rietschpartialdef}, the $\sym^{\bl}(\ttd)\ot\OO(\ZL)$-module $\FF_{\hp=0}(\bries)$ is isomorphic to the quotient of $\sym^{\bl}(\ttd)\ot\OO(\Xd)$ by the ideal generated by elements of the form
\[ 1\ot \LL_{\zeta_j}W- \sum_{i} h_i\ot\iota_{\zeta_j}p^*\langle h^i,\mccccc_{\Td}\rangle\]
where $\{\zeta_j\}$ is a global frame of $\Xd$ relative to $\ZL$. It is clear that the latter module is in fact equal to $\jac(\Xd,W,\pi,p)$.
\end{proof}
%%%%%%%%%%%%%%%%%%%%%%%%%%%%%%%%%%%
%%%%%%%%%%%%%%%%%%%%%%%%%%%%%%%%%%%
\subsection{Fiberwise volume form} \label{Fiberwisevolumeform}
We finish the definition of the Rietsch mirror (Definition \ref{Rietschmirrordef}) by defining the fiberwise volume form $\vol$ on $\Xd$.

Recall the open projected Richardson variety $\UP\subseteq \Gd/\Pmd$ defined in Section \ref{Rietschmirror} (before Lemma \ref{Rietschfiblemma}). By \cite[Lemma 5.4]{KLS}, the complement $D:= (\Gd/\Pmd)\setminus \UP$ has pure codimension one, and there exists a volume form $\w_{\UP}$ on $\UP$ (unique up to scalar) which has simple pole along every irreducible component of $D$.

\begin{definition}\label{Fiberwisevolumeformdef} (Definition \ref{Rietschmirrordef} continued) Define $\vol\in \O^{top}(\Xd/\ZL)$ by 
\[ \vol:= (\pr_{\UP}\circ \nu)^*\w_{\UP}\]
where $\nu:\Xd\xrightarrow{\sim}\UP\times\ZL$ is the isomorphism from Lemma \ref{Rietschfiblemma} and $\pr_{\UP}:\UP\times\ZL\ra\UP$ is the projection.
\end{definition}

\begin{remark}\label{Fiberwisevolumeformrmkscalar}
Notice that $\vol$ is only defined up to scalar. It can be normalized by choosing an orientation of an integration cycle. See Remark \ref{Fiberwisevolumeformrmk} below. But for our purpose, such normalization is unnecessary.
\end{remark}

\begin{remark}\label{Fiberwisevolumeformrmk} The above definition is due to Lam and Templier \cite[Section 6.6]{LT}. See also \cite[Section 8]{MR} for the case of Grassmannian. It is a priori different from the one by Rietsch \cite[Section 7]{Rietsch} which uses, instead of $\w_{\UP}$, the unique (up to sign) volume form whose restrictions to every torus chart is equal to the standard volume form $\pm dt_1\wedge \cdots\wedge dt_N/ t_1\cdots t_N$. But Lam \cite[Lemma 2.9]{LGC} proved that these two definitions are the same (up to scalar). (Strictly speaking, he only dealt with the case $P=B$ but his proof clearly works for the general case.)
\end{remark}

\begin{lemma}\label{Fiberwisevolumeformdegzero} The degree of $[\vol]$ with respect to the grading from Lemma \ref{Brieskornlatticegrading} is 0.
\end{lemma}
\begin{proof}
This can be proved in three ways. First, it follows immediately from \cite[Lemma 6.26]{LT}. Second, by Remark \ref{Fiberwisevolumeformrmk}, $\vol$ is locally equal to a scalar multiple of $dt_1\wedge \cdots\wedge dt_N/ t_1\cdots t_N$ which is easily seen to have degree 0. Third, we have $\vol=\ol{f}^{-1}\ol{\w}_{\UUU}$ where $\ol{f}$ and $\ol{\w}_{\UUU}$ come from the proof of Lemma \ref{ApplemmaA}. The result can be proved by computing their degrees directly. Details are left to the reader.
\end{proof}
%%%%%%%%%%%%%%%%%%%%%%%%%%%%%%%%%%%
%%%%%%%%%%%%%%%%%%%%%%%%%%%%%%%%%%%
\subsection{Shift operators} \label{Bshift}
Following \cite[Proposition 3.20]{Iritanibig}, we define shift operators for the Brieskorn lattice $\bries$ as follows.

For any $\lambda\in\Q$, define an automorphism $\twist_{\lambda}$ of the $\CC[\hp]$-algebra $\utd$ by 
\[ \twist_{\lambda}(x) := x+\hp\lambda(x),\quad x\in\ttd.\]
For a $\utd$-module $N$, denote by $\twist_{\lambda}(N)$ the $\utd$-module $N$ with module structure twisted by $\twist_{\lambda}$.

\begin{definition}\label{Bshifttwistedlinear} Let $N_1$ and $N_2$ be $\utd$-modules and $\phi:N_1\ra N_2$ a $\CC[\hp]$-linear map. Let $\lambda\in\Q$. We say that $\phi$ is \textit{$\lambda$-twisted $\utd$-linear} if it is $\utd$-linear as a map from $N_1$ to $\twist_{\lambda}(N_2)$.
\end{definition} 

\begin{definition}\label{Bshiftpredef}
Let $\lambda\in\Q$. Define
\[ \widetilde{\shift}^B_{\lambda}: \utd\ot\O^{top}(\Xd/\ZL)\ra \utd\ot\O^{top}(\Xd/\ZL)  \]
by
\[ \widetilde{\shift}^B_{\lambda}(z\ot\w) := \twist_{-\lambda}(z)\ot (\lambda\circ p)\w \]
for any $z\in\utd$ and $\w\in\O^{top}(\Xd/\ZL) $. It is $(-\lambda)$-twisted $\utd$-linear in the sense of Definition \ref{Bshifttwistedlinear}.
\end{definition} 

\begin{lemma} \label{Bshiftwell} (Shift operators)
$\widetilde{\shift}^B_{\lambda}$ descends to a $(-\lambda)$-twisted $\utd$-linear map 
\[ \shift^B_{\lambda}:\bries\ra\bries \]
\end{lemma}
\begin{proof}
It suffices to show 
\[  \widetilde{\shift}^B_{\lambda}(\partial(z\ot\w))=\partial(\twist_{-\lambda}(z)\ot (\lambda\circ p)\w)\]
for any $z\in\utd$ and $\w\in\O^{top-1}(\Xd/\ZL) $, where $\partial$ is defined in \eqref{Rietschpartialdef}. We have
\begin{align*}
 \widetilde{\shift}^B_{\lambda}(\partial(z\ot\w))= ~&~
 \twist_{-\lambda}(z)\ot (\hp (\lambda\circ p)d\w + (\lambda\circ p) dW\wedge\w) \\
 & ~-\sum_i \twist_{-\lambda}(zh_i)\ot (\lambda\circ p)( p^*\langle h^i,\mccccc_{\Td}\rangle)\wedge\w\\
 =~&~ \partial(\twist_{-\lambda}(z)\ot (\lambda\circ p)\w) - \hp \twist_{-\lambda}(z)\ot d(\lambda\circ p)\wedge\w\\
 & ~+ \sum_i \twist_{-\lambda}(z)(h_i-\twist_{-\lambda}(h_i))\ot (\lambda\circ p) (p^*\langle h^i,\mccccc_{\Td}\rangle)\wedge\w \\
 =~&~ \partial(\twist_{-\lambda}(z)\ot (\lambda\circ p)\w)  \\
 & ~-\hp  \twist_{-\lambda}(z)\ot (d(\lambda\circ p)-(\lambda\circ p)p^*\langle \lambda,\mccccc_{\Td}\rangle)\wedge\w.
\end{align*}
It is not difficult to see that $d(\lambda\circ p)=(\lambda\circ p)p^*\langle \lambda,\mccccc_{\Td}\rangle$, and hence the last expression is equal to $\partial(\twist_{-\lambda}(z)\ot (\lambda\circ p)\w)$ as desired.
\end{proof}

The following are the parallel properties of the A-model shift operators from Section \ref{Ashift}.

\begin{lemma}\label{Bshiftlemmamulti}
We have $\shift^B_0=\id$ and 
\[ \shift^B_{\lambda_1}\circ \shift^B_{\lambda_2} = \shift^B_{\lambda_1+\lambda_2} \qquad \lambda_1,\lambda_2\in\Q.\]
In particular, each $\shift^B_{\lambda}$ is invertible. 
\end{lemma}
\begin{proof}
This is clear from definition.
\end{proof}

\begin{lemma}\label{Bshiftlemmacomm} 
For any $\lambda\in\Q$ and $i\in I\setminus I_P$, $\shift^B_{\lambda}$ is $\dm{\ZL}$-linear.
\end{lemma}
\begin{proof}
This is straightforward. Since we will not use this result, we omit the details.
\end{proof}
%%%%%%%%%%%%%%%%%%%%%%%%%%%%%%%%%%%
%%%%%%%%%%%%%%%%%%%%%%%%%%%%%%%%%%%

%%%%%%%%%%%%%%%%%%%%%%%%%%%%%%%%%%%
%%%%%%%%%%%%%%%%%%%%%%%%%%%%%%%%%%%
\section{Non-abelian shift operators}\label{GMP}
\textit{Non-abelian shift operators} are defined by Gonz\'alez, Mak and Pomerleano \cite{GMP} in the symplectic category. They generalize \textit{abelian shift operators} (see Section \ref{Ashift}) and \textit{Savelyev-Seidel's homomorphisms} \cite{Savelyev1, Savelyev2, Savelyev3, Seidel} simultaneously. In this section, we give an exposition in the algebraic category, where the target space is restricted to $G/P$. We also prove some computational results based on \cite{me1, me2}.

\subsection{$G/P$-bundles and moduli of sections}\label{GMPbundle}
By Beauville-Laszlo's theorem \cite{BL}, the affine Grassmannian $\ag$ parametrizes $G$-torsors over $\PP^1$ with a trivialization over $\PP^1\setminus 0$. In other words, every morphism $f:\G\ra\ag$ is represented by a $G$-torsor $\fib{f}$ over $\PP^1\times \G$ with a trivialization $\nu_f:\fib{f}|_{(\PP^1\setminus 0)\times \G}\xrightarrow{\sim}  (\PP^1\setminus 0)\times \G\times G$. In what follows, we assume $\G$ is a smooth projective variety. Then the associated fiber bundle
\[ \ffib{f}:=\fib{f}\times^G G/P\]
exists as a smooth projective variety. Denote by $\pi_f:\ffib{f}\ra\PP^1\times\G$ the projection. Define 
\[D_{f,0}:=\pi_f^{-1}(0\times \G)\quad\text{ and }\quad D_{f,\infty}:=\pi_f^{-1}(\infty\times \G)\]
which are smooth divisors on $\ffib{f}$. Denote by $\iota_{f,0/\infty}:D_{f,0/\infty}\hookrightarrow \ffib{f}$ the inclusions. The trivialization $\nu_f$ induces an isomorphism $D_{f,\infty}\simeq \G\times G/P$. Let $\rho\in (\Q/\Q_P)^*$. Define $L_{\rho}:=G\times^P\CC_{-\rho}$ which is a line bundle on $G/P$. Since $L_{\rho}$ has a (unique) $G$-linearization, there is a natural line bundle $\LL_{\rho}$ on $\ffib{f}$, constructed by descent, whose restriction to every fiber of $\pi_f$ is isomorphic to $L_{\rho}$. 

\begin{definition}\label{GMPmodulidef}
Let $f:\G\ra\ag$ be a morphism and $\eta\in\Q/\Q_P$.
\begin{enumerate}
\item  Define 
\[\MM^{0,\infty}(f,\eta):= \bigcup_{\b} ~\MM_{0,2}(\ffib{f},\b)\times_{((\ev_1,\ev_2),\iota_{f,0}\times\iota_{f,\infty})} (D_{f,0}\times D_{f,\infty})\]
where $\b$ runs over the set of elements of $H_2(\ffib{f})$ satisfying $(\pi_f)_*\b=[\PP^1\times\pt]$ and $\langle c_1(\LL_{\rho}),\b\rangle = \rho(\eta)$ for any $\rho\in (\Q/\Q_P)^*$. 

\item Define 
\[ \ev_{f,\eta,0}:\MM^{0,\infty}(f,\eta) \ra D_{f,0}\]
to be the morphism induced by $\ev_1$.

\item  Define 
\[\ev_{f,\eta,\infty}:\MM^{0,\infty}(f,\eta) \ra D_{f,\infty}\simeq \G\times G/P \ra G/P\]
to be the composition of the morphism induced by $\ev_2$, the isomorphism induced by $\nu_f$ and the projection.
\end{enumerate}
\end{definition}
\begin{remark}\label{GMPmodulirmk}
The superscripts $0,\infty$ in $\MM^{0,\infty}(f,\eta)$ refer to the marked points $0,\infty\in \PP^1$ while the subscripts $0,2$ in $\MM_{0,2}(\ffib{f},\b)$ refer to the genus ($=0$) and the number of marked points ($=2$) as usual.
\end{remark}

Let $H$ be either $T$ or $G$. Put $\Hhat:= H\times \gm$. Let $f:\G\ra\ag$ be a morphism. Suppose $\G$ has an $\Hhat$-action and $f$ is $\Hhat$-equivariant. By modifying the proof of \cite[Lemma 3.5]{me1}, we see that $\ffib{f}$ has an $\Hhat$-action such that $\pi_f$ is $\Hhat$-equivariant, $D_{f,0}$ and $D_{f,\infty}$ are $\Hhat$-invariant and the isomorphism $D_{f,\infty}\simeq \G\times G/P$ induced by $\nu_f$ is $\Hhat$-equivariant. (Here, $H$ acts trivially on $\PP^1$ and $\gm$ acts on $\PP^1$ by rotation such that $0\in\PP^1$ has weight 1.) It follows that the stack $\MM^{0,\infty}(f,\eta)$ has a natural $\Hhat$-action and the morphisms $\ev_{f,\eta,0}$ and $\ev_{f,\eta,\infty}$ are $\Hhat$-equivariant.

We will use the stacks $\MM^{0,\infty}(f,\eta)$ to define the non-abelian shift operators. Before doing so, we need to define the following convolution-type operation. As before, let $H$ be either $T$ or $G$ and $f:\G\ra\ag$ an $\Hhat$-equivariant morphism.
\begin{definition}(C.f. \cite[end of Section 3.5]{GMP}) \label{GMPtwist}
Define an $H_{\Hhat}^{\bl}(\pt)$-linear map
\[ m_f: H^{\Hhat}_{-\diamond}(\G)\otimes_{H_{\Ghat}^{\bl}(\pt)} H_{\Ghat}^*(G/P)\ra H_{\Hhat}^{\dim_{\RR}\G+\diamond+*}(D_{f,0}) \]
to be the composition
\begin{align*}
 m_f : ~& H^{\Hhat}_{-\diamond}(\G)\otimes_{H_{\Ghat}^{\bl}(\pt)} H_{\Ghat}^*(G/P) \xrightarrow[(a)]{\sim} H_{H\times \Ghat}^{\dim_{\RR}\G+\diamond}(\fib{f,0})\otimes_{H_{\Ghat}^{\bl}(\pt)} H_{\Ghat}^{*}(G/P)\xrightarrow[(b)]{} \\
 &  H_{H\times(G\times\gm)}^{\dim_{\RR}\G+\diamond+*}( \fib{f,0}\times G/P) \xrightarrow[(c)]{\sim} H_{\Hhat}^{\dim_{\RR}\G+\diamond+*}(\fib{f,0}\times^G G/P) \xrightarrow[(d)]{\sim} H^{\dim_{\RR}\G+\diamond+*}_{\Hhat}(D_{f,0})
\end{align*}
where $\fib{f,0}:=\fib{f}|_{0\times \G}$ and the above morphisms are induced by
\begin{enumerate}[(a)]
\item the sequence of canonical isomorphisms 
\begin{equation}\nonumber\label{GMPtwisteq1}
H^{\Hhat}_{-\diamond}(\G)\simeq H_{\Hhat}^{\dim_{\RR}\G+\diamond}(\G)\simeq  H_{\Hhat}^{\dim_{\RR}\G+\diamond}(\fib{f,0}/G) \simeq H_{\Hhat\times G}^{\dim_{\RR}\G+\diamond}(\fib{f,0})\simeq H_{H\times\Ghat}^{\dim_{\RR}\G+\diamond}(\fib{f,0});
\end{equation} 

\item the external cup product ($G\times\gm$ acts on $\fib{f,0}\times G/P$ diagonally);

\item the free group action on $\fib{f,0}\times G/P$ by the diagonal subgroup of $G\times G$; and

\item the canonical isomorphism $\fib{f,0}\times^G G/P\simeq D_{f,0}$.
\end{enumerate}
\end{definition}

Recall the opposite Schubert basis $\{\s^v\}_{v\in W^P}$ of $H_{\That}^{\bl}(G/P)$ defined in Section \ref{Qcohomology}.
\begin{definition}\label{GMPintegral}
Let $f:\G\ra\ag$ be a $\That$-equivariant morphism. For any $x\in H_{-\bl}^{\That}(\G)$, $y\in H_{\Ghat}^{*}(G/P)$, $v\in W^P$ and $\eta\in\Q/\Q_P$, define
\[\mathcal{I}(f,x,y,v,\eta):=\int_{[\MM^{0,\infty}(f,\eta)]^{vir}}\ev_{f,\eta,0}^*m_{f}(x\otimes y)  \cup \ev_{f,\eta,\infty}^*\s^v\in H^{\diamond}_{\That}(\pt)\] 
where $ \diamond:= \bl+*-2\ell(v)-2\sum_{\a\in R^+\setminus R^+_P}\a(\eta)$.
\end{definition}

%%%%%%%%%%%%%%%%%%%%%%%%%%%%
%%%%%%%%%%%%%%%%%%%%%%%%%%%%
%%%%%%%%%%%%%%%%%%%%%%%%%%%%
\subsection{Definition}\label{GMPnonabelianshift} 
Recall the affine Schubert basis $\{\ascl_{\wl}\}_{\wl\in W_{af}^-}$ and the Schubert basis $\{\s_v\}_{v\in W^P}$ defined in Section \ref{notationG} and Section \ref{Qcohomology} respectively.
\begin{definition}\label{GMPactiondef}
Define an $H_{\That}^{\bl}(\pt)[\eccc]$-linear map
\[ -\gmpt -: H_{-\diamond}^{\That}(\ag)\otimes_{H_{\Ghat}^{\bl}(\pt)} QH_{\Ghat}^{*}(G/P)[\ecc]\ra QH_{\That}^{\diamond+*}(G/P)[\ecc] \]
by
\begin{equation} \label{GMPactiondefeq}
\ascl_{\wl} \gmpt y := \sum_{v\in W^P}\sum_{\eta\in\Q/\Q_P} q^{\eta} \mathcal{I}(f_{\ag,\wl},[\G_{\wl}],y,v,\eta) \s_v, 
\end{equation}
where $\mathcal{I}(f_{\ag,\wl},[\G_{\wl}],y,v,\eta)$ is the integral defined in Definition \ref{GMPintegral} with $f$ taken to be a $\That$-equivariant morphism $f_{\ag,\wl}:\G_{\wl}\ra \ag$ which factors through a resolution $\G_{\wl}\ra \ol{\BB\cdot t^{w(\lambda)}}$.
\end{definition}

\begin{lemma}\label{GMPactiongraded}
$-\gmpt -$ is graded.
\end{lemma}
\begin{proof}
By Definition \ref{GMPintegral}, the integral $\mathcal{I}(f_{\ag,\wl},[\G_{\wl}],y,v,\eta)$ from \eqref{GMPactiondefeq} has degree $-2\ell(\wl)+\deg y -2\ell(v) - 2\sum_{\a\in R^+\setminus R^+_P}\a(\eta)$. Since $q^{\eta}$ and $\s_v$ has degree $2\sum_{\a\in R^+\setminus R^+_P}\a(\eta)$ and $2\ell(v)$ respectively, $\ascl_{\wl} \gmpt y$ has degree $-2\ell(\wl)+\deg y$ which is the degree of $\ascl_{\wl} \ot y$ in $H_{-\bl}^{\That}(\ag)\ot_{H_{\Ghat}^{\bl}(\pt)} QH_{\Ghat}^{\bl}(G/P)[\ecc]$.
\end{proof}

Let $f:\G\ra\ag$ be a $\That$-equivariant morphism. By a localization argument (see e.g. the proof of \cite[Proposition 3.12]{me1}), we have 
\begin{equation}\label{GMPlocaleqn}
f_*x\gmpt y= \sum_{v\in W^P}\sum_{\eta\in\Q/\Q_P} q^{\eta} \mathcal{I}(f,x,y,v,\eta) \s_v
\end{equation}
for any $x\in H_{-\diamond}^{\That}(\G)$ and $y\in QH_{\Ghat}^{*}(G/P)[\ecc]$. Suppose further $f$ is $\Ghat$-equivariant. Then for any $\eta\in\Q/\Q_P$, the stack $\MM^{0,\infty}(f,\eta)$ has a natural $\Ghat$-action such that $\ev_{f,\eta,0}$ and $\ev_{f,\eta,\infty}$ are $\Ghat$-equivariant. It follows that, by \eqref{GMPlocaleqn}, we have 
\[ f_*(w\cdot x)\gmpt y = w\cdot (f_*x\gmpt y)\]
for any $w\in W$. Since every $\agll$ has a $\Ghat$-equivariant resolution, we have proved
\begin{lemma}\label{GMPpreserveW} 
$-\gmpt -|_{H_{-\diamond}^{\Ghat}(\ag)\otimes_{H_{\Ghat}^{\bl}(\pt)} QH_{\Ghat}^{*}(G/P)[\ecc]}$ lands in $QH_{\Ghat}^{\diamond+*}(G/P)[\ecc]$. \hfill$\square$
\end{lemma}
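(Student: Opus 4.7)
The plan is to reduce the assertion to a $W$-invariance statement. Under the natural forgetful maps, $QH^*_{\Ghat}(G/P)[\ecc]$ embeds in $QH^*_{\That}(G/P)[\ecc]$ precisely as the $W$-invariants (with $W$ acting simultaneously on the coefficient ring $H^*_{\That}(\pt)$ and on the Schubert classes via the residual $N(\That)/\That$-action on fixed loci), and likewise $H^{\Ghat}_{-\bl}(\ag)$ embeds in $H^{\That}_{-\bl}(\ag)$ as the $W$-invariants. It therefore suffices to prove that $x \gmpt y$ is $W$-invariant whenever $x$ and $y$ come from $\Ghat$-equivariant cohomology.

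The first concrete step is to represent an arbitrary class $x \in H^{\Ghat}_{-\bl}(\ag)$ as a pushforward $f_*\tilde{x}$ for some $\Ghat$-equivariant proper morphism $f : \G \to \ag$ with $\G$ smooth and $\tilde{x} \in H^{\Ghat}_{-\bl}(\G)$. This is available because the spherical Schubert varieties $\agll$ carry natural $\Ghat$-actions and admit $\Ghat$-equivariant resolutions of convolution/Bott--Samelson type, and their fundamental classes generate $H^{\Ghat}_{\bl}(\ag)$ as an $H^{\bl}_{\Ghat}(\pt)$-module. Applying the localization identity \eqref{GMPlocaleqn} then reduces the desired invariance to a property of the integrals $\mathcal{I}(f,\tilde{x},y,v,\eta)$.

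The core of the argument, already flagged in the paragraph preceding the lemma, is that $\Ghat$-equivariance of $f$ upgrades the moduli space $\MM^{0,\infty}(f,\eta)$ and the evaluation morphisms $\ev_{f,\eta,0}, \ev_{f,\eta,\infty}$ to $\Ghat$-equivariant objects, so the integrals in question are $\Ghat$-equivariant. This yields the $W$-covariance
\[ f_*(w\cdot \tilde{x}) \gmpt y \;=\; w\cdot \bigl(f_*\tilde{x} \gmpt y\bigr) \qquad (w \in W). \]
Since $\tilde{x}$ and $y$ are $W$-invariant when viewed $\That$-equivariantly (being pulled back from $\Ghat$-equivariant classes), this relation forces $x\gmpt y$ to be fixed by every $w \in W$, hence to land in $QH^{\bl+*}_{\Ghat}(G/P)[\ecc]$, as required. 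The only place where care is needed is the existence of $\Ghat$-equivariant resolutions of the $\agll$, but this is classical, so no serious obstacle is anticipated.
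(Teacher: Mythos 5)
Your proposal matches the paper's argument: both deduce the $W$-covariance $f_*(w\cdot x)\gmpt y = w\cdot(f_*x\gmpt y)$ from the $\Ghat$-equivariance of $\MM^{0,\infty}(f,\eta)$ and the evaluation maps (for $\Ghat$-equivariant $f$), via the localization formula \eqref{GMPlocaleqn}, and then identify $\Ghat$-equivariant (quantum) cohomology with the $W$-invariants of the $\That$-equivariant one. The only difference is that you make explicit the step of representing a class in $H^{\Ghat}_{-\bl}(\ag)$ as a pushforward along a $\Ghat$-equivariant morphism from a smooth source, which the paper leaves implicit.
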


\begin{definition} (Non-abelian shift operators) Define
\[ -\gmpg -: H_{-\diamond}^{\Ghat}(\ag)\otimes_{H_{\Ghat}^{\bl}(\pt)} QH_{\Ghat}^{*}(G/P)[\ecc]\ra QH_{\Ghat}^{\diamond+*}(G/P)[\ecc] \]
to be the map induced by $-\gmpt-$ via Lemma \ref{GMPpreserveW}.
\end{definition}

%%%%%%%%%%%%%%%%%%%%%%%%%%%%
%%%%%%%%%%%%%%%%%%%%%%%%%%%%
%%%%%%%%%%%%%%%%%%%%%%%%%%%%
\subsection{Some properties}\label{GMPnonabelianshiftproperties}
Let $H$ be either $T$ or $G$.
\begin{proposition}\label{GMPactionmodule}
For any $x_1\in H^{\Hhat}_{\bl}(\ag)$, $x_2\in H^{\Ghat}_{\bl}(\ag)$ and $y\in QH^{\bl}_{\Ghat}(G/P)[\ecc]$, we have 
\[ x_1\gmph (x_2\gmpg y) = (x_1\aggh x_2)\gmph y\]
where $-\aggh-$ is defined in Section \ref{notationG}.
\end{proposition}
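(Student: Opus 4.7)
The plan is to realize both sides of the identity as virtual integrals over a common moduli space of sections over a rational curve and to match them using the geometric meaning of the convolution product on $H^{\Ghat}_\bl(\ag)$.

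First I would geometrically unpack the convolution. If $x_i = f_{i*}[\G_i]$ for $\Ghat$-equivariant morphisms $f_i:\G_i\to\ag$, then $x_1\aggh x_2$ is represented by a twisted product $f_{12} : \G_1\tilde\times\G_2\to\ag$ coming from the convolution diagram $G(\KK)\times^{G(\OO)}\ag\to\ag$. The associated $G$-torsor $\fib{f_{12}}$ on $\PP^1\times(\G_1\tilde\times\G_2)$ is obtained by composing two modifications of the trivial torsor at $0\in\PP^1$---first using the $f_2$-data, then using the $f_1$-data---so that $\fib{f_{12}}$ agrees with $\fib{f_2}$ away from $0$ and is further modified by the $f_1$-data at $0$.

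Next I would build a moduli over a nodal curve. Let $C=\PP^1_1\cup_q\PP^1_2$ be the nodal rational curve obtained by gluing $\infty\in\PP^1_2$ to $0\in\PP^1_1$. Over $C\times(\G_1\times\G_2)$ construct a $G/P$-bundle whose restrictions to $\PP^1_1\times\G_1$ and $\PP^1_2\times\G_2$ are $\ffib{f_1}$ and $\ffib{f_2}$ respectively, with the gluing at the node encoded by the trivialization $\nu_{f_2}$ at $\infty\in\PP^1_2$ matched against the twist carried by $m_{f_1}$ at $D_{f_1,0}$. Let $\MM^{\text{nod}}$ be the corresponding moduli of two-pointed stable maps in the appropriate curve class, with markings at the free ends of $C$. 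Expanding $x_1\gmph(x_2\gmpg y)$ via \eqref{GMPactiondefeq} and collapsing the intermediate sum $\sum_v \s_v\otimes \s^v$ into the diagonal of $G/P$ inserted at $q$ identifies the LHS with the virtual integral over $\MM^{\text{nod}}$.

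The RHS is, by definition, the virtual integral over $\MM^{0,\infty}(f_{12},\eta)$, the moduli of two-pointed sections of the convolution bundle on a single smooth $\PP^1$. I would interpolate between $\MM^{\text{nod}}$ and this smooth moduli by a flat family of curves smoothing the node, together with a compatible family of $G$-torsors whose special fiber restricts to the glued torsor above and whose generic fiber is $\fib{f_{12}}$; this is the standard Beilinson-Drinfeld degeneration of the convolution product. A deformation-invariance argument for the virtual fundamental classes---using properness of the family of moduli over the base of the smoothing and the $\gm$-rotation equivariance---then equates the two virtual integrals.

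The hard part is the bookkeeping compatibility between the insertions: one must verify that the iterated twist $m_{f_1}(x_1\otimes -)\circ m_{f_2}(x_2\otimes -)$ on the LHS, after inserting the diagonal of $G/P$ at the node, coincides with the single twist $m_{f_{12}}(x_1\aggh x_2 \otimes y)$ on the RHS. This reduces to the statement that twisting a class on $G/P$ by the convolution torsor $\fib{f_{12}}$ equals successively twisting by $\fib{f_2}$ and then by $\fib{f_1}$---visible from the very construction of the convolution torsor, but requiring careful tracking of the multiple layers of $H$-, $G$-, and $\gm$-equivariance appearing in Definition \ref{GMPtwist}.
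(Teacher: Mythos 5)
Your strategy is genuinely different from the paper's. The paper localizes to the $\That$-fixed points $t^\mu$ of $\ag$, observes that for such classes $-\gmpt -$ is the abelian shift operator $\shift_\mu$ (up to twist), and then invokes Iritani's composition law $\shift_{\mu_0}\circ\shift_{\mu_j}=\shift_{\mu_0+\mu_j}$ (\cite[Corollary~3.16]{Shift2}) together with the simple localized form of the convolution product, $[t^{\mu_1}]\aggt[t^{\mu_2}]=[t^{\mu_1+\mu_2}]$. You instead propose a direct geometric proof via a Beilinson--Drinfeld degeneration of the convolution $G$-torsor to a nodal curve, identifying both sides as virtual integrals over moduli of sections and matching them by deformation invariance of virtual classes. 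This is closer in spirit to how one would prove the shift-operator composition law from scratch, and it avoids localization entirely; in exchange it requires substantially more infrastructure.

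The main gaps in your sketch are precisely where you flag ``the hard part.'' First, the glued torsor on the nodal curve is not simply $\fib{f_1}\sqcup\fib{f_2}$ over $\G_1\times\G_2$: the convolution class $x_1\aggh x_2$ lives over a twisted product $\G_1\tilde\times\G_2$, and the special fiber of the Beilinson--Drinfeld family carries the $f_2$-torsor on the second component only after descending through the $f_1$-data; getting the base of the family and the $\Hhat$- (and in particular $\gm$-rotation) equivariance right is exactly the content of what the paper outsources to Iritani. Second, deformation invariance of $[\MM]^{vir}$ requires properness of the family of moduli spaces over the $\AA^1$-base of the smoothing, plus a comparison of virtual normal cones across the degeneration; this is not automatic and you do not indicate how you would verify it for the possibly singular $\ffib{f_{12}}$. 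Third, the splitting axiom step (``collapsing $\sum_v\s_v\ot\s^v$ into the diagonal at the node'') needs the diagonal of $G/P$ to sit correctly inside $D_{f_1,0}\times D_{f_2,\infty}$ compatibly with the twist $m_{f_1}$, which is the bookkeeping you defer. Your route could be made rigorous, but it amounts to reproving a nontrivial composition law that the paper is content to cite, and the identified gaps would each require real work.
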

\begin{proof}
This is the algebraic version of \cite[Theorem 4.3]{GMP}. Our proof is nothing but a translation of the proof given therein.

By localization, it suffices to assume $H=T$ and $x_1=[t^{\mu_0}]$ for some $\mu_0\in\Q$. There exist $p_0,p_1,\ldots,p_N\in H_{\That}^{\bl}(\pt)$ with $p_0\ne 0$ and $\mu_1,\ldots,\mu_N\in\Q$ such that
\[ p_0x_2 = \sum_{j=1}^N p_j [t^{\mu_j}].\]
For $j=0,\ldots,N$, put $p_j':=\twist_{-\mu_0}(p_j)$ where $\twist_{-\mu_0}$ is defined in \eqref{Ashifttwistdef}. Using the fact that $\shift^A_{\mu_0}$ is $(-\mu_0)$-twisted $H_{\That}^{\bl}(\pt)$-linear (Lemma \ref{Ashiftwell}), we have
\begin{align*} 
 p'_0 x_1\gmpt (x_2\gmpg y) & = p'_0 \shift^A_{\mu_0}(x_2\gmpg  y) = \shift^A_{\mu_0}(p_0x_2\gmpg y)\\
 & = \sum_{j=1}^N\shift^A_{\mu_0}(p_j\shift^A_{\mu_j}(y)) = \sum_{j=1}^Np'_j\shift^A_{\mu_0}\circ \shift^A_{\mu_j}(y).
\end{align*}
By Lemma \ref{Ashiftlemmamulti}, we have $\shift^A_{\mu_0}\circ \shift^A_{\mu_j}=\shift^A_{\mu_0+\mu_j}$, and hence
\[ p_0'x_1\gmpt(x_2\gmpg y)=\sum_{j=1}^N p'_j \shift^A_{\mu_0+\mu_j}(y) = \left( \sum_{j=1}^N p'_j [t^{\mu_0+\mu_j}]\right)\gmpg y.   \]
By the definition of $-\aggt-$ (see Section \ref{notationG}), we have $\sum_{j=1}^N p_j'[t^{\mu_0+\mu_j}]=p_0'x_1\aggt x_2$, and hence
\[ p'_0 x_1\gmpt (x_2\gmpg y) =p'_0( x_1\aggt x_2)\gmpg y. \]
The result follows.
\end{proof}

\begin{proposition}\label{GMPactioncommute}
For any $x\in H^{\Hhat}_{\bl}(\ag)$, $y\in QH_{\Ghat}^{\bl}(G/P)[\ecc]$ and $i\in\iip$, we have
\begin{equation}\label{GMPactioncommuteeq1}
x\gmph (\qc{G}_{\partial_{q_i}}y) = \qc{H}_{\partial_{q_i}}(x\gmph y).
\end{equation}
\end{proposition}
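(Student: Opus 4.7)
The plan is to reduce the identity to the known commutation of abelian shift operators with the quantum connection, mirroring the strategy of the proof of Proposition \ref{GMPactionmodule}.

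Expanding $\qc{H}_{\partial_{q_i}}=\hp\partial_{q_i}+q_i^{-1}c_1^{\Hhat}(L_{\w_i})\star-$, the identity \eqref{GMPactioncommuteeq1} splits into two assertions: the commutation of $x\gmph -$ with $\hp\partial_{q_i}$, and the identity
\[ x\gmph\bigl(c_1^{\Ghat}(L_{\w_i})\star y\bigr)=c_1^{\Hhat}(L_{\w_i})\star(x\gmph y). \]
The first follows directly from the $\CC[q_1^{\pm 1},\ldots,q_k^{\pm 1}]$-linearity of $x\gmph -$ built into Definition \ref{GMPactiondef}: writing $y=\sum_\nu q^\nu y_\nu$ with $y_\nu\in H^\bl_{\Ghat}(G/P)$, both sides equal $\sum_\nu\nu_i\hp\,q_i^{-1}q^\nu(x\gmph y_\nu)$.

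For the second assertion, I would first reduce to $H=T$. Under the base change $H^\bl_{\Ghat}(G/P)\to H^\bl_{\That}(G/P)$ the class $c_1^{\Ghat}(L_{\w_i})$ restricts to $c_1^{\That}(L_{\w_i})$, and $\gmpg$ is obtained from $\gmpt$ by the same base change via Lemma \ref{GMPpreserveW}; hence the $G$-version follows from the $T$-version restricted to $\Ghat$-equivariant inputs. Next, exactly as in the proof of Proposition \ref{GMPactionmodule}, multiplying by a suitable nonzero $p_0\in H^\bl_{\That}(\pt)$ allows a further reduction to $x=[t^\mu]$ for some $\mu\in\Q$. For such $x$ the operator $[t^\mu]\gmpt -$ extends to, and agrees up to a constant multiple with, the abelian shift operator $\shift_\mu$ of \cite[Definition 3.9]{Shift2} acting on $QH^\bl_{\That}(G/P)[\ecc]$. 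The assertion then reduces to the commutation
\[ \shift_\mu\bigl(c_1^{\That}(L_{\w_i})\star y\bigr)=c_1^{\That}(L_{\w_i})\star\shift_\mu(y). \]

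I expect this last commutation to be the main technical point. It expresses the flatness of the quantum connection under the abelian shift symmetry and can either be extracted from the literature on abelian shift operators (in particular \cite{Shift2, Shift3}), or proved directly via a divisor-axiom argument on the $\PP^1$-section moduli defining $\shift_\mu$: insert $c_1(\LL_{\w_i})$ at a free third marked point on the domain and then push the insertion to either the $0$- or $\infty$-end of $\PP^1$; the two contributions produced this way give the two sides of the identity and agree because $\LL_{\w_i}$ restricts to isomorphic classes on $D_{f,0}$ and $D_{f,\infty}$.
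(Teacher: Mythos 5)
Your decomposition of $\qc{H}_{\partial_{q_i}}=\hp\partial_{q_i}+q_i^{-1}c_1^{\Hhat}(L_{\w_i})\star-$ into two pieces, each claimed to commute separately with $x\gmph-$, does not work, and the computation you give for the ``first'' half is the precise place it breaks. The operator $x\gmph-$ is $\CC[q_1^{\pm 1},\ldots,q_k^{\pm 1}]$-linear, so $x\gmph(q^\nu y_\nu)=q^\nu(x\gmph y_\nu)$, but $\CC[q^{\pm 1}]$-linearity does not imply commutation with $\partial_{q_i}$: by Definition \ref{GMPactiondef} the class $x\gmph y_\nu$ is itself a sum $\sum_\eta q^\eta\,\mathcal{I}(\ldots,y_\nu,\cdot,\eta)\,\s_\cdot$ with a genuine $q$-dependence, so the Leibniz rule gives
\[ \hp\partial_{q_i}(x\gmph y)=\sum_\nu\hp\nu_i q_i^{-1}q^\nu(x\gmph y_\nu)+\hp\sum_\nu q^\nu\,\partial_{q_i}(x\gmph y_\nu), \]
and the second summand, which you drop, is nonzero. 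Symmetrically, the identity $\shift_\mu(c_1^{\That}(L_{\w_i})\star y)=c_1^{\That}(L_{\w_i})\star\shift_\mu(y)$ you reduce to is simply false; the correct relation is the intertwining relation \eqref{GMPactioncommuteeq2}, which carries a compensating cross term $-\hp\sum_{\eta}q^{\eta}\w_i(\eta)\shift_{\mu}^{\eta}(y)$. The two error terms cancel in the sum, which is why the \emph{full} connection $\qc{T}_{\partial_{q_i}}$ commutes with $x\gmph-$ even though neither piece does.

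Your reductions to $H=T$ and then to $x=[t^{\mu}]$, and the plan to appeal to the abelian shift-operator literature, are exactly the paper's first approach; the fix is just to treat the connection $\qc{T}_{\partial_{q_i}}$ as a single operator and cite \cite[Corollary 3.15]{Shift2} for $\shift_\mu\circ\qc{T}_{\partial_{q_i}}=\qc{T}_{\partial_{q_i}}\circ\shift_\mu$, rather than splitting. Your alternative WDVV/divisor argument has the same gap: pushing $c_1(\LL_{\w_i})$ alone to the $0$ and $\infty$ ends only produces the (false) cup-product-only commutation. To get the needed $\hp$-cross term one must also insert the class $\pi_{t^{\mu}}^*z$ with $z|_0=0$ and $z|_\infty=-\hp$, which is precisely what makes the paper's WDVV computation yield the intertwining relation rather than the naive commutation.
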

\begin{proof}
This is the algebraic version of \cite[Theorem 4.5]{GMP}. Our proof is nothing but a translation of the proof given therein.

By localization, it suffices to assume $H=T$ and $x=[t^{\mu}]$ for some $\mu\in\Q$. In this case, the LHS of \eqref{GMPactioncommuteeq1} is equal to $\shift^A_{\mu}(\qc{T}_{\partial_{q_i}}y)$. By Lemma \ref{Ashiftlemmacomm}, the latter expression is equal to $\qc{T}_{\partial_{q_i}}\shift^A_{\mu}(y)$ which is equal to the RHS of \eqref{GMPactioncommuteeq1}.
\end{proof}

%%%%%%%%%%%%%%%%%%%%%%%%%%%%%%%%%%%
%%%%%%%%%%%%%%%%%%%%%%%%%%%%%%%%%%%
\subsection{Some computational results}\label{GMPcompute}

\begin{definition} \label{GMPcomputedef} Let $H$ be either $T$ or $G$. Define an $H_{\Hhat}^{\bl}(\pt)$-linear map
\[ \pgmph: H^{\Hhat}_{-\bl}(\ag) \ra QH_{\Hhat}^{\bl}(G/P)[\ecc] \]
by 
\[ \pgmph(x) := x\gmph 1,\quad x\in H_{-\bl}^{\Hhat}(\ag).\]
It is graded by Lemma \ref{GMPactiongraded}.
\end{definition}

Observe that $m_f([\G]\otimes 1)=1$ for any $\That$-equivariant morphism $f:\G\ra\ag$. Hence 
\[\mathcal{I}(f,[\G],1,v,\eta)= \int_{[\MM^{0,\infty}(f,\eta)]^{vir}} \ev_{f,\eta,\infty}^*\s^v=\int_{[\MM^{\infty}(f,\eta)]^{vir}} \ev_{f,\eta,\infty}^*\s^v\]
where $\MM^{\infty}(f,\eta):=\bigcup_{\b}\MM_{0,1}(\ffib{f},\b)\times_{(\ev_1,\iota_{f,\infty})}D_{f,\infty}$. It follows that $\pgmpt$ has the same defining expression (i.e. the expression \eqref{GMPactiondefeq} after putting $y=1$) as that of \textit{Savelyev-Seidel's homomorphism} $\Phi_{SS}$ defined in \cite[Definition 3.10]{me1}, except that the latter is in the $T$-equivariant settings. This observation allows us to obtain some computational results for $\pgmpt$ by extending those for $\Phi_{SS}$ which have already been established in \cite{me1} and \cite{me2}.

Denote by $(W^P)_{af}$ the set of $\wl\in W_{af}$ satisfying
\[\left\{
\begin{array}{rcl}
\a\in R_P^+\cap (-w^{-1}R^+) &\Longrightarrow& \a(\lambda)=-1\\ [.5em]
\a\in R_P^+\cap w^{-1}R^+&\Longrightarrow& \a(\lambda)=0
\end{array}
\right. .\]

\begin{proposition}\label{GMPPLS} For any $\wl\in W_{af}^-$, we have
\begin{equation}\label{GMPPLSeq1}\nonumber
\pgmpt(\ascl_{\wl})=\left\{
\begin{array}{cc}
q^{[\lambda]}\s_{\widetilde{w}}& \wl\in (W^P)_{af}\\ [.5em]
0& \text{otherwise}
\end{array}
\right.
\end{equation}
where $\widetilde{w}\in W^P$ is the unique element satisfying $\widetilde{w}W_P=wW_P$. 
\end{proposition}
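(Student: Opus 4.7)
The plan is to extend the analogous $T$-equivariant Peterson-Lam-Shimozono formula for Savelyev-Seidel's homomorphism $\Phi_{SS}$, established in \cite{me1, me2}, to the $T\times\gm$-equivariant setting. As observed in the paragraph preceding the statement, $\pgmpt(\ascl_{\wl})$ is defined by the same virtual Gromov-Witten integrals
$$\mathcal{I}(f_{\ag,\wl},[\G_{\wl}],1,v,\eta) = \int_{[\MM^{\infty}(f_{\ag,\wl},\eta)]^{vir}} \ev_{f_{\ag,\wl},\eta,\infty}^*\s^v$$
that appear in the defining expression of $\Phi_{SS}(\ascl_{\wl})$; the only difference is that these integrals are now taken in $H_{\That}^{\bl}(\pt)$ rather than in $H_T^{\bl}(\pt)$, with the extra $\gm$-factor acting on $\PP^1$ by loop rotation.

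I would first redo the geometric analysis of \cite{me1, me2}. For $\wl \notin (W^P)_{af}$, the defining conditions on $\a(\lambda)$ fail, which means that either $\MM^{\infty}(f_{\ag,\wl},\eta)$ has the wrong virtual dimension for every $v,\eta$, or any section of $\ffib{f_{\ag,\wl}}$ of the required class fails to meet the Schubert stratum at $\infty$ needed for $\ev_{f_{\ag,\wl},\eta,\infty}^*\s^v$ to be nonzero on the virtual class. Both situations force each integral $\mathcal{I}(f_{\ag,\wl},[\G_{\wl}],1,v,\eta)$ to vanish.

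For $\wl \in (W^P)_{af}$, the analysis in \cite{me1, me2} identifies the unique contributing curve class $\eta = [\lambda]$ and shows that the corresponding moduli space $\MM^{\infty}(f_{\ag,\wl},[\lambda])$ reduces after dimension matching to a single reduced point, whose image under $\ev_{f_{\ag,\wl},[\lambda],\infty}$ is the $T$-fixed point $\widetilde{w}\cdot P/P$ of $G/P$. The integral then contributes $1$ to the coefficient of $\s_{\widetilde{w}}$ and $0$ to the coefficient of $\s_v$ for $v \ne \widetilde{w}$, giving the desired formula.

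To upgrade from $T$ to $T\times\gm$-equivariance, I would note that all relevant structures (the Bott-Samelson-type resolution $\G_{\wl}$, the bundle $\ffib{f_{\ag,\wl}}$, the moduli space $\MM^{\infty}$, and the evaluation morphisms) are canonically $T\times\gm$-equivariant, so the vanishing arguments carry over verbatim. For the non-vanishing case, the contribution from a single reduced $T\times\gm$-fixed point is $1$ independently of the equivariance, and since $\ev_{f_{\ag,\wl},[\lambda],\infty}$ factors through the fiber $G/P$ at $\infty$ via the trivialization $\nu_{f_{\ag,\wl}}$, on which loop rotation acts trivially, no spurious $\hp$-factor can appear. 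The main obstacle is to verify carefully that the dimension-counting and transversality steps of \cite{me1, me2}, which are used to reduce the moduli space to a single reduced point, correctly identify the $T\times\gm$-fixed locus, so that this single-point computation in $H_{\That}^{\bl}(\pt)$ is legitimate.
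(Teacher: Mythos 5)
Your proposal follows essentially the same route as the paper: both treat Proposition~\ref{GMPPLS} as the $\That$-equivariant upgrade of \cite[Theorem 4.9]{me1} and argue that the geometric steps (choice of equivariant resolution, smoothness/transversality of the moduli stack, and a direct point-count) carry over verbatim. One small discrepancy: where you describe the equivariance of the resolution $\G_{\wl}$ as ``canonical'' and locate the main obstacle in the fixed-locus identification, the paper instead singles out the $\gm$-equivariance of $f_{\ag,\wl}$ itself as the one new thing to check, noting it holds because the specific morphism chosen in \cite[Lemma 2.4]{me1} has the property that any algebraic group action on $\ol{\BB\cdot t^{w(\lambda)}}$ lifts to $\G_{\wl}$; this is a minor difference in emphasis rather than substance.
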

\begin{proof}
This is the $\That$-equivariant version of \cite[Theorem 4.9]{me1}. Recall the proof given therein consists of the following steps:
\begin{enumerate}
\item Show that each $f_{\ag,\wl}:\G_{\wl}\ra \ag$ (see Definition \ref{GMPactiondef}) can be chosen to be equivariant with respect to the $B_-$-action and the $U_{\a,k}$-action for any $\a\in R$ and $k>0$ \cite[Lemma 2.4]{me1}, where $U_{\a,k}:=\exp(z^k\gg_{\a})\subset\mathcal{L}G$ is the affine root group.

\item Show that the stack $\MM^{\infty}(f_{\ag,\wl},\eta)$ (denoted by $\MM(f_{\ag,\wl},\eta)$ in \textit{op. cit.}) is smooth and of expected dimension \cite[Proposition 4.5]{me1}, and the evaluation morphism 
\[\ev_{f_{\ag,\wl},\eta,\infty}:\MM^{\infty}(f_{\ag,\wl},\eta)\ra G/P\]
(denoted by $\ev$ in \textit{op. cit.}) is transverse to any $B$-equivariant morphisms to $G/P$ \cite[before Lemma 4.7]{me1}.

\item Compute the $T$-equivariant analogue of the integral $\mathcal{I}(f_{\ag,\wl},[\G_{\wl}],1,v,\eta)$ by counting the elements of $\MM^{\infty}(f_{\ag,\wl},\eta)\times_{(\ev_{f_{\ag,\wl},\eta,\infty},f_{G/P,v})} \G_v$ directly \cite[Proposition 4.8]{me1}, where $f_{G/P,v}:\G_v\ra G/P$ is a $B$-equivariant morphism which factors through a resolution of $\ol{B\dot{v}P/P}$.
\end{enumerate}

It is not difficult to see that the arguments in \textit{op. cit.} for completing the above steps work in the current settings. The only extra task is to show that the chosen $f_{\ag,\wl}$ is also $\gm$-equivariant. But this is automatic because it satisfies the property that every algebraic group action on $\ol{\mathcal{B}\cdot t^{w(\lambda)}}$ lifts to an algebraic group action on $\G_{\wl}$.
\end{proof}

\begin{corollary}\label{GMPPLScor}
The map
\[ (\pgmpt)_{\CC[\eccc]} : H^{\That}_{-\bl}(\ag)[\eccc] \ra QH_{\That}^{\bl}(G/P)[\ecc]\]
defined by extension of scalars is surjective. \hfill$\square$
\end{corollary}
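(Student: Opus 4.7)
The plan is to deduce the corollary directly from Proposition~\ref{GMPPLS}. Since $\{\s_v\}_{v\in W^P}$ is an $H_{\That}^{\bl}(\pt)[\ecc]$-basis of $QH_{\That}^{\bl}(G/P)[\ecc]$, after extending scalars by $\CC[\eccc]$ it suffices to place each $\s_v$ in the image of $\pgmpt$. Combined with the formula in Proposition~\ref{GMPPLS}, the task reduces to the combinatorial statement: for every $v\in W^P$ there exists some $\wl\in (W^P)_{af}\cap W_{af}^-$ with $\widetilde{w}=v$.

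I would construct such $\wl$ explicitly by setting $w:=v$ and choosing $\lambda\in\cop$ with $\alpha_i(\lambda)>0$ for $i\leqslant k$ and $\alpha_i(\lambda)=0$ for $i>k$. Over $\QQ$ these linear conditions cut out the positive chamber inside the $\QQ$-span of the fundamental coweights $\dd{\omega_1},\ldots,\dd{\omega_k}$, so a sufficiently divisible positive multiple of $\sum_{i\leqslant k}\dd{\omega_i}$ lies in $\Q$ and does the job.

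Three properties then need to be verified for this candidate $\wl$. First, $\wl\in W_{af}^-$: the positivity pattern forces $\mathrm{Stab}_W(\lambda)=W_P$, so the minimal length $u\in W$ with $u(\lambda)=v(\lambda)$ is the minimum of $vW_P$, which is $v$ itself because $v\in W^P$; this is precisely the characterization of the $W_{af}^-$-representative of $v(\lambda)$ under the bijection $\wl\mapsto w(\lambda)$. Second, $\wl\in (W^P)_{af}$: since $v\in W^P$ satisfies $v(R_P^+)\subset R^+$, one has $R_P^+\cap(-v^{-1}R^+)=\emptyset$, making the first defining condition vacuous; the second holds because every $\alpha\in R_P^+$ is a non-negative integer combination of $\alpha_{k+1},\ldots,\alpha_r$, each annihilating $\lambda$. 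Third, $\widetilde{w}=v$ by construction. Proposition~\ref{GMPPLS} then yields $\pgmpt(\ascl_{\wl})=q^{[\lambda]}\s_v$, and multiplying by $q^{-[\lambda]}$ in $\CC[\eccc]$ places $\s_v$ in the image of $(\pgmpt)_{\CC[\eccc]}$. No serious obstacle arises beyond ensuring $\sum_{i\leqslant k}\dd{\omega_i}$ becomes integral, which is handled by the scaling above.
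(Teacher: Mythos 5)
Your argument is correct and is precisely the argument the paper leaves implicit (the corollary is stated with no proof beyond $\square$). You correctly reduce to showing that every $\s_v$ with $v\in W^P$ lies in the image after inverting the $q_i$, and then exhibit $\wl\in (W^P)_{af}\cap W_{af}^-$ with $w=v$ and $\widetilde{w}=v$ by taking $\lambda\in\cop$ a regular-dominant-on-$P$ coroot, i.e.\ $\a_i(\lambda)>0$ for $i\leqslant k$ and $\a_i(\lambda)=0$ for $i>k$, so that $\mathrm{Stab}_W(\lambda)=W_P$ and $vt_\lambda$ is the minimal-length representative of its coset. Your verification that the $(W^P)_{af}$-conditions hold uses exactly $v(R_P^+)\subset R^+$ for $v\in W^P$ and $R_P^+\subset\ker\lambda$, which is right, and the integrality issue for $\sum_{i\leqslant k}\dd{\omega_i}$ is handled correctly by scaling into the coroot lattice. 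No gaps.
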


\begin{proposition}\label{GMPcompute0}
For any $\lambda\in\cop$, $\pgmpt$ vanishes on $H_{>2\rho(\lambda-w_Pw_0(\lambda))}^{\That}(\agll)$.
\end{proposition}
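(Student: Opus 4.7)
The plan is to use the Schubert basis of $H_{\bullet}^{\That}(\agll)$ to reduce the statement to a length bound on those affine Weyl elements surviving $\pgmpt$. Any $x \in H_d^{\That}(\agll)$ admits a unique expansion
\[ x = \sum_{wt_\mu \in W_{af}^-,\ \ascl_{wt_\mu} \subset \agll} f_{wt_\mu} \cap \ascl_{wt_\mu}, \qquad f_{wt_\mu} \in H^{2\ell(wt_\mu) - d}_{\That}(\pt), \]
so $f_{wt_\mu} = 0$ unless $2\ell(wt_\mu) \geqslant d$. Since $\pgmpt$ is $H^{\bl}_{\That}(\pt)$-linear and, by Proposition~\ref{GMPPLS}, $\pgmpt(\ascl_{wt_\mu}) = 0$ for $wt_\mu \notin (W^P)_{af}$, it suffices to prove that every $wt_\mu \in (W^P)_{af}$ with $\ascl_{wt_\mu} \subset \agll$ satisfies $2\ell(wt_\mu) \leqslant 2\rho(\lambda - w_Pw_0\lambda)$. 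Combined with the hypothesis $d > 2\rho(\lambda - w_Pw_0\lambda)$, this bound would force $f_{wt_\mu} = 0$ for every contributing index, yielding $\pgmpt(x) = 0$.

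For the length bound, first note that $\pgmpt$ is grading-preserving: using Definition~\ref{GMPintegral}, one checks directly that each summand $q^\eta \mathcal{I}(\ldots)\sigma_v$ contributing to $\pgmpt(\ascl_{wt_\mu})$ lies in a common cohomological degree. Comparing degrees in the identity $\pgmpt(\ascl_{wt_\mu}) = q^{[\mu]}\sigma_{\widetilde{w}}$ of Proposition~\ref{GMPPLS}, using $\deg q_i = 4\rho^P(\ad_i)$ where $\rho^P := \rho - \rho_P$, yields $\ell(wt_\mu) = -2\rho^P(\mu) - \ell(\widetilde{w}) \leqslant -2\rho^P(\mu)$. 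The support condition $\ascl_{wt_\mu} \subset \agll$ translates to $\mu_+ \leqslant \lambda$ in the dominance order, where $\mu_+$ is the dominant $W$-conjugate of $\mu$. Writing $\mu = w'\mu_+$ for some $w' \in W$ and applying the extremal-pairing identity $\max_{u \in W}(-u\rho^P)(\nu) = (-w_0\rho^P)(\nu)$ valid for dominant $\nu$ (since $\rho^P$ is dominant), together with the dominance of $-w_0\rho^P$, gives
\[ -\rho^P(\mu) \leqslant (-w_0\rho^P)(\mu_+) \leqslant (-w_0\rho^P)(\lambda). \]
A direct Weyl-group calculation then identifies the resulting bound with the statement: from $w_P\rho = \rho - 2\rho_P$ and $w_0\rho = -\rho$ one obtains $w_0w_P\rho = -\rho - 2w_0\rho_P$ and hence $\rho - w_0w_P\rho = 2\rho + 2w_0\rho_P = -2w_0(\rho - \rho_P) = -2w_0\rho^P$. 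This gives $\rho(\lambda - w_Pw_0\lambda) = (\rho - w_0w_P\rho)(\lambda) = -2(w_0\rho^P)(\lambda)$, and combining all estimates yields $2\ell(wt_\mu) \leqslant 2\rho(\lambda - w_Pw_0\lambda)$, as required.

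The principal obstacle is the final combinatorial identification: the natural bound on $\ell(wt_\mu)$ emerging from degree-matching plus the extremal pairing is $-2(w_0\rho^P)(\lambda)$, and matching this with $\rho(\lambda - w_Pw_0\lambda)$ hinges on the precise cancellation between the two occurrences of $w_0$ (one coming from the extremal pairing on $\rho^P$, the other from the Weyl-group element $w_Pw_0$ appearing in the statement).
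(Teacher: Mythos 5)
Your argument is correct. You expand $x$ in the affine Schubert basis of $H_\bullet^{\That}(\agll)$, observe by $H^\bullet_{\That}(\pt)$-linearity and Proposition~\ref{GMPPLS} that only the classes $\ascl_{wt_\mu}$ with $wt_\mu\in(W^P)_{af}$ can contribute, and then bound $2\ell(wt_\mu)$ for such $wt_\mu$ supported in $\agll$ by $2\rho(\lambda-w_Pw_0\lambda)$, which forces the coefficient polynomials to vanish by degree. I verified each ingredient: the degree bookkeeping in Definition~\ref{GMPintegral} does give that each summand $q^\eta\mathcal I(\ldots)\sigma_v$ sits in cohomological degree $-2\ell(wt_\mu)$ (the $4\rho^P(\eta)$ contributions cancel against the virtual dimension), so comparing degrees in $\pgmpt(\ascl_{wt_\mu})=q^{[\mu]}\sigma_{\widetilde w}$ really does yield $\ell(wt_\mu)=-2\rho^P(\mu)-\ell(\widetilde w)$; the support condition $\ol{\BB\cdot t^{w(\mu)}}\subseteq\agll$ is equivalent to $\mu_+\leqslant\lambda$ since $\agll$ is closed and $\BB$-invariant; $\rho^P=\rho-\rho_P$ is dominant so $-w_0\rho^P$ is its dominant $W$-conjugate, which justifies the extremal-pairing step and the monotonicity $(-w_0\rho^P)(\mu_+)\leqslant(-w_0\rho^P)(\lambda)$; and the final Weyl-group identity $\rho-w_0w_P\rho=-2w_0\rho^P$, hence $\rho(\lambda-w_Pw_0\lambda)=-2(w_0\rho^P)(\lambda)$, is a direct computation from $w_P\rho=\rho-2\rho_P$ and $w_0\rho=-\rho$.

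The paper itself gives no in-line proof but declares it \emph{identical} to the proof of Proposition~3.8 of \cite{me2}, which is not reproduced here. From the surrounding structure (the author proves the adjacent Proposition~\ref{GMPcompute1} the same way by reference), the cited argument almost certainly also runs through the PLS formula, the Schubert-basis expansion, and a length bound, so your proof is in the same spirit. One point worth noting, however: you obtain the key length formula $\ell(wt_\mu)=-2\rho^P(\mu)-\ell(\widetilde w)$ (for $wt_\mu\in(W^P)_{af}\cap W_{af}^-$) as a \emph{consequence} of degree-preservation of $\pgmpt$ combined with the PLS formula. This is a valid shortcut, but it is also a standard combinatorial fact about the affine Weyl group (the anti-dominant translation length minus $\ell(\widetilde w)$, restricted to $(W^P)_{af}$), and the cited proof very likely quotes it directly rather than re-deriving it from degree matching. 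Either way, the logic is sound and the conclusion follows.
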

\begin{proof}
The proof is identical to the proof of \cite[Proposition 3.9]{me2}.
\end{proof}

\begin{proposition}\label{GMPcompute1}
For any $\lambda\in\cop$ and $x\in H_{2\rho(\lambda-w_Pw_0(\lambda))}^{\That}(\agll)$, we have $\pgmpt(x)=c_x q^{[w_0(\lambda)]}$ for some $c_x\in \CC$.
\end{proposition}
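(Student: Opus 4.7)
The plan is to expand $x$ in the affine Schubert basis of $H_\bl^\That(\agll)$ and apply Proposition \ref{GMPPLS} term by term. Write $x = \sum_\tau p_\tau \ascl_\tau$, where $\tau = w_\tau t_{\lambda_\tau}$ ranges over $W_{af}^-$ such that $\ol{\BB\cdot t^{w_\tau(\lambda_\tau)}}\subset\agll$, and each $p_\tau\in H_\That^{\bl}(\pt)$ is homogeneous of cohomological degree $2(\rho(\lambda-w_Pw_0\lambda)-\ell(\tau))\geq 0$. By Proposition \ref{GMPPLS}, every summand with $\tau\notin(W^P)_{af}$ dies, leaving
\[ \pgmpt(x) \;=\; \sum_{\tau\in(W^P)_{af}} p_\tau\, q^{[\lambda_\tau]}\, \s_{\widetilde{w_\tau}}. \]

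A direct degree check gives $\deg q^{[w_0\lambda]} = 4\rho_{G/P}(w_0\lambda) = -2\rho(\lambda-w_Pw_0\lambda)$, matching the cohomological degree of $\pgmpt(x)$ since $\pgmpt$ is degree preserving. The proof then reduces to the following combinatorial claim: every $\tau\in W_{af}^-\cap(W^P)_{af}$ obeying the support condition $\ol{\BB\cdot t^{w_\tau(\lambda_\tau)}}\subset\agll$ and the length bound $\ell(\tau)\leq\rho(\lambda-w_Pw_0\lambda)$ must have $w_\tau\in W_P$ (so $\widetilde{w_\tau}=e$), $[\lambda_\tau]=[w_0\lambda]$ in $\Q/\Q_P$, and $\ell(\tau)=\rho(\lambda-w_Pw_0\lambda)$ (so $p_\tau\in\CC$). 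Granting the claim, $\pgmpt(x) = c_x\, q^{[w_0\lambda]}$ with $c_x = \sum_\tau p_\tau \in\CC$, as required.

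The main obstacle will be proving the combinatorial claim. My plan is to combine three ingredients: the equalities $\a(\lambda_\tau)\in\{0,-1\}$ for $\a\in R_P^+$ imposed by $\tau\in(W^P)_{af}$, which sharply constrain $\lambda_\tau$; the minimality of $\tau$ in its $W_{af}/W$-coset, which pins down the pair $(w_\tau,\lambda_\tau)$ from $\mu:=w_\tau(\lambda_\tau)$ alone; and the support condition, forcing the dominant conjugate of $\mu$ to lie below $\lambda$ in the dominance order. An explicit length comparison should then force $\mu$ to lie in the $W_P$-orbit of $w_Pw_0(\lambda)$ whenever $\ell(\tau)$ saturates the bound $\rho(\lambda-w_Pw_0\lambda)$, yielding $[\mu]=[w_0\lambda]$ and $w_\tau\in W_P$. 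This combinatorial analysis is an equivariant refinement of the argument used in \cite[Proposition 3.9]{me2}.
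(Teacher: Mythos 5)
Your overall strategy — expanding $x$ in the affine Schubert basis, applying Proposition \ref{GMPPLS} term by term, and pinning down the surviving terms by a degree-plus-combinatorics argument — is plausibly the same route the paper takes (it defers to \cite{me2}). However, there is a sign error in the degree bookkeeping that propagates into a false combinatorial claim.

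In equivariant Borel--Moore homology the $H_{\That}^{\bl}(\pt)$-module action is by cap product, which \emph{lowers} homological degree: if $p_\tau$ has cohomological degree $d$, then $p_\tau\ascl_\tau\in H_{2\ell(\tau)-d}^{\That}(\agll)$. Thus for $x\in H_{2\rho(\lambda-w_Pw_0(\lambda))}^{\That}(\agll)$ one has $\deg p_\tau = 2\bigl(\ell(\tau)-\rho(\lambda-w_Pw_0(\lambda))\bigr)\geqslant 0$, forcing $\ell(\tau)\geqslant\rho(\lambda-w_Pw_0(\lambda))$ for every term appearing in $x$ — the opposite of what you wrote. The combinatorial claim as you stated it is consequently false: take $\tau=e$. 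Then $\tau\in W_{af}^-\cap(W^P)_{af}$ (the conditions defining $(W^P)_{af}$ are vacuous for $\lambda_\tau=0$), $\ol{\BB\cdot t^0}\subseteq\agll$, and $\ell(\tau)=0\leqslant\rho(\lambda-w_Pw_0(\lambda))$, yet $[\lambda_\tau]=[0]\ne[w_0(\lambda)]$ and the bound is not saturated whenever $\lambda$ is not fixed by $w_Pw_0$.

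The fix is to redistribute the two halves of the inequality. The degree of $x$ gives $\ell(\tau)\geqslant\rho(\lambda-w_Pw_0(\lambda))$; the statement you need to prove combinatorially is the \emph{upper} bound: any $\tau\in W_{af}^-\cap(W^P)_{af}$ supported on $\agll$ satisfies $\ell(\tau)\leqslant\rho(\lambda-w_Pw_0(\lambda))$, with equality only if $w_\tau\in W_P$ and $[\lambda_\tau]=[w_0(\lambda)]$. This upper bound is a genuine combinatorial fact — not a consequence of the degree of $x$ — and it is precisely the input behind both Proposition \ref{GMPcompute0} and the present proposition (the paper points to \cite[Proposition 3.10]{me2}, not 3.9 as you cite; 3.8 is cited for Proposition \ref{GMPcompute0}). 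Once both inequalities are in place the lengths saturate, the $p_\tau$ are constants, and your concluding paragraph goes through; your check that $\deg q^{[w_0(\lambda)]}=-2\rho(\lambda-w_Pw_0(\lambda))$ is correct and a useful sanity check.
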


\begin{proof}
This follows from the proof of \cite[Proposition 3.11]{me2}.
\end{proof}

%%%%%%%%%%%%%%%%%%%%%%%%%%%%%%%%%%%%
%%%%%%%%%%%%%%%%%%%%%%%%%%%%%%%%%%%%
%%%%%%%%%%%%%%%%%%%%%%%%%%%%%%%%%%%%
%%%%%%%%%%%%%%%%%%%%%%%%%%%%%%%%%%%%
%%%%%%%%%%%%%%%%%%%%%%%%%%%%%%%%%%%
%%%%%%%%%%%%%%%%%%%%%%%%%%%%%%%%%%%
\section{Bezrukavnikov-Finkelberg's theorems}\label{BF}
In this section, we give a brief exposition of two results of Bezrukavnikov and Finkelberg \cite{BF}. Recall the geometric Satake equivalence \cite{Ginzburg, MV} is an equivalence 
\[\gs:\rep(\Gd)\xrightarrow{\sim} \per\]
of tensor categories between the category $\rep(\Gd)$ of finite dimensional representations of $\Gd$ and the category $\per$ of $G(\OO)$-equivariant perverse sheaves on $\ag$ with compact support. The first of their results, discussed in Section \ref{BFenhance}, is an enhancement of this equivalence: the category of free $\hp$-Harish-Chandra bimodules for $\Gd$ is equivalent as monoidal categories to the category of semi-simple complexes in $D_{G(\OO)\rtimes\gm}(\ag)$. The second result, discussed in Section \ref{BFtoda}, is an application of the first: the quantum Toda lattice of $\Gd$ is isomorphic as rings to the $G\times\gm$-equivariant homology of $\ag$. 

We also prove a computational result on the isomorphism from the second result and discuss its semi-classical limit, which will be done in Section \ref{BFcompute} and Section \ref{BFlimit} respectively.

%%%%%%%%%%%%%%%%%%%%%%%%%%%%%%%%%%%
%%%%%%%%%%%%%%%%%%%%%%%%%%%%%%%%%%%
\subsection{Enhancement of geometric Satake equivalence}\label{BFenhance}
Let $\rho$ be the half-sum of the positive roots. We grade $\ugd$ by requiring $\hp$ to have degree 2; every element of $\ttd\subset\ggd\subset\ugd$ to have degree 2; and every element of $\ggd_{\ad}\subset\ggd\subset\ugd$ ($\a\in R$) to have degree $2\rho(\ad)+2$. 
\begin{definition}\label{BFenhanceHCdef}
An \textit{$\hp$-Harish-Chandra bimodule} $M$ is a graded left $\ugd\otimes_{\CC[\hp]}\ugd^{op}$-module equipped with a linear algebraic $\Gd$-action satisfying
\begin{enumerate}
\item $\s(\ttd)\cdot M_i\subseteq M_i$ and $\s(\ggd_{\ad})\cdot M_i\subseteq M_{i+2\rho(\ad)}$ for any $\a\in R$ and $i\in\ZZ$, where $\s:\ggd\ra\eendo_{\CC}(M) $ is the linearization of the $\Gd$-action and $M_i$ is the $i$-th graded piece of $M$;

\item $g\cdot ((x\otimes y)\hcdot m)=((g\cdot x)\otimes (g\cdot y))\hcdot (g\cdot m)$ for any $m\in M$, $x,y\in\ugd$ and $g\in\Gd$; and
\item $(x\otimes 1-1\otimes x)\hcdot m= \hp\s(x) m$ for any $m\in M$ and $x\in\ggd$.
\end{enumerate}
\end{definition}

\begin{example}\label{BFenhanceHCeg1}
Let $V=\bigoplus_{i\in\ZZ}V_i$ be a finite dimensional graded $\Gd$-module satisfying $\ttd\cdot V_i\subseteq V_i$ and $\ggd_{\ad}\cdot V_i\subseteq V_{i+2\rho(\ad)}$ for any $\a\in R$ and $i\in\ZZ$. (Up to a degree shift every irreducible representation has a unique such grading.) Define $\fr(V):=\ugd\ot V$ and a left $\ugd\otimes_{\CC[\hp]}\ugd^{op}$-module structure on $\fr(V)$ by the formula
\begin{equation}\label{BFenhanceHCeg1eq1}\nonumber
 (x\otimes y)\hcdot (z\otimes v):= xzy\otimes v - \hp xz\otimes (y\cdot v),\quad v\in V,~ x,y\in \ggd,~ z\in \ugd.
\end{equation}
Then $\fr(V)$ is an $\hp$-Harish-Chandra bimodule. We call bimodules of this form \textit{free $\hp$-Harish-Chandra bimodules}.
\end{example}

\begin{remark}\label{BFenhanceHCeg1rmk}
Similarly, we define an $\hp$-Harish-Chandra bimodule structure on $V\ot\ugd$ by the formula
\[  (x\otimes y)\hcdot (v\otimes z):= v\otimes xzy + \hp (x\cdot v)\otimes zy,\quad v\in V,~ x,y\in \ggd,~ z\in \ugd.\]
However, we obtain nothing new because it is just isomorphic to $\fr(V)$. The isomorphism is constructed by ``moving'' every factor $z\in\ggd\subset\ugd$ from right to left using the exchange relation $v\ot z\mapsto z\ot v-\hp \ot (z\cdot v)$.
\end{remark}

\begin{example}\label{BFenhanceHCeg2}
Let $\dm{\Gd}$ be the ring of $\hslash$-differential operators on $\Gd$. Define a grading on $\dm{\Gd}$ as follows. Consider a $\gm$-action on $\Gd$ defined by $c\cdot  g:=(2\rho(c))g(2\rho(c))^{-1}$. Regular functions which have weight $m_1$ with respect to this $\gm$-action are required to have degree $m_1$ and vector fields which have weight $m_2$ are required to have degree $m_2+2$. For any $x\in\ggd$, denote by $x^L$ (resp. $x^R$) the left-invariant (resp. right-invariant) vector field on $\Gd$ generated by $x$. Define two left $\ugd\otimes_{\CC[\hp]}\ugd^{op}$-module structures $\hcdot^L$ and $\hcdot^R$ on $\dm{\Gd}$:
\begin{equation}\label{BFenhanceHCeg2eq1}\nonumber 
 (x\otimes 1)\hcdot^L z:= x^Lz~\text{ and }~(1\otimes x)\hcdot^L z:=zx^L,\quad x\in\ggd,~ z\in \dm{\Gd}
\end{equation}
and 
\begin{equation}\label{BFenhanceHCeg2eq2} \nonumber 
(x\otimes 1)\hcdot^R z:=- x^Rz~\text{ and }~(1\otimes x)\hcdot^R z:=-zx^R,\quad x\in\ggd, ~z\in \dm{\Gd}.
\end{equation}
Then $\hcdot^L$ and $\hcdot^R$ define two $\hp$-Harish-Chandra bimodule structures on $\dm{\Gd}$, where the right and left translations are used for the corresponding $\Gd$-action respectively. These two structures commute with each other in the obvious sense.
\end{example}
\begin{definition}\label{BFenhancecatdef}$~$
\begin{enumerate}
\item Define $\hct$ to be the category of $\hp$-Harish-Chandra bimodules where the morphism spaces consist of graded $\Gd$-equivariant $\ugd\otimes_{\CC[\hp]}\ugd^{op}$-linear maps of degree zero.

\item Define $\hcf$ to be the full subcategory of $\hct$ consisting of free $\hp$-Harish-Chandra bimodules defined in Example \ref{BFenhanceHCeg1}.

\item Define $\hc$ to be the full subcategory of $\hct$ consisting of objects which are quotients of objects of $\hcf$.
\end{enumerate}
\end{definition}

\begin{definition}
Let $M_1,M_2\in\hct$. Define
\[ M_1\hcprod M_2 := M_1\otimes_{\ugd} M_2\]
where the right (resp. left) $\ugd$-module structure on $M_1$ (resp. $M_2$) for the tensor product is the one induced by the given $\hp$-Harish-Chandra bimodule structure via the map $x\mapsto 1\otimes x$ (resp. $x\mapsto x\otimes 1$). Clearly $M_1\hcprod M_2$ has a natural $\hp$-Harish-Chandra bimodule structure and lies in $\hcf$ or $\hc$ if both $M_1$ and $M_2$ do. Moreover, $\hcprod$ makes $\hct$, $\hcf$ and $\hc$ into monoidal categories.
\end{definition}

Take a character $\cchh_0:\nd\ra\CC$ which is non-zero on each direct summand  $\ggd_{-\ad_i}$. Let $M\in\hct$. Denote by $\mathcal{I}^M_{-,\cchh_0}$, or simply $\mathcal{I}_{-,\cchh_0}$ if no confusion arises, the sub-$\CC[\hp]$-module of $M$ generated by elements of the form $(1\otimes (x-\cchh_0(x)))\hcdot m$ where $x\in\nndm$ and $m\in M$. It is graded and preserved by $\ugd\ot_{\CC[\hp]}\zgd$ and $U^{\vee}_-$. Since $U^{\vee}_-$ fixes every element of $\zgd$, $(M/\mathcal{I}_{-,\cchh_0})^{U^{\vee}_-}$ is naturally a $\zgd\ot_{\CC[\hp]}\zgd$-module. Denote by $\mathcal{I}^M_{+}$, or simply $\mathcal{I}_{+}$, the sub-$\CC[\hp]$-module of $M$ generated by elements of the form $(x\otimes 1)\hcdot m$ where $x\in \nnd$ and $m\in M$. It is graded and preserved by $\utd\otimes_{\CC[\hp]}\ugd$. For any $\lambda\in\Q$, define an automorphism $\twist_{\lambda}$ of the $\CC[\hp]$-algebra $\utd$ by 
\[ \twist_{\lambda}(x) := x+\hp\lambda(x),\quad x\in\ttd.\]
For a $\utd$-module $N$, denote by $\twist_{\lambda}(N)$ the $\utd$-module $N$ with module structure twisted by $\twist_{\lambda}$.
 
\begin{definition}\label{BFenhanceKFdef}$~$
\begin{enumerate}
\item (Kostant functor) Define an additive functor
\[ \kf:\hct\ra \left\{\text{graded }\zgd\otimes_{\CC[\hp]}\zgd\text{-modules}\right\}\]
by
\[ \kf(M):= (M/\mathcal{I}_{-,\cchh_0})^{\Umd}.\]

\item (Desymmetrized Kostant functor) Define an additive functor
\[ \dkf:\hct\ra \left\{\text{graded }\utd\otimes_{\CC[\hp]}\zgd\text{-modules}\right\}\]
by
\[ \dkf(M):= \twist_{-\rhod}(M/(\mathcal{I}_{-,\cchh_0}+\mathcal{I}_+))\]
where $\rhod$ is the half-sum of the positive coroots.
\end{enumerate}
\end{definition}

\begin{remark}\label{BFenhanceKFrmk}
Up to natural equivalence, the (resp. desymmetrized) Kostant functor is independent of the choice of $\cchh_0$. Indeed, for different choices of $\cchh_0$, the multiplication by a suitable element of $\Td$ induces a natural equivalence between the corresponding (resp. desymmetrized) Kostant functors. Unless otherwise specified, we will take $\cchh_0$ to be the character $\cchh$ defined in \eqref{cchhdef}.
\end{remark}

The functors $\kf$ and $\dkf$ are related as follows. Let $M\in\hct$. The composition
\[ \kf(M)=(M/\mathcal{I}_{-,\cchh})^{\Umd} \hookrightarrow M/\mathcal{I}_{-,\cchh}\ra M/(\mathcal{I}_{-,\cchh}+\mathcal{I}_+)=\dkf(M) \]
is a homomorphism of modules with respect to the ring map
\[  \hci\otimes_{\CC[\hp]}\id_{\zgd}: \zgd\otimes_{\CC[\hp]}\zgd\ra \utd\otimes_{\CC[\hp]}\zgd \]
where $\hci$ is the $\hp$-Harish-Chandra homomorphism from Section \ref{notationGd}. By extension of scalars, we obtain a $\utd\otimes_{\CC[\hp]}\zgd$-linear map
\begin{equation}\label{BFenhancettmdef}
\tt_M: \utd\otimes_{\zgd}\kf(M)\ra \dkf(M).
\end{equation}

\begin{lemma}\label{BFenhanceKFisom}
(\cite[Lemma 5]{BF}) If $M\in\hcf$, then $\tt_M$ is an isomorphism. \hfill$\square$
\end{lemma}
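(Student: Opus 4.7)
The plan is to unwind the definition of $\hcf$ to reduce to the case $M=\fr(V)$ for some finite-dimensional graded $\Gd$-module $V$, and then to compute both $\kf(\fr(V))$ and $\dkf(\fr(V))$ explicitly via Whittaker--Kostant reduction. The goal is to exhibit natural isomorphisms $\kf(\fr(V)) \cong \zgd \ot V$ and $\dkf(\fr(V)) \cong \utd \ot V$ so that $\tt_M$ becomes the canonical isomorphism $\utd \ot_{\zgd}(\zgd \ot V) \xrightarrow{\sim} \utd \ot V$ induced by the $\hp$-Harish--Chandra embedding $\hci$.

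The first step will be to use the alternative realization $\fr(V) \cong V \ot \ugd$ from Remark \ref{BFenhanceHCeg1rmk}, in which the right $\ugd$-action is plain multiplication on the second factor. Under this identification, $\mathcal{I}_{-,\cchh}$ corresponds to $V \ot (\ugd \cdot J_{\cchh})$, where $J_{\cchh} \subset \ugd$ is the right ideal generated by $\{x-\cchh(x) : x\in\nndm\}$, and so
\[ \fr(V)/\mathcal{I}_{-,\cchh} \cong V\ot Q_{\cchh}, \]
with $Q_{\cchh}:=\ugd/\ugd \cdot J_{\cchh}$ the $\hp$-graded universal Whittaker module. I would then invoke an $\hp$-graded version of Kostant's Whittaker theorem, which identifies $Q_{\cchh}^{\Umd}$ with $\zgd$ via $\hci$ and exhibits $Q_{\cchh}$ as a free $\zgd$-module. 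Since $V$ is finite-dimensional and $\Umd$ acts locally finitely by the (twisted) adjoint action on $Q_{\cchh}$ combined with the given representation on $V$, a Frobenius-reciprocity type computation on diagonal invariants should yield $\kf(\fr(V)) \cong \zgd \ot V$ as graded $\zgd$-modules.

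For $\dkf$, I would further quotient by $\mathcal{I}_+$. On $V\ot\ugd$ this corresponds, via the formula $(x\ot 1)\cdot(v\ot z) = v\ot xz + \hp (xv)\ot z$, to a diagonally-twisted left action of $\nnd$. Using the PBW factorization $\ugd = U(\nnd)\cdot \utd\cdot U(\nndm)$, the quotient by the left $\nnd$-action kills the $U(\nnd)$-factor, while the quotient by $J_{\cchh}$ collapses $U(\nndm)$ to $\CC$ (since $\cchh$ is non-zero on each simple negative root space). What remains is $V\ot\utd$, and the $\twist_{-\rhod}$ appearing in Definition \ref{BFenhanceKFdef}(2) is precisely the shift needed to match, on this quotient, the natural $\utd$-action with the one coming through $\hci$.

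Assembling the two computations, $\tt_M$ becomes the canonical isomorphism $\utd\ot_{\zgd}(\zgd\ot V)\xrightarrow{\sim}\utd\ot V$. The main obstacle I anticipate is carefully bookkeeping the interaction of the $\hp$-grading, the $-\rhod$ twist, and the diagonal $\Umd$-action on $V\ot Q_{\cchh}$; correctly identifying the diagonal invariants requires the graded Whittaker reduction to be compatible with the $\Umd$-finite decomposition of $V\ot Q_{\cchh}$, which is where the finite-dimensionality of $V$ and the freeness of $Q_{\cchh}$ over $\zgd$ enter crucially.
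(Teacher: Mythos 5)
Your plan---to identify $\kf(\fr(V))$ with $\zgd\ot V$ and $\dkf(\fr(V))$ with $\utd\ot V$ explicitly, so that $\tt_{\fr(V)}$ becomes the tautological isomorphism---is a genuinely different route from the one the paper takes in Appendix~\ref{A}, where Lemma~\ref{Alemma2} (a sharpening of the present statement to the larger category $\CCC$) is proved. There, $\tt_M$ is not computed on either side; instead, injectivity is obtained by a Nakayama-type descent to the $\hp=0$ slice (using that $M/\mathcal{I}_{-,\cchh}$ is $\hp$-torsion-free and that $\sym^{\bl}(\ttd)$ is free over $\sym^{\bl}(\ttd)^W$), and surjectivity is obtained by filtering $V$ by degree, reducing to $V=\CC$ together with the cohomology vanishing $H^1(\nnd;Y_{\hp,\cchh})=0$, the latter itself reduced to $H^1(\nnd;\OO(\Umd))=0$ via the $\Umd$-equivariant map $\Theta:\utd\ot_{\zgd}Y_{\hp,\cchh}\to\utd\ot\OO(\Umd)$. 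Your route would give a cleaner description of what $\kf(\fr(V))$ and $\dkf(\fr(V))$ actually are; the paper's route, at the cost of being less explicit, also proves the extension to all of $\CCC$ with essentially no additional work, which is used elsewhere.

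That said, the two decisive steps in your sketch are glossed over, and they are exactly where the work lives. First, for $\kf(\fr(V))\cong\zgd\ot V$: the two facts you invoke---$Y_{\hp,\cchh}^{\Umd}\cong\zgd$ via $\hci$ and $Y_{\hp,\cchh}$ free over $\zgd$ (here $Y_{\hp,\cchh}$ is your $Q_\cchh$)---are \emph{not} enough for the Frobenius-reciprocity computation. Freeness over $\zgd$ is silent about the $\Umd$-module type of $Y_{\hp,\cchh}$, and one can exhibit $\Umd$-equivariant, $\zgd$-free modules $N$ with $N^{\Umd}\cong\zgd$ but $(V\ot N)^{\Umd}\not\cong V\ot\zgd$. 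What you actually need is the strong form of the $\hp$-graded Kostant--Whittaker theorem: a $\Umd$-equivariant, $\zgd$-linear isomorphism $Y_{\hp,\cchh}\cong\OO(\Umd)\ot\zgd$, with $\Umd$ acting on $\OO(\Umd)$ by its regular representation; only then does the matrix-coefficient identity $(V\ot\OO(\Umd))^{\Umd}\cong V$ apply. This is precisely what the paper supplies (after tensoring with $\utd$) through the map $\Theta$ and a Nakayama argument, so your black box is the hard input of the paper's own argument. Second, for $\dkf(\fr(V))\cong\utd\ot V$: the PBW ``collapse'' is not immediate. In the realization $\fr(V)\cong V\ot\ugd$, the right $\nnd_-$-action is untwisted but the left $\nnd$-action is $\hp$-twisted, $(x\ot 1)\cdot(v\ot z)=v\ot xz+\hp(xv)\ot z$, so the relations you quotient by are not the naive ones; passing to the realization $\ugd\ot V$ only transfers the twist to the $\mathcal{I}_{-,\cchh}$ side. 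One needs a filtration argument---for instance, filter by $V_{\geqslant i}\ot Y_{\hp,\cchh}$, so that the twist vanishes on the associated graded---before the PBW identification of the graded pieces can be invoked. Both gaps are fillable, but filling them would bring your argument to roughly the same length and the same essential inputs as the paper's.
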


\begin{remark} See Appendix \ref{A} for more discussions on Lemma \ref{BFenhanceKFisom}.
\end{remark}

Define $\mathcal{R}$ to be the sub-$\zgd\ot_{\CC[\hp]}\zgd$-algebra of $\zgd\ot_{\CC[\hp]}\zgd[\hp^{-1}]$ generated by $\hp^{-1}(x\ot 1-1\ot x)$, $x\in \zgd$. Let $M\in\hct$. Suppose $\kf(M)$ is $\hp$-torsion-free. (It is the case for $M\in \hcf$.) Then $\kf(M)$ is naturally a graded $\mathcal{R}$-module.

Let $\ic$ be the full subcategory of $D_{G(\OO)\rtimes\gm}(\ag)$ consisting of objects which are direct sums of degree shifts of simple objects of $\mathcal{P}_{G(\OO)\rtimes \gm}(\ag)$. Put $\Ghat:=G\times\gm$. Consider the $\Ghat$-equivariant hypercohomology functor $H_{\Ghat}^{\bl}$ on $\ic$. A priori, it lands in the category of graded $H_{\Ghat}^{\bl}(\ag)$-modules. Observe that $H_{\Ghat}^{\bl}(\ag)$ is naturally a graded algebra over $H_{\Ghat}^{\bl}(\pt)\ot H_{G}^{\bl}(\pt)\simeq H_{\Ghat}^{\bl}(\pt)\ot_{\CC[\hp]} H_{\Ghat}^{\bl}(\pt)$. (The first factor corresponds to the canonical $\Ghat$-action on $\ag$, and the second corresponds to the canonical $G$-torsor on $\ag$.) Identify $\zgd$ with $H_{\Ghat}^{\bl}(\pt)$ via the $\hp$-Harish-Chandra homomorphism $\hci$ (see Section \ref{notationGd}) and the canonical isomorphism $\utd\simeq H_{\That}^{\bl}(\pt)$ so that $H_{\Ghat}^{\bl}(\ag)$ is now a $\zgd\ot_{\CC[\hp]}\zgd$-algebra. One can show that $\hp^{-1}(x\ot 1-1\ot x)\cdot 1$ exists in $H_{\Ghat}^{\bl}(\ag)$ for any $x\in \zgd$. This gives rise to a graded homomorphism
\[\phi:\mathcal{R}\ra H_{\Ghat}^{\bl}(\ag)\]
of $\zgd\ot_{\CC[\hp]}\zgd$-algebras.

\begin{theorem}\label{BFenhancesame} 
(\cite[Theorem 1]{BF}) $\phi$ is bijective. \hfill$\square$
\end{theorem}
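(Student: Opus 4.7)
The plan is to prove injectivity and surjectivity of $\phi$ separately. Both sides are $\hp$-torsion-free: $\mathcal{R}$ by construction as a subalgebra of a localization, and $H_{\Ghat}^{\bl}(\ag)$ by equivariant formality, which follows from the pointwise purity of $G(\OO)\rtimes\gm$-equivariant simple perverse sheaves on $\ag$ (standard in the geometric Satake setup). The two sides become naturally comparable after inverting $\hp$: one checks directly that $\mathcal{R}[\hp^{-1}] = \zgd\otimes_{\CC[\hp]}\zgd[\hp^{-1}]$, since each generator $\hp^{-1}(x\otimes 1 - 1\otimes x)$ is already present in the ordinary tensor product once $\hp$ is invertible.

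For injectivity, I would show $\phi[\hp^{-1}]$ is injective using $\Ghat$-equivariant fixed-point localization to the discrete locus $\{t^{\lambda}\}_{\lambda\in\co}$. At each fixed point the first $\zgd$-factor restricts tautologically, while the second factor (coming from the universal $G$-torsor) is translated by a shift depending linearly on $\hp\lambda$. Varying $\lambda$ across the infinite lattice $\co$ separates any nonzero element of $\zgd\otimes_{\CC[\hp]}\zgd[\hp^{-1}]$, since such an element is a polynomial that cannot vanish at infinitely many distinct translates.

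Surjectivity is the essential content. Here I would invoke the enhancement of geometric Satake recalled earlier in this section. Under the equivalence $\hcf\simeq\ic$, the free Harish-Chandra bimodule $\fr(V)$ corresponds (up to degree shifts) to $\gs(V)$, and the Kostant functor $\kf$ corresponds to taking $\Ghat$-equivariant hypercohomology, compatibly with the monoidal structures ($\hcprod$ on the algebraic side, convolution on the geometric side). Via this dictionary together with Lemma \ref{BFenhanceKFisom}, the surjectivity of $\phi$ reduces to an algebraic statement asserting that the elements $\hp^{-1}(x\otimes 1 - 1\otimes x)$, together with $\zgd\otimes 1$ and $1\otimes\zgd$, generate the appropriate quotient of $\kf$ applied to the regular bimodule $\dm{\Gd}$ of Example \ref{BFenhanceHCeg2}; this in turn follows from Kostant's theorem on the freeness of $\ugd$ over $\zgd$ and the properties of the Kostant slice associated to $\cchh$. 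The main obstacle is precisely this last translation step: making the enhanced Satake correspondence rigorous is the substantive work of \cite{BF}, and extracting from it the precise generation statement requires a delicate matching of the algebraic $\hp$-HC bimodule structure with the geometric characteristic classes on $\ag$ coming from the universal $G$-torsor.
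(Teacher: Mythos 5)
The paper does not prove this statement: the $\square$ immediately after the citation to \cite[Theorem 1]{BF} indicates that the entire content of this "theorem" is that citation, so there is no internal proof to compare against. Your proposal is a reconstruction of what the external proof might look like.

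That reconstruction has a circularity when read against the paper's exposition. For surjectivity you appeal to the "enhancement of geometric Satake recalled earlier in this section," but in this paper that enhancement is Theorem \ref{BFenhancethm}, which appears \emph{after} Theorem \ref{BFenhancesame} and whose statement, in part (3), already invokes "the bottom isomorphism ... induced by $\phi$ from Theorem \ref{BFenhancesame}." The compatibility of $\kf|_{\hcf}$ with $H_{\Ghat}^{\bl}|_{\ic}$ that you want to feed into the surjectivity argument is precisely the equivalence governed by $\phi$; you cannot invoke it to establish that $\phi$ is bijective. Your injectivity sketch via equivariant localization to the $T\times\gm$-fixed points $t^{\lambda}$, together with the observation that distinct $\lambda\in\co$ twist $\fof H_{\That}^{\bl}(\pt)$ by distinct automorphisms, is plausible in outline. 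But for surjectivity you would need to construct enough of the Satake enhancement independently of Theorem \ref{BFenhancesame} --- and you yourself flag that "making the enhanced Satake correspondence rigorous is the substantive work of \cite{BF}," which is exactly why the paper cites \cite{BF} rather than proving the statement. As written, the final reduction to Kostant's freeness theorem and the Kostant slice is a wish, not an argument.
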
 

Now, we state the first result mentioned at the beginning of Section \ref{BF}.
\begin{theorem}\label{BFenhancethm}
(\cite[Theorem 2]{BF}) There is a unique functor 
\[ \GS:\hcf\ra\ic\]
such that 
\begin{enumerate}
\item $\GS$ commutes with the degree shift functors;
\item for any $\lambda\in\cop$, $\GS(\fr(S(\lambda)))$ is the $G(\OO)\rtimes \gm$-equivariant intersection complex of $\agll$ ($S(\lambda)$ is graded such that every element of $S(\lambda)_{\mu}$ has degree $2\rho(\mu)$); and
\item the diagram
\begin{equation}\nonumber
\begin{tikzpicture}
\tikzmath{\x1 = 7; \x2 = 2;}
\node (A) at (0,0) {$\hcf$} ;
\node (B) at (\x1,0) {$\ic$} ;
\node (C) at (0,-\x2) {$\{\text{gr. }\mathcal{R}\text{-mod}\}$} ;
\node (D) at (\x1,-\x2) {$\{\text{gr. }H_{\Ghat}^{\bl}(\ag)\text{-mod}\}$} ;

\path[->] (A) edge node[above]{$\GS$} (B);
\path[->] (A) edge node[left]{$\kf|_{\hcf}$} (C);
\path[->] (B) edge node[right]{$H_{\Ghat}^{\bl}|_{\ic}$} (D);
\path[->]
(C) edge node[above]{$\sim$} (D);
\end{tikzpicture}
\end{equation}
is commutative, where the bottom isomorphism is induced by $\phi$ from Theorem \ref{BFenhancesame}.
\end{enumerate}
Moreover, $\GS$ is an equivalence of monoidal categories and the natural equivalence connecting the composite functors in the above diagram is compatible with the monoidal structures on these functors. \hfill$\square$
\end{theorem}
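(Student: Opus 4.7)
The plan is to enhance the ordinary geometric Satake equivalence $\gs: \rep(\Gd) \xrightarrow{\sim} \per$ by tracking the extra grading and $\ugd$-bimodule data encoded in free Harish-Chandra bimodules. Every object of $\hcf$ is, up to finite direct sums and degree shifts, built from the bimodules $\fr(S(\lambda))$ for $\lambda \in \cop$: decomposing the underlying graded $\Gd$-module of any $M \in \hcf$ into isotypic components and their graded multiplicity spaces yields such a decomposition. Conditions (1) and (2) then force the prescription $\GS(\fr(S(\lambda))) := \mathrm{IC}_{\agll}$ and determine $\GS$ on all objects up to canonical isomorphism, giving uniqueness at the object level.

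First I would check that condition (3) holds on objects of the form $\fr(S(\lambda))$. On the algebraic side, one computes $\kf(\fr(S(\lambda)))$ directly: the relation $(1 \otimes y) \hcdot (z \otimes v) \equiv \cchh(y) z \otimes v$ for $y \in \nndm$ together with $\Umd$-invariance collapses $\ugd \otimes S(\lambda)$ onto a copy of $S(\lambda)$ as a graded $\zgd$-module with the grading specified by $2\rho(\mu)$ on the $\mu$-weight space. On the sheaf side, ordinary geometric Satake together with the hyperbolic restriction theorem identifies $H^{\bl}_{\Ghat}(\mathrm{IC}_{\agll})$ with $S(\lambda)$ (with the same grading) as a graded $H^{\bl}_{\Ghat}(\pt)$-module, and Theorem \ref{BFenhancesame} upgrades the matching to an isomorphism of graded $\mathcal{R}$-modules.

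The main step is to construct $\GS$ on morphisms and prove full faithfulness. The decisive input is that objects of $\ic$ are semi-simple pure complexes on $\ag$, hence $H^{\bl}_{\Ghat}|_{\ic}$ is fully faithful into graded $H^{\bl}_{\Ghat}(\pt)$-modules (a consequence of formality and parity of IC sheaves on spherical affine Schubert varieties). On the algebraic side, Lemma \ref{BFenhanceKFisom} identifies $\dkf(M) \simeq \utd \otimes_{\zgd} \kf(M)$ for $M \in \hcf$, which, combined with the fact that an $\hp$-Harish-Chandra bimodule structure on $\fr(V)$ is recovered from its Whittaker/Kostant reduction, makes $\kf|_{\hcf}$ fully faithful into graded $\mathcal{R}$-modules. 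The functor $\GS$ on morphisms is then defined as the unique lift making the square in condition (3) commute; this is well-posed precisely because both vertical functors are fully faithful and the bottom horizontal arrow is an equivalence by Theorem \ref{BFenhancesame}. Uniqueness of $\GS$ follows from the uniqueness of such lifts.

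The last step is to promote $\GS$ to a monoidal equivalence. Convolution of $G(\OO) \rtimes \gm$-equivariant complexes equips $\ic$ with a monoidal structure whose induced operation on $H^{\bl}_{\Ghat}(\ag)$ is the Pontryagin product, and under the identification $\mathcal{R} \simeq H^{\bl}_{\Ghat}(\ag)$ from Theorem \ref{BFenhancesame} this matches the natural tensor product of $\mathcal{R}$-modules arising from the coproduct $\hci \otimes_{\CC[\hp]} \id: \zgd \otimes_{\CC[\hp]} \zgd \to \utd \otimes_{\CC[\hp]} \zgd$. On the algebraic side, $\hcprod = \otimes_{\ugd}$ induces the same tensor product after applying $\kf$. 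The expected main obstacle is verifying the associativity and commutativity constraints: this reduces to checking that $\kf$ is monoidal with respect to $\hcprod$ on the generators $\fr(S(\lambda))$, which in turn unwinds, via ordinary geometric Satake, to the compatibility of the Whittaker reduction with Clebsch-Gordan decompositions of representations of $\Gd$.
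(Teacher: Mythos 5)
The paper does not prove this statement: it is cited verbatim from Bezrukavnikov--Finkelberg (see the ``$\square$'' at the end of the theorem and the citation to \cite[Theorem 2]{BF}), so there is strictly speaking no ``paper's own proof'' to compare against. Treating the proposal as a sketch of the argument one would need, there are a few concrete gaps.

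First, the computation of $\kf(\fr(S(\lambda)))$ is wrong as stated. Collapsing by $\mathcal{I}_{-,\cchh}$ and taking $\Umd$-invariants does not produce ``a copy of $S(\lambda)$ as a graded $\zgd$-module''; the result is a $\zgd$-module that is \emph{free of rank $\dim S(\lambda)$} (compare with $H^{\bl}_{\Ghat}(\ic_{\agll})$, which is likewise a free $H^{\bl}_{\Ghat}(\pt)$-module of rank $\dim S(\lambda)$ --- the finite-dimensional representation $S(\lambda)$ is what you get only after passing to a fiber or to non-equivariant cohomology). This matters, because the entire content of the theorem is in identifying the two free modules together with their additional $\mathcal{R}$-module (equivalently, $H^{\bl}_{\Ghat}(\ag)$-module) structures, and that identification is not formal: in \cite{BF} it is constructed via a weight filtration indexed by $\Q$, with graded pieces matched against $H^{2\rho(\mu)}_{S^-_{\mu}}(\gs(S(\lambda)))$, i.e. MV cycles. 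This is exactly what the present paper recalls as Theorem \ref{BFenhancepreservefil}, and it is the substance of the proof; your sketch does not engage with it.

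Second, the claimed full faithfulness of $\kf|_{\hcf}$ is justified by ``the fact that an $\hp$-Harish-Chandra bimodule structure on $\fr(V)$ is recovered from its Whittaker/Kostant reduction.'' That statement \emph{is} full faithfulness --- it cannot be cited to prove itself. The analogous claim for the sheaf-theoretic vertical arrow (full faithfulness of $H^{\bl}_{\Ghat}|_{\ic}$ via purity/formality) is genuine input that you may cite, but the algebraic side is precisely what \cite{BF} establishes via Lemma 5 together with the graded-piece comparison; you need to actually run that argument. Similar remarks apply to the monoidal compatibility, which you reduce to ``compatibility of the Whittaker reduction with Clebsch--Gordan decompositions'' --- in \cite{BF} this compatibility is again deduced from the filtration argument, not assumed. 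So the overall plan (reduce to $\fr(S(\lambda))$, compare $\kf$ with $H^{\bl}_{\Ghat}$, lift along vertical arrows) is the right shape, but the load-bearing steps are not substantiated and one is asserted circularly.
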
 

%%%%%%%%%%%%%%%%%%%%%%%%%%%%%%%%%%%%
%%%%%%%%%%%%%%%%%%%%%%%%%%%%%%%%%%%%
\subsection{Quantum Toda lattice and equivariant homology of affine Grassmannian}\label{BFtoda}
Recall the functors $\kf$ and $\dkf$ defined in Definition \ref{BFenhanceKFdef} and the two $\hp$-Harish-Chandra bimodule structures $\hcdot^L$ and $\hcdot^R$ on $\dm{\Gd}$ defined in Example \ref{BFenhanceHCeg2}.

\begin{definition}\label{KKdef}
Define
\[ \KK:=\kf(\dm{\Gd})\]
where we apply $\kf$ to $\hcdot^R$.
\end{definition}

Since these two $\hp$-Harish-Chandra bimodule structures commute with each other, it follows that $\hcdot^L$ induces an $\hp$-Harish-Chandra bimodule structure on $\KK$. 

\begin{definition}\label{BFQTLdef}$~$
\begin{enumerate}
\item The graded $\zgd\ot_{\CC[\hp]}\zgd$-module $\kf(\KK)$ is called the \textit{quantum Toda lattice}.

\item The graded $\utd\ot_{\CC[\hp]}\zgd$-module $\dkf(\KK)$ is called the \textit{desymmetrized quantum Toda lattice}.
\end{enumerate}
\end{definition}

The ring structure on $\dm{\Gd}$ induces an algebra structure on $\KK$, i.e. a morphism $\KK\hcprod\KK
\ra \KK$ in $\hct$ satisfying the unitality and associativity axioms. Since $\kf$ is a monoidal functor, it follows that $\kf(\KK)$ is naturally a graded $\CC[\hp]$-algebra and $\dkf(\KK)$ is naturally a graded right $\kf(\KK)$-module.

On the other hand, define 
\[ \daggg := \underset{\lambda\in\cop}{\varinjlim} \dagl \]
to be the filtered colimit of the dualizing complexes $\dagl$ of $\agll$ ($\lambda\in\cop$). It has an algebra structure given by convolution so that $H_{\Ghat}^{\bl}(\daggg)$ is a graded $\CC[\hp]$-algebra. Notice that 
\[ H_{\Ghat}^{\bl}(\daggg)\simeq H_{-\bl}^{\Ghat}(\ag)\]
as graded $\CC[\hp]$-algebras where the algebra structure on the RHS is the one defined in Section \ref{notationG}. 

Now, we state the second result mentioned at the beginning of Section \ref{BF}.
\begin{theorem} (\cite[Section 6.4]{BF})\label{BFqttheorem}
\begin{enumerate}
\item There exists a cohomological functor
\[ F: D_{G(\OO)\rtimes\gm}(\ag)\ra \hc\]
such that 
\begin{enumerate}[(i)]
\item $F$ extends $\GS^{-1}$ (see Theorem \ref{BFenhancethm} for the definition of $\GS$);

\item for any $X,Y\in D_{G(\OO)\rtimes\gm}(\ag)$ with $X\in\ic$, the canonical map
\[ F_{X,Y}:\Hom_{D_{G(\OO)\rtimes\gm}(\ag)}(X,Y)\ra \Hom_{\hc}(F(X),F(Y)) \]
is bijective;

\item $F$ is quasi-monoidal in the sense that there exist a morphism $\text{unit}\ra F(\text{unit})$ and for each pair $(X,Y)$ of objects of the source category a morphism $F(X)\hcprod F(Y)\ra F(X* Y)$ such that these morphisms are functorial and satisfy the unitality and associativity axioms; 

\item the quasi-monoidal structure on $F$ from (iii) extends the monoidal structure on $\GS^{-1}$ from Theorem \ref{BFenhancethm}; and

\item $\kf\circ F\simeq H_{\Ghat}^{\bl}$ as quasi-monoidal functors. 
\end{enumerate} 

\item The functor $F$ from (1) sends $\daggg$ to $\KK$ in the sense that $\KK$ represents the functor $\hcf\ni M\mapsto \underset{\lambda\in\cop}{\varinjlim} \Hom_{\hc}(M,F(\dagl))$. Moreover, it induces a graded isomorphism 
\[ \bfisok_{\KK}: \kf(\KK) \simeq \underset{\lambda\in\cop}{\varinjlim} \kf\circ F(\dagl)\simeq \underset{\lambda\in\cop}{\varinjlim} H_{\Ghat}^{\bl}(\dagl) \simeq H_{-\bl}^{\Ghat}(\ag)\]
of $\CC[\hp]$-algebras and of $\mathcal{R}$-modules.
\end{enumerate}
\hfill $\square$
\end{theorem}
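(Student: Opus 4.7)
My plan has two parts corresponding to the two parts of the theorem. For the construction of $F$ in part (1), I would first set $F|_{\ic} := \GS^{-1}$ using Theorem \ref{BFenhancethm}, and then extend to the full derived category $D_{G(\OO)\rtimes\gm}(\ag)$ by a representability-style construction: for $X \in D_{G(\OO)\rtimes\gm}(\ag)$, define $F(X)$ as the coend-like object $\int^{Y \in \ic} \GS^{-1}(Y) \hcprod \Hom_{D_{G(\OO)\rtimes\gm}(\ag)}(Y, X)$, interpreted as a quotient of an appropriate direct sum of free HC bimodules and thus landing in $\hc$. The cohomological nature of $F$ would follow from the semisimplicity of $\ic$ (every triangle splits after passing to morphisms from an object of $\ic$), and property (ii) is then just Yoneda on $\ic$. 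The quasi-monoidal structure in (iii)--(iv) is obtained by transporting the natural isomorphisms $\GS(M_1)\hcprod\GS(M_2)\xrightarrow{\sim}\GS(M_1\hcprod M_2)$ provided by the monoidal equivalence $\GS$, and then checking that the resulting coherence data extend functorially to all of $D_{G(\OO)\rtimes\gm}(\ag)$.

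To establish property (v), that $\kf\circ F\simeq H^{\bl}_{\Ghat}$ as quasi-monoidal functors, I would combine Theorem \ref{BFenhancethm}(3)---which supplies the isomorphism on $\ic$---with the cohomological nature of both sides. Since $\ic$ generates $D_{G(\OO)\rtimes\gm}(\ag)$, a standard extension argument produces the isomorphism on the full derived category; the quasi-monoidal compatibility reduces, via the equivalence $\GS$, to the monoidal compatibility already recorded in Theorem \ref{BFenhancethm} between convolution on $\ag$ and $\hcprod$ on $\hcf$.

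For part (2), I would verify that $\KK$ represents the pro-functor $\hcf\ni M\mapsto \varinjlim_\lambda \Hom_{\hc}(M, F(\dagl))$. The essential point is that $\dm{\Gd}$ equipped with its two commuting $\hp$-HC bimodule structures is the algebraic counterpart of the $G$-biequivariant geometry producing $\dag$ on the spectral side, and applying $\kf$ to the right-handed structure (which is by definition $\KK$) mirrors precisely the colimit of Whittaker-type reductions of the $F(\dagl)$. Once $\KK\simeq F(\dag)$ is established, the isomorphism $\bfisok_{\KK}: \kf(\KK) \simeq H^{\Ghat}_{-\bl}(\ag)$ follows from applying $\kf$ to this identification and invoking property (v); the $\CC[\hp]$-algebra and $\mathcal{R}$-module structures are then preserved thanks to the quasi-monoidal property of $F$ together with the fact that $\kf$ is monoidal on $\hcf$. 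The main obstacle I expect is the coherence verification for the quasi-monoidal structure together with property (v), because convolution on $\ag$ and $\hcprod$ on $\hct$ carry subtly different associators and unit constraints that must be reconciled using the full strength of the geometric Satake equivalence and its enhancement in Theorem \ref{BFenhancethm}; this is where most of the technical effort in the original Bezrukavnikov--Finkelberg argument is concentrated.
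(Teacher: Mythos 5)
This statement is cited from Bezrukavnikov--Finkelberg \cite[Section~6.4]{BF} and is not proved in the present paper: the $\square$ sits at the end of the theorem environment itself, with no accompanying proof, and Section~\ref{BF} opens by declaring it ``a short exposition of two results of Bezrukavnikov and Finkelberg.'' There is therefore no internal proof to compare your proposal against; the paper's ``proof'' is the citation.

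Taking your sketch on its own terms, the overall shape (extend $\GS^{-1}$ from $\ic$ by a coend/Kan-extension, deduce cohomologicality from semisimplicity of $\ic$, prove~(v) by d\'evissage, then identify $F(\dag)$ with $\KK$) is a reasonable reading of how [BF] proceeds, but two points need repair. First, in the coend you write $\GS^{-1}(Y)\hcprod\Hom(Y,X)$, yet $\hcprod$ is the relative tensor product of HC bimodules over $\ugd$, while $\Hom(Y,X)$ is only a graded $\CC[\hp]$-module; the correct operation is the ordinary $\CC[\hp]$-tensor, after which one must still check the resulting quotient lands in $\hc$, i.e.\ is dominated by a free HC bimodule. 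Second, and more seriously, part~(2) is not formal: the assertion that ``$\dm{\Gd}$ with its two commuting $\hp$-HC bimodule structures is the algebraic counterpart of $\dag$'' is the conclusion of [BF, Section~6], not its premise. Establishing $F(\dag)\simeq\KK$ (equivalently the isomorphism $\kf(\KK)\simeq H_{-\bl}^{\Ghat}(\ag)$ as $\CC[\hp]$-algebras and $\mathcal{R}$-modules) requires the explicit comparison of the Kostant--Whittaker reduction of $\dm{\Gd}$ with the convolution algebra $H_{-\bl}^{\Ghat}(\ag)$ via equivariant localization and the quantization of the universal centralizer; your paragraph restates this identification rather than supplying an argument for it. Since the paper under review never attempts this, there is no deviation to report, but the gap in your proposal is exactly where the bulk of the original Bezrukavnikov--Finkelberg argument lies.
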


%%%%%%%%%%%%%%%%%%%%%%%%%%%%%%%%%%%%
%%%%%%%%%%%%%%%%%%%%%%%%%%%%%%%%%%%%
\subsection{A computational result}\label{BFcompute}
\begin{definition}\label{BFpbfdef}
Define
\[ \pbfk :=\bfisok_{\KK}: \kf(\KK) \xrightarrow{\sim} H_{-\bl}^{\Ghat}(\ag)\]
where $\bfisok_{\KK}$ comes from Theorem \ref{BFqttheorem}(2), and 
\[ \pbfd : \dkf(\KK) \xrightarrow{\sim} H_{-\bl}^{\That}(\ag)\]
to be the map obtained from $\pbfk$ by extension of scalars with respect to the canonical map $\zgd\simeq H_{\Ghat}^{\bl}(\pt)\ra \utd\simeq H_{\That}^{\bl}(\pt)$. (We have $\dkf(\KK)\simeq\utd\ot_{\zgd}\kf(\KK)$ by Lemma \ref{Alemma2}.)
\end{definition}

In this subsection, we prove a computational result about $\pbfd$. Recall we have been using $\cchh$ to define $\kf$. See Remark \ref{BFenhanceKFrmk}. But we would like to make an exception: instead of $\cchh$, we use $-\cchh$ for $\kf$ applied to $\dm{\Gd}$ when we define $\KK$. Although changing the character does not change the isomorphism class of algebra objects of $\hct$ represented by $\KK$, it does change how $\Phi_{BF}^{\delta/\kappa}$ will look. Our choice will make Proposition \ref{BFenhancedivisor} below hold. Notice that we will still use $\cchh$ for $\kf$ (resp. $\dkf$) applied to $\KK$ when we define the (resp. desymmetrized) quantum Toda lattices.

Let $\lambda\in\cop$ and $\mu\in\Q$. Recall an MV cycle of type $\lambda$ and weight $\mu$ is any irreducible component of $\ol{S^-_{\mu}}\cap\agll$ of dimension $\rho(\lambda-\mu)$ where 
\[S^-_{\mu}:= \left\{ y\in\ag\left|~\lim_{s\to\infty}2\rhod(s)\cdot y= t^{\mu}\right.\right\} \]
and $\rho$ (resp. $\rhod$) is the half-sum of the positive roots (resp. coroots). By \cite[Proposition 13.1]{MV}, $\gs(S(\lambda))$ is the costandard sheaf ${}^p\mathcal{H}^0((\iota_{\lambda})_*\CC[2\rho(\lambda)])$ where $\iota_{\lambda}:\agl\hookrightarrow \ag$ is the inclusion (recall $S(\lambda):=H^0(\GB;\Gd\times^{\Bmd}\CC_{\lambda})$). There are canonical isomorphisms
\begin{equation}\label{MVbasis}
S(\lambda)_{\mu}\simeq H_{\ol{S^-_{\mu}}}^{2\rho(\mu)}(\gs(S(\lambda))) \simeq H_{\ol{S^-_{\mu}}}^{2\rho(\mu)}((\iota_{\lambda})_*\CC[2\rho(\lambda)])\simeq \CC\langle\text{MV cycles of type }\lambda\text{ and weight }\mu\rangle
\end{equation}
where the first isomorphism is part of the geometric Satake equivalence, the second is induced by the canonical morphism $\gs(S(\lambda))\ra (\iota_{\lambda})_*\CC[2\rho(\lambda)]$, and the third follows from base-change and excision (see \cite[Proposition 3.10]{MV} and also \cite[Proposition 11.1]{Satakeluminy}). 

Let $v_{\lambda}^*\in S(\lambda)^*$ be the unique vector satisfying $v_{\lambda}^*|_{S(\lambda)_{\ne \lambda}}\equiv 0$ and $\langle v_{\lambda}^*,v_e\rangle =1$ where $v_e\in S(\lambda)_{\lambda}\simeq\CC$ is the vector corresponding to the unique MV cycle $\{t^{\lambda}\}$ of type $\lambda$ and weight $\lambda$ via the composite isomorphism \eqref{MVbasis}. For any $v\in S(\lambda)$, denote by $f_{\lambda,v}\in\OO(\Gd)$ the regular function $g\mapsto \langle v_{\lambda}^*,g\cdot v\rangle$. Observe that $f_{\lambda,v}(u\cdot -)=f_{\lambda,v}(-)$ for any $u\in \Umd$. It follows that $f_{\lambda,v}$ represents an element of $\KK$ and hence an element of $\dkf(\KK)$. By abuse of notation, both of these elements will be denoted by $[f_{\lambda,v}]$.

\begin{proposition}\label{BFenhancedivisor}
Let $\lambda\in\cop$, $\mu\in\Q$ and $v\in S(\lambda)_{\mu}$. Suppose $v$ corresponds to an MV cycle $Z$ (of type $\lambda$ and weight $\mu$) via the composite isomorphism \eqref{MVbasis}. Then 
\[ \pbfd([f_{\lambda,v}]) = [Z]\in H_{2\rho(\lambda-\mu)}^{\That}(\ag).\]
\end{proposition}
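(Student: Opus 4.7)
\emph{The plan.} The idea is to realize $[f_{\lambda,v}]$ as the image of $1\otimes v$ under a morphism $\phi_\lambda:\fr(S(\lambda))\to\KK$ in $\hcf$, transport $\phi_\lambda$ through the quasi-monoidal equivalence $F$ of Theorem~\ref{BFqttheorem}(1) to a morphism of sheaves on $\ag$, and then identify the induced map on equivariant cohomology with the MV identification \eqref{MVbasis}.

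\emph{Construction of $\phi_\lambda$.} For $v\in S(\lambda)$, the matrix coefficient $f_{\lambda,v}(g)=\langle v_\lambda^*,g\cdot v\rangle\in\OO(\Gd)\subset\dm{\Gd}$ is $\Umd$-invariant under left translation, because $v_\lambda^*$ is (it kills all weight spaces of $S(\lambda)$ other than the highest); hence $f_{\lambda,v}$ represents a class $[f_{\lambda,v}]\in\KK$, where one uses the character $-\cchh$ following the convention adopted just before Lemma~\ref{dunnowheretoplacelemmaa}. Under right translation on $\OO(\Gd)$, one has $h\cdot f_{\lambda,v}=f_{\lambda,h\cdot v}$, so $v\mapsto[f_{\lambda,v}]$ is $\Gd$-equivariant. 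By the universal property of $\fr(V)$ in $\hcf$ (morphisms $\fr(V)\to M$ are equivalent to $\ggd$-equivariant linear maps $V\to M$, which follows from Definition~\ref{BFenhanceHCdef}(3)), this map extends uniquely to the desired $\phi_\lambda$, with $\phi_\lambda(1\otimes v)=[f_{\lambda,v}]$.

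\emph{Transport and cohomology.} By Theorem~\ref{BFenhancethm}(2), $\GS(\fr(S(\lambda)))$ is the equivariant intersection complex $\mathcal{IC}_\lambda$ of $\agll$. Combining the full faithfulness statement of Theorem~\ref{BFqttheorem}(1)(ii) with the representability $F(\dag)\simeq\KK$ of Theorem~\ref{BFqttheorem}(2), $\phi_\lambda$ corresponds to a unique morphism $\widehat{\phi}_\lambda:\mathcal{IC}_\lambda\to\dag$ in $D_{G(\OO)\rtimes\gm}(\ag)$. The natural isomorphism $\kf\circ F\simeq H_\Ghat^\bl$ from Theorem~\ref{BFqttheorem}(1)(v), followed by extension of scalars from $\zgd$ to $\utd$, shows that $\pbfd\circ\dkf(\phi_\lambda)$ agrees with $H_\That^\bl(\widehat{\phi}_\lambda):H_\That^\bl(\mathcal{IC}_\lambda)\to H_\That^\bl(\dag)\simeq H_{-\bl}^\That(\ag)$. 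Under \eqref{MVbasis}, the class of $v\in S(\lambda)_\mu$ lies in $H^{2\rho(\mu)}_{\ol{S^-_\mu}}(\gs(S(\lambda)))$, and base-change plus excision (as in the proof of \cite[Proposition~3.10]{MV}) then send it to the fundamental class $[Z]\in H^\That_{2\rho(\lambda-\mu)}(\ag)$ of the corresponding MV cycle, which is the desired identity.

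\emph{Main obstacle.} The core difficulty is identifying $\widehat{\phi}_\lambda$ (with the correct normalization) as the canonical morphism $\mathcal{IC}_\lambda\to\dagl\to\dag$ whose cohomology map implements \eqref{MVbasis}. Since $\mathcal{IC}_\lambda$ is simple perverse supported on $\agll$ and $\dag\simeq\varinjlim\dagl$, the space of morphisms $\mathcal{IC}_\lambda\to\dag$ is essentially one-dimensional, so only a scalar needs to be pinned down. This scalar is fixed by the highest-weight test $v=v_e$: the normalization $\langle v_\lambda^*,v_e\rangle=1$ makes $f_{\lambda,v_e}$ equal to $1$ at the identity of $\Gd$, forcing its image under $\pbfd$ to be the class $[\{t^\lambda\}]=[t^\lambda]\in H_0^\That(\ag)$ of the unique MV cycle of weight $\lambda$. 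The general case then follows by $\Gd$-equivariance, as the action of root vectors of $\ggd$ intertwines the MV basis across weight spaces of $S(\lambda)$.
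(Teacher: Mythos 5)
Your overall plan coincides with the paper's: realize $v\mapsto[f_{\lambda,v}]$ as a morphism $\fr(S(\lambda))[2\rho(\lambda)]\to\KK$ in $\hct$, transport it across Theorem~\ref{BFqttheorem} to a morphism $\mathcal{IC}_\lambda[2\rho(\lambda)]\to\dag$, and reduce the computation of $\pbfd([f_{\lambda,v}])$ to pinning down a single scalar via the highest-weight vector. However, three of the steps you compress hide the actual work.

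First, the statement that under the chain of isomorphisms the class of $1\otimes v$ is carried to the MV class of $v$ is not automatic from functoriality. What is needed is that the isomorphism $\bfisod_{\fr(S(\lambda))}:\dkf(\fr(S(\lambda)))\to H_{\That}^{\bl}(\GS(S(\lambda)))$ is compatible with the weight/MV filtrations and induces exactly the geometric Satake isomorphism \eqref{MVbasis} on associated graded pieces. This is the content of Theorem~\ref{BFenhancepreservefil}(2), a nontrivial input of Bezrukavnikov--Finkelberg that you never invoke. Without it, your sentence ``Under \eqref{MVbasis}, the class of $v$ \ldots\ is sent to $[Z]$'' is simply re-stating the definition of \eqref{MVbasis}, not connecting it to $\pbfd$.

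Second, the assertion that $\Hom(\mathcal{IC}_\lambda,\dag)$ is one-dimensional (so that ``only a scalar needs to be pinned down'') is stated without proof and is, in the equivariant derived category, a nontrivial claim about $IH^0(\agll)$ and its equivariant lift. The paper avoids this entirely by a more elementary argument: it restricts $g_\lambda$ and the canonical morphism $h_\lambda$ to the open orbit $\agl$, where both become endomorphisms of the shifted constant sheaf $\mathbb{D}_{\agl}$, and a degree-zero endomorphism of a shifted constant sheaf on a connected variety is a scalar $c_\lambda$. One also needs to observe, via the dimension count $2\rho(\lambda-\mu)=\dim(\ol{S^-_\mu}\cap\agll)$, that $\pbfd([f_{\lambda,v}])$ is a \emph{constant-coefficient} combination of MV cycle classes, which lets one pass to the non-equivariant setting; you skip this reduction.

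Third, the normalization $c_\lambda=1$ does not follow merely because $f_{\lambda,v_e}$ takes the value $1$ at the identity. The paper's actual argument uses the geometric description $\FF_{eq=0}(\dkf(\KK))\simeq\OO(\ZZZ_0)$ with $\ZZZ_0\subseteq\Ud$ (Lemma~\ref{dunnowheretoplacelemmaa} plus adjointness of $\Gd$), which together with $f_{\lambda,v_e}|_{\Ud}\equiv 1$ gives $[f_{\lambda,v_e}]=1$ as an element of the coordinate ring, and then crucially uses that $\pbfd|_{\kf(\KK)}$ is a ring homomorphism to conclude $\FF_{eq=0}(\pbfd)(1)=1=[t^\lambda]$. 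Saying this is ``forced'' glosses over the multiplicative structure that makes the deduction valid. Likewise ``the general case follows by $\Gd$-equivariance'' is too vague: the $\ggd$-action on the MV basis has nontrivial structure constants, and what actually works is showing $c_\lambda$ depends on $\lambda$ alone and then evaluating at $\mu=\lambda$, $v=v_e$.
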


Before proving Proposition \ref{BFenhancedivisor}, let us go back to Theorem \ref{BFenhancethm} in Section \ref{BFenhance}. A key step of the proof given in \cite{BF} is to construct for each $V\in\rep\Gd$ an isomorphism
\[ \bfisok_{\fr(V)}: \kf(\fr(V))\xrightarrow{\sim} H_{\Ghat}^{\bl}(\GS(V))\]
of $\zgd\ot_{\CC[\hp]}\zgd$-modules. This amounts to constructing a $W$-equivariant isomorphism
\[ \bfisod_{\fr(V)}: \dkf(\fr(V))\xrightarrow{\sim} H_{\That}^{\bl}(\GS(V)) \]
of $\utd\ot_{\CC[\hp]}\zgd$-modules. Consider the filtrations 
\[\{F^{\mu}\dkf(\fr(V))\}_{\mu\in\co}\quad\text{ and }\quad\{F^{\mu}H_{\That}^{\bl}(\GS(V))\}_{\mu\in\co}\]
on $\dkf(\fr(V))$ and $H_{\That}^{\bl}(\GS(V))$ respectively, defined by
\[  F^{\mu}\dkf(\fr(V)):= \fr(V)_{\geqslant \mu}/\left(\fr(V)_{\geqslant \mu}\cap (\mathcal{I}_{-,\cchh} +\mathcal{I}_+ )\right)\]
where $\fr(V)_{\geqslant \mu}:= V_{\geqslant \mu}\ot\ugd$ (recall $\fr(V)\simeq V\ot\ugd$ by Remark \ref{BFenhanceHCeg1rmk}), and
\[ F^{\mu}H_{\That}^{\bl}(\GS(V)) : = \im\left( H_{\ol{S_{\mu}^-}, \That}^{\bl}(\GS(V)) \ra H_{\That}^{\bl}(\GS(V))\right) \] 
where the map in the RHS is induced by the inclusion $\ol{S_{\mu}^-}\hookrightarrow \ag$. By \cite[Lemma 1 \& Lemma 6]{BF}, there are canonical isomorphisms of $\utd\ot_{\CC[\hp]}\zgd$-modules
\[ grF^{\mu}\dkf(\fr(V))\simeq \mathcal{C}_{\mu}\ot V_{\mu}~\text{ and }~grF^{\mu}H_{\That}^{\bl}(\GS(V))\simeq \mathcal{C}_{\mu}\ot H_{S^-_{\mu}}^{2\rho(\mu)}(\gs(V)) \]
where $\mathcal{C}_{\mu}:=\utd$ on which $\utd$ and $\zgd$ act via the identity and the ring homomorphism $z\mapsto \twist_{-\mu}(\Theta_{HC}(z))$ respectively. By the geometric Satake equivalence, we thus have an isomorphism
\begin{equation}\label{BFgradedpiece}
grF^{\mu}\dkf(\fr(V))\simeq grF^{\mu}H_{\That}^{\bl}(\GS(V))
\end{equation}
of $\utd\ot_{\CC[\hp]}\zgd$-modules.

\begin{theorem} \label{BFenhancepreservefil} (\cite[Theorem 6]{BF}) There exists a unique isomorphism 
\[ \bfisod_{\fr(V)}:\dkf(\fr(V))\xrightarrow{\sim} H_{\That}^{\bl}(\GS(V))\]
of $\utd\ot_{\CC[\hp]}\zgd$-modules such that 
\begin{enumerate}
\item it is compatible with the above filtrations, i.e. $\bfisod_{\fr(V)}\left( F^{\mu}\dkf(\fr(V))\right)\subseteq F^{\mu}H_{\That}^{\bl}(\GS(V))$ for any $\mu\in\co$; and 

\item it induces isomorphism \eqref{BFgradedpiece} on the associated graded pieces.
\end{enumerate}  
Moreover, $\bfisod_{\fr(V)}$ is $W$-equivariant and hence induces the desired isomorphism $\bfisok_{\fr(V)}$.
\hfill $\square$
\end{theorem}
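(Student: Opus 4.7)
My plan is to prove the theorem in three stages: uniqueness first, then existence (by reduction to irreducible $V$ and explicit matching of weight vectors with MV cycles), and finally $W$-equivariance. The essential point is that the filtrations on both sides are so tight that any lift of the canonical graded isomorphism is already rigid; the heart of the work is showing such a lift exists as a $\utd\ot_{\CC[\hp]}\zgd$-module map.

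\textbf{Uniqueness.} Suppose $\bfisod_1,\bfisod_2$ are two maps satisfying (1) and (2). Their difference $\Psi:=\bfisod_1-\bfisod_2$ is a $\utd\ot_{\CC[\hp]}\zgd$-linear map preserving $F^\mu$ on both sides and inducing zero on every $grF^\mu$. Since $V$ is finite-dimensional, only finitely many weights $\mu\in\co$ occur, so its set of weights has finitely many maximal elements (in the coroot partial order $\leqslant$). A descending induction on this order shows $\Psi$ vanishes: if $\mu$ is maximal among weights still carrying nonzero $\Psi$, then $F^\mu=grF^\mu$ modulo $F^{>\mu}$ on which $\Psi$ already vanishes, contradicting the fact that $\Psi$ induces zero on $grF^\mu$.

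\textbf{Existence.} Since $\fr$ is additive and both $\dkf\circ\fr$ and $H_{\That}^{\bl}\circ\GS$ commute with finite direct sums, I reduce to the case $V=S(\lambda)$ irreducible, where $\GS(S(\lambda))$ is the $\Ghat$-equivariant IC sheaf of $\agll$. On the A-side, for each MV cycle $Z$ of type $\lambda$ and weight $\mu$ the isomorphism \eqref{MVbasis} singles out a weight vector $v_Z\in S(\lambda)_\mu$, so the classes $[v_Z\ot 1]\in\dkf(\fr(S(\lambda)))$ form, by Lemma~\ref{Alemma2} and the PBW-style presentation of $\dkf(\fr(V))\simeq \utd\ot V$, a $\utd$-basis compatible with the filtration. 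On the B-side, the MV cycle classes $[Z]$ supply a compatible $\utd$-basis of $H^{\bl}_{\That}(\GS(S(\lambda)))$, filtration-compatible by the very definition of $F^\mu H^{\bl}_{\That}(\GS(V))$ via $H^{\bl}_{\ol{S^-_\mu},\That}$. I define $\bfisod_{\fr(S(\lambda))}$ by $[v_Z\ot 1]\mapsto [Z]$ and extend $\utd$-linearly. Filtration compatibility and the induction of the graded isomorphism \eqref{BFgradedpiece} are immediate. The work lies in verifying $\zgd$-linearity: an element $z\in\zgd$ acts on $\dkf(\fr(V))$ via right multiplication by $z\in\ugd$, and on $H^{\bl}_{\That}(\GS(V))$ via the map $\zgd\simeq H^{\bl}_{\Ghat}(\pt)\xrightarrow{\hci}\utd\simeq H^{\bl}_{\That}(\pt)$ followed by the cap-product action. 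Using the filtration and the description of $grF^\mu$ as $\mathcal{C}_\mu\ot V_\mu$, one sees that on associated graded the two actions agree with the twist $\twist_{-\mu}\circ\hci$ on both sides, and a filtration-induction argument (identical in spirit to the uniqueness argument) propagates this agreement to the full module.

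\textbf{$W$-equivariance and main obstacle.} Both sides carry natural $W$-actions: on the LHS via the $W$-action on $\utd$ (together with the permutation of weight spaces arising from the $N(\Td)$-action on $V$), and on the RHS via the $W$-action on $H^{\bl}_{\That}(\pt)$ extended to equivariant cohomology. Since the Satake equivalence intertwines these two actions on $grF^\mu$ and since the filtration $F^\mu$ on the RHS is $W$-compatible (because $W$ permutes the strata $\ol{S^-_\mu}$), the uniqueness argument from the first step applied to $\bfisod_{\fr(V)}$ and $w\cdot \bfisod_{\fr(V)}\cdot w^{-1}$ forces $W$-equivariance. Passing to $W$-invariants then yields $\bfisok_{\fr(V)}$, using that $(\utd\ot_{\zgd}\kf(\fr(V)))^W=\kf(\fr(V))$ and $H^{\bl}_{\That}(\GS(V))^W=H^{\bl}_{\Ghat}(\GS(V))$. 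The main obstacle is the $\zgd$-linearity verification in the existence step: the right $\ugd$-action on $\fr(V)$ and the equivariant cup-product action on the IC sheaf are of quite different natures, and bridging them requires the full strength of the Harish-Chandra isomorphism $\hci$ together with the precise compatibility between MV cycles and weight spaces built into \eqref{MVbasis}.
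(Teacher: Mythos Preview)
The paper does not prove this theorem; it is quoted from \cite[Theorem~6]{BF} and marked with $\square$. So there is no ``paper's own proof'' to compare against, and I evaluate your reconstruction on its merits.

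Your uniqueness argument is correct and standard. Your existence argument has the right architecture, but one point is glossed over: an MV cycle $Z$ gives a canonical element only of the graded piece $grF^{\mu}H^{\bl}_{\That}(\GS(S(\lambda)))$, not of $F^{\mu}$ itself. You need to \emph{choose} lifts $\widetilde{[Z]}\in F^{\mu}$ before defining the map on bases; any choice works, and your filtration-induction argument for $\zgd$-linearity then goes through. This is minor.

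The real gap is in the $W$-equivariance step. The $W$-action on $\dkf(\fr(V))$ is \emph{defined} via the isomorphism $\tt_{\fr(V)}:\utd\ot_{\zgd}\kf(\fr(V))\xrightarrow{\sim}\dkf(\fr(V))$ of Lemma~\ref{BFenhanceKFisom}, with $W$ acting on the $\utd$ factor and trivially on $\kf(\fr(V))$. There is no a~priori reason this action should permute the filtration $\{F^{\mu}\}$, which is built from the weight filtration on $V$; your claim that it does (``together with the permutation of weight spaces arising from the $N(\Td)$-action on $V$'') conflates the $\utd$-coefficient $W$-action with a geometric $N(\Td)$-action on $V$, and these are different. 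Without knowing that $w\cdot F^{\mu}\dkf(\fr(V))=F^{w\mu}\dkf(\fr(V))$, you cannot conclude that $w^{-1}\circ\bfisod_{\fr(V)}\circ w$ is again filtration-preserving, so the uniqueness argument does not apply. In \cite{BF} the $W$-equivariance is established by a separate and substantive argument; it is not a formal consequence of uniqueness.
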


\begin{myproof}{Proposition}{\ref{BFenhancedivisor}}
Observe that the assignment $v\mapsto [f_{\lambda,v}]$ defines a morphism 
\[f_{\lambda}:\fr(S(\lambda))[2\rho(\lambda)]\ra\KK\]
in $\hct$. (Here, $S(\lambda)$ is graded such that every element of $S(\lambda)_{\mu}$ has degree $2\rho(\mu)$.) Let $g_{\lambda}:\GS(S(\lambda))[2\rho(\lambda)]\ra \daggg$ be the morphism induced by $f_{\lambda}$ via Theorem \ref{BFqttheorem}. Put $M(\lambda):=\fr(S(\lambda))[2\rho(\lambda)]\in\hcf$ and $\JJ_{\lambda}:= \GS(S(\lambda))\in \ic$. We have the following commutative diagram

\begin{equation}\nonumber
\begin{tikzpicture}
\tikzmath{\x1 = 3; \x2 = 1.5; \x3=2.5; \x4=12; \x5=0; }
\node (A) at (0,0) {$\dkf(M(\lambda))$} ;
\node (B) at (0,-\x3) {$\dkf(\KK)$} ;
\node (C) at (\x1,-\x2) {$H_{\That}^{\bl+2\rho(\lambda)}(\JJ_{\lambda})$} ;
\node (D) at (\x1,-\x2-\x3) {$H_{-\bl}^{\That}(\ag)$} ;
\node (E) at (\x4,-\x5) {$H_{\That}^{\bl+2\rho(\lambda)}(\iota_{\mu,\leqslant\lambda}^{!}\JJ_{\lambda})$} ;
\node (F) at (\x4,-\x3-\x5) {$H_{-\bl}^{\That}(\ol{S^-_{\mu}}\cap\agll)$} ;
\node (G) at (0.5*\x1+0.5*\x4,-0.5*\x2-0.5*\x5) {$F^{\mu}H_{\That}^{\bl+2\rho(\lambda)}(\JJ_{\lambda})$} ;
\node (H) at (-\x1+0.5*\x1+0.5*\x4,\x2-0.5*\x2-0.5*\x5)  {$F^{\mu}\dkf(M(\lambda))$} ;

\path[->, font=\tiny] (A) edge node[left]{$\dkf(f_{\lambda})$} (B);
\path[->, font=\tiny] (A) edgenode[right]{$~\bfisod_{M(\lambda)}$} (C);
\path[->, font=\tiny] (C) edge node[right]{$H_{\That}^{\bl}(g_{\lambda})$} (D);
\path[->, font=\tiny] (B) edge node[below left]{$\pbfd$} (D);

\path[->, font=\tiny] (E) edge node[right]{$H_{\That}^{\bl}(\iota_{\mu,\leqslant\lambda}^{!}(g_{\lambda}))$} (F);
\path[->, font=\tiny] (F) edge node[below]{$H_{-\bl}^{\That}(\iota_{\mu,\leqslant\lambda})$} (D);
\path[->, font=\tiny] (G) edge node[above]{$\iota_2$} (C);
\path[->, font=\tiny] (E) edge node[above]{$j$} (G);

\path[->, font=\tiny] (H) edge node[above]{$\iota_1$} (A);
\path[->, font=\tiny] (H) edge node[right]{$~\bfisod_{M(\lambda)}|_{F^{\mu}\dkf(M(\lambda))}$} (G);

\end{tikzpicture}
\end{equation}
where $\iota_1, \iota_2$ are the inclusion maps, $\iota_{\mu,\leqslant\lambda}:\ol{S^-_{\mu}}\cap\agll\hookrightarrow \ag$ is the inclusion map and $j$ is the canonical isomorphism.

By assumption, we have $[v\otimes 1]\in F^{\mu}\dkf(M(\lambda))$, and by definition, we have $[f_{\lambda,v}]=\dkf(f_{\lambda})\circ\iota_1([v\otimes 1])$. Hence, by the above diagram, we have 
\begin{equation}\label{BFenhancedivisoreq1}
\pbfd([f_{\lambda,v}])=H_{-\bl}^{\That}(\iota_{\mu,\leqslant\lambda})\circ H_{\That}^{\bl}(\iota_{\mu,\leqslant\lambda}^{!}(g_{\lambda}))(x)
\end{equation}
where $x:=j^{-1}\circ\bfisod_{M(\lambda)}|_{F^{\mu}\dkf(M(\lambda))}([v\otimes 1])$. Observe that $H_{\That}^{\bl}(\iota_{\mu,\leqslant\lambda}^{!}(g_{\lambda}))(x)$ lies in $H_{2\rho(\lambda-\mu)}^{\That}(\ol{S^-_{\mu}}\cap\agll)$ and $2\rho(\lambda-\mu)$ is the maximal real dimension of the irreducible components of $\ol{S^-_{\mu}}\cap\agll$, by \cite[Theorem 3.2]{MV}. It follows that $\pbfd([f_{\lambda,v}])$ is equal to a linear combination, with \textit{complex} coefficients, of the $\That$-equivariant fundamental classes of the irreducible components of $\ol{S^-_{\mu}}\cap\agll$ of maximal dimension, i.e. the MV cycles of type $\lambda$ and weight $\mu$. Since these coefficients are constant polynomials in the equivariant parameters, we can determine them by looking at the non-equivariant case.

Denote by $\FF_{eq=0}$ the functor killing all equivariant parameters. By condition (2) in Theorem \ref{BFenhancepreservefil}, the element
\[ x':=\FF_{eq=0}(j^{-1}\circ\bfisod_{M(\lambda)}|_{F^{\mu}\dkf(M(\lambda))})([v\otimes 1])\in H^{2\rho(\mu)}(\iota_{\mu,\leqslant\lambda}^{!}\JJ_{\lambda})\simeq H^{2\rho(\mu)}_{\ol{S^-_{\mu}}}(\JJ_{\lambda})\]
corresponds to $v\in S(\lambda)_{\mu}$ via the first isomorphism in \eqref{MVbasis}. Denote by $\iota_{\lambda}$ and $\iota_{\mu,\lambda}$ the inclusions $\agl\hookrightarrow \ag$ and $\ol{S^-_{\mu}}\cap\agl\hookrightarrow \ag$ respectively. Recall there is a canonical morphism $\JJ_{\lambda} \ra (\iota_{\lambda})_*\CC[2\rho(\lambda)]$ which we shall denote by $h_{\lambda}$. Observe that $\iota_{\lambda}^!(h_{\lambda})$ is an isomorphism and 
\[  \iota_{\lambda}^!(g_{\lambda})\circ \iota_{\lambda}^!(h_{\lambda})^{-1}[2\rho(\lambda)]: \CC[4\rho(\lambda)]\simeq \mathbb{D}_{\agl} \ra \mathbb{D}_{\agl}  \]
is equal to $c_{\lambda}\id_{\mathbb{D}_{\agl} }$ for some $c_{\lambda}\in\CC$. It is not difficult to see that the composition
\begin{align*}
H_{2\rho(\lambda-\mu)}(\ol{S^-_{\mu}}\cap\agll) & \simeq  H_{\ol{S^-_{\mu}}}^{2\rho(\mu-\lambda)}((\iota_{\lambda})_*\CC[4\rho(\lambda)])\xrightarrow{ a^{-1}} 
H_{\ol{S^-_{\mu}}}^{2\rho(\mu-\lambda)}(\JJ_{\lambda}[2\rho(\lambda)]) \\
& \xrightarrow{b  }
H^{2\rho(\mu-\lambda)}(\mathbb{D}_{\ol{S^-_{\mu}}\cap\agll})
\simeq H_{2\rho(\lambda-\mu)}(\ol{S^-_{\mu}}\cap\agll) 
\end{align*}
is equal to $c_{\lambda}$ times the identity, where
\[ a:=H_{\ol{S^-_{\mu}}}^{2\rho(\mu-\lambda)}(h_{\lambda}[2\rho(\lambda)])\quad\text{ and }\quad  b:=H^{2\rho(\mu-\lambda)}(\iota_{\mu,\leqslant\lambda}^{!}(g_{\lambda})).\]
(Notice that $a$ is the second isomorphism in \eqref{MVbasis}.) Therefore, $H^{\bl}(\iota_{\mu,\leqslant\lambda}^{!}(g_{\lambda}))$ sends $x'$ to $c_{\lambda}[Z]$.

It remains to show $c_{\lambda}=1$. Notice that $c_{\lambda}$ depends only on $\lambda$ but not $\mu$. By Lemma \ref{dunnowheretoplacelemmaa} which we will prove in the next subsection, $\FF_{eq=0}(\dkf(\KK))\simeq\OO(\ZZZ_0)$ where $\ZZZ_0:=\{b\in\Bd|~b\cdot e=e\}$. Since $\Gd$ is of adjoint type, we have $\ZZZ_0\subseteq\Ud$. Now take $\mu=\lambda$ and $v=v_e\in S(\lambda)_{\lambda}$, the vector corresponding to the unique MV cycle $\{t^{\lambda}\}$ of type $\lambda$ and weight $\lambda$. Notice that $f_{\lambda,v_e}|_{\Ud}\equiv 1$. Hence,
\[ [f_{\lambda,v_e}] = 1\in \FF_{eq=0}(\dkf(\KK)).\]
Since $\pbfd|_{\kf(\KK)}$ is a ring homomorphism, we have $\FF_{eq=0}(\pbfd)(1)=1$. But we also have $[t^{\lambda}]=1\in H_0(\ag)$. Therefore, we have $c_{\lambda}=1$, as desired.
\end{myproof}

%%%%%%%%%%%%%%%%%%%%%%%%%%%%%%%%%%%
%%%%%%%%%%%%%%%%%%%%%%%%%%%%%%%%%%% 
\subsection{Semi-classical limit}\label{BFlimit}
In this subsection, we discuss the semi-classical limit of the homomorphism $\pbfd$ defined in Definition \ref{BFpbfdef}. Let $\FF_{\hp=0}$ denote the functor $N\ra N/\hp N$.
\begin{lemma}\label{dunnowheretoplacelemmaa}
$\FF_{\hp=0}(\dkf(\KK))$ is isomorphic as $\sym^{\bl}(\ttd)$-modules to the coordinate ring $\OO(\ZZZ)$ of the scheme
\[\ZZZ:=\left\{ (b,\xi)\in \Bd\times (e+\ttd) \left|~ b\cdot\xi=\xi \right. \right\}\]
where $e$ is defined in \eqref{betaande} and the $\sym^{\bl}(\ttd)$-module structure on $\OO(\ZZZ)$ is induced by the morphism $\ZZZ\ra \spec\sym^{\bl}(\ttd)\simeq (\ttd)^*$ defined by $(b,\xi)\mapsto \b(\xi,-)|_{\ttd}$. ($\b$ is also defined in \eqref{betaande}.) 
\end{lemma}
\begin{proof}
We first determine $\FF_{\hp=0}(\KK)$. By Definition \ref{KKdef}, we have $\KK=\kf(\dm{\Gd})$ where $\kf$ is applied to $\hcdot^R$ defined in Example \ref{BFenhanceHCeg2}. By Lemma \ref{Alemma3} which says that $\FF_{\hp=0}$ commutes with $\kf$, $\FF_{\hp=0}(\KK)$ is isomorphic to $\kf(\FF_{\hp=0}(\dm{\Gd}))\simeq \kf(\OO(T^*\Gd))$. By the definition of $\hcdot^R$ and the assumption that the character $-\cchh$ is used for $\kf$ (see the paragraph following Definition \ref{BFpbfdef}), we see that the last module is equal to $(\OO(T^*\Gd)/\mathcal{I}_1)^{\Umd}$ where $\mathcal{I}_1$ is the ideal generated by $-x^R+\cchh(x)$ ($x\in\nnd_-$) and the $\Umd$-action is induced by left translations on $\Gd$. Therefore, $\FF_{\hp=0}(\KK)$ is isomorphic to the coordinate ring of $Y:=\Umd\setminus \widetilde{Y}$, the left $\Umd$-quotient of the closed subscheme $\widetilde{Y}$ of $T^*\Gd$ defined by the equations $x^R-\cchh(x)$ ($x\in \nnd_-$).

We now determine $\FF_{\hp=0}(\dkf(\KK))$. Clearly, $\FF_{\hp=0}$ commutes with $\dkf$, and so, by the previous paragraph, $\FF_{\hp=0}(\dkf(\KK))$ is isomorphic to the coordinate ring of certain closed subscheme $Y'$ of $Y$ which can be expressed as $\Umd\setminus \widetilde{Y}'$ for some closed subscheme $\widetilde{Y}'$ of $\widetilde{Y}$. It is straightforward to see that 
\[ \widetilde{Y}' =\left\{ (g,\xi)\in \Gd\times \ggd \left|~ \xi\in e+\ttd,~g\cdot\xi\in e+\bbd_- \right. \right\}.\]
(Here, we have identified $T^*\Gd$, first with $\Gd\times(\ggd)^*$ via left translations, and then with $\Gd\times\ggd$ via $\b$.) But by Kostant's slice theorem, we have, for any $\xi\in e+\ttd$, 
\[ \left\{ g\in\Gd\left|~g\cdot\xi\in e+\bbd_- \right. \right\} = \Umd\cdot \left\{ b\in\Bd\left|~b\cdot\xi=\xi \right. \right\}, \]
and hence
\[ \widetilde{Y}'=\Umd\cdot \left\{(b,\xi)\in\Bd\times (e+\ttd)\left|~b\cdot\xi=\xi \right. \right\}= \Umd\cdot \ZZZ. \]
Therefore, 
\[ Y'\simeq \Umd\setminus\widetilde{Y}'\simeq  \Umd\setminus\left(\Umd\cdot \ZZZ\right) \simeq \ZZZ.\]
\end{proof}

Recall \textit{Yun-Zhu's isomorphism} \cite{YZ}
\[ \Phi_{YZ}:\OO(\Bed)\xrightarrow{\sim} H_{-\bl}^T(\ag)\]
where 
\begin{equation}\label{Beddef}
\Bed:=\{(b,h)\in\Bd\times\spec H_T^{\bl}(\pt)|~b\cdot e^T(h)=e^T(h)\}
\end{equation}
is the centralizer group scheme of the $H_T^{\bl}(\pt)$-point $e^T$ of $\bbd$ defined by $e^T:=e+f$ with $e:= \sum_{i=1}^r|\al_i|^2 e_i^{\vee}$ (see \eqref{betaande}) and $ f:\ad_i\mapsto |\ad_i|^2\a_i$. (This map is also constructed by Bezrukavnikov, Finkelberg and Mirkovi\'c \cite{BFM}.) Clearly, the isomorphism
\[
\begin{array}{ccc}
\Bd\times\spec H_T^{\bl}(\pt) & \ra & \Bd\times(e+\ttd)\\ [.5em]
(b,h) &\mapsto & (b,e^T(h))
\end{array}
\]
induces an isomorphism $\Bed\xrightarrow{\sim}\ZZZ$ of schemes over $\spec H_T^{\bl}(\pt)\simeq (\ttd)^*$, where $\ZZZ$ is defined in Lemma \ref{dunnowheretoplacelemmaa}. (Notice that $\b(e+f(h),-)|_{\ttd}=h$ for any $h\in (\ttd)^*$.)

\begin{proposition}\label{BFenhanceequalYZ}
After identifying  $\FF_{\hp=0}(\dkf(\KK))$ with $\OO(\Bed)$ via Lemma \ref{dunnowheretoplacelemmaa} and the above isomorphism, we have $\FF_{\hp=0}(\pbfd)=\Phi_{YZ}$.
\end{proposition}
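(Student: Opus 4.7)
The plan is to reduce the proof to checking that the two isomorphisms agree on a generating set of $\OO(\Bed)$, namely the restrictions to $\Bed$ of matrix coefficient functions $f_{\lambda,v}$ (viewed as regular functions on $\Bd \times \spec H_T^{\bl}(\pt)$ by pull-back along the first projection). These functions generate $\OO(\Bed)$ as a $\sym^{\bl}(\ttd)$-algebra because the $f_{\lambda,v}$ already generate $\OO(\Bd)$ via the restriction map $\OO(\Gd) \twoheadrightarrow \OO(\Bd)$, and $\Bed$ is closed in $\Bd \times \spec H_T^{\bl}(\pt)$.

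The first step is to compute $\FF_{\hp=0}(\pbfd)$ on these generators. By Proposition \ref{BFenhancedivisor}, for $v \in S(\lambda)_{\mu}$ corresponding to an MV cycle $Z$ of type $\lambda$ and weight $\mu$, the map $\pbfd$ sends the class $[f_{\lambda,v}] \in \dkf(\KK)$ to $[Z] \in H_{2\rho(\lambda-\mu)}^{\That}(\ag)$. Applying $\FF_{\hp=0}$ (which specializes the loop rotation parameter) preserves this formula in the $T$-equivariant setting, so $\FF_{\hp=0}(\pbfd)$ sends $f_{\lambda,v}|_{\Bed}$ to $[Z] \in H_{-\bl}^T(\ag)$. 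Since the MV cycles of type $\lambda$ and weight $\mu$ form a $\CC$-basis of $S(\lambda)_{\mu}$ via \eqref{MVbasis}, letting $\lambda$ and $v$ vary exhausts a generating set of the codomain as well.

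The second step is to verify that Yun-Zhu's isomorphism $\Phi_{YZ}$ acts on these same generators by the same rule $f_{\lambda,v}|_{\Bed} \mapsto [Z]$. This is precisely the defining content of the construction in \cite{YZ}: the identification of $\OO(\Bed)$ with $H_{-\bl}^T(\ag)$ is realized by matching matrix coefficients of $S(\lambda)$ with the fundamental classes of MV cycles, through the geometric Satake correspondence. Once this is in hand, both maps $\FF_{\hp=0}(\pbfd)$ and $\Phi_{YZ}$ are $H_T^{\bl}(\pt)$-algebra homomorphisms agreeing on a generating set, and the result follows.

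The main obstacle is a careful bookkeeping of normalizations. One has to match up (i) the conventions for the principal nilpotent $e$ and the character $\cchh$ fixed in \eqref{cchhdef} with the corresponding choices in \cite{YZ}; (ii) the deliberate use of $-\cchh$ (rather than $\cchh$) in defining $\kf$ applied to $\dm{\Gd}$, which was inserted precisely so that Proposition \ref{BFenhancedivisor} holds with the stated sign; and (iii) the identification $\Bed \xrightarrow{\sim} \ZZZ$ of Lemma \ref{dunnowheretoplacelemmaa}, which uses the specific lift $e^T = e + f$. Once these compatibilities are verified, no further computation is required: agreement on the generators $f_{\lambda,v}|_{\Bed}$ forces $\FF_{\hp=0}(\pbfd) = \Phi_{YZ}$ as $\sym^{\bl}(\ttd)$-algebra isomorphisms.
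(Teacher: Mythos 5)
Your reduction to checking agreement on a generating set is the right idea, and your Step~1 (computing $\FF_{\hp=0}(\pbfd)$ on matrix coefficients via Proposition~\ref{BFenhancedivisor}) is fine. The gap is in Step~2. You claim that $\Phi_{YZ}$ sends $f_{\lambda,v}|_{\Bed}$ to $[Z]$ for \emph{every} MV cycle $Z$, calling this ``precisely the defining content of the construction in \cite{YZ}.'' That is not what Yun--Zhu establish. What is available from \cite[Remark 3.4]{YZ} is only the fixed-point case: $\Phi_{YZ}([f_{\lambda,v_e}]) = [t^{\lambda}]$. The general formula $\Phi_{YZ}(f_{\lambda,v}|_{\Bed}) = [Z]$ is not stated or proved there, and in fact it is a nontrivial \emph{consequence} of the very proposition you are trying to prove (combined with Proposition~\ref{BFenhancedivisor}). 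Invoking it here is circular.

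The paper sidesteps this by working with a much smaller generating set: the elements $[f_{\lambda,v_e}]$ ($\lambda\in\cop$) and their $W$-translates. Both $\Phi_{YZ}$ and $\FF_{\hp=0}(\pbfd)$ are $W$-equivariant (the former by \cite[Proposition 6.6]{YZ}, the latter by construction), and since $[t^{\lambda}]$ together with its $W$-translates generate $H_{-\bl}^T(\ag)$ over $\fof(H_T^{\bl}(\pt))$, the corresponding preimages generate $\OO(\Bed)$ over $\fof(H_T^{\bl}(\pt))$. Agreement on $[f_{\lambda,v_e}]$ (the single input from \cite[Remark 3.4]{YZ} and the special case of Proposition~\ref{BFenhancedivisor}) then forces agreement everywhere. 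To repair your argument you would either need an independent reference for Yun--Zhu's action on arbitrary MV-cycle matrix coefficients, which does not exist, or replace Step~2 with the $W$-equivariance argument. A secondary point: your last paragraph treats both maps as $H_T^{\bl}(\pt)$-\emph{algebra} homomorphisms, but the paper's argument only needs $\fof(H_T^{\bl}(\pt))$-linearity plus $W$-equivariance; the algebra structure on $\dkf(\KK)$ is not part of its definition (it is only a module over $\kf(\KK)$), so asserting that $\FF_{\hp=0}(\pbfd)$ is a ring map would itself require justification.
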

\begin{proof}
There is a $W$-action on $\FF_{\hp=0}(\dkf(\KK))$ which is induced by the isomorphism $\tt_{\KK}$ (see Lemma \ref{Alemma2}). There is also a $W$-action on $\OO(\Bed)$ because $\Bed\simeq\ZZZ$ is the base change of the \textit{universal centralizer}
\[ \mathcal{C}:= \Gd\setminus\!\!\!\setminus\{ (g,\xi)\in \Gd\times\ggd|~g\cdot\xi = \xi\} \simeq \Umd\setminus\{ (g,\xi)\in \Gd\times(e+\mathfrak{b}^{\vee}_-)|~g\cdot\xi=\xi\}\]
along the canonical morphism $\ttd\simeq e+\ttd \ra \Umd\setminus (e+\bbd_-) \simeq  \ttd/W$. It is not difficult to see that these two actions coincide under our identification (because $\FF_{\hp=0}(\kf(\KK))\simeq \OO(\mathcal{C})$ which can be proved the same way how Lemma \ref{dunnowheretoplacelemmaa} is proved). By definition, $\FF_{\hp=0}(\pbfd)$ is $W$-equivariant, and by the proof of \cite[Proposition 6.6]{YZ}, $\Phi_{YZ}$ is $W$-equivariant as well.

Let $\lambda\in\cop$. Denote by $v_e\in S(\lambda)_{\lambda}$ the vector corresponding to the unique MV cycle $\{t^{\lambda}\}$ of type $\lambda$ and weight $\lambda$ via the composite isomorphism \eqref{MVbasis}. By \cite[Remark 3.4]{YZ}, we have
\[ \Phi_{YZ}([f_{\lambda,v_e}]) = [t^{\lambda}]\in H_0^T(\ag).\]
Since $\Phi_{YZ}$ is a $W$-equivariant isomorphism and $[t^{\lambda}]$ ($\lambda\in\cop$) together with their $W$-translates generate the module $H_{\bl}^T(\ag)$ over $\fof(H_T^{\bl}(\pt))$, it follows that $[f_{\lambda,v_e}]$ ($\lambda\in\cop$) together with their $W$-translates generate the module $\OO(\Bed)$ over $\fof(H_T^{\bl}(\pt))$. Therefore, we are done if we can show 
\begin{equation}\label{BFenhanceequalYZeq1} \nonumber
\FF_{\hp=0}(\pbfd)([f_{\lambda,v_e}]) = [t^{\lambda}]
\end{equation}
for any $\lambda\in\cop$. But this is a special case of Proposition \ref{BFenhancedivisor}.
\end{proof}

%%%%%%%%%%%%%%%%%%%%%%%%%%%%%%%%%%%%
%%%%%%%%%%%%%%%%%%%%%%%%%%%%%%%%%%%%
%%%%%%%%%%%%%%%%%%%%%%%%%%%%%%%%%%%%
%%%%%%%%%%%%%%%%%%%%%%%%%%%%%%%%%%%%
%%%%%%%%%%%%%%%%%%%%%%%%%%%%%%%%%%%%
%%%%%%%%%%%%%%%%%%%%%%%%%%%%%%%%%%%%
%%%%%%%%%%%%%%%%%%%%%%%%%%%%%%%%%%%%
\section{Proof of main results}\label{final}
\subsection{Summary}\label{finalstrategy}
We construct $\Mir$ as stated in Theorem \ref{main} by completing the following diagram.
\begin{equation}\label{finalstrategydiag}\nonumber
\begin{tikzpicture}
\tikzmath{\x1 = 5; \x2 = 2.5; \x3=7;}
\node (A) at (0,0) {$\bries$} ;
\node (B) at (\x1,0) {$\dm{\Gd}\ot\OO(\ZL)/\VV$} ;
\node (C) at (\x1+\x3,0) {$ QH_{\That}^{\bl}(G/P)[\ecc] $} ;
\node (D) at (\x1+0.5*\x3,\x2) {$\dkf(\KK)\ot\OO(\ZL)/\WW$} ;
\node (E) at (0.5*\x1,1.03*\x2) {} ;

%\path[->] (A) edge node[left]{$\Phi_{\rhod}$} (E);
%\path[->] (E) edge node[right]{$\Phi_0$} (B);
\path[->] (A) edge node[above]{$\Phi_0$} (B);
\path[->] (D) edge node[left]{$\Phi_{1~} ~$} (B);
\path[->] (D) edge node[right]{$~\Phi_2$} (C);
\path[->] (B) edge node[above]{$\Phi_3$} (C);
\end{tikzpicture}
\end{equation}

Here, 
\begin{enumerate}
\item $\dkf(\KK)$ is the desymmetrized quantum Toda lattice defined in Definition \ref{BFQTLdef}.

\item $\VV$ and $\WW$ are some submodules which we will define in Section \ref{finaltwomod} and Section \ref{finaltwomodw} respectively.

\item $\Phi_0$ is a $\utd\ot\OO(\ZL)$-linear map which we will define in Section \ref{finalphi0}. We will show that it is bijective (Proposition \ref{finalphi0bijective}).

\item $\Phi_1$ is a $\utd\ot\OO(\ZL)$-linear map which we will define in Section \ref{finalphi1}. We will show that it is surjective (Proposition \ref{finalphi1phi1surj}) and becomes bijective after applying the functor $N\mapsto N/\hp N$ (Proposition \ref{finalphi1phi1limitisbij}).

\item $\Phi_2$ is a $\utd\ot\OO(\ZL)$-linear map which we will define in Section \ref{finalphi2}. We will show that it is surjective (Proposition \ref{finalphi2phi2surj}) and becomes bijective after applying the functor $N\mapsto N/\hp N$ (Proposition \ref{finalphi2phi2limitisbij}).

\item $\Phi_3$ is the unique $\utd\ot\OO(\ZL)$-linear map such that $\Phi_3\circ\Phi_1=\Phi_2$. We will show that it exists (Proposition \ref{finalphi3exist}) and is bijective (Proposition \ref{finalphi3bij}) in Section \ref{finalphi3}.
\end{enumerate}

We define
\[\Mir := \Phi_3\circ\Phi_0\] 
which is thus a bijective $\utd\ot\OO(\ZL)$-linear map. We will show that it is $\dm{\ZL}$-linear (Proposition \ref{finalfinaldmlinear}) in Section \ref{finalfinal}. In Section \ref{conclusionofproof}, we will verify the rest of the properties of $\Mir$ as well as other statements in Theorem \ref{main} and Theorem \ref{main2}.

\begin{remark}
Readers who have read Section \ref{outlinefriendly} may find the following dictionary useful:
\[ \widetilde{\Phi}_R^{\hp} = \Phi_0^{-1}\circ\Phi_1\circ q|_{\dkf(\KK)},\quad \Phi_{YZ}^{\hp}=\pbfd,\quad \Phi_{PLS}^{\hp}=\pgmpt\]
and 
\[ \Phi_2\circ q|_{\dkf(\KK)}=\pgmpt\circ\Phi_{BF}^{\delta}\]
where $q:\dkf(\KK)\otimes\OO(\ZL)\ra\dkf(\KK)\otimes\OO(\ZL)/\WW$ is the quotient map and $\pbfd$ (resp. $\pgmpt$) comes from Definition \ref{BFpbfdef} (resp. Definition \ref{GMPcomputedef}).
\end{remark}
%%%%%%%%%%%%%%%%%%%%%%%%%%%%%%%%
%%%%%%%%%%%%%%%%%%%%%%%%%%%%%%%%
\subsection{The submodule $\VV$}\label{finaltwomod}
Let $\cchh:\nd\ra\CC$ be the character from \eqref{cchhdef}.
\begin{definition} \label{finaltwomodVdef}
Define $\VV\subset \dm{\Gd}\ot\OO(\ZL)$ to be the sum of 
\begin{enumerate}
\item the left ideal generated by $x^L-\cchh(x)$ with $x\in\nnd_-$;

\item the left ideal generated by $x^R-\cchh(x)$ with $x\in\nnd_-$;

\item the right ideal generated by $x^L$ with $x\in\nnd$; and

\item the sub-$\utd$-module generated by $\vp\in\OO(\Gd)\ot\OO(\ZL)\simeq \OO(\Gd\times\ZL)$ satisfying $\vp|_{\Xd}\equiv 0$ where the $\utd$-module structure on $\dm{\Gd}\ot\OO(\ZL)$ is defined by left multiplication of left-invariant vector fields on $\Gd$.
\end{enumerate}
\end{definition}

\begin{lemma} $\VV$ is homogeneous with respect to the natural grading on $\dm{\Gd}\ot\OO(\ZL)$ so there is a grading on the quotient $\dm{\Gd}\ot\OO(\ZL)/\VV$.
\end{lemma}
\begin{proof}
This is straightforward and left to the reader.
\end{proof}

\begin{definition}\label{finaltwomodtwistdef}
Define a $\utd$-module structure $\twdot$ on $\dm{\Gd}\ot\OO(\ZL)$ by the equality
\[ x\twdot y:= (x^L-\hp\rhod(x))y,\quad x\in\ttd, ~y\in \dm{\Gd}\ot\OO(\ZL)\]
where $\rhod$ is the half-sum of the positive coroots. It is not hard to see that $\twdot$ induces a $\utd$-module structure on $\dm{\Gd}\ot\OO(\ZL)/\VV$. 
\end{definition} 

Recall the paragraph before Proposition \ref{BFenhancedivisor} where we defined, for any $\lambda\in\cop$, the vector $v_e\in S(\lambda)_{\lambda}$ and, for any $v\in S(\lambda)$, the regular function $f_{\lambda,v}\in\OO(\Gd)$.

\begin{definition}\label{finaltwomodqvdef} For any $\lambda\in\cop$, define
\[ \widetilde{\shift}^{\VV}_{\lambda} :  \dm{\Gd}\ot\OO(\ZL)\ra \dm{\Gd}\ot\OO(\ZL)\]
by
\[ \widetilde{\shift}^{\VV}_{\lambda}(y) := f_{\lambda,v_e}y  \qquad y\in \dm{\Gd}\ot\OO(\ZL). \]
It is $(-\lambda)$-twisted $\utd$-linear in the sense of Definition \ref{Bshifttwistedlinear}.
\end{definition}

\begin{lemma}\label{finaltwomodvextend} (Shift operators) $ \widetilde{\shift}^{\VV}_{\lambda} $ descends to a $(-\lambda)$-twisted $\utd$-linear map
\[\shift^{\VV}_{\lambda}:\dm{\Gd}\ot\OO(\ZL)/\VV\ra \dm{\Gd}\ot\OO(\ZL)/\VV.\]
\end{lemma}
\begin{proof}
We have to show $\widetilde{\shift}^{\VV}_{\lambda} (\VV)\subseteq \VV$. Clearly $\widetilde{\shift}^{\VV}_{\lambda}$ preserves the subspaces (1), (2) and (4) from Definition \ref{finaltwomodVdef}. Let $y$ be an element of the remaining subspace (3). We may assume $y=x^Ly'$ for some $x\in\mathfrak{n}^{\vee}$ and $y'\in \dm{\Gd}\ot\OO(\ZL)$. Since $\LL_{x^L}f_{\lambda,v_e}=0$, it follows that
\[ f_{\lambda,v_e}y=x^Lf_{\lambda,v_e}y' - \hp(\LL_{x^L}f_{\lambda,v_e})y' = x^Lf_{\lambda,v_e}y'\equiv 0~(\bmod{\VV}).\]
This gives $\widetilde{\shift}^{\VV}_{\lambda}(y)\in\VV$.
\end{proof}

\begin{lemma}\label{finalVshiftinvertible} $\shift^{\VV}_{\lambda}$ is invertible.
\end{lemma}
\begin{proof}
This follows from the fact that $\shift^B_{\lambda}$ is invertible (Lemma \ref{Bshiftlemmamulti}) and two results which will be proved in Section \ref{finalphi0}, namely, Lemma \ref{finalphi0shift} and Proposition \ref{finalphi0bijective}.
\end{proof}
%%%%%%%%%%%%%%%%%%%%%%%%%%%%
%%%%%%%%%%%%%%%%%%%%%%%%%%%%
\subsection{The submodule $\WW$}\label{finaltwomodw} 
Recall $\KK:=\kf(\dm{\Gd})$ defined in Definition \ref{KKdef}.
\begin{definition}\label{finaltwomodadef}
For any $\lambda\in\cop$ and $v\in S(\lambda)$, define $a_{\lambda,v}\in \kf(\KK)$ as follows. Fix a homogeneous $\utd^W$-basis $\{z_1,\ldots,z_{|W|}\}$ of $\utd$ with $z_1=1$. Notice that $z_i$ has strictly positive degree for any $i\geqslant 2$. Recall the $\utd$-linear map $\tt_{\fr(S(\lambda))}:\utd\ot_{\zgd}\kf(\fr(S(\lambda)))\ra \dkf(\fr(S(\lambda)))$. Since it is bijective (Lemma \ref{BFenhanceKFisom}), there exist unique $\widetilde{a}_{\lambda,v,1},\ldots, \widetilde{a}_{\lambda,v,|W|}\in \kf(\fr(S(\lambda)))$ such that 
\begin{equation}\label{finaltwomodadefeq1}
[1\ot v]=\sum_{i=1}^{|W|} \tt_{\fr(S(\lambda))}(z_i\ot \widetilde{a}_{\lambda,v,i}) .
\end{equation}
Let $f_{\lambda}:\fr(S(\lambda))[2\rho(\lambda)]\ra \KK$ be the morphism in $\hct$ defined by $1\ot v'\mapsto [f_{\lambda,v'}]$. ($f_{\lambda,v'}$ is defined in the paragraph before Proposition \ref{BFenhancedivisor}.) We define
\[ a_{\lambda,v}:= \kf(f_{\lambda})(\widetilde{a}_{\lambda,v,1}). \]
\end{definition}

\begin{lemma}\label{finaltwomodarmk}
Suppose $v\in S(\lambda)_{\mu}$ for some $\mu\in\Q$. Then $a_{\lambda,v}$ is represented by $f_{\lambda,v}$ plus an element which is equal to a $\ubd$-linear combination of $f_{\lambda,v'}$ with $v'\in S(\lambda)_{<\mu}$. Here, the $\ubd$-module structure on $\dm{\Gd}$ is given by the left multiplication of left-invariant vector fields.
\end{lemma}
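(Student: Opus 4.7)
The plan is to exploit the weight filtration on $\fr(S(\lambda))$ together with the filtration-compatibility of $\tt_{\fr(S(\lambda))}$ from Theorem \ref{BFenhancepreservefil}, after first establishing an explicit normal form for representatives of elements of $\kf(\fr(S(\lambda)))$ via a Kostant-style Whittaker reduction.

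Working in the model $\fr(S(\lambda))\simeq \ugd\ot S(\lambda)$ from Example \ref{BFenhanceHCeg1}, I would first note that the generating relation for $\mathcal{I}_{-,\cchh}$ can be rewritten as
\[ zx\ot v'\;\equiv\; \cchh(x)\,z\ot v' + \hp\,z\ot(x\cdot v') \pmod{\mathcal{I}_{-,\cchh}}, \qquad x\in\nndm. \]
Combined with the direct-sum PBW decomposition $\ugd=\ubd\oplus\ugd\cdot\nndm$, an induction on the length of the trailing $\nndm$-string shows that every class in $\fr(S(\lambda))/\mathcal{I}_{-,\cchh}$ admits a representative of the form $\sum_{v'} b_{v'}\ot v'$ with $b_{v'}\in\ubd$; moreover the correction terms produced by the reduction strictly lower the $\Td$-weight of the $S(\lambda)$-component, since $x\in\nndm$ acts on $v'$ by a strictly negative root.

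Next I would apply Theorem \ref{BFenhancepreservefil}: $\tt_{\fr(S(\lambda))}$ preserves the weight filtration $\{F^\mu\}_{\mu\in\co}$, where $F^\mu$ is the image of $\ugd\ot S(\lambda)_{\geqslant\mu}$, and induces on associated graded pieces the identification $\mathcal{C}_\mu\ot S(\lambda)_\mu$. For $v\in S(\lambda)_\mu$, the class $[1\ot v]\in\dkf(\fr(S(\lambda)))$ lies in $F^\mu$ with leading graded image $1\ot v$. In the decomposition
\[ [1\ot v]=\sum_{i=1}^{|W|}\tt_{\fr(S(\lambda))}(z_i\ot \widetilde{a}_{\lambda,v,i}), \]
using that $\twist_{-\rhod}$ is degree-preserving while $z_i$ has strictly positive degree for $i\geqslant 2$, only the $i=1$ summand can match the graded degree of $[1\ot v]$ in $\mathrm{gr}^\mu\dkf(\fr(S(\lambda)))$. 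Hence $\widetilde{a}_{\lambda,v,1}\in F^\mu\kf(\fr(S(\lambda)))$ with graded class equal to $1\ot v$, and all $\widetilde{a}_{\lambda,v,i}$ for $i\geqslant 2$ live in strictly lower filtration.

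Combining these two ingredients, I would choose a $\Umd$-invariant representative of $\widetilde{a}_{\lambda,v,1}$ in $\fr(S(\lambda))$ of the normal form
\[ 1\ot v + \sum_{v'\in S(\lambda)_{<\mu}} b_{v'}\ot v',\qquad b_{v'}\in\ubd.\]
The morphism $f_\lambda$ is a left $\ugd$-module map, where the left $\ugd$-action on $\KK$ is $\hcdot^L$, i.e.\ left multiplication by left-invariant differential operators. Hence $f_\lambda(b\ot v')=b^L\cdot [f_{\lambda,v'}]$ for $b\in\ubd$, and applying $\kf(f_\lambda)$ to the representative above yields a representative of $a_{\lambda,v}$ in $\dm{\Gd}$ of the form $f_{\lambda,v}+\sum_{v'\in S(\lambda)_{<\mu}} b_{v'}^L\cdot f_{\lambda,v'}$, as claimed. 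The main obstacle will be the simultaneous achievement of the $\ubd$-normal form and $\Umd$-invariance: the naive Whittaker reduction does not commute with the $\Umd$-adjoint action, so care is needed either to average the reduced representative over $\Umd$ or, equivalently, to verify that the $\Umd$-action preserves the normal form up to strictly lower-weight corrections, which follows from $\Umd$ normalizing $\nndm$ and $\cchh$ being $\Umd$-invariant modulo the Whittaker ideal.
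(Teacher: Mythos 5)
Your overall strategy — combine a Whittaker (PBW) reduction with a weight/degree argument applied to the defining decomposition \eqref{finaltwomodadefeq1} — is the right intuition, and the degree argument using $\deg z_i>0$ for $i\geqslant 2$ is exactly what the paper uses. But there are two issues, one of which is a genuine gap.

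The genuine gap is in how you pin down the weight bound. The degree constraint tells you that if $\widetilde{a}_{\lambda,v,i}$ (for $i\geqslant 2$) is represented in $\ubd$-normal form by $\sum b_{v''}\ot v''$, then $\deg b_{v''}+2\rho(\text{wt}(v''))=2\rho(\mu)-\deg z_i$, hence $\rho(\text{wt}(v''))<\rho(\mu)$. But the lemma needs $\text{wt}(v'')<\mu$ in the dominance order, and $\rho(\nu)<\rho(\mu)$ does not imply $\nu<\mu$ unless you already know $\nu\leqslant\mu$. You attempt to supply that bound through a weight filtration on $\kf(\fr(S(\lambda)))$ and the ``filtration-compatibility of $\tt_{\fr(S(\lambda))}$ from Theorem \ref{BFenhancepreservefil}'', but that theorem is about the map $\bfisod_{\fr(V)}:\dkf(\fr(V))\to H_{\That}^{\bl}(\GS(V))$, not about $\tt_{\fr(V)}$, and the paper never defines a filtration on $\kf(\fr(V))$ (only on $\dkf$). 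Establishing such a filtration and the compatibility of $\tt$ with it would be a substantive additional argument which you leave implicit. The paper sidesteps all of this by applying Lemma \ref{Alemma2} — the extension of Lemma \ref{BFenhanceKFisom} to the larger category $\CCC$ — to the submodule $M:=\fr(S(\lambda)_{\leqslant\mu})$, which lies in $\CCC$ but not in $\hct$. Since $[1\ot v]\in\dkf(M)$ and $\tt_M$ is bijective, uniqueness forces all the $\widetilde{a}_{\lambda,v,i}$ to come from $\kf(M)$ and hence to have representatives in $\ugd\ot S(\lambda)_{\leqslant\mu}$; this gives the bound $\text{wt}(v'')\leqslant\mu$ for free, after which your degree argument does its job. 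This is precisely why the appendix proves the category-$\CCC$ version of the Kostant-functor isomorphism rather than just the $\hct$ version.

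Secondly, your concluding worry about reconciling the $\ubd$-normal form with $\Umd$-invariance is a red herring: a ``representative'' in the sense of the lemma is just any preimage in $\fr(S(\lambda))$ of the class $\widetilde{a}_{\lambda,v,1}\in(\fr(S(\lambda))/\mathcal{I}_{-,\cchh})^{\Umd}$, and need not itself be $\Umd$-invariant. Since $f_\lambda$ maps all of $\fr(S(\lambda))$ into $\KK$ (not merely the invariants), the image of any such preimage under $f_\lambda$ automatically represents $a_{\lambda,v}$, so there is no obstacle to resolve. Finally, a small imprecision: the claim that the Whittaker correction terms ``strictly lower the weight'' is not literally true — the term $\cchh(x)z'\ot v'$ keeps the same $S(\lambda)$-weight, only the term $\hp z'\ot(x\cdot v')$ lowers it — though the eventual conclusion that the fully reduced $\ubd$-form has all weights $\leqslant$ the starting weight is correct.
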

\begin{proof}
By Lemma \ref{Alemma2} applied to $M:=\fr(S(\lambda)_{\leqslant\mu})$, every $\widetilde{a}_{\lambda,v,i}$ lies in the image of the canonical map $\kf(M)\ra\kf(\fr(S(\lambda)))$ and hence is represented by a $\ubd$-linear combination of $1\ot v''$ with $v''\in S(\lambda)_{\leqslant\mu}$. By a degree argument, we cannot have $v''\in S(\lambda)_{\mu}$ if $i\geqslant 2$. The result now follows from \eqref{finaltwomodadefeq1} and the canonical isomorphism of $\utd$-modules
\[ \dkf(M)/\dkf(\fr(S(\lambda)_{<\mu})) \simeq \twist_{-\rhod}(\utd)\ot S(\lambda)_{\mu} .\]
\end{proof}

Recall $\dkf(\KK)$ is naturally a right $\kf(\KK)$-module. Hence $\dkf(\KK)\ot\OO(\ZL)$ is a right $\kf(\KK)\ot\OO(\ZL)$-module. For any $\lambda\in\cop$ and $w\in W$, denote by $v_w\in S(\lambda)_{w(\lambda)}$ the vector corresponding to the unique MV cycle of type $\lambda$ and weight $w(\lambda)$ via the composite isomorphism \eqref{MVbasis}. (Notice that $v_w$ depends on $\lambda$. But since it is unlikely to cause any confusion, we drop $\lambda$ from the notation for simplicity.) 
\begin{definition} \label{finaltwomodWdef}
Define $\WW\subset \dkf(\KK)\ot\OO(\ZL)$ to be the vector subspace generated by elements of the form $x
\cdot a $ where $x\in\dkf(\KK)\ot\OO(\ZL)$ and $a$ is either 
\begin{enumerate}
\item $a_{\lambda,v_{\wp}}-q^{[w_0(\lambda)]}$ for some $\lambda\in\cop$; or

\item $a_{\lambda,v}$ for some $\lambda\in\cop$ and $v\in S(\lambda)_{<\wp(\lambda)}$.
\end{enumerate}
\end{definition}

\begin{remark}
The definition of $\WW$ is inspired by \cite{me2}. See Section 6.1 therein. 
\end{remark}

\begin{lemma} $\WW$ is homogeneous with respect to the natural grading on $ \dkf(\KK)\ot\OO(\ZL)$ so there is a grading on the quotient $ \dkf(\KK)\ot\OO(\ZL)/\WW$.
\end{lemma}
\begin{proof}
This is straightforward and left to the reader.
\end{proof}

\begin{definition}\label{finaltwomodqwdef} For any $\lambda\in\cop$, define
\[ \widetilde{\shift}^{\WW}_{\lambda} : \dkf(\KK)\ot\OO(\ZL) \ra \dkf(\KK)\ot\OO(\ZL)\]
by
\begin{equation}\label{finaltwomodqwdefeq1}
\widetilde{\shift}^{\WW}_{\lambda}( [x] ):= [f_{\lambda,v_e}x] \qquad [x]\in \dkf(\KK)\ot\OO(\ZL)
\end{equation}
where $x\in\dm{\Gd}\ot\OO(\ZL)$ is any representative of $[x]$. (Notice that $\dkf(\KK)$ is a sub-quotient of $\dm{\Gd}$.) It is $(-\lambda)$-twisted $\utd$-linear in the sense of Definition \ref{Bshifttwistedlinear}.
\end{definition}

\begin{remark}\label{finaltwomodqwrmk}
One has to show that the RHS of \eqref{finaltwomodqwdefeq1} is well-defined, i.e. $f_{\lambda,v_e}x$ represents an element of $\dkf(\KK)\ot\OO(\ZL)$ which is independent of the representative $x$. Since the proof is straightforward, we omit it. 
\end{remark}

\begin{lemma}\label{finaltwomodwextend} (Shift operators) $ \widetilde{\shift}^{\WW}_{\lambda} $ descends to a $(-\lambda)$-twisted $\utd$-linear map
\[\shift^{\WW}_{\lambda}:\dkf(\KK)\ot\OO(\ZL)/\WW\ra \dkf(\KK)\ot\OO(\ZL)/\WW.\]
\end{lemma}
\begin{proof}
Obvious.
\end{proof}

\begin{lemma}\label{finalWshiftinvertible} $\shift^{\WW}_{\lambda}$ is invertible.
\end{lemma}
\begin{proof}
We prove the lemma by showing that (1) $\widetilde{\shift}^{\WW}_{\lambda}$ is invertible and (2) its inverse preserves $\WW$. 

Since $\pbfd$ is an isomorphism of modules with respect to the isomorphism $\pbfk$ of rings (Theorem \ref{BFqttheorem}) and $\pbfd([f_{\lambda,v_e}])=[t^{\lambda}]$ (Proposition \ref{BFenhancedivisor}), $\widetilde{\shift}^{\WW}_{\lambda}|_{\dkf(\KK)}$ is identified with $[t^{\lambda}]\aggt -$ under this isomorphism. (A priori, $[t^{\lambda}]\aggt -$ is a map from $H_{-\bl}^{\Ghat}(\ag)$ to $H_{-\bl}^{\That}(\ag)$. But we can extend it to an endomorphism of $H_{-\bl}^{\That}(\ag)$ by extension of scalars because it is a homomorphism of modules with respect to the composition $H_{\Ghat}^{\bl}(\pt) \hookrightarrow H_{\That}^{\bl}(\pt) \xrightarrow{\twist_{-\lambda}} H_{\That}^{\bl}(\pt)$.) Clearly, $[t^{\lambda}]\aggt -$ has an inverse which is given by $[t^{-\lambda}]\aggt -$. This proves (1). 

Observe that the left multiplication by $[t^{-\lambda}]$ commutes with the right multiplication by any elements of $H_{-\bl}^{\Ghat}(\ag)$. Since $\WW$ is additively generated by $x\cdot a$ where $x\in\dkf(\KK)\ot\OO(\ZL)$ is arbitrary and $a$ belongs to a subset of $\kf(\KK)\ot\OO(\ZL)$ (see Definition \ref{finaltwomodWdef}), (2) follows from this observation and the fact that $\pbfd$ is an isomorphism of modules with respect to $\pbfk$.
\end{proof}
%%%%%%%%%%%%%%%%%%%%%%%%%%%%
%%%%%%%%%%%%%%%%%%%%%%%%%%%%
\subsection{The map $\Phi_0$}\label{finalphi0}
For any $\a\in R$, pick a generator $e_{\ad}$ of $\ggd_{\ad}$. Denote by $\tt_{\a,\b}\in\OO(\Gd)$ ($\b\in R$) and $\tt_{\a,\hh}:\Gd\ra\ttd$ the unique functions satisfying
\begin{equation}\label{finalphi0descendlemmaeq1}
e_{\ad}^R = \sum_{\b\in R}\tt_{\a,\b}e_{\bd}^L +\sum_i\langle h^i, \tt_{\a,\hh}\rangle h_i^L
\end{equation}
where $\{h_i\}$ and $\{h^i\}$ are dual bases of $\ttd$ and $(\ttd)^*$ respectively. Consider the splitting $\ggd=\bbd\oplus\nnd_-$ which induces a splitting of the tangent bundle $\mathcal{T}_{\Gd}$ of $\Gd$ by left translations. Let $\pr_{\bbd}^L$ and $\pr_{\nnd_-}^L$ denote the projections from $\mathcal{T}_{\Gd}$ onto the corresponding direct summands. Let $\a\in -R^+$. Define
\[\widetilde{\zeta}_{\ad}:= \pr_{\bbd}^L(e_{\ad}^R)\oplus 0\in\mathfrak{X}(\Gd\times\ZL/\ZL).\]

\begin{lemma}\label{finalphi0longlemma}
For any $\a\in -R^+$ and $\widetilde{\vp}\in\OO(\Gd\times\ZL)$, we have
\begin{align*}
& \hp\rhod(\tt_{\a,\hh})\widetilde{\vp} + \hp\LL_{\widetilde{\zeta}_{\ad}}\widetilde{\vp} + \left( \cchh(e_{\ad}) - \sum_{\b\in - R^+}\tt_{\a,\b}\cchh(e_{\bd})\right)\widetilde{\vp} - \sum_ih_i\twdot\langle h^i,\tt_{\a,\hh}\rangle\widetilde{\vp}\\
=~& \widetilde{\vp}\left( -(e_{\ad}^R -\cchh(e_{\ad}))+\sum_{\b\in -R^+}\tt_{\a,\b} (e_{\bd}^L-\cchh(e_{\bd}))\right) + \sum_{\b\in R^+}e_{\bd}^L\tt_{\a,\b} \widetilde{\vp} ~\in \dm{\Gd}\ot\OO(\ZL).
\end{align*}
(Recall $\twdot$ is defined in Definition \ref{finaltwomodtwistdef}.) 
\end{lemma}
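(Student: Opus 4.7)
The plan is to put both sides into a common normal form in $\dm{\Gd}\otimes\OO(\ZL)$ and reduce the lemma to a pointwise identity of functions on $\Gd$.

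I would start by expanding $\hp\LL_{\widetilde{\zeta}_{\ad}}\widetilde{\vp}$ using the commutator identity $D\widetilde{\vp}-\widetilde{\vp}D=\hp\LL_D\widetilde{\vp}$ (valid for any vector field $D$ in the $\hp$-graded Weyl algebra), applied to $D=\widetilde{\zeta}_{\ad}=e_{\ad}^R-\sum_{\b\in -R^+}\tt_{\a,\b}e_{\bd}^L$ (the latter coming from projecting \eqref{finalphi0descendlemmaeq1} onto $\bbd^L$). Separately, I would unpack $\sum_i h_i\twdot\langle h^i,\tt_{\a,\hh}\rangle\widetilde{\vp}$ via Definition \ref{finaltwomodtwistdef} as $\sum_i h_i^L\langle h^i,\tt_{\a,\hh}\rangle\widetilde{\vp}-\hp\rhod(\tt_{\a,\hh})\widetilde{\vp}$, where the second term uses the dual-basis identity $\sum_i\rhod(h_i)\langle h^i,\tt_{\a,\hh}\rangle=\rhod(\tt_{\a,\hh})$.

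I would then rewrite both LHS and RHS in the normal form ``(function)$\cdot$(left-invariant vector field)\,+\,function'', moving every instance of $\widetilde{\vp}$ to the left via the same commutation relation. Using \eqref{finalphi0descendlemmaeq1} to rewrite $\widetilde{\vp}e_{\ad}^R$ and using that $\widetilde{\vp}$ and each $\tt_{\a,\b}$ commute (both being functions), the $e_{\bd}^L$-coefficients on the two sides cancel pairwise and the $h_i^L$-coefficients match: on both sides the vector-field part equals $-\sum_i\langle h^i,\tt_{\a,\hh}\rangle\widetilde{\vp}\,h_i^L$. After combining the Leibniz-rule residuals produced by these moves together with the two copies of $\hp\rhod(\tt_{\a,\hh})\widetilde{\vp}$ (the explicit one in the LHS and the one produced by unpacking $\twdot$), the difference $\mathrm{LHS}-\mathrm{RHS}$ reduces to $\hp\widetilde{\vp}$ times
\[
2\rhod(\tt_{\a,\hh})-\sum_{\b\in R^+}\LL_{e_{\bd}^L}\tt_{\a,\b}-\sum_i\LL_{h_i^L}\langle h^i,\tt_{\a,\hh}\rangle.
\]

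Finally, I would prove this expression vanishes identically on $\Gd$ by differentiating the defining relation $\adjoint(g^{-1})e_{\ad}=\sum_{\b\in R}\tt_{\a,\b}(g)e_{\bd}+\tt_{\a,\hh}(g)$ along the flow of $X^L$ via $\adjoint((g\exp(tX))^{-1})e_{\ad}=e^{-t\,\sadjoint X}\adjoint(g^{-1})e_{\ad}$, whose $t$-derivative is $-[X,\adjoint(g^{-1})e_{\ad}]$. Matching coefficients in the decomposition $\ggd=\ttd\oplus\bigoplus_{\b\in R}\ggd_{\bd}$: for $X=e_{\gamma^{\vee}}$ with $\gamma\in R^+$, only the torus term contributes to the $e_{\gamma^{\vee}}$-coefficient via $-[e_{\gamma^{\vee}},\tt_{\a,\hh}]=\gamma^{\vee}(\tt_{\a,\hh})\,e_{\gamma^{\vee}}$, yielding $\LL_{e_{\gamma^{\vee}}^L}\tt_{\a,\gamma}=\gamma^{\vee}(\tt_{\a,\hh})$, and summing over $\gamma\in R^+$ gives $\bigl(\sum_{\gamma\in R^+}\gamma^{\vee}\bigr)(\tt_{\a,\hh})=2\rhod(\tt_{\a,\hh})$; for $X=h_i\in\ttd$, the bracket $[h_i,\adjoint(g^{-1})e_{\ad}]$ has zero $\ttd$-component because $\ttd$ is abelian, so $\LL_{h_i^L}\tt_{\a,\hh}=0$. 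The main obstacle will be the bookkeeping --- tracking which side of each vector field $\widetilde{\vp}$ sits on, and ensuring the two sources of $\hp\rhod(\tt_{\a,\hh})\widetilde{\vp}$ combine (producing the factor of $2$ that matches $\sum_{\gamma\in R^+}\gamma^{\vee}=2\rhod$) --- rather than the Lie-algebraic content, which is a one-line calculation.
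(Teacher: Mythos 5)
Your proposal is correct and follows essentially the same path as the paper: both proofs hinge on the two decompositions of $\pr_{\bbd}^L(e_{\ad}^R)$ from \eqref{finalphi0descendlemmaeq1}, the commutation relation $D\widetilde{\vp}-\widetilde{\vp}D=\hp\LL_D\widetilde{\vp}$, and the identities $\LL_{h_i^L}\tt_{\a,\hh}=0$ and $\LL_{e_{\bd}^L}\tt_{\a,\b}=\bd(\tt_{\a,\hh})$, the last two of which the paper merely ``observes'' while you supply the (correct) derivation by differentiating $\adjoint(g^{-1})e_{\ad}$ along left-invariant flows. The only organizational difference is that the paper splits the identity into two intermediate equations (\eqref{finalphi0longlemmaeq1} and \eqref{finalphi0longlemmaeq2}) and adds them, while you collect the difference $\mathrm{LHS}-\mathrm{RHS}$ directly; the algebra and the Lie-theoretic content are the same.
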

\begin{proof}
First by the equality $\pr_{\bbd}^L(e_{\ad}^R)=e_{\ad}^R - \sum_{\b\in -R^+}\tt_{\a,\b}e_{\bd}^L$, we have
\begin{align}\label{finalphi0longlemmaeq1}
& -\widetilde{\vp}\widetilde{\zeta}_{\ad} + \left(\cchh(e_{\ad})-\sum_{\b\in -R^+}\tt_{\a,\b}\cchh(e_{\bd})\right)\widetilde{\vp} \nonumber \\
= ~& \widetilde{\vp}\left( -(e_{\ad}^R-\cchh(e_{\ad})) + \sum_{\b\in -R^+}\tt_{\a,\b}(e_{\bd}^L-\cchh(e_{\bd}))\right).
\end{align}

Next, using $\pr_{\bbd}^L(e_{\ad}^R) =\sum_i\langle h^i,\tt_{\a,\hh}\rangle h_i^L +\sum_{\b\in R^+}\tt_{\a,\b}e_{\bd}^L$, we have 
\begin{align}
\widetilde{\zeta}_{\ad}\widetilde{\vp} =~& 
\left( \sum_i\langle h^i,\tt_{\a,\hh}\rangle h_i^L +\sum_{\b\in R^+}\tt_{\a,\b}e_{\bd}^L\right)\widetilde{\vp} \nonumber \\
= ~& \left( \sum_i h_i^L \langle h^i,\tt_{\a,\hh}\rangle  -\hp \sum_i\langle h^i,\LL_{h_i^L}\tt_{\a,\hh}\rangle +\sum_{\b\in R^+} e_{\bd}^L\tt_{\a,\b} - \hp\sum_{\b\in R^+}\LL_{e_{\bd}^L}\tt_{\a,\b} \right)\widetilde{\vp}.  \nonumber
\end{align}
Observe that $\LL_{h_i^L}\tt_{\a,\hh} =0$ and $\LL_{e_{\bd}^L}\tt_{\a,\b}  = \bd(\tt_{\a,\hh})$. It follows that
\begin{equation}\label{finalphi0longlemmaeq2}
\widetilde{\zeta}_{\ad}\widetilde{\vp} =\sum_ih_i\twdot\langle h^i,\tt_{\a,\hh}\rangle\widetilde{\vp}-\hp\rhod(\tt_{\a,\hh})\widetilde{\vp}+\sum_{\b\in R^+}e_{\bd}^L\tt_{\a,\b}  \widetilde{\vp}.
\end{equation}

Our result now follows from \eqref{finalphi0longlemmaeq1}, \eqref{finalphi0longlemmaeq2} and the equality $\hp\LL_{\widetilde{\zeta}_{\ad}}\widetilde{\vp} = \widetilde{\zeta}_{\ad}\widetilde{\vp}- \widetilde{\vp}\widetilde{\zeta}_{\ad}$.
\end{proof}

Recall the Rietsch mirror $(\Xd,W,\pi,p,\vol)$ (Definition \ref{Rietschmirrordef}). Let $\a\in -R^+$. Observe that $\widetilde{\zeta}_{\ad}$ is tangent to $\Xd$. Define 
\[\zeta_{\ad}:=\widetilde{\zeta}_{\ad}|_{\Xd}\in\mathfrak{X}(\Xd/\ZL).\]

\begin{lemma}\label{finalphi0globalframelemma}
$\{\zeta_{\ad}\}_{\a\in w_0(R^+\setminus R^+_P)}$ is a global frame of $\Xd$ relative to $\ZL$.
\end{lemma}
\begin{proof}
This amounts to showing $\mathfrak{n}^{\vee}_-\cap g^{-1}\cdot V=0$ for any $g\in\Bd\cap\Umd(\wpd)^{-1}\ZL\Umd$ where $V:=\bigoplus_{\a\in w_0(R^+\setminus R^+_P)}\ggd_{\ad}\subset\ggd$. This follows from the facts that $\Umd$ preserves $V$, $\wpd$ maps $V$ into $\mathfrak{n}^{\vee}$ and $\Bmd$ preserves $\mathfrak{n}^{\vee}_-$.
\end{proof}

\begin{remark} Compare the frame $\{\zeta_{\ad}\}_{\a\in w_0(R^+\setminus R^+_P)}$ with the one introduced in \cite[Lemma 5.4]{Rietsch}.
\end{remark}

We now define $\Phi_0$. Since the fiberwise volume form $\vol\in \O^{top}(\Xd/\ZL)$ is nowhere vanishing, we have 
\[\O^{top}(\Xd/\ZL)= \OO(\Xd)\cdot \vol\quad\text{and}\quad \O^{top-1}(\Xd/\ZL)=\bigoplus_{\a\in w_0(R^+\setminus R^+_P)} \OO(\Xd)\cdot \iota_{\zeta_{\ad}}\vol.\]
It is not hard to see that
\[ \briesp\simeq M/\UU_{\vol}\]
where $M:=\utd\ot\OO(\Xd)$ and $\UU_{\vol}\subseteq M$ is the vector subspace generated by elements of the form 
\begin{align*}\label{finalphi0predefeq1}
& m_{\a}(z,\vp)\\
:=~ &  z\ot\left(\hp\left(\frac{\LL_{\zeta_{\ad}}\vol}{\vol}\right)\vp + \hp\LL_{\zeta_{\ad}}\vp + (\LL_{\zeta_{\ad}} W)\vp \right) - \sum_izh_i\ot(\iota_{\zeta_{\ad}}p^*\langle h^i,\mccccc_{\Td}\rangle)\vp
\end{align*}
where $\a\in w_0(R^+\setminus R_P^+)$, $z\in\utd$ and $\vp\in\OO(\Xd)$. Notice that $m_{\a}(z,\vp)$ is in fact well-defined for any $\a\in -R^+$ and also contained in $\UU_{\vol}$.

\begin{definition}\label{finalphi0def}
Define a $\utd\ot\OO(\ZL)$-linear map
\[ \widetilde{\Phi}_0 : M\ra \dm{\Gd}\ot\OO(\ZL)/\VV\]
by
\[ \widetilde{\Phi}_0(z\ot\vp):= [z\twdot\widetilde{\vp}]\qquad z\in\utd,~\vp\in\OO(\Xd)\]
where $\widetilde{\vp}\in\OO(\Gd\times\ZL)$ is any extension of $\vp$ and $\twdot$ is defined in Definition \ref{finaltwomodtwistdef}. (It is well-defined because $\VV$ contains the subspace (4) from Definition \ref{finaltwomodVdef}.)
\end{definition}

\begin{lemma}\label{finalphi0descendlemma} $\widetilde{\Phi}_0$ descends to a $\utd\ot\OO(\ZL)$-linear map
\[ \Phi_0:M/\UU_{\vol}\simeq  \briesp\ra \dm{\Gd}\ot\OO(\ZL)/\VV.\]
\end{lemma}
\begin{proof}
We have to show $\widetilde{\Phi}_0(m_{\a}(z,\vp))=0$ for any $\a\in w_0(R^+\setminus R_P^+)$, $z\in\utd$ and $\vp\in\OO(\Xd)$. By Lemma \ref{ApplemmaA}, Lemma \ref{ApplemmaB} and Lemma \ref{ApplemmaC}, 
\[  \rhod(\tt_{\a,\hh}),\quad  \cchh(e_{\ad})-\sum_{\b\in -R^+}\tt_{\a,\b}\cchh(e_{\bd}) \quad\text{and}\quad\langle h^i,\tt_{\a,\hh}\rangle \]
are extensions of 
\[ \frac{\LL_{\zeta_{\ad}}\vol}{\vol},\quad \LL_{\zeta_{\ad}}W\quad\text{and}\quad\iota_{\zeta_{\ad}}p^*\langle h^i,\mccccc_{\Td}\rangle\]
respectively. It follows that $\widetilde{\Phi}_0(m_{\a}(z,\vp))$ is represented by  
\[ z\twdot\left(  \hp\rhod(\tt_{\a,\hh})\widetilde{\vp} +\hp\LL_{\widetilde{\zeta}_{\ad}}\widetilde{\vp} + \left( \cchh(e_{\ad}) - \sum_{\b\in - R^+}\tt_{\a,\b}\cchh(e_{\bd})\right)\widetilde{\vp} - \sum_ih_i\twdot\langle h^i,\tt_{\a,\hh}\rangle\widetilde{\vp}\right) \]
where $\widetilde{\vp}\in\OO(\Gd\times\ZL)$ is an extension of $\vp$. The result now follows from Lemma \ref{finalphi0longlemma}.
\end{proof}

\begin{lemma}\label{finalphi0graded} $\Phi_0$ is graded.
\end{lemma}
\begin{proof}
This is because $[\vol]$ has degree 0 (Lemma \ref{Fiberwisevolumeformdegzero}).
\end{proof}

\begin{lemma} \label{finalphi0shift} For any $\lambda\in\cop$, we have
\[ \Phi_0\circ\shift^B_{\lambda} = \shift^{\VV}_{\lambda}\circ\Phi_0\]
where $\shift^B_{\lambda}$ and $\shift^{\VV}_{\lambda}$ come from Lemma \ref{Bshiftwell} and Lemma \ref{finaltwomodvextend} respectively.
\end{lemma}
\begin{proof}
It suffices to note that $f_{\lambda,v_e}$ (regarded as a regular function on $\Gd\times\ZL$) is an extension of $\lambda\circ p\in \OO(\Xd)$. 
\end{proof}

\begin{proposition}\label{finalphi0bijective}
$\Phi_0$ is bijective.
\end{proposition}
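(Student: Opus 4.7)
The plan is to prove surjectivity and injectivity separately, using PBW for $U_\hp(\ggd)$ together with the explicit computation in Lemma \ref{finalphi0longlemma}.

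Surjectivity comes from a canonical reduction modulo $\VV$. Writing $U_\hp(\ggd) \simeq U_\hp(\nnd_-) \otimes U_\hp(\ttd) \otimes U_\hp(\nnd)$, every element of $\dm{\Gd} \otimes \OO(\ZL)$ is a sum of terms $\vp \cdot (z_- z_0 z_+)^L$ with $\vp \in \OO(\Gd \times \ZL)$, $z_\pm \in U_\hp(\nnd_{\pm})$, $z_0 \in U_\hp(\ttd)$. Modulo the right ideal (3), positive-degree factors $z_+$ are eliminated inductively: after commuting $z_+$ past $\vp$, the leading term ends in an element of $\nnd^L$, hence lies in (3), while the $\hp$-corrections have strictly lower $U_\hp(\nnd)$-degree. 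Modulo the left ideal (1), each factor $z_-$ is iteratively replaced by its scalar value $\cchh(z_-) \in \CC[\hp]$, using that $\cchh$ extends to $U_\hp(\nnd_-)$ as an algebra homomorphism. Modulo (4), functions $\vp \in \OO(\Gd \times \ZL)$ descend to $\vp|_{\Xd} \in \OO(\Xd)$. The resulting canonical form $\sum z_0 \cdot \widetilde{\vp}|_{\Xd}$ lies in the image of $\widetilde{\Phi}_0$ after one further induction on $\utd$-degree to convert $z^L$ into $z \twdot$ (the discrepancy being $\hp\rhod(z)$).

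Injectivity is proved by constructing an explicit inverse $\overline{\Psi}$ from the above canonical form. At the level of representatives, define
\[ \Psi\bigl(\vp \cdot (z_- z_0 z_+)^L\bigr) := \cchh(z_-)\,\epsilon(z_+)\,[z_0 \otimes \vp|_{\Xd}] \in M/\UU_{\w_0}, \]
where $\epsilon : U_\hp(\nnd) \to \CC[\hp]$ is the augmentation; extend $\CC[\hp]$-linearly, which is unambiguous by PBW uniqueness. The families (1), (3), (4) are killed by construction. The central check is that $\Psi$ vanishes on (2); this is essentially Lemma \ref{finalphi0longlemma} read in reverse: for each $\a \in w_0(R^+ \setminus R^+_P)$, the lemma expresses $\widetilde{\Phi}_0(\zeta_{\ad} \cdot (z \otimes \vp))$ as, modulo (1), (3), (4), the (2)-generator $-\widetilde{\vp}\,z\twdot(e_{\ad}^R - \cchh(e_{\ad}))$. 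Since $\UU_{\w_0}$ is generated by such $\zeta_{\ad} \cdot (z \otimes \vp)$ and $\{\zeta_{\ad}\}_{\a \in w_0(R^+ \setminus R^+_P)}$ is a global frame on $\Xd$ (Lemma \ref{finalphi0globalframelemma}), the corresponding (2)-generators are matched with elements of $\UU_{\w_0}$. Once $\overline{\Psi}$ is defined, the identities $\overline{\Psi} \circ \Phi_0 = \id$ and $\Phi_0 \circ \overline{\Psi} = \id$ follow from PBW uniqueness applied to the canonical form.

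The main obstacle is checking that the remaining (2)-generators, namely $B \cdot (e_{\ad}^R - \cchh(e_{\ad}))$ for $\ad$ lying outside $w_0(R^+ \setminus R^+_P)$ (i.e., in the Lie algebra of the Levi $\Ld$ of $\Pd$), already lie in $(1) + (3) + (4)$. For such $\ad$, the containment $\Xd \subset \Bd \cap \Umd(\wpd)^{-1}\ZL\Umd$ built into the definition of the Rietsch LG model forces $\adjoint(g^{-1}) e_{\ad}$ for $g \in \Xd$ to stay in $\bbd \oplus \nnd_-$, so that $e_{\ad}^R$ restricted to $\Xd$ decomposes into a $\bbd^L$-part and a $\nnd_-^L$-part and is absorbed modulo $(1), (3), (4)$. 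This root-by-root analysis, which relies on the double Bruhat cell structure of $\Xd$ from Lemma \ref{Rietschfiblemma}, is the computational heart of the argument.
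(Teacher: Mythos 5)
Your surjectivity argument matches what the paper leaves as ``clear,'' and constructing an explicit two-sided inverse $\overline{\Psi}$ in place of the paper's direct reduction-and-restriction argument is a legitimate alternative strategy. The gap is in your treatment of the $(2)$-generators $e_{\ad}^{R}-\cchh(e_{\ad})$ for $\a\in -R^+\setminus w_0(R^+\setminus R_P^+)=w_0R_P^+$: you assert that they ``already lie in $(1)+(3)+(4)$,'' and justify this by saying $\adjoint(g^{-1})e_{\ad}$ stays in $\bbd\oplus\nnd_-$ for $g\in\Xd$. That justification is vacuous ($\bbd\oplus\nnd_-=\ggd$), and the assertion itself is false: expanding $e_{\ad}^{R}$ via \eqref{finalphi0descendlemmaeq1} leaves a $\ttd^{L}$-contribution $\sum_i\langle h^i,\tt_{\a,\hh}\rangle h_i^{L}$ which is not absorbed by $(1)$, $(3)$ or $(4)$, and $\tt_{\a,\hh}$ does not vanish on $\Xd$ for such $\a$ (e.g.\ for $\a$ a negative simple root of the Levi, $\adjoint(u_0^{-1})e_{\ad}$ acquires a nonzero $\ttd$-component as $u_0\in\Ud$ varies). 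Equivalently, at $\hp=0$ the function $(g,\xi)\mapsto\b(\adjoint(g)\xi,e_{\ad})-\cchh(e_{\ad})$ is not identically zero on $\Xd\times(e+\ttd)$, which is what vanishing modulo $(1)+(3)+(4)$ would force.

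What actually holds, and what the paper's compressed proof relies on, is that the identity of Lemma \ref{finalphi0longlemma} is valid for every $\a\in -R^+$ (its proof never uses $\a\in w_0(R^+\setminus R_P^+)$), together with the observation that $\widetilde{\zeta}_{\ad}:=\pr_{\bbd}^{L}(e_{\ad}^{R})\oplus 0$ is tangent to $\Xd$ for every $\a\in -R^+$: at $g\in\Xd$ its left-translated value lies in $\bbd\cap(\nnd_-+\adjoint(g^{-1})\nnd_-)$. Hence for $\a\in w_0R_P^+$ the restriction $\zeta_{\ad}$ is an $\OO(\Xd)$-combination of the frame fields, and therefore $\partial(\iota_{\zeta_{\ad}}\w_0\otimes m)\in\UU_{\w_0}$. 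It follows that these $(2)$-generators lie in $(1)+(3)+(4)+\widetilde{\Phi}_0(\UU_{\w_0})$, \emph{not} in $(1)+(3)+(4)$ alone, and your $\Psi$ kills them only because $\Psi\circ\widetilde{\Phi}_0$ is the quotient map $M\to M/\UU_{\w_0}$. Without spelling out this extra appeal to $\UU_{\w_0}$, the well-definedness of $\overline{\Psi}$, and hence your injectivity argument, is not established.
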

\begin{proof}
We construct the inverse of $\Phi_0$ as follows. 

Denote by $\VV'\subset\VV\subset \dm{\Gd}\ot\OO(\ZL)$ the sum of the ideals (1) and (3) from Definition \ref{finaltwomodVdef}. By PBW's theorem, we have
\[ \dm{\Gd}\simeq \und\ot_{\CC[\hp]}(\utd\ot\OO(\Gd))\ot_{\CC[\hp]}\unmd.\]
The characters 
\[ 0_{\mathfrak{n}^{\vee}}: \mathfrak{n}^{\vee}\ra \CC\quad\text{ and }\quad \cchh:\nd\ra\CC\]
induce $\CC[\hp]$-algebra homomorphisms
\[ U_{\hp}(0_{\mathfrak{n}^{\vee}}):\und\ra\CC[\hp]\quad\text{ and }\quad U_{\hp}(\cchh):\unmd\ra \CC[\hp]\]
respectively, and hence splittings
\[ \und\simeq\CC[\hp]\oplus \mathfrak{n}^{\vee}\cdot \und\quad \text{ and }\quad \unmd\simeq \CC[\hp]\oplus\ker U_{\hp}(\cchh).\]
This yields a splitting
\begin{equation}\label{finalphi0bijectiveeq1}
\dm{\Gd}\ot\OO(\ZL)\simeq \VV'\oplus \left( \utd\ot\OO(\Gd\times\ZL)\right).
\end{equation}

Now define 
\[ \widetilde{\Phi}_0' :\dm{\Gd}\ot\OO(\ZL)/\VV' \ra M/\UU_{\vol} \]
by 
\[  \widetilde{\Phi}_0'([z\twdot \widetilde{\vp}] ) := [z\ot \widetilde{\vp}|_{\Xd}]\qquad z\in\utd,~\widetilde{\vp}\in\OO(\Gd\times\ZL).\]
We show that $\widetilde{\Phi}_0'$ descends to 
\[ \Phi_0': \dm{\Gd}\ot\OO(\ZL)/\VV \ra M/\UU_{\vol} .\]
This will complete the proof because $\Phi_0'$ will clearly be the inverse of $\Phi_0$. It is clear that $\widetilde{\Phi}_0'$ sends the subspace (4) from Definition \ref{finaltwomodVdef} to 0. It remains to deal with the left ideal (2). By \eqref{finalphi0bijectiveeq1} and the fact that $e_{\beta^{\vee}_1}^R-\cchh(e_{\beta^{\vee}_1})$ commutes with $e_{\beta^{\vee}_2}^L-\cchh(e_{\beta^{\vee}_2})$ for any $\b_1,\b_2\in -R^+$, every element of the left ideal (2) is equal, modulo $\VV'$, to a sum of elements of the form $z\twdot \widetilde{\vp}(e_{\ad}^R-\cchh(e_{\ad}))$ where $z\in\utd$, $\widetilde{\vp}\in\OO(\Gd\times\ZL)$ and $\a\in -R^+$. By Lemma \ref{finalphi0longlemma}, Lemma \ref{ApplemmaA}, Lemma \ref{ApplemmaB} and Lemma \ref{ApplemmaC}, we have
\[ \widetilde{\Phi}_0'\left([z\twdot \widetilde{\vp}(e_{\ad}^R-\cchh(e_{\ad}))]\right) = [m_{\a}(z, \widetilde{\vp}|_{\Xd})]=0. \]
\end{proof}

\begin{remark}
The construction of $\Phi_0$ is inspired by Teleman's interpretation of the Rietsch mirror \cite[Section 6.6]{T1}.
\end{remark}

%%%%%%%%%%%%%%%%%%%%%%%%%%%%
%%%%%%%%%%%%%%%%%%%%%%%%%%%%
\subsection{The map $\Phi_1$}\label{finalphi1}
Denote by $\VV''\subset\VV\subset \dm{\Gd}\ot\OO(\ZL)$ the sum of the ideals (1), (2) and (3) from Definition \ref{finaltwomodVdef}. Observe that $\dkf(\KK)$ is a subquotient of $\dm{\Gd}$, i.e. $\dkf(\KK)\simeq N_1/N_2$ for some subspaces $N_2\subseteq N_1\subseteq \dm{\Gd}$. It is not difficult to see that $N_2\subseteq \VV''$.

\begin{definition}\label{finalphi1phi1def}
Define 
\[ \widetilde{\Phi}_1:\dkf(\KK)\ot\OO(\ZL)\ra \dm{\Gd}\ot\OO(\ZL)/\VV\]
to be the composition of the two canonical homomorphisms
\[ \dkf(\KK)\ot\OO(\ZL)\ra\dm{\Gd}\ot\OO(\ZL)/\VV''  \]
and
\[ \dm{\Gd}\ot\OO(\ZL)/\VV''\ra \dm{\Gd}\ot\OO(\ZL)/\VV .\]
\end{definition}

\begin{lemma}\label{finalphi1phi1ok}
$\widetilde{\Phi}_1$ descends to a $\utd\ot\OO(\ZL)$-linear map
\[ \Phi_1: \dkf(\KK)\ot\OO(\ZL)/\WW \ra \dm{\Gd}\ot\OO(\ZL)/\VV . \] 
\end{lemma}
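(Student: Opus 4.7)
The plan is to show that the map $\widetilde{\Phi}_1$ sends $\WW$ into $\VV$, so that it descends to a well-defined $\utd\ot\OO(\ZL)$-linear map $\Phi_1$. By Definition \ref{finaltwomodWdef}, $\WW$ is spanned as a vector space by products $x\cdot a$, where $x\in\dkf(\KK)\ot\OO(\ZL)$ is arbitrary and $a$ is one of the distinguished generators $a_{\lambda,v_\wp}-q^{[w_0(\lambda)]}$ or $a_{\lambda,v}$ with $v\in S(\lambda)_{<\wp(\lambda)}$. First I would verify that each such generator $a$ is sent into $\VV$, and then propagate the conclusion to $x\cdot a$ using the right $\kf(\KK)\ot\OO(\ZL)$-module structure.

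The central computation uses the parametrization $y=u_1(\wpd)^{-1}t_1 u_2$ of points $y\in\Xd$ from Definition \ref{Rietschmirrordef}, together with the representative $\alpha=f_{\lambda,v}+\sum_i z_i\cdot f_{\lambda,v'_i}$ of $a_{\lambda,v}$ with $z_i\in\ubd$ and $v'_i\in S(\lambda)_{<\mu}$ from Lemma \ref{finaltwomodarmk}. Since $\Umd=\exp(\nnd_-)$ strictly lowers weights and $(\wpd)^{-1}$ permutes weight spaces via $\mu\mapsto\wp(\mu)$, the pairing $\langle v_\lambda^*,u_1(\wpd)^{-1}t_1 u_2\cdot v\rangle$ vanishes whenever $v$ has weight strictly less than $\wp(\lambda)$, and equals $\langle v_\lambda^*,(\wpd)^{-1}v_\wp\rangle\cdot\wp(\lambda)(t_1)$ for $v=v_\wp$. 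The normalization of $\wpd$ from \cite[Appendix A]{me2} makes the leading pairing equal to $1$; since $W_P$ acts trivially on characters of $\ZL$ (because $\ad_i|_{\ZL}$ is trivial for $i>k$), one has $\wp(\lambda)|_{\ZL}=w_0(\lambda)|_{\ZL}$, which matches $q^{[w_0(\lambda)]}$ under the mirror map $\mirror$. Consequently both $f_{\lambda,v_\wp}-q^{[w_0(\lambda)]}$ and each $f_{\lambda,v}$ with $v\in S(\lambda)_{<\wp(\lambda)}$ represent functions on $\Gd\times\ZL$ vanishing on the graph of $\pi$, and hence lie in $\VV$ via part (4).

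The correction terms $z_i\cdot f_{\lambda,v'_i}$ will be handled by PBW-ordering $z_i$ with the $\nnd^L$-factors to the right of the $\utd^L$-factors: the rightmost $\nnd^L$-factor is absorbed by the right ideal in part (3) of $\VV$, leaving a $\utd$-combination of iterated Lie derivatives $\LL_{y^L}f_{\lambda,v'_i}=f_{\lambda,y\cdot v'_i}$ whose vanishing on $\Xd$ is then tracked by a weight argument. For the extension to $x\cdot a$: the right $\kf(\KK)$-action is induced by multiplication inside $\dm{\Gd}$, so $x\cdot a$ is represented by $\beta\alpha$ whenever $\beta,\alpha\in\dm{\Gd}\ot\OO(\ZL)$ represent $x,a$; summands (1) and (2) of $\VV$ are left ideals and are absorbed automatically, while summands (3) and (4) are absorbed using the $\Umd$-invariance (modulo the Whittaker twist) of the representative $\beta$ of an element of $\dkf(\KK)$, which lets one commute $\beta$ past the generators of $\VV$ up to terms already in $\VV$.

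The main obstacle I expect is the correction-term analysis: iteratively moving $\nnd^L$-factors past the functions $f_{\lambda,v'_i}$ produces new functions $f_{\lambda,v''}$ with $v''$ of possibly high weight (even $\geq\wp(\lambda)$), where the direct weight argument breaks down. I would circumvent this by invoking the identification $\dm{\Gd}\ot\OO(\ZL)/\VV\simeq\briesp$ from Proposition \ref{finalphi0bijective} to reformulate the vanishing inside $M/\UU_{\w_0}$; there, the specific form of $\alpha$ dictated by the bijection $\tt_{\fr(S(\lambda))}$ of Lemma \ref{BFenhanceKFisom} supplies the needed cancellations without requiring explicit weight tracking of the correction summands.
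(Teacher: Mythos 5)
Your identification of the key vanishing facts --- $f_{\lambda,v_\wp}|_{\Xd}=q^{[w_0(\lambda)]}|_{\Xd}$ and $f_{\lambda,v'}|_{\Xd}\equiv 0$ for $v'\in S(\lambda)_{<\wp(\lambda)}$ --- is correct and is exactly what the paper's proof runs on. The genuine gap is in your treatment of the correction terms $z_i\cdot f_{\lambda,v'_i}$. You PBW-order $z_i\in\ubd$ with the $\nnd$-factors \emph{to the right} and claim the rightmost such factor is absorbed by part (3) of $\VV$. But part (3) is the \emph{right} ideal generated by $x^L$ ($x\in\nnd$), i.e.\ the span of $x^L\cdot(\ldots)$: the $\nnd$-factor sits on the \emph{left}, not the right. (This reading is forced by the decomposition $\dm{\Gd}\simeq\und\ot(\utd\ot\OO(\Gd))\ot\unmd$ used in the proof of Proposition~\ref{finalphi0bijective}, where passing modulo the sum of (1) and (3) kills the leftmost $\und$-factor; it also matches the identification of (3) with $\mathcal{I}_+$, generated by $(x\ot 1)\hcdot m = x^L m$.) With your ordering you genuinely create Lie-derivative corrections $f_{\lambda,y\cdot v'_i}$ of weight possibly $\geq\wp(\lambda)$, and the obstacle you then diagnose is real --- but it is an artifact of the wrong PBW order, and the proposed detour through $\briesp$ and $\tt_{\fr(S(\lambda))}$ is too vague to fill it.

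The correct ordering makes the step immediate and commutator-free. After first using the $\kf(\KK)\ot\OO(\ZL)$-linearity of $\dkf(\KK)\ot\OO(\ZL)\ra\dm{\Gd}\ot\OO(\ZL)/\VV''$ to replace the representative of $x$ by some $\widetilde{x}\in\utd\ot\OO(\Gd\times\ZL)$, one writes $\widetilde{x}z_i$ --- which lies in the subalgebra generated by $\OO(\Gd\times\ZL)$ and the left-invariant vector fields from $\bbd$ --- in the PBW form $\sum_k n_k\tau_k\vp_k$ with $n_k\in\und$, $\tau_k\in\utd$, $\vp_k\in\OO(\Gd\times\ZL)$. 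Then $\widetilde{x}z_if_{\lambda,v'_i}=\sum_k n_k\tau_k\,(\vp_k f_{\lambda,v'_i})$: the terms with $n_k$ of positive degree begin with a factor $y^L$, $y\in\nnd$, so sit directly in (3), while the terms with $n_k=1$ give $\tau_k(\vp_k f_{\lambda,v'_i})\in(4)$ since $(\vp_k f_{\lambda,v'_i})|_{\Xd}=0$. No weight tracking of $y\cdot v'_i$ is needed at all. Note also that the paper does not prove, and you should not assume, that $\VV$ is stable under left multiplication by an arbitrary representative of an element of $\dkf(\KK)\ot\OO(\ZL)$ --- this is what your appeal to $\Umd$-invariance is reaching for but not establishing; the paper instead normalizes the representative of $x$ \emph{before} multiplying by the representative of $a$.
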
 
\begin{proof}
Recall from Definition \ref{finaltwomodWdef} that $\WW$ is generated by elements of the form $x
\cdot a $ where $x\in\dkf(\KK)\ot\OO(\ZL)$ and $a$ is either $a_{\lambda,v_{\wp}}-q^{[w_0(\lambda)]}$ for some $\lambda\in\cop$ or $a_{\lambda,v}$ for some $\lambda\in\cop$ and $v\in S(\lambda)_{<\wp(\lambda)}$. We have to show $\widetilde{\Phi}_1(x\cdot a)=0$. Let $\widetilde{x}\in \dm{\Gd}\ot\OO(\ZL)$ be a representative of $x$. It is not hard to see that $\dm{\Gd}\ot\OO(\ZL)/\VV''$ is naturally a right $\kf(\KK)\ot\OO(\ZL)$-module and the first canonical homomorphism from Definition \ref{finalphi1phi1def} is $\kf(\KK)\ot\OO(\ZL)$-linear. This allows us to assume $\widetilde{x}\in \utd\ot\OO(\Gd\times\ZL)$ where $\utd\subset\dm{\Gd}$ is generated by $z^L$ with $z\in\ttd$ (see \eqref{finalphi0bijectiveeq1}). By Lemma \ref{finaltwomodarmk}, $x\cdot a_{\lambda,v}$ (resp. $x\cdot (a_{\lambda,v_{\wp}}-q^{[w_0(\lambda)]})$) is represented by (resp. $\widetilde{x}(f_{\lambda,v_{\wp}}-q^{[w_0(\lambda)]})$ plus) a sum of elements of the form $\widetilde{x}zf_{\lambda,v'}$ where $z\in \ubd$ and $v'\in S(\lambda)_{<\wp(\lambda)}$. Observe that $f_{\lambda,v_{\wp}}|_{\Xd}=q^{[w_0(\lambda)]}|_{\Xd}$ and $f_{\lambda,v'}|_{\Xd}\equiv 0$ for any $v'\in S(\lambda)_{<\wp(\lambda)}$. It follows that these representatives belong to the sum of the subspaces (3) and (4) from Definition \ref{finaltwomodVdef}, and hence $\widetilde{\Phi}_1(x\cdot a)=0$ as desired.
\end{proof}

\begin{lemma}\label{finalphi1graded} $\Phi_1$ is graded. 
\end{lemma}
\begin{proof}
This is because $\Phi_1$ is induced by $\id_{\dm{\Gd}\ot\OO(\ZL)}$.
\end{proof}

\begin{lemma} \label{finalphi1shift}  For any $\lambda\in\cop$, we have
\[ \Phi_1\circ \shift^{\WW}_{\lambda} = \shift^{\VV}_{\lambda}\circ\Phi_1\]
where $\shift^{\VV}_{\lambda}$ and $\shift^{\WW}_{\lambda}$ come from Lemma \ref{finaltwomodvextend} and Lemma \ref{finaltwomodwextend} respectively.
\end{lemma} 
\begin{proof}
This follows immediately from the definitions of $\shift^{\VV}_{\lambda}$, $\shift^{\WW}_{\lambda}$ and $\Phi_1$.
\end{proof}

\begin{proposition}\label{finalphi1phi1surj}
$\Phi_1$ is surjective.
\end{proposition}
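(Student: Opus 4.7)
The plan is to establish surjectivity in two stages: a PBW-type reduction, followed by an explicit construction of lifts from matrix coefficients.

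\emph{First stage (reduction).} By repeating the PBW argument used in the proof of Proposition~\ref{finalphi0bijective} (via the decomposition $\dm{\Gd}\simeq\und\ot_{\CC[\hp]}(\utd\ot\OO(\Gd))\ot_{\CC[\hp]}\unmd$ and reduction modulo the subspaces (1), (3), (4) of $\VV$), every class in $\dm{\Gd}\ot\OO(\ZL)/\VV$ has a representative of the form $\sum_i z_i^L\cdot\widetilde{\vp}_i$ with $z_i\in\utd$ and $\widetilde{\vp}_i\in\OO(\Gd\times\ZL)$, and modulo (4) only $\widetilde{\vp}_i|_{\Xd}$ matters. The map $\widetilde{\Phi}_1$ is $\utd\ot\OO(\ZL)$-linear: the left $\utd$-action on $\dkf(\KK)\simeq\utd\ot_{\zgd}\kf(\KK)$ intertwines (up to the $-\hp\rhod$-shift) with left multiplication by $\utd^L$ on the target, cf. Definition~\ref{finaltwomodtwistdef}. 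Combining these, it suffices to exhibit, for each $\widetilde{\vp}\in\OO(\Gd\times\ZL)$, an element of $\dkf(\KK)\ot\OO(\ZL)$ whose image under $\widetilde{\Phi}_1$ is the class $[\widetilde{\vp}]$ in $\dm{\Gd}\ot\OO(\ZL)/\VV$.

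\emph{Second stage (construction of lifts).} For $\lambda\in\cop$ and $v\in S(\lambda)$, the matrix coefficient $f_{\lambda,v}$ is left-$\Umd$-invariant on $\Gd$ (since the lowest-weight covector $v_{\lambda}^*$ is fixed by the dual $\Umd$-action), hence represents an element of $\KK$ and thus of $\dkf(\KK)$. By Lemma~\ref{finaltwomodarmk}, $1\cdot a_{\lambda,v}\in\dkf(\KK)$ is represented by $f_{\lambda,v}$ plus a $\ubd^L$-linear combination of $f_{\lambda,v'}$ with $v'$ of strictly lower weight; modulo $\VV$, the $\und^L$-part of each such correction is killed by (3), and the surviving $\utd^L$-factors are eventually paired (by induction on weight) with $f_{\lambda,v'}|_{\Xd}=0$ for $v'\in S(\lambda)_{<w_Pw_0(\lambda)}$, using the vanishing computation from the proof of Lemma~\ref{finalphi1phi1ok}. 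On the geometric side, Lemma~\ref{Rietschfiblemma} gives $\OO(\Xd)\simeq\OO(\ZL)\ot\OO(\mathcal{R}_P)$, and the algebra $\OO(\mathcal{R}_P)$ of the double Bruhat cell in $\GB$ is generated over $\CC$ by restrictions of matrix coefficients $f_{\lambda,v}$ together with the inverse of the generalized minor $f_{\lambda_P,v_{w_P}}|_{\Xd}=q^{[w_0(\lambda_P)]}|_{\Xd}$ for $\lambda_P:=\sum_{i=1}^k\w_i$; the required inverse is furnished by the extension to a $\CC[\Q]$-module structure on $\dkf(\KK)\ot\OO(\ZL)/\WW$ supplied by Lemma~\ref{finaltwomodwextend}. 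Iterated right $\kf(\KK)\ot\OO(\ZL)$-actions by the $a_{\lambda,v}$'s on $1\in\dkf(\KK)$, composed with the $\CC[\Q]$-action, therefore produce an element of $\dkf(\KK)\ot\OO(\ZL)$ whose image in $\dm{\Gd}\ot\OO(\ZL)/\VV$ is the desired $[\widetilde{\vp}]$.

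The main technical obstacle is the bookkeeping in the second stage: expressing an arbitrary $\widetilde{\vp}|_{\Xd}\in\OO(\Xd)$ as a concrete algebraic combination of matrix coefficients and inverses, and checking inductively — via the associativity of the right $\kf(\KK)\ot\OO(\ZL)$-action together with the weight-filtration of $\dkf(\KK)$ (again using Lemma~\ref{finaltwomodarmk} and the vanishing from the proof of Lemma~\ref{finalphi1phi1ok}) — that the iterated product maps under $\widetilde{\Phi}_1$ exactly to $[\widetilde{\vp}]$ rather than to $[\widetilde{\vp}]$ plus uncontrolled lower-weight correction terms.
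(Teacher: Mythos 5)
Your proposal correctly identifies the two-stage structure (reduce to elements represented by a single regular function, then lift such functions via matrix coefficients and the $\CC[\Q]$-module structure) and this matches the architecture of the paper's proof. However, the heart of the argument — the step you yourself flag as "the main technical obstacle" — is left as a gap, and it is precisely the step the paper spends most of its proof on.

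Concretely, the paper proves the lifting claim as follows: it chooses a regular dominant $\lambda_0\in\cop$, takes the sections $s_1,s_2\in H^0(\GB;\LL(\lambda_0))$ corresponding to $v_e$ and $v_{\wp}$, and lets $U=(\GB)\setminus(D_1\cup D_2)$ be the complement of their zero loci. The morphism $(g,t)\mapsto(g^{-1}\Bmd,t)$ embeds $\Xd$ as a closed subscheme of $U\times\ZL$. Because $\LL(\lambda_0)$ is ample, every regular function on $U\times\ZL$ is an $\OO(\ZL)$-linear combination of $s/s_1^n s_2^n$ with $s\in H^0(\GB;\LL(2n\lambda_0))\simeq S(2n\lambda_0)$, and these pull back to $f_{2n\lambda_0,v}/f_{\lambda_0,v_e}^n f_{\lambda_0,v_{\wp}}^n$. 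This yields the identity $f_{\lambda_0,v_e}^{n}f_{\lambda_0,v_{\wp}}^{n}\vp|_{\Xd}=\sum_i c_i f_{2n\lambda_0,v_i}|_{\Xd}$, which is then read off as $t^{n\lambda_0}\cdot q^{n[w_0(\lambda_0)]}x=\Phi_1(y)$. Your proposal asserts the corresponding generation statement for $\OO(\mathcal{R}_P)$ but does not prove it, and proving it is exactly what the ampleness argument is for; citing it as a "technical obstacle" to be resolved by "bookkeeping" leaves the proof incomplete.

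There is also a smaller confusion in your proposal about which inverse is furnished where. The $\CC[\Q]$-module structure of Lemma~\ref{finaltwomodwextend} (resp.\ Lemma~\ref{finaltwomodvextend}) inverts the action of $f_{\lambda,v_e}$, corresponding to the divisor $D_1$. The other needed inverse, that of $f_{\lambda,v_{\wp}}$, is \emph{not} what these lemmas provide: modulo $\WW$ (resp.\ restricted to $\Xd$), $f_{\lambda,v_{\wp}}$ is just $q^{[w_0(\lambda)]}$, which is already a unit of $\OO(\ZL)$. Your statement that the inverse of $f_{\lambda_P,v_{w_P}}$ is "furnished by the extension to a $\CC[\Q]$-module structure" therefore misattributes the roles: both inverses are indeed available, but for different reasons, and the nontrivial one — the one the $\CC[\Q]$-structure exists to supply — is the inverse of $f_{\lambda,v_e}$, which corresponds to inverting $\lambda\circ p$ on $\Xd$.
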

\begin{proof}
Consider the operators $\shift^{\VV}_{\lambda}$ and $\shift^{\WW}_{\lambda}$ ($\lambda\in\cop$) from Lemma \ref{finaltwomodvextend} and Lemma \ref{finaltwomodwextend} respectively. They are invertible (Lemma \ref{finalVshiftinvertible} and Lemma \ref{finalWshiftinvertible}) and satisfy $\Phi_1\circ \shift^{\WW}_{\lambda} = \shift^{\VV}_{\lambda}\circ\Phi_1$ (Lemma \ref{finalphi1shift}). Hence it suffices to show that for any $x\in\dm{\Gd}\ot\OO(\ZL)/\VV$, there exists $\lambda\in\cop$ and $y\in\dkf(\KK)\ot\OO(\ZL)/\WW$ such that $\shift^{\VV}_{\lambda}(x)=\Phi_1(y)$. Thanks to the ideals (1) and (3) from Definition \ref{finaltwomodVdef} and the $\utd$-linearity of $\Phi_1$, we may assume $x$ is represented by some $\vp\in\OO(\Gd\times\ZL)$.

Let $\lambda_0\in\cop$ be regular dominant. The vectors $v_e, v_{\wp}\in S(\lambda_0)$ give rise to two sections $s_1$, $s_2$ of the line bundle $\LL(\lambda_0):=\Gd\times^{\Bmd}\CC_{\lambda_0}$ on $\GB$ respectively. Denote by $D_1$ and $D_2$ the zero loci of $s_1$ and $s_2$ respectively. Define $U:=(\GB)\setminus (D_1\cup D_2)$. Observe that the morphism $\Gd\times\ZL\ra \GB\times\ZL$ defined by $(g,t)\mapsto (g^{-1}\Bmd,t)$ takes $\Xd$ isomorphically onto a closed subscheme of $U\times\ZL$. Since $\lambda_0$ is regular dominant, $\LL(\lambda_0)$ is ample, and hence every regular function on $U\times \ZL$ is an $\OO(\ZL)$-linear combination of $s/s_1^{n}s_2^{n}$ where $n\in\ZZ_{\geqslant 0}$ and $s\in H^0(\GB;\LL(2n\lambda_0))\simeq S(2n\lambda_0)$. It is not hard to see that the pull-back of $s/s_1^{n}s_2^{n}$ via the above morphism is of the form $f_{2n\lambda_0,v}/f_{\lambda_0,v_e}^{n}f_{\lambda_0,v_{\wp}}^{n}$ where $v\in S(2n\lambda_0)$. Therefore, we can find $n, N\in\ZZ_{\geqslant 0}$, $c_1,\ldots,c_N\in\OO(\ZL)$ and $v_1,\ldots,v_N\in S(2n\lambda_0)$ such that 
\begin{equation}\label{finalphi1phi1surjeq1}\nonumber
 f_{\lambda_0,v_e}^{n}f_{\lambda_0,v_{\wp}}^{n}\vp|_{\Xd} = \sum_{i=1}^N c_i f_{2n\lambda_0,v_i}|_{\Xd}. 
\end{equation}
Notice that $ f_{\lambda_0,v_e}^{n}f_{\lambda_0,v_{\wp}}^{n}\vp$ represents $\shift^{\VV}_{n\lambda_0}( q^{n[w_0(\lambda_0)]}x)$ and each $f_{2n\lambda_0,v_i}$ represents an element of $\dkf(\KK)$, and hence $ \sum_{i=1}^N c_i f_{2n\lambda_0,v_i}$ represents $\Phi_1(y)$ where $y:=\sum_{i=1}^N \left[ [f_{2n\lambda_0,v_i}]\ot c_i\right]\in\dkf(\KK)\ot\OO(\ZL)/\WW$. The proof is complete.
\end{proof}

\begin{proposition}\label{finalphi1phi1limitisbij}
$\FF_{\hp=0}(\Phi_1)$ is bijective
\end{proposition}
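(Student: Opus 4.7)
The strategy is to identify both the source and target of $\FF_{\hp=0}(\Phi_1)$ with the single ring $\OO(\ZZZ_P)$ via Lemma \ref{dunnowheretoplacelemma1}, and then show that $\FF_{\hp=0}(\Phi_1)$ corresponds to the identity under these identifications. Since the right exact functor $\FF_{\hp=0}$ sends the surjection $\Phi_1$ of Proposition \ref{finalphi1phi1surj} to a surjection, bijectivity will follow once the identity identification is established.

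\textbf{Tracing the two identifications.} Unwinding the proof of Lemma \ref{dunnowheretoplacelemma1}, both isomorphisms
\[
\FF_{\hp=0}(\dm{\Gd}\ot\OO(\ZL)/\VV) \xrightarrow{\sim} \OO(\ZZZ_P) \xleftarrow{\sim} \FF_{\hp=0}(\dkf(\KK)\ot\OO(\ZL)/\WW)
\]
are obtained from the same ambient ring $\FF_{\hp=0}(\dm{\Gd}\ot\OO(\ZL))\simeq\OO(T^*\Gd\times\ZL)$ by successively imposing: (a) the Whittaker/Borel relations cutting out the closed subscheme $\ZZZ\subset\Bd\times(e+\ttd)$ (via Kostant's slice theorem), and (b) the Bruhat cell condition $g\in\Umd(\wpd)^{-1}t\Umd$. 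On the target side, step (a) uses ideals (1)--(3) of Definition \ref{finaltwomodVdef} and step (b) uses ideal (4) (equations cutting out $\Xd$). On the source side, step (a) already takes place at the level of $\dkf(\KK)$ by Lemma \ref{dunnowheretoplacelemmaa}, and step (b) is imposed by passing to the quotient by $\WW$, which by Lemma \ref{finaltwomodarmk} and Lemma \ref{dunnowheretoplacelemmab} is generated by the same Bruhat cell equations $f_{\lambda,v_{\wp}}-q^{\wp(\lambda)}$ and $f_{\lambda,v}$ for $v\in S(\lambda)_{<\wp(\lambda)}$.

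\textbf{Compatibility of $\Phi_1$ with the identifications.} By Definition \ref{finalphi1phi1def}, $\Phi_1$ is induced by a canonical map $\dkf(\KK)\to\dm{\Gd}/\VV''$ arising from the inclusion $N_1\subseteq\dm{\Gd}$ (with $\dkf(\KK)\simeq N_1/N_2$ and $N_2\subseteq\VV''$), followed by the further projection onto $\dm{\Gd}/\VV$. At $\hp=0$, the first map corresponds precisely to the identity on $\OO(\ZZZ\times\ZL)$, because $\VV''$ at $\hp=0$ yields exactly the same Whittaker/Borel relations (centralizer condition via Kostant's slice theorem) that define $\ZZZ$; the second map is the identity on $\OO(\ZZZ_P)$ after passing to the Bruhat quotient, since the relations added by (4) on the target match those added by $\WW$ on the source. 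Hence $\FF_{\hp=0}(\Phi_1)$ is the identity map $\OO(\ZZZ_P)\to\OO(\ZZZ_P)$, and in particular bijective.

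\textbf{Anticipated obstacle.} The only technical point is carefully matching the Whittaker-type relations on the two sides at $\hp=0$: on the source we have $(1\otimes(x-\cchh(x)))\cdot(-)$ for $x\in\nndm$ (coming from $\mathcal{I}_{-,\cchh}$), while on the target we use both left- and right-invariant versions $x^L-\cchh(x)$ and $x^R-\cchh(x)$ from ideals (1) and (2) of $\VV''$. The extra left-invariant relation in (1), together with Kostant's slice theorem applied as in the proof of Lemma \ref{dunnowheretoplacelemmaa}, ensures that the natural map factors through the $\Umd$-invariants and yields an isomorphism on $\hp=0$ fibers, which is what is needed for the identity identification to hold.
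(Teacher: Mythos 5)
Your proposal is correct and follows essentially the same route as the paper's proof: both identify the source and target at $\hp=0$ with $\OO(\ZZZ_P)$ via Lemma \ref{dunnowheretoplacelemma1} and observe that $\FF_{\hp=0}(\Phi_1)$ becomes the identity under these identifications. The paper's proof simply asserts "it is not hard to see" for this last compatibility, whereas you have spelled out the underlying bookkeeping of matching the Whittaker and Bruhat-cell relations on both sides, which is exactly what that phrase is gesturing at.
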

\begin{proof}
This follows from Lemma \ref{dunnowheretoplacelemma1} below. 
\end{proof}

\begin{lemma}\label{dunnowheretoplacelemma1}
There are canonical isomorphisms of $\sym^{\bl}(\ttd)\ot\OO(\ZL)$-modules
\[ \FF_{\hp=0}(\dkf(\KK)\ot\OO(\ZL)/\WW)\xrightarrow{\sim}\OO(\ZZZ_P)\xrightarrow{\sim}\FF_{\hp=0}(\dm{\Gd}\ot\OO(\ZL)/\VV)\] 
whose composition is equal to $\FF_{\hp=0}(\Phi_1)$, where 
\[\ZZZ_P:=\left\{ (b,\xi,t)\in \Bd\times (e+\ttd)\times \ZL \left|~ b\cdot\xi = \xi,~ b\in \Umd(\wpd)^{-1}t\Umd \right. \right\}.\] 
Here, $e$ is defined in \eqref{betaande} and the $\sym^{\bl}(\ttd)$-module structure on $\OO(\ZZZ_P)$ is defined as in Lemma \ref{dunnowheretoplacelemmaa}.
\end{lemma}
\begin{proof}
Since $\FF_{\hp=0}$ is right exact, $\FF_{\hp=0}(\dm{\Gd}\ot\OO(\ZL)/\VV )$ is isomorphic to the quotient of $\FF_{\hp=0}(\dm{\Gd}\ot\OO(\ZL))\simeq \OO(T^*\Gd\times\ZL)$ by a subspace which is the sum of the subspaces obtained by applying $\FF_{\hp=0}$ to (1) to (4) from Definition \ref{finaltwomodVdef}. It is not difficult to see that this subspace is in fact an ideal so that $\FF_{\hp=0}(\dm{\Gd}\ot\OO(\ZL)/\VV )$ is a ring, and 
\begin{align}\label{dunnowheretoplacelemma1eq1}
&\spec \FF_{\hp=0}(\dm{\Gd}\ot\OO(\ZL)/\VV ) \nonumber \\
\simeq ~& \left\{ (g,\xi,t)\in \Gd\times \ggd\times \ZL \left|~ \xi\in e+\ttd,~ g\cdot\xi\in e+\bbd_-,~ (g,t)\in \Xd \right. \right\}
\end{align}
as schemes. Here, as in the proof of Lemma \ref{dunnowheretoplacelemmaa}, we have identified $T^*\Gd$, first with $\Gd\times(\ggd)^*$ via left translations, and then with $\Gd\times\ggd$ via $\b$. By Kostant's slice theorem, we have
\[ b\cdot \xi\in e+\bbd_-\Longleftrightarrow b\cdot\xi=\xi\]
for any $b\in\Bd$ and $\xi\in e+\ttd$. Therefore, the RHS of \eqref{dunnowheretoplacelemma1eq1} is isomorphic to $\ZZZ_P$.

Again, by the right exactness of $\FF_{\hp=0}$, $\FF_{\hp=0}(\dkf(\KK)\ot\OO(\ZL)/\WW)$ is isomorphic to the quotient of $\FF_{\hp=0}(\dkf(\KK)\ot\OO(\ZL))$ by the ideal generated by $a_{\lambda,v_{\wp}}-q^{[w_0(\lambda)]}$ ($\lambda\in\cop$) and $a_{\lambda,v}$ ($\lambda\in\cop$, $v\in S(\lambda)_{<\wp(\lambda)}$). (More precisely, the elements of $\FF_{\hp=0}(\dkf(\KK)\ot\OO(\ZL))$ they represent.) By Lemma \ref{finaltwomodarmk} and an induction argument, this ideal is equal to the ideal generated by $f_{\lambda,v_{\wp}}-q^{[w_0(\lambda)]}$ ($\lambda\in\cop$) and $f_{\lambda,v}$ ($\lambda\in\cop$, $v\in S(\lambda)_{<\wp(\lambda)}$). By Lemma \ref{dunnowheretoplacelemmaa},  
\begin{equation}\label{dunnowheretoplacelemma1eq2}
\spec \FF_{\hp=0}(\dkf(\KK))\simeq \ZZZ=\left\{ (b,\xi)\in \Bd\times (e+\ttd) \left|~ b\cdot\xi=\xi \right. \right\}.
\end{equation}
Notice that $\ZL$ is a subtorus of $\Td$ and $q^{[w_Pw_0(\lambda)]}=q^{[w_0(\lambda)]}\in\OO(\ZL)$ for any $\lambda\in\co$. It follows that, by \eqref{dunnowheretoplacelemma1eq2} and Lemma \ref{dunnowheretoplacelemmab},
\begin{align*}
& \FF_{\hp=0}(\dkf(\KK)\ot\OO(\ZL)/\WW)\\
 \simeq~& \left\{ (b,\xi,t)\in \Bd\times (e+\ttd)\times \ZL \left|~ b\cdot\xi = \xi,~ b\in \Umd(\wpd)^{-1}t\Umd \right. \right\} = \ZZZ_P.
\end{align*}

This gives us the desired isomorphisms. The last assertion that their composition is equal to $\FF_{\hp=0}(\Phi_1)$ follows from the observation that all the maps involved are induced by the identity map of $\OO(T^*\Gd\times\ZL)$.
\end{proof}

%%%%%%%%%%%%%%%%%%%%%%%%%%%%
%%%%%%%%%%%%%%%%%%%%%%%%%%%%
\subsection{The map $\Phi_2$}\label{finalphi2}
Recall the $\utd$-linear maps
\[ \pgmpt: H^{\That}_{-\bl}(\ag) \ra QH_{\That}^{\bl}(G/P)[\ecc]\]
and 
\[ \pbfd: \dkf(\KK)\ra H^{\That}_{-\bl}(\ag) \]
defined in Definition \ref{GMPcomputedef} and Definition \ref{BFpbfdef} respectively.

\begin{definition}\label{finalphi2phi2predef}$~$
\begin{enumerate}
\item Define 
\[ \psgmpt:= (\pgmpt)_{\CC[\eccc] }: H^{\That}_{-\bl}(\ag)[\eccc] \ra QH_{\That}^{\bl}(G/P)[\ecc]\]
to be the map obtained from $\pgmpt$ by extension of scalars and
\[ \psbfd:= \pbfd\ot\mirror:\dkf(\KK)\ot\OO(\ZL)\ra H^{\That}_{-\bl}(\ag)[\eccc] \]
where $\mirror: \OO(\ZL)\xrightarrow{\sim}\CC[\eccc]$ is defined in \eqref{mirrormapdef}. 

\item Define $\psgmpg$ and $\psbfk$ similarly. 
\end{enumerate}
\end{definition}

\begin{definition}\label{finalphi2phi2def}
Define a $\utd\ot\OO(\ZL)$-linear map
\[ \widetilde{\Phi}_2:= \psgmpt\circ\psbfd:\dkf(\KK)\ot\OO(\ZL) \ra QH_{\That}^{\bl}(G/P)[\ecc].\]
\end{definition}
\begin{lemma}\label{finalphi2phi2ok}
$\widetilde{\Phi}_2$ descends to a $\utd\ot\OO(\ZL)$-linear map
\[ \Phi_2:\dkf(\KK)\ot\OO(\ZL)/\WW \ra QH_{\That}^{\bl}(G/P)[\ecc].\]
\end{lemma}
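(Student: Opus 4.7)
The plan is to show that $\widetilde{\Phi}_2 = \psgmpt\circ\psbfd$ annihilates the two families of generators of $\WW$ from Definition~\ref{finaltwomodWdef}.

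First I would reduce to vanishing on a single tensor factor. Since $\pbfd$ is a homomorphism of right $\kf(\KK)$-modules over the ring isomorphism $\pbfk$ (Theorem~\ref{BFqttheorem}), tensoring with $\OO(\ZL)$ gives $\psbfd(x\cdot a)=\psbfd(x)\aggt\psbfk(a)$ for $x\in\dkf(\KK)\ot\OO(\ZL)$ and $a\in\kf(\KK)\ot\OO(\ZL)$. Applying Proposition~\ref{GMPactionmodule} with $y=1$ then yields
\[ \widetilde{\Phi}_2(x\cdot a) \;=\; \psbfd(x)\gmpt\psgmpg(\psbfk(a)),\]
so it suffices to verify $\psgmpg(\psbfk(a))=0$ for each type of generator.

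Next, for $a=a_{\lambda,v}$ with $v\in S(\lambda)_{\mu}$ and $\mu<\wp(\lambda)$: Lemma~\ref{finaltwomodarmk} lets me write, inside $\dkf(\KK)$, $a_{\lambda,v}=[f_{\lambda,v}]+\sum_{v'\in S(\lambda)_{<\mu}}z_{v'}\cdot[f_{\lambda,v'}]$ with $z_{v'}\in\utd$ (the $\nnd$-part of $\ubd$ dies modulo $\mathcal{I}_+$). Proposition~\ref{BFenhancedivisor} then gives $\pbfd(a_{\lambda,v})=[Z_v]+\sum_{v'}z_{v'}\cdot[Z_{v'}]$, with each MV-cycle class sitting in homological degree $2\rho(\lambda-\mu')>2\rho(\lambda-\wp(\lambda))$. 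Proposition~\ref{GMPcompute0} plus the $\utd$-linearity of $\pgmpt$ annihilate every term, giving $\psgmpg(\psbfk(a_{\lambda,v}))=0$. For $a=a_{\lambda,v_{\wp}}-q^{[w_0(\lambda)]}$ the analogous decomposition collapses $\pgmpt(\pbfd(a_{\lambda,v_{\wp}}))$ to $\pgmpt([Z_{v_{\wp}}])$, and Proposition~\ref{GMPcompute1} then produces $\pgmpt([Z_{v_{\wp}}])=c_{\lambda}\,q^{[w_0(\lambda)]}$ for some $c_{\lambda}\in\CC$; since $\psgmpg(\psbfk(q^{[w_0(\lambda)]}))=q^{[w_0(\lambda)]}$, what remains is to show $c_{\lambda}=1$.

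To pin down $c_{\lambda}=1$: as $c_{\lambda}\in\CC$ is independent of $\hp$, I would compute it at the semi-classical limit. By Proposition~\ref{BFenhanceequalYZ}, $\FF_{\hp=0}(\pbfd)=\Phi_{YZ}$, and Proposition~\ref{BFenhancedivisor} gives $\Phi_{YZ}([f_{\lambda,v_{\wp}}])=[Z_{v_{\wp}}]\in H^T_{\bl}(\ag)$. Then $c_{\lambda}=1$ should follow by identifying $[Z_{v_{\wp}}]$ with an affine Schubert class $\xi_{wt_{\lambda'}}$ for some $wt_{\lambda'}\in(W^P)_{af}$ with associated $\widetilde w\in W^P$ equal to $1$, and applying Proposition~\ref{GMPPLS}.

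The hard part will be this last identification: verifying that the extremal-weight MV cycle $[Z_{v_{\wp}}]$ agrees, modulo $\pgmpt$-trivial corrections, with an affine Schubert class indexed by an element of $(W^P)_{af}$ with $\widetilde w=1$. This is a classical but nontrivial piece of MV-cycle combinatorics at extremal weights; once it is in hand, the descent of $\widetilde{\Phi}_2$ to $\Phi_2$ follows immediately.
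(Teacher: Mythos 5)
Your reduction to showing $\psgmpg(\psbfk(a))=0$ is the paper's, and your handling of the $a_{\lambda,v}$-versus-$[f_{\lambda,v}]$ discrepancy is if anything more explicit than the paper's: you invoke Lemma~\ref{finaltwomodarmk} directly, note that the $\nnd$-part of the $\ubd$-action dies modulo $\mathcal{I}_+$ in $\dkf(\KK)$, and kill each correction term by degree plus Proposition~\ref{GMPcompute0}. The paper compresses this into the one-line remark that $\psbfd$ and $\psgmpt$ are $W$-equivariant (after establishing the two vanishing equations for $[f_{\lambda,v_{\wp}}]-q^{[w_0(\lambda)]}$ and $[f_{\lambda,v}]$), so your route is a legitimate — and arguably clearer — unpacking of that step.

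The genuine gap is precisely the one you flag: the constant $c_{\lambda}$. You are right that it is degree zero, hence computable at $\hp=0$, and right that $\FF_{\hp=0}(\pgmpt)=\Phi_{PLS}$. But the paper does not go through Proposition~\ref{BFenhanceequalYZ} or any MV-cycle-to-Schubert-class combinatorics at this point; it argues directly from Proposition~\ref{BFenhancedivisor} that $\pbfd([f_{\lambda,v_{\wp}}])=[Z]$ for the extremal MV cycle $Z$, applies Proposition~\ref{GMPcompute1} to get $\pgmpt([Z])=cq^{[w_0(\lambda)]}$, and then invokes \cite[Lemma 5.11]{me2} — which computes $\Phi_{PLS}$ on exactly this class — to conclude $c=1$. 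So the ``nontrivial piece of MV-cycle combinatorics'' you anticipate is real, but it is not reproved here; it is a black-box citation to the companion paper (the identification $Z=\ol{\BB\cdot t^{w_0(\lambda)}}$ also appears there as \cite[Lemma 4.13]{me2} and is used later in Section~\ref{finalfinal}). Without that citation or an independent argument, your proof of $c_{\lambda}=1$ is incomplete, and so is the descent of $\widetilde{\Phi}_2$ to $\Phi_2$.
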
 
\begin{proof}
Let $x\in\dkf(\KK)\ot\OO(\ZL)$ and $a\in\kf(\KK)\ot\OO(\ZL)$. Since $\pbfd$ is a homomorphism of modules with respect to the ring isomorphism $\pbfk$ (Theorem \ref{BFqttheorem}) and $\gmpg$ is a module action (Proposition \ref{GMPactionmodule}), we have 
\[  \widetilde{\Phi}_2(x\cdot a) = \psgmpt\left( \psbfd(x)\aggt \psbfk(a)\right) = \psbfd(x)\gmpt\left( \psgmpg(\psbfk(a))\right).\]
Therefore, it suffices to show 
\begin{equation}\label{finalphi2phi2okeq1}
\psgmpg(\psbfk(a)) =0
\end{equation}
if $a=a_{\lambda,v_{\wp}}-q^{[w_0(\lambda)]}$ with $\lambda\in\cop$ or $a=a_{\lambda,v}$ with 
$\lambda\in\cop$ and $v\in S(\lambda)_{<\wp(\lambda)}$. (See Definition \ref{finaltwomodWdef}.)

By Proposition \ref{BFenhancedivisor}, $\pbfd([f_{\lambda,v_{\wp}}])$ is equal to $[Z]$ where $Z$ is the unique MV cycle of type $\lambda$ and weight $\wp(\lambda)$. Since $[Z]$ has degree $2\rho(\lambda-\wp(\lambda))$, it follows that, by Proposition \ref{GMPcompute1}, $\pgmpt([Z])=c q^{[w_0(\lambda)]}$ for some $c\in\CC$. To determine $c$, it suffices to look at $\FF_{\hp=0}(\pgmpt)$ which is Peterson-Lam-Shimozono's homomorphism $\Phi_{PLS}$ (see Proposition \ref{GMPPLS}). Hence, by \cite[Lemma 6.3]{me2}, we have $c=1$. It follows that 
\begin{equation}\label{finalphi2phi2okeqa}
\psgmpt(\psbfd([f_{\lambda,v_{\wp}}]-q^{[w_0(\lambda)]}))=0.
\end{equation}

On the other hand, for any $v\in S(\lambda)_{<\wp(\lambda)}$, we have $\pbfd([f_{\lambda,v}])\in H^{\That}_{>2\rho(\lambda-\wp(\lambda))}(\agll)$, and hence, by Proposition \ref{GMPcompute0}, we have
\begin{equation}\label{finalphi2phi2okeqb}
\psgmpt(\psbfd([f_{\lambda,v}]))=0.
\end{equation}

Now, \eqref{finalphi2phi2okeq1} follows from \eqref{finalphi2phi2okeqa}, \eqref{finalphi2phi2okeqb} and the fact that $\psbfd$ and $\psgmpt$ are induced by $\psbfk$ and $\psgmpg$ respectively. (Recall from Definition \ref{finaltwomodadef} that $a_{\lambda,v}$ is characterized by the equality $[f_{\lambda,v}]=a_{\lambda,v} + \sum_{i=2}^{|W|} z_i\cdot\kf(f_{\lambda})(\widetilde{a}_{\lambda,v,i})$.)
\end{proof}

\begin{lemma}\label{finalphi2graded} $\Phi_2$ is graded.
\end{lemma}
\begin{proof}
This is because $\pgmpt$ and $\pbfd$ are graded (Lemma \ref{GMPactiongraded} and Theorem \ref{BFqttheorem}(2)). 
\end{proof}

\begin{lemma}\label{finalphi2shift} For any $\lambda\in\cop$, we have
\[ \Phi_2\circ \shift^{\WW}_{\lambda} = \shift^A_{\lambda}\circ\Phi_2\]
where $\shift^A_{\lambda}$ and $\shift^{\WW}_{\lambda}$ come from Lemma \ref{Ashiftwell} and Lemma \ref{finaltwomodwextend} respectively.
\end{lemma}
\begin{proof}
It suffices to show $\widetilde{\Phi}_2\circ  \widetilde{\shift}^{\WW}_{\lambda} = \shift^A_{\lambda}\circ  \widetilde{\Phi}_2.$ From the second paragraph of the proof of Lemma \ref{finalWshiftinvertible}, we have
\[ \psbfd\circ \widetilde{\shift}^{\WW}_{\lambda}= \left( ([t^{\lambda}] \aggt - ) \ot\id_{\CC[\eccc]}\right) \circ\psbfd. \]
Hence it suffices to show 
\begin{equation}\label{finalphi2shifteq1}
\pgmpt\circ\left([t^{\lambda}] \aggt -  \right) = \shift^A_{\lambda}\circ \pgmpt. 
\end{equation}
Since both sides of \eqref{finalphi2shifteq1} are $(-\lambda)$-twisted $H_{\That}^{\bl}(\pt)$-linear, we may assume the given input element belongs to $H_{-\bl}^{\Ghat}(\ag)$. In this case, we apply Proposition \ref{GMPactionmodule} to get the result.
\end{proof}

\begin{proposition}\label{finalphi2phi2surj}
$\Phi_2$ is surjective.
\end{proposition}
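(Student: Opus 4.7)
The plan is to observe that $\Phi_2$ is induced on a quotient by the map $\widetilde{\Phi}_2=\psgmpt\circ\psbfd$, so it suffices to show that $\widetilde{\Phi}_2$ is surjective, and then to factor this through the two ingredients and apply results already at our disposal.

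First I would note that $\psbfd$ is bijective: by Definition \ref{BFpbfdef} and Theorem \ref{BFqttheorem}(2), the map $\pbfd:\dkf(\KK)\to H^{\That}_{-\bl}(\ag)$ is an isomorphism of $\utd$-modules, and the mirror map $\mirror:\OO(\ZL)\xrightarrow{\sim}\CC[\eccc]$ from Section \ref{notationGd} is also an isomorphism. Since $\psbfd=\pbfd\ot\mirror$ is the resulting tensor product, it is an isomorphism of $\utd\ot\OO(\ZL)$-modules.

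Next I would invoke Corollary \ref{GMPPLScor}, which asserts precisely that $\psgmpt=(\pgmpt)_{\CC[\eccc]}$, the extension of scalars to the quantum parameters, is surjective. (This was established in Section \ref{GMPcompute} via Proposition \ref{GMPPLS}, which identifies $\pgmpt$ on the affine Schubert basis with the Peterson--Lam--Shimozono-type classes $q^{[\lambda]}\s_{\widetilde w}$ that span $QH_{\That}^{\bl}(G/P)[\ecc]$ after extension of scalars.) Composing these two maps, $\widetilde{\Phi}_2=\psgmpt\circ\psbfd$ is surjective as a composition of surjective maps.

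Finally I would conclude: by Lemma \ref{finalphi2phi2ok}, $\widetilde{\Phi}_2$ factors through the quotient $\dkf(\KK)\ot\OO(\ZL)/\WW$ as $\Phi_2$, and since $\widetilde{\Phi}_2$ is surjective, so is $\Phi_2$. The main obstacle in this step is essentially nil --- all the hard content was front-loaded into Corollary \ref{GMPPLScor} (which rests on Proposition \ref{GMPPLS} via the analysis of Savelyev--Seidel-type homomorphisms) and into Theorem \ref{BFqttheorem} (Bezrukavnikov--Finkelberg's identification of the desymmetrized quantum Toda lattice with $H^{\That}_{-\bl}(\ag)$). The proof of surjectivity itself is just a one-line diagram chase.
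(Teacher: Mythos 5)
Your proof is correct and follows essentially the same route as the paper's: the paper's own argument is exactly ``this follows from the facts that $\psgmpt$ is surjective (Corollary \ref{GMPPLScor}) and $\psbfd$ is surjective (Theorem \ref{BFqttheorem}).'' You correctly note the slightly stronger fact that $\psbfd$ is in fact bijective (being a tensor product of two isomorphisms), though only surjectivity is needed.
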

\begin{proof}
This follows from the facts that $\psgmpt$ is surjective (Corollary \ref{GMPPLScor}) and $\psbfd$ is surjective (Theorem \ref{BFqttheorem}).
\end{proof}

\begin{proposition}\label{finalphi2phi2limitisbij}
$\FF_{\hp=0}(\Phi_2)$ is bijective.
\end{proposition}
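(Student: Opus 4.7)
The plan is to apply $\FF_{\hp=0}$ and reduce the statement to the already-known bijectivity of the semi-classical mirror isomorphism of Rietsch \cite{Rietsch}, whose proof was completed in \cite{me2}.

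First, using Lemma \ref{dunnowheretoplacelemma1}, the domain $\FF_{\hp=0}(\dkf(\KK)\ot\OO(\ZL)/\WW)$ is canonically identified with $\OO(\ZZZ_P)$. I would then compute $\FF_{\hp=0}$ of each factor of $\widetilde{\Phi}_2 = \psgmpt \circ \psbfd$. By Proposition \ref{BFenhanceequalYZ}, $\FF_{\hp=0}(\pbfd)$ agrees with the Yun--Zhu isomorphism $\Phi_{YZ}: \OO(\Bed) \xrightarrow{\sim} H^T_{-\bl}(\ag)$, so after the extension of scalars of Definition \ref{finalphi2phi2predef}, $\FF_{\hp=0}(\psbfd)$ is $\Phi_{YZ}$ tensored with the mirror map $\mirror$. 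By Proposition \ref{GMPPLS}, $\FF_{\hp=0}(\pgmpt)$ coincides with the Peterson--Lam--Shimozono homomorphism $\Phi_{PLS}$, and hence $\FF_{\hp=0}(\psgmpt)$ is its scalar extension.

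Next, I would identify the resulting composition with Rietsch's semi-classical mirror isomorphism. The composition $\FF_{\hp=0}(\psgmpt)\circ\FF_{\hp=0}(\psbfd)$ vanishes on the image of $\WW$ (this is essentially what was already checked in the proof of Lemma \ref{finalphi2phi2ok}, using \cite[Lemma 5.11]{me2} and $W$-equivariance), so it descends to a map on the quotient. Under the identification of the source with $\OO(\ZZZ_P)$ from Lemma \ref{dunnowheretoplacelemma1} and of the target with $QH_T^{\bl}(G/P)[\ecc]$ in the semi-classical limit, this descent is exactly the composite
\[ \OO(\ZZZ_P) \simeq \jac(\Xd,W,\pi,p) \xrightarrow{\Phi_{\hp=0}} QH_T^{\bl}(G/P)[\ecc], \]
where the first isomorphism comes from Lemma \ref{RietschJacobilemma} combined with Lemma \ref{dunnowheretoplacelemma1}, and the second is Rietsch's mirror isomorphism. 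Since $\Phi_{\hp=0}$ is an isomorphism by \cite{Rietsch, me2}, $\FF_{\hp=0}(\Phi_2)$ is bijective.

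The main technical obstacle is checking that the two independent descriptions of $\OO(\ZZZ_P)$---one as $\FF_{\hp=0}(\dkf(\KK)\ot\OO(\ZL)/\WW)$ and the other as the Jacobi ring $\jac(\Xd,W,\pi,p)$---match up in a way compatible with $\Phi_{YZ}$ on one side and with Rietsch's mirror construction on the other. This compatibility is essentially $W$-equivariance plus the evaluation on the ``top'' MV-cycle classes (the computation underlying Proposition \ref{BFenhanceequalYZ} and its quantum analogue \cite[Lemma 5.11]{me2}). Once the two descriptions are matched, bijectivity is inherited from $\Phi_{\hp=0}$.
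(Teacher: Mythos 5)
Your high-level outline matches the paper's strategy: identify the source with $\OO(\ZZZ_P)$ via Lemma~\ref{dunnowheretoplacelemma1}, pass to the classical limit of $\psbfd$ and $\psgmpt$ via Proposition~\ref{BFenhanceequalYZ} and Proposition~\ref{GMPPLS}, and identify the resulting composite with a known classical isomorphism. But there are two substantive points of divergence, and the second one leaves a genuine gap.

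First, the paper does not route through the Jacobi ring $\jac(\Xd,W,\pi,p)$ or Rietsch's original construction of $\Phi_{\hp=0}$. Instead it factors the argument through the scheme $\Bed\tGB\Umd\wpd\Bmd/\Bmd$ and the isomorphism $\Phi_{loc}$ of \cite[Theorem B]{me2}. This is not a cosmetic choice: $\Phi_{loc}$ is by \cite[Definition~5.15]{me2} built out of $\Phi_{PLS}\circ\Phi^{tw}_{YZ}$, so the equality $\Phi_{loc}\circ\eta_4|_{\OO(\Bed)}=\Phi_{PLS}\circ\Phi^{tw}_{YZ}$ holds essentially by definition. If you instead insist on matching with Rietsch's $\Phi_{\hp=0}$ as constructed from the LG model, you take on the extra burden of proving that Rietsch's map coincides with the $\Phi_{PLS}\circ\Phi^{tw}_{YZ}$ composite — a comparison that is not established anywhere in the present paper and is not obviously a formal consequence of \cite{Rietsch} plus \cite{me2}.

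Second, and more importantly, even granted the choice of $\Phi_{loc}$ the argument is not finished: one still has to verify that the two sides agree on the quantum parameters $\OO(\ZL)$, not just on $\OO(\Bed)$. This is the content of the calculation with $g_{P,w_0(\lambda)}$ and \cite[Lemma~5.11]{me2} in the paper's proof (equation~\eqref{finalphi2phi2limitisbijeq2}), and it is precisely what you flag as ``the main technical obstacle'' without carrying it out. You have correctly named the inputs ($W$-equivariance, the MV-cycle evaluation, and \cite[Lemma~5.11]{me2}), but waving at them is not the same as producing the diagram~\eqref{finalphi2phi2limitisbijdiag} and checking its outer square commutes on $\OO(\ZL)$; without that, your claim that ``the descent is exactly'' Rietsch's map is an assertion, not a proof. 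Concretely, the missing step is to compare the map $\eta_4$, which on $\OO(\ZL)$ records $\lambda\mapsto\bigl[(b,h)\mapsto\lambda(t(b)^{-1})\bigr]$, against $\Phi^{tw}_{YZ}\ot\mirror$ followed by $\Phi_{PLS}$, and to show both land on $q^{[\lambda]}$.

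So: same skeleton, but your proposal swaps the paper's anchor $\Phi_{loc}$ for a weaker one (Rietsch's construction) and leaves the quantum-parameter comparison — the actual hard calculation — unproved.
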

\begin{proof}
Recall the following
\begin{enumerate}
\item $\ZZZ:=\{(b,\xi)\in\Bd\times (e+\ttd)|~b\cdot\xi=\xi\}$ (see Lemma \ref{dunnowheretoplacelemmaa});

\item $\ZZZ_P:=\left\{ (b,\xi,t)\in \Bd\times (e+\ttd)\times \ZL \left|~ b\cdot\xi = \xi,~ b\in \Umd(\wpd)^{-1}t\Umd \right. \right\}$ (see Lemma \ref{dunnowheretoplacelemma1});

\item $\Bed:=\{(b,h)\in\Bd\times\spec H_T^{\bl}(\pt)|~b\cdot e^T(h)=e^T(h)\}$ (see \eqref{Beddef});

\item $\Phi_{YZ}:\OO(\Bed)\xrightarrow{\sim}H_{-\bl}^T(\ag)$, Yun-Zhu's isomorphism \cite{YZ} (see Proposition \ref{BFenhanceequalYZ}); and

\item $\Phi_{PLS}:H_{-\bl}^T(\ag)\ra QH_T^{\bl}(G/P)[\ecc]$, Peterson-Lam-Shimozono's homomorphism \cite{me1, LS, Peter} (see Proposition \ref{GMPPLS}).
\end{enumerate}
For simplicity, put 
\[ N_1:= H_{-\bl}^T(\ag)[\eccc]\quad\text{ and }\quad N_2:= QH_T^{\bl}(G/P)[\ecc].\]
We have the following commutative diagram
\begin{equation}\label{finalphi2phi2limitisbijdiag}
\begin{tikzpicture}
\tikzmath{\x1 = 5; \x2 = 2;}
\node (A) at (-0.1*\x1,0) {$\OO(\ZZZ_P)$} ;
\node (B) at (\x1,0) {$\OO(\ZZZ\times\ZL)$} ;
\node (C) at (1.7*\x1,0) {$N_1$} ;
\node (D) at (2.4*\x1,0) {$N_2$} ;
\node (E) at (-0.1*\x1,-\x2) {$\OO(\Bed\times_{\Gd}\Umd(\wpd)^{-1}\Bmd)$} ;
\node (F) at (\x1,-\x2) {$\OO(\Bed\times\ZL)$} ;
\node (G) at (1.7*\x1,-\x2) {$N_1$} ;
\node (H) at (2.4*\x1,-\x2) {$N_2$} ;

\draw[->>, font=\tiny] (B) edge node[above]{$\eta_1$} (A);
\path[->,  font=\tiny] (B) edge node[above]{$\FF_{\hp=0}(\psbfd)$} (C);
\path[->,  font=\tiny] (C) edge node[above]{$\FF_{\hp=0}(\psgmpt)$} (D);
\draw[->>, font=\tiny] (F) edge node[above]{$\eta_4$} (E);
\path[->,  font=\tiny] (F) edge node[above]{$\Phi_{YZ}\ot\mirror$} (G);
\path[->,  font=\tiny] (G) edge node[above]{$(\Phi_{PLS})_{\CC[\eccc]}$} (H);

\path[->,  font=\tiny] (A) edge node[left]{$\eta_2$} (E);
\path[->,  font=\tiny] (B) edge node[left]{$\eta_3$} (F);

\path[->] (A) edge node[right]{$\simeq$} (E);
\path[->] (B) edge node[right]{$\simeq$} (F);
\draw[double equal sign distance] (C) -- (G);
\draw[double equal sign distance] (D) -- (H);
\end{tikzpicture}.
\end{equation}
Here, 
\begin{enumerate}[(i)]
\item the leftmost commutative square is induced by the commutative diagram
\begin{equation}\nonumber
\begin{tikzpicture}
\tikzmath{\x1 = 7; \x2 = 2;}
\node (A) at (0,0) {$\ZZZ_P$} ;
\node (B) at (\x1,0) {$\ZZZ\times\ZL$} ;
\node (C) at (0,-\x2) {$\Bed\times_{\Gd}\Umd(\wpd)^{-1}\Bmd$} ;
\node (D) at (\x1,-\x2) {$\Bed\times\ZL$} ;

\draw[right hook->, font=\tiny]  (A) edge node[above]{canonical} (B);
\path[->, font=\tiny] (C) edge node[left]{induced by right arrow} (A);
\path[->, font=\tiny] (C) edge node[right]{$\simeq$} (A);
\path[->, font=\tiny] (D) edge node[right]{$(b,h,t)\mapsto (b,e^T(h),t)$} (B);
\path[->, font=\tiny] (D) edge node[left]{$\simeq$} (B);
\draw[right hook->, font=\tiny] 
(C) edge node[above]{$(b,h)\mapsto (b,h,t(b))$} (D);
\end{tikzpicture}
\end{equation}
where $t(b)\in\Td$ is the unique element such that
\begin{equation}\label{finalphi2phi2limitisbijeqq1}
b\in \Umd (\wpd)^{-1}t(b)\Umd
\end{equation}
(by the proof of \cite[Lemma 6.10]{me2}, $t(b)$ in fact lies in $\ZL$);

\item the middle commutative square is given by Lemma \ref{dunnowheretoplacelemmaa} and Proposition \ref{BFenhanceequalYZ}; and

\item the rightmost commutative square is given by Proposition \ref{GMPPLS}.
\end{enumerate}

By Lemma \ref{dunnowheretoplacelemma1}, we have
\begin{equation}\label{finalphi2phi2limitisbijeq1}
\FF_{\hp=0}(\Phi_2)\circ\eta_1 = \FF_{\hp=0}(\psgmpt)\circ\FF_{\hp=0}(\psbfd).
\end{equation}
We wish to show 
\begin{equation}\label{finalphi2phi2limitisbijeq2}
 \Phi_{loc}\circ \eta_4= (\Phi_{PLS})_{\CC[\eccc]} \circ (\Phi_{YZ}\ot\mirror)
\end{equation}
where $\Phi_{loc}$ is the algebra isomorphism from \cite[Theorem B]{me2}. This will complete the proof of Proposition \ref{finalphi2phi2limitisbij} because we will have $\FF_{\hp=0}(\Phi_2)=\Phi_{loc}\circ\eta_2$ by \eqref{finalphi2phi2limitisbijdiag} and \eqref{finalphi2phi2limitisbijeq1}. By the definition of $\Phi_{loc}$ (see \cite[Definition 6.7]{me2}), we have $\Phi_{loc}\circ\eta_4|_{\OO(\Bed)}=\Phi_{PLS}\circ\Phi_{YZ}$. Hence it remains to show that the restrictions to $\OO(\ZL)$ of both sides of \eqref{finalphi2phi2limitisbijeq2} are equal. Let $\lambda\in\Q$ be anti-dominant. By definition, $\eta_4$ sends $\lambda|_{\ZL}\in \OO(\ZL)$ to the regular function $(b,h)\mapsto \lambda(t(b))$. Consider the regular function $g_{P,w_0(\lambda)}\in\OO(\Bed)$ defined by $(b,h)\mapsto \langle v^*_{w_0(\lambda)}, b\cdot v_{\wp}\rangle$. By \eqref{finalphi2phi2limitisbijeqq1} and \cite[Proposition B.7]{me2}, $\eta_4(g_{P,w_0(\lambda)})$ is the regular function $(b,h)\mapsto \wp(w_0(\lambda))(t(b))=w_P(\lambda)(t(b))$. Since $t(b)\in\ZL$, we have $w_P(\lambda)(t(b))=\lambda(t(b))$, and hence $\eta_4(g_{P,w_0(\lambda)})=\eta_4(\lambda|_{\ZL})$. By \cite[Lemma 6.3]{me2}, $\Phi_{loc}\circ\eta_4(g_{P,w_0(\lambda)})=q^{[\lambda]}$. Therefore,
\[  \Phi_{loc}\circ\eta_4(\lambda|_{\ZL}) = \Phi_{loc}\circ\eta_4(g_{P,w_0(\lambda)})=q^{[\lambda]} = (\Phi_{PLS})_{\CC[\eccc]}\circ (\Phi_{YZ}\ot\mirror)( \lambda|_{\ZL}). \]
Since $\lambda|_{\ZL}$ ($\lambda\in\Q$ anti-dominant) and their inverses generate the $\CC$-algebra $\OO(\ZL)$, the result follows.
\end{proof}

\begin{remark}\label{addrmk1}
From the proof of Proposition \ref{finalphi2phi2limitisbij} (more precisely, the equality $\FF_{\hp=0}(\Phi_2)=\Phi_{loc}\circ\eta_2$), we see that $\FF_{\hp=0}(\Phi_2)$ is a ring map where the ring structure on $\FF_{\hp=0}(\dkf(\KK)\ot\OO(\ZL)/\WW)$ is induced by $\OO(\ZZZ_P)$ via Lemma \ref{dunnowheretoplacelemma1}. We will use this fact when we prove that $\FF_{\hp=0}(\Mir)$ is a ring map in Section \ref{conclusionofproof}.
\end{remark}

%%%%%%%%%%%%%%%%%%%%%%%%%%%%%%%%%%
%%%%%%%%%%%%%%%%%%%%%%%%%%%%%%%%%%
\subsection{The map $\Phi_3$}\label{finalphi3} 
\begin{proposition}\label{finalphi3exist}
There exists a unique $\utd\ot\OO(\ZL)$-linear map $\Phi_3$ such that $\Phi_3\circ\Phi_1=\Phi_2$.
\end{proposition}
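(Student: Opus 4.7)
Uniqueness is immediate: since $\Phi_1$ is surjective by Proposition \ref{finalphi1phi1surj}, any two $\utd\ot\OO(\ZL)$-linear maps that agree after precomposition with $\Phi_1$ must coincide. So the real content is existence, which amounts to showing $\ker\Phi_1\subseteq\ker\Phi_2$.

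The key observation is that the target of $\Phi_1$ is $\hp$-torsion-free. Indeed, $\Phi_{\rhod}$ (Proposition \ref{Rietschfinallemma}) and $\Phi_0$ (Proposition \ref{finalphi0bijective}) are bijective, so $\dm{\Gd}\ot\OO(\ZL)/\VV\simeq \bries$, which has no $\hp$-torsion by Lemma \ref{hTF}. The strategy is then an $\hp$-adic descent argument powered by the bijectivity of $\FF_{\hp=0}(\Phi_1)$ (Proposition \ref{finalphi1phi1limitisbij}).

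More precisely, let $x\in\ker\Phi_1$. I claim $x$ lies in $\hp^n\cdot(\dkf(\KK)\ot\OO(\ZL)/\WW)$ for every $n\geqslant 0$. The proof is by induction. The base case is trivial. For the inductive step, write $x=\hp^n y$; then $\hp^n\Phi_1(y)=0$, and since the target is $\hp$-torsion-free we get $\Phi_1(y)=0$. Reducing modulo $\hp$, the element $y$ maps to zero under $\FF_{\hp=0}(\Phi_1)$, which is bijective, so the class of $y$ in $\FF_{\hp=0}(\dkf(\KK)\ot\OO(\ZL)/\WW)$ vanishes, giving $y\in\hp\cdot(\text{source})$ and hence $x\in\hp^{n+1}\cdot(\text{source})$. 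Applying $\Phi_2$ then yields $\Phi_2(x)\in\bigcap_{n\geqslant 0}\hp^n\cdot QH^{\bl}_{\That}(G/P)[\ecc]$.

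To conclude $\Phi_2(x)=0$ it suffices to observe that $QH^{\bl}_{\That}(G/P)[\ecc]$ is $\hp$-adically separated. This is transparent: as a $\CC[\hp]$-module it is free, with basis the Schubert classes tensored with monomials in $q_i^{\pm 1}$ and in the remaining equivariant parameters $\w_1,\dots,\w_r$, so $\bigcap_n \hp^n\cdot QH^{\bl}_{\That}(G/P)[\ecc]=0$. This yields $\ker\Phi_1\subseteq\ker\Phi_2$ and hence the existence of $\Phi_3$. I do not foresee a serious obstacle here; the argument is essentially a formal consequence of the $\hp$-torsion-free property of $\bries$ (Lemma \ref{hTF}) combined with the semi-classical bijectivity statements already proved.
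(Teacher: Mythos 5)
Your proof is correct and follows essentially the same approach as the paper's: reduce existence to $\ker\Phi_1\subseteq\ker\Phi_2$, then use the $\hp$-torsion-freeness of the target (via $\Phi_{\rho^\vee}$, $\Phi_0$ and Lemma \ref{hTF}) together with the bijectivity of $\mathsf{F}_{\hp=0}(\Phi_1)$ to show $\ker\Phi_1$ is $\hp$-divisible, and conclude by the $\hp$-adic separatedness of the free $\CC[\hp]$-module $QH^{\bl}_{T\times\mathbb{G}_m}(G/P)[q_1^{-1},\ldots,q_k^{-1}]$. The only cosmetic difference is that you phrase the iteration as divisibility of $x$ itself rather than producing a chain $y_k\in\ker\Phi_1$ with $x=\hp^k y_k$, which is the same argument.
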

\begin{proof}
Since $\Phi_1$ is surjective (Proposition \ref{finalphi1phi1surj}), the existence of $\Phi_3$ is equivalent to $\ker\Phi_1\subseteq\ker\Phi_2$. Let $y\in\ker\Phi_1$. Since $\FF_{\hp=0}(\Phi_1)$ is bijective (Proposition \ref{finalphi1phi1limitisbij}), we have $y=\hp y_1$ for some $y_1\in\dkf(\KK)\ot\OO(\ZL)/\WW$. Then $0=\Phi_1(y)=\hp\Phi_1(y_1)$. By Proposition \ref{hTF} and Proposition \ref{finalphi0bijective}, $\dm{\Gd}\ot\OO(\ZL)/\VV$ is $\hp$-torsion-free. Hence we have $y_1\in\ker\Phi_1$. Continuing, we obtain $y_1,y_2,y_3,\ldots\in\ker\Phi_1$ satisfying $y=\hp^k y_k$ for any $k$. It follows that $\Phi_2(y)=\hp^k\Phi_2(y_k)$ for any $k$, and hence $\Phi_2(y)=0$ because $QH_{\That}^{\bl}(G/P)[\ecc]$ is $\CC[\hp]$-free. This establishes the existence of $\Phi_3$. The uniqueness follows from the surjectivity of $\Phi_1$. 
\end{proof}

\begin{lemma}\label{finalphi3graded}
$\Phi_3$ is graded.
\end{lemma}
\begin{proof}
This is because $\Phi_1$ and $\Phi_2$ are graded (Lemma \ref{finalphi1graded} and Lemma \ref{finalphi2graded}), $\Phi_1$ is surjective (Proposition \ref{finalphi1phi1surj}) and $\Phi_3\circ \Phi_1=\Phi_2$ (Proposition \ref{finalphi3exist}).
\end{proof}

\begin{lemma} \label{finalphi3shift}
For any $\lambda\in\cop$, we have
\[ \Phi_3\circ\shift^{\VV}_{\lambda} = \shift^A_{\lambda}\circ \Phi_3\]
where $\shift^A_{\lambda}$ and $\shift^{\VV}_{\lambda}$ come from Lemma \ref{Ashiftwell} and Lemma \ref{finaltwomodvextend} respectively.
\end{lemma}
\begin{proof}
This follows from the analogous equalities for $\Phi_1$ and $\Phi_2$ (Lemma \ref{finalphi1shift} and Lemma \ref{finalphi2shift}), the surjectivity of $\Phi_1$ (Proposition \ref{finalphi1phi1surj}) and the equality $\Phi_3\circ \Phi_1=\Phi_2$ (Proposition \ref{finalphi3exist}).
\end{proof}

\begin{proposition}\label{finalphi3bij}
$\Phi_3$ is bijective.
\end{proposition}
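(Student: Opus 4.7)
The proof splits into surjectivity, which is formal, and injectivity, which is the heart of the matter. For surjectivity, since $\Phi_2 = \Phi_3 \circ \Phi_1$ is surjective (Proposition \ref{finalphi2phi2surj}) and $\Phi_1$ is surjective (Proposition \ref{finalphi1phi1surj}), $\Phi_3$ must be surjective. For injectivity, set $R := \utd \ot \OO(\ZL)$, which is Noetherian. Via the isomorphisms $\Phi_{\rhod}$ (Proposition \ref{Rietschfinallemma}) and $\Phi_0$ (Proposition \ref{finalphi0bijective}), the source of $\Phi_3$ is identified as an $R$-module with $\bries$, and Lemma \ref{coherent} asserts that this is a finitely generated $R$-module. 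The target $QH_{\That}^{\bl}(G/P)[\ecc] = H_{\That}^{\bl}(G/P)\otimes\CC[\eccc]$ is free over $R$ of rank $|W^P|$ with the Schubert basis, hence flat. Consequently, tensoring the exact sequence $0 \to \ker\Phi_3 \to \dm{\Gd}\ot\OO(\ZL)/\VV \to QH_{\That}^{\bl}(G/P)[\ecc] \to 0$ with $R/\mathfrak{m}$ for any maximal ideal $\mathfrak{m}\subset R$ stays exact. The plan is therefore to show that the induced map on each fiber is bijective, and then deduce $\ker\Phi_3 = 0$ by applying Nakayama to $(\ker\Phi_3)_\mathfrak{m}$ at every maximal ideal.

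Since surjectivity persists on fibers, it suffices to prove that the source has fiber dimension at most $|W^P|$ at every point of $\spec R = \AA^1_{\hp}\times\hh\times\ZL$. For points in $\{0\}\times\hh\times\ZL$, the bijectivity of $\FF_{\hp=0}(\Phi_3)$ gives the desired equality; this bijectivity follows by applying the right exact functor $\FF_{\hp=0}$ to $\Phi_3\circ\Phi_1 = \Phi_2$ together with Propositions \ref{finalphi1phi1limitisbij} and \ref{finalphi2phi2limitisbij}. For a point $(\hp_0,h_0,t_0)$ with $\hp_0 \ne 0$, the key input is Lemma \ref{Rietschfiberlemma}, which yields
\[ \dim G_0(\hp_0,h_0,t_0) = \dim G_0(c\hp_0,ch_0,t_0) \quad \text{for every } c \in \CC^\times. \]
Since $(0,0,t_0)$ lies in the closure of the $\gm$-orbit $\{(c\hp_0,ch_0,t_0): c\in \CC^\times\}$, upper semicontinuity of fiber dimension for the coherent sheaf associated to $\bries$ on $\spec R$ forces
\[ \dim G_0(\hp_0,h_0,t_0) \leq \dim G_0(0,0,t_0) = |W^P|. \]
Combining this with the reverse inequality coming from surjectivity, equality holds at every fiber, and Nakayama completes the argument.

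The main obstacle I anticipate is knitting together the two distinct finiteness inputs: pointwise finite-dimensionality of fibers over $\hp \neq 0$ (Lemma \ref{Rietschfiberlemma}, whose proof goes through a Fourier transform and regularity of the Gauss--Manin connection) and global finite generation over $R$ (Lemma \ref{coherent}). The former without the latter gives no semicontinuity, while the latter without the $\gm$-scaling from the former would only bound fiber dimensions on a dense open subset and leave the bad locus uncontrolled. The $\gm$-homogeneity is what propagates the bound at the central fiber out to the entire $\hp \neq 0$ stratum, and harnessing these two ingredients simultaneously is, to my mind, the delicate point.
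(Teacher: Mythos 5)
Your proof is correct and follows essentially the same route as the paper: surjectivity from $\Phi_3\circ\Phi_1=\Phi_2$, then injectivity via coherence of the source (Lemma \ref{coherent}), freeness of the target to split off the kernel, the $\gm$-scaling equality from Lemma \ref{Rietschfiberlemma} together with upper semicontinuity at the origin, and the bijectivity of $\FF_{\hp=0}(\Phi_1)$ and $\FF_{\hp=0}(\Phi_2)$ to pin the central fiber dimension at $|W/W_P|$. Your closing paragraph accurately identifies the same interplay the paper exploits between pointwise finiteness on $\hp\ne 0$ and global coherence.
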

\begin{proof}
The surjectivity of $\Phi_3$ follows from that of $\Phi_2$ (Proposition \ref{finalphi2phi2surj}) and the equality $\Phi_3\circ\Phi_1=\Phi_2$ (Proposition \ref{finalphi3exist}). It remains to show that $\Phi_3$ is injective. 

Since $\Phi_3$ is surjective and $QH_{\That}^{\bl}(G/P)[\ecc]$ is a free $H_{\That}^{\bl}(\pt)[\eccc]$-module, we have a (non-canonical) splitting
\[ \dm{\Gd}\ot\OO(\ZL)/\VV\simeq\ker\Phi_3\oplus QH_{\That}^{\bl}(G/P)[\ecc].\]
By Proposition \ref{coherent} and Proposition \ref{finalphi0bijective}, $\dm{\Gd}\ot\OO(\ZL)/\VV$ is a coherent $\OO_{\AA^1_{\hp}\times\hh\times\ZL}$-module, and hence $\ker\Phi_3$ is also coherent. It follows that $\ker\Phi_3=0$, i.e. $\Phi_3$ is injective, if we can show that the fiber dimension of $\dm{\Gd}\ot\OO(\ZL)/\VV$ at every point $(\hp,h,t)\in \AA^1_{\hp}\times\hh\times\ZL$ is at most $|W/W_P|$, the rank of $QH_{\That}^{\bl}(G/P)[\ecc]$. 

Since $\Phi_0$ is bijective (Proposition \ref{finalphi0bijective}), it suffices to look at $\briesp$. Denote the fiber of $\bries$ at $(\hp,h,t)$ by $G_0(\hp,h,t)$. By Proposition \ref{Rietschfiberlemma}, we have $\dim  G_0(\hp,h,t)=\dim G_0(c\hp,ch,t)$ for any $(\hp,h,t)\in \AA^1_{\hp}\times\hh\times\ZL$ and $c\in\CC$ with $c,\hp\ne 0$. This equality together with the coherence of $\bries\simeq \dm{\Gd}\ot\OO(\ZL)/\VV$ implies $\dim G_0(\hp,h,t)\leqslant\dim G_0(0,0,t)$. But since $\FF_{\hp=0}(\Phi_1)$ and $\FF_{\hp=0}(\Phi_2)$ are bijective (Proposition \ref{finalphi1phi1limitisbij} and Proposition \ref{finalphi2phi2limitisbij}), we have 
\begin{equation}\label{finalphi3bijeq1}
\dim G_0(0,0,t)=\dim G_0(0,h,t)= |W/W_P|,
\end{equation}
and hence we have $\dim G_0(\hp,h,t)\leqslant |W/W_P|$ for any $(\hp,h,t)$ with $\hp\ne 0$. It remains to deal with the case $\hp=0$. But this also follows from \eqref{finalphi3bijeq1}.
\end{proof}

%%%%%%%%%%%%%%%%%%%%%%%%%%%%%%%%%%
%%%%%%%%%%%%%%%%%%%%%%%%%%%%%%%%%%
\subsection{The map $\Mir$}\label{finalfinal}
\begin{definition}\label{finalfinalphidef}
Define
\[\Mir:=\Phi_3\circ\Phi_0: \bries\ra QH_{\That}^{\bl}(G/P)[\ecc]\]
where $\Phi_0$ and $\Phi_3$ come from Lemma \ref{finalphi0descendlemma} and Proposition \ref{finalphi3exist} respectively.
\end{definition}

\begin{proposition}\label{finalfinaldmlinear}
$\Mir$ is $\dm{\ZL}$-linear.
\end{proposition}

We will prove Proposition \ref{finalfinaldmlinear} after some preparation. Take $\lambda_0\in\cop$ such that $\a_i(w_0(\lambda_0))$ is non-zero precisely when $i\in\iip$. Let $i\in\iip$. There is a unique MV cycle of type $\lambda_0$ and weight $w_0(\lambda_0)+\ad_i$. In fact, by \cite[Lemma A.1]{me2}, this MV cycle is equal to $\ol{\BB\cdot t^{s_{\a_i}(w_0(\lambda_0))}}$. Let $v_i\in S(\lambda_0)_{w_0(\lambda_0)+\ad_i}$ correspond to this MV cycle via the composite isomorphism \eqref{MVbasis}. Define $\ffff_i:=f_{\lambda_0,v_i}$. Define also $\ffff_0:=f_{\lambda_0,v_{\wp}}=f_{\lambda_0,v_{w_0}}$. (For the definition of $f_{\lambda,v}$, see the paragraph before Proposition \ref{BFenhancedivisor}. For the definition of $v_w$, see the paragraph before Definition \ref{finaltwomodWdef}.)

Define
\[ \widetilde{A}_i:\dm{\Gd}\ot\OO(\ZL)\ra \dm{\Gd}\ot\OO(\ZL)\]
by
\[ \widetilde{A}_i(x q^{[\mu]}):=x(\ffff_i-(\w_i^L-\hp\rhod(\w_i))\ffff_0) q^{[\mu-w_0(\lambda_0)-\ad_i]}+\hp x\partial_{q_i}q^{[\mu]}\]
for any $x\in\dm{\Gd}$ and $q^{[\mu]}\in\OO(\ZL)$, where $\partial_{q_i}q^{[\mu]}:=\w_i(\mu) q^{[\mu-\ad_i]}$.

\begin{lemma}\label{finalfinalAipreserveV}
$\widetilde{A}_i$ preserves $\VV$.
\end{lemma}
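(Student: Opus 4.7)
The plan is to verify $\widetilde{A}_i(\VV) \subseteq \VV$ by checking preservation of each of the four generating subspaces (1)--(4) of $\VV$ listed in Definition \ref{finaltwomodVdef}. The key structural observation is that $\widetilde{A}_i$ decomposes as right multiplication by $g \cdot q^{[-w_0(\lambda_0)-\ad_i]}$ (with $g := \ffff_i - (\w_i^L - \hp\rhod(\w_i))\ffff_0 \in \dm{\Gd}$) plus $\hp$ times a derivation that acts only on the $\OO(\ZL)$ tensor factor. The derivation summand commutes with both left and right multiplication by elements of $\dm{\Gd}$, so for subspaces (1)--(3) its preservation is immediate; preservation of (4) under it uses Lemma \ref{Rietschfiblemma}, which says $\Xd \simeq \ZL \times \mathcal{R}_P$ is a trivial fibration, whence differentiation along $\ZL$ preserves the vanishing-on-$\Xd$ condition.

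For the right-multiplicative summand, preservation of (2) is the easiest: since $\ffff_i, \ffff_0$ are left-$\Umd$-invariant, $\LL_{x^R}\ffff_i = \LL_{x^R}\ffff_0 = 0$ for $x \in \nnd_-$, and combined with $[x^R, \w_i^L]=0$, this shows $(x^R - \cchh(x))$ commutes with $g$. For (4), the computation expands $\vp \cdot g$ and observes that two appearances of $\hp\vp \LL_{\w_i^L}\ffff_0$ cancel after commuting $\vp$ through $\w_i^L$; the remaining summands all involve functions vanishing on $\Xd$. The crucial ingredient here is that $\w_i \in \ttd_{\ZL}$ for $1\leqslant i\leqslant k$ (since $\ad_j(\w_i) = \delta_{ij}$ vanishes for $j > k$), so $\w_i^L$ is tangent to $\Xd$ and hence $\LL_{\w_i^L}$ preserves vanishing on $\Xd$.

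The delicate cases are (1) and (3). For (1), write $y = r(x^L - \cchh(x))$ with $x \in \nnd_-$ and commute $(x^L - \cchh(x))$ past $g$: the commutator equals $[x^L, g] = \hp\, \LL_{x^L}\ffff_i - [x,\w_i]^L\ffff_0$, since $\LL_{x^L}\ffff_0 = f_{\lambda_0, xv_{w_0}}$ vanishes because $v_{w_0}$ is the lowest weight vector of $S(\lambda_0)$. A weight argument shows $xv_i = 0$ unless $x$ has a component along $\ggd_{-\ad_i}$, and in that exceptional case $xv_i$ is a scalar multiple of $v_{w_0}$ whose coefficient is computed exactly as in the proof of Lemma \ref{finalfinaladdlemma} via the normalization \eqref{RietschmultipleofKilling}. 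The residual, after combining with $[x,\w_i]^L\ffff_0 = \ffff_0 x^L$ in this case, reduces to an element of $\VV$ using the congruence $\ffff_0 q^{[-w_0(\lambda_0)]} \equiv 1 \pmod{(4)}$, which holds because $\ffff_0|_\Xd = q^{[w_0(\lambda_0)]}|_\Xd$ (cf.\ Lemma \ref{dunnowheretoplacelemmab}). Case (3) runs parallel: for $x \in \nnd$, compute $[x^L, g]$ using the full expression including the $\hp\rhod(\w_i)$ correction; the terms $\LL_{x^L}\ffff_i = f_{\lambda_0, xv_i}$ and $\LL_{x^L}\ffff_0 = f_{\lambda_0, xv_{w_0}}$ involve matrix coefficients of vectors in higher weight spaces, which contribute through (4) (since they vanish on $\Xd$ by the support condition $v' \in S(\lambda_0)_{<\wp(\lambda_0)}$), while $[x,\w_i]^L$ remains in $\nnd^L$ and stays inside the right ideal (3).

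The main obstacle is the bookkeeping in case (1) at $x = e_{-\ad_i}$: one must verify that the scalar $c_i$ from $e_{-\ad_i}\cdot v_i = c_i v_{w_0}$, the value of $\cchh(e_{-\ad_i})$, and the $\hp\rhod(\w_i)$ correction combine so that the leftover term lies in $\VV$ modulo (4), and this match is exactly what the normalization \eqref{RietschmultipleofKilling} and the identity proved inside Lemma \ref{finalfinaladdlemma} are designed to encode. Once this miracle is verified, the remaining cases follow by straightforward adaptations of the same commutator analysis.
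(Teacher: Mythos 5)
Your decomposition of $\widetilde{A}_i$ into a right-multiplication-by-$g$ piece (with $g := \ffff_i - (\w_i^L - \hp\rhod(\w_i))\ffff_0$) plus $\hp\partial_{q_i}$ is natural, and your treatment of (1)--(3) via explicit commutators is a hands-on unfolding of the paper's one-line observation that $g$ represents an element of $\kf(\KK)$ (Lemma \ref{finalfinaladdlemma}), from which preservation of (1) and (2) follows formally; also your detour through $[x^L,g]$ for (3) is superfluous, since (3) is a right ideal and is preserved by any right multiplication together with $\hp\partial_{q_i}$ without computation. Those parts are defensible, if more laborious than necessary.

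The genuine gap is in subspace (4), where your strategy of analyzing the two summands of $\widetilde{A}_i$ independently cannot work. Recall that $\Xd$ sits inside $\Gd\times\ZL$ via the \emph{graph} of $\pi$, so a vector field $V\oplus W$ on $\Gd\times\ZL$ is tangent to $\Xd$ precisely when $V$ is tangent to $\Xd\subset\Gd$ \emph{and} $W=d\pi(V)$. Now $d\pi(\w_i^L)=q_i\partial_{q_i}\ne 0$ (the flow of $\w_i^L$ is $g\mapsto ge^{s\w_i}$, which moves $\pi(g)$ to $\pi(g)e^{s\w_i}$), so $\w_i^L\oplus 0$ is \emph{not} tangent to the graph, contrary to your claim; the fact that $\w_i\in\lie\ZL$ only makes $\w_i^L$ tangent to $\Xd$ as a subvariety of $\Gd$, which is not the relevant condition. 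Likewise $0\oplus\partial_{q_i}$ fails the tangency test since $d\pi(0)\ne\partial_{q_i}$, so ``differentiation along $\ZL$'' does \emph{not} preserve the condition $\widetilde{\vp}|_{\Xd}\equiv 0$ (try $\widetilde{\vp}(g,q):=q_i-q_i(\pi(g))$). Concretely, expanding $\hey\widetilde{\vp}gq^{-[w_0(\lambda_0)+\ad_i]}$ produces, after commuting $\widetilde{\vp}$ past $\w_i^L$, the term $\hp\hey(\LL_{\w_i^L}\widetilde{\vp})\ffff_0 q^{-[w_0(\lambda_0)+\ad_i]}$, which does \emph{not} lie in $\VV$ on its own; it must be paired with the other summand $\hp\hey\partial_{q_i}\widetilde{\vp}$ (also not in $\VV$ on its own), and only after replacing $\ffff_0 q^{-[w_0(\lambda_0)+\ad_i]}$ by $q^{-[\ad_i]}$ modulo (4) does the sum become $\hp\hey\LL_{\widetilde{\zeta}_i}\widetilde{\vp}$ with $\widetilde{\zeta}_i:=q^{-[\ad_i]}\w_i^L+\partial_{q_i}$, and $\widetilde{\zeta}_i$ \emph{is} tangent to the graph (its flow is $(g,t)\mapsto(ge^{s\w_i},te^{s\w_i})$). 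Your claimed cancellation ``of two appearances of $\hp\vp\LL_{\w_i^L}\ffff_0$'' inside $\vp g$ alone does not address this cross term, so the argument for (4) does not close.
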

\begin{proof}
It is clear that $\widetilde{A}_i$ preserves the ideal (3) from Definition \ref{finaltwomodVdef}. By Lemma \ref{finalfinaladdlemma}, both $\ffff_i-\w_i^L \ffff_0$ and $\ffff_0$ represent elements of $\kf(\KK)$, and so does $\ffff_i-(\w_i^L-\hp\rhod(\w_i))\ffff_0$. It follows that $\widetilde{A}_i$ maps the ideals (1) and (2) into $\VV$. It remains to handle the subspace (4). Let $\hey\in\utd$ and $\widetilde{\vp}\in\OO(\Gd\times\ZL)$ such that $\widetilde{\vp}|_{\Xd}\equiv 0$. We have 
\begin{align*}
 \widetilde{A}_i(\hey\widetilde{\vp}) = ~&\hey\widetilde{\vp}(\ffff_i-(\w_i^L-\hp\rhod(\w_i))\ffff_0)q^{-[w_0(\lambda_0)+\ad_i]}+ \hp\hey\partial_{q_i}\widetilde{\vp}\\
  =~&\hey\widetilde{\vp} \ffff_i q^{-[w_0(\lambda_0)+\ad_i]} - \w_i^L\hey \widetilde{\vp} \ffff_0 q^{-[w_0(\lambda_0)+\ad_i]} + \hp\rhod(\w_i) \hey\widetilde{\vp} \ffff_0  q^{-[w_0(\lambda_0)+\ad_i]}\\
 & +\hp\hey(\LL_{\w_i^L}\widetilde{\vp})\ffff_0 q^{-[w_0(\lambda_0)+\ad_i]} + \hp\hey\partial_{q_i}\widetilde{\vp}.
\end{align*}
Clearly the first three terms in the last expression lie in $\VV$. Moreover, we have
\begin{equation}\label{finalfinalAipreserveVeq1}
\ffff_0q^{-[w_0(\lambda_0)+\ad_i]}|_{\Xd}\equiv q^{-[\ad_i]}|_{\Xd}. 
\end{equation}
(We apply the equality $(\wpd)^{-1}\cdot v_{\wp}=v_e$ which is \cite[Proposition B.7]{me2}.) Therefore, 
\[  \widetilde{A}_i(\hey\widetilde{\vp})\equiv \hp\hey\LL_{q^{-[\ad_i]} \w_i^L+\partial_{q_i}}\widetilde{\vp}~(\bmod{\VV}).\]
The result now follows from Lemma \ref{ApplemmaD} which says that the vector field $\widetilde{\zzeta}_i:=q^{-[\ad_i]} \w_i^L+\partial_{q_i}\in\mathfrak{X}(\Gd\times\ZL)$ is tangent to $\Xd$.
\end{proof}

\begin{definition}\label{finalfinalAidef}
Define
\[ A_i: \dm{\Gd}\ot\OO(\ZL)/\VV\ra \dm{\Gd}\ot\OO(\ZL)/\VV \]
to be the map induced by $\widetilde{A}_i$ via Lemma \ref{finalfinalAipreserveV}.
\end{definition}

Recall $\dm{\ZL}$ is the $\CC[\hp]$-algebra generated by $q_i^{\pm 1}$ and $\xi_i$ ($i\in\iip$) subject to the relations $q_iq_j=q_jq_i$, $\xi_iq_j-q_j\xi_i=\hp\delta_{ij}$ and $\xi_i\xi_j=\xi_j\xi_i$. Since $\Phi_0$ is bijective (Proposition \ref{finalphi0bijective}), there is a unique $\dm{\ZL}$-module structure on $\dm{\Gd}\ot\OO(\ZL)/\VV$ such that $\Phi_0$ is $\dm{\ZL}$-linear. 

\begin{lemma}\label{finalfinalAicoincidexi}
For any $i\in\iip$, the operator $\xi_i\cdot - \in\eendo(\dm{\Gd}\ot\OO(\ZL)/\VV)$ coincides with the operator $A_i$ from Definition \ref{finalfinalAidef}.
\end{lemma}
\begin{proof}
Define $\widetilde{\zzeta}_i:= q^{-[\ad_i]}\w_i^L+\partial_{q_i}\in\mathfrak{X}(\Gd\times\ZL)$. By Lemma \ref{ApplemmaD}, $\widetilde{\zzeta}_i$ is tangent to $\Xd$ and $\zzeta_i:=\widetilde{\zzeta}_i|_{\Xd}$ is a lift of $\partial_{q_i}\in\mathfrak{X}(\ZL)$ with respect to $\pi$. Every element of $\dm{\Gd}\ot\OO(\ZL)/\VV$ is represented by $\hey\widetilde{\vp}$ for some $\hey\in\utd$ and $\widetilde{\vp}\in \OO(\Gd\times\ZL)$. Put $\vp:=\widetilde{\vp}|_{\Xd}$. By the definitions of the $\dm{\ZL}$-module structures on $ \briesp$ and $\dm{\Gd}\ot\OO(\ZL)/\VV$, we have 
\begin{align*}
  \xi_i\cdot [\hey\widetilde{\vp}] =~&  \widetilde{\Phi}_0\left(  \twist_{-\rhod}^{-1}(\hey)\ot\left( \hp\left(\frac{\LL_{\zzeta_i}\vol}{\vol}\right)\vp + \hp\LL_{\zzeta_i}\vp+(\LL_{\zzeta_i}W)\vp\right) {\color{white} \sum_j}\right.\\
&\qquad \left. - \sum_j\twist_{-\rhod}^{-1}(\hey)h_j\ot(\iota_{\zzeta_i}p^*
\langle h^j,\mccccc_{\Td}\rangle )\vp\right) .
\end{align*}
(See Definition \ref{finalphi0def} and Definition \ref{Fiberwisevolumeformdef} for the definitions of $\widetilde{\Phi}_0$ and $\vol$ respectively.) By Lemma \ref{ApplemmaF}, $\LL_{\zzeta_i}\vol=0$. By Lemma \ref{finalfinalextend}, $q^{-[w_0(\lambda_0)+\ad_i]}\ffff_i$ is an extension of $\LL_{\zzeta_i}W$. By Lemma \ref{ApplemmaE}, $\iota_{\zzeta_i}p^*\langle h^j,\mccccc_{\Td}\rangle  = q^{-[\ad_i]}\langle h^j,\w_i\rangle$. Therefore,
\begin{align*}
&~ \xi_i\cdot [\hey\widetilde{\vp}] \\
 =  ~ & \left[\hey\left(\hp\LL_{\widetilde{\zzeta}_i}\widetilde{\vp} +q^{-[w_0(\lambda_0)+\ad_i]}\ffff_i\widetilde{\vp}\right) - \hey(\w_i^L-\hp\rhod(\w_i))q^{-[\ad_i]}\widetilde{\vp} \right]\\
 =  ~ &\left[ \hp\hey\LL_{q^{-[\ad_i]}\w_i^L+\partial_{q_i}}\widetilde{\vp} + \hey\widetilde{\vp}(\ffff_i - (\w_i^L-\hp\rhod(\w_i))\ffff_0)q^{-[w_0(\lambda_0)+\ad_i]} \right. \\ 
& - \left.\hey(\w_i^L\widetilde{\vp}-\widetilde{\vp}\w_i^L)q^{-[\ad_i]} + \hey\widetilde{\vp}(\w_i^L-\hp\rhod(\w_i))\left( \ffff_0q^{-[w_0(\lambda_0)+\ad_i]}-q^{-[\ad_i]} \right)  \right] .
\end{align*}
By \eqref{finalfinalAipreserveVeq1}, the term $\hey\widetilde{\vp}(\w_i^L-\hp\rhod(\w_i))\left( \ffff_0q^{-[w_0(\lambda_0)+\ad_i]}-q^{-[\ad_i]} \right)$ belongs to the subspace (4) from Definition \ref{finaltwomodVdef}. Moreover, we have $\hey(\w_i^L\widetilde{\vp}-\widetilde{\vp}\w_i^L)q^{-[\ad_i]}=\hp\hey\LL_{q^{-[\ad_i]} \w_i^L}\widetilde{\vp}$. Therefore,
\[\xi_i\cdot [\hey\widetilde{\vp}] =\left[  \hey\widetilde{\vp}(\ffff_i - (\w_i^L-\hp\rhod(\w_i))\ffff_0)q^{-[w_0(\lambda_0)+\ad_i]} +\hp\hey\partial_{q_i}\widetilde{\vp}   \right]=\left[ \widetilde{A}_i(\hey\widetilde{\vp})\right] = A_i([\hey \widetilde{\vp} ]) . \]
\end{proof}

\begin{myproof}{Proposition}{\ref{finalfinaldmlinear}}
By definition, $\Phi_0$ is $\dm{\ZL}$-linear. Hence it suffices to prove that $\Phi_3$ is $\dm{\ZL}$-linear. Let $x\in \dm{\Gd}\ot\OO(\ZL)/\VV$. Since $\Phi_1$ is surjective (Proposition \ref{finalphi1phi1surj}), there exists $y\in\dkf(\KK)\ot\OO(\ZL)/\WW$ such that $\Phi_1(y)=x$. Let $\widetilde{y}\in\dkf(\KK)\ot\OO(\ZL)$ be a representative of $y$. Let $i\in\iip$. Denote by $[\ffff_i - (\w_i^L-\hp\rhod(\w_i))\ffff_0]\in\kf(\KK)$ the element represented by $\ffff_i - (\w_i^L-\hp\rhod(\w_i))\ffff_0$. (See Lemma \ref{finalfinaladdlemma}.) We have
\[ A_i(x) = \widetilde{\Phi}_1\left( \widetilde{y} q^{-[w_0(\lambda_0)+\ad_i]} \cdot [\ffff_i - (\w_i^L-\hp\rhod(\w_i))\ffff_0] +\hp\partial_{q_i}\widetilde{y} \right).\]
By Lemma \ref{finalfinalAicoincidexi}, the equality $\Phi_3\circ\Phi_1=\Phi_2$ (Proposition \ref{finalphi3exist}) and the definition of $\Phi_2$ (Lemma \ref{finalphi2phi2ok}), we have
\begin{equation}\label{finalfinaldmlineareqa}
\Phi_3(\xi_i\cdot x)= \Phi_3(A_i(x))= \psgmpt\circ \psbfd\left( \widetilde{y} q^{-[w_0(\lambda_0)+\ad_i]} \cdot [\ffff_i - (\w_i^L-\hp\rhod(\w_i))\ffff_0] +\hp\partial_{q_i}\widetilde{y}\right).
\end{equation}
Since $\pbfd$ is a homomorphism of modules with respect to the ring isomorphism $\pbfk$ (Theorem \ref{BFqttheorem}) and $\gmpg$ is a module action (Proposition \ref{GMPactionmodule}), we have
\begin{align}\label{finalfinaldmlineareqb}
&  \psgmpt\circ\psbfd\left( \widetilde{y} q^{-[w_0(\lambda_0)+\ad_i]} \cdot [\ffff_i - (\w_i^L-\hp\rhod(\w_i))\ffff_0]\right) \nonumber \\
=~&  q^{-[w_0(\lambda_0)+\ad_i]}\psgmpt\left(\psbfd(\widetilde{y})\aggt \pbfk\left(\left[\ffff_i - (\w_i^L-\hp\rhod(\w_i))\ffff_0\right]\right)\right) \nonumber \\
=~&  q^{-[w_0(\lambda_0)+\ad_i]}\psbfd(\widetilde{y})\gmpt\left( \pgmpg\circ\pbfk\left(\left[\ffff_i - (\w_i^L-\hp\rhod(\w_i))\ffff_0\right] \right)\right).
\end{align}

By Proposition \ref{BFenhancedivisor} and \cite[Lemma A.1]{me2}, we have 
\[ \pbfd([\ffff_i])=\left[\ol{\BB\cdot t^{s_{\a_i}(w_0(\lambda_0))}}\right]\quad\text{ and }\quad\pbfd([\ffff_0])=\left[\ol{\BB\cdot t^{w_0(\lambda_0)}}\right] .\]
(By our assumption on $\lambda_0$, we have $\wp(\lambda_0)=w_0(\lambda_0)$.) Since $\pbfd$ is an extension of $\pbfk$ and is $\utd$-linear where the $\utd$-module structure on $\dkf(\KK)$ is given by the left multiplication of left-invariant vector fields twisted by $\twist_{-\rhod}$ (see Example \ref{BFenhanceHCeg2} and Definition \ref{BFenhanceKFdef}), it follows that 
\[ \pbfk\left(\left[\ffff_i - (\w_i^L-\hp\rhod(\w_i))\ffff_0\right]\right)=  \left[\ol{\BB\cdot t^{s_{\a_i}(w_0(\lambda_0))}}\right]-\w_i \left[\ol{\BB\cdot t^{w_0(\lambda_0)}}\right].\]
Now by Proposition \ref{GMPPLS},
\begin{equation}\label{finalfinaldmlineareqc}
\pgmpg\left(\left[\ol{\BB\cdot t^{s_{\a_i}(w_0(\lambda_0))}}\right]-\w_i \left[\ol{\BB\cdot t^{w_0(\lambda_0)}}\right]\right) = q^{[w_0(\lambda_0)]}(\s_{s_{\a_i}}-\w_i) =q^{[w_0(\lambda_0)]}c_1^{\Ghat}(L_{\w_i}). 
\end{equation}
Combining \eqref{finalfinaldmlineareqa}, \eqref{finalfinaldmlineareqb} and \eqref{finalfinaldmlineareqc} gives
\begin{equation}\label{finalfinaldmlineareq6}
\Phi_3(\xi_i\cdot x)=q^{-[w_0(\lambda_0)+\ad_i]}\psbfd(\widetilde{y})\gmpt\left(q^{[w_0(\lambda_0)]}c_1^{\Ghat}(L_{\w_i})\right) + \hp\partial_{q_i}\left( \psbfd(\widetilde{y})\gmpt 1\right).   
\end{equation}
Write $\widetilde{y}=\sum_j \widetilde{y}_j\ot q^{[\mu_j]}$ with $ \widetilde{y}_j\in\dkf(\KK)$. Put $\widetilde{Y}_j:=\pbfd(\widetilde{y}_j)$. The RHS of \eqref{finalfinaldmlineareq6} is equal to 
\begin{align*}
& \sum_j\widetilde{Y}_j\gmpt\left( \left(q^{-[\ad_i]}c_1^{\Ghat}(L_{\w_i})+\hp\partial_{q_i}\right) q^{[\mu_j]}\right)\\
=~& \sum_j \widetilde{Y}_j\gmpt \left(\qc{G}_{\partial_{q_i}}q^{[\mu_j]}\right) = \sum_j \qc{T}_{\partial_{q_i}}\left( \widetilde{Y}_j\gmpt q^{[\mu_j]}\right) = \qc{T}_{\partial_{q_i}}\left(\Phi_3(x)\right)
\end{align*}
where the second equality follows from Proposition \ref{GMPactioncommute}. The proof is complete.
\end{myproof}

%%%%%%%%%%%%%%%%%%%%%%%%%%%%%%%%%%
%%%%%%%%%%%%%%%%%%%%%%%%%%%%%%%%%%
\subsection{Conclusion of proof}\label{conclusionofproof}

\begin{myproof}{Theorem}{\ref{main}}
Define $\Mir$ to be the map from Definition \ref{finalfinalphidef}. 

\begin{enumerate}
\item \underline{\textit{$\Mir$ is bijective.}} This is because $\Phi_0$ and $\Phi_3$ are bijective (Proposition \ref{finalphi0bijective} and Proposition \ref{finalphi3bij}).
\vspace{0cm}

\item \underline{\textit{$\Mir$ is $\dm{\ZL}$-linear.}} This is Proposition \ref{finalfinaldmlinear}.
\vspace{0cm}

\item \underline{\textit{$\Mir([\vol])=1$.}} Consider the elements of $\dm{\Gd}\ot\OO(\ZL)/\VV$ and $\dkf(\KK)\ot\OO(\ZL)/\WW$ represented by $1$. By abuse of notation, we denote both of them by $[1]$. By the definition of $\Phi_0$, we have $\Phi_0([\vol])=[1]$. It is clear that $\Phi_1([1])=[1]$. By Proposition \ref{BFenhancedivisor} (applied to $f_{0,v_e}=1$) and Proposition \ref{GMPPLS}, we have $\Phi_2([1])=1$. Since $\Phi_3\circ\Phi_1=\Phi_2$ (Proposition \ref{finalphi3exist}), we have 
\[ \Mir([\vol])=\Phi_3\circ\Phi_0([\vol]) = \Phi_3([1]) = \Phi_3\circ\Phi_1([1]) =\Phi_2([1]) =1.\]
\vspace{-.6cm}

\item \underline{\textit{$\FF_{\hp=0}(\Mir)$ is a ring isomorphism.}} Identify $\FF_{\hp=0}(\bries)$ with $\jac(\Xd,W,\pi,p)$ via Lemma \ref{RietschJacobilemma} where the fiberwise volume form is taken to be $\vol$ from Definition \ref{Fiberwisevolumeformdef}. By Lemma \ref{dunnowheretoplacelemma1}, both $\FF_{\hp=0}(\dm{\Gd}\ot\OO(\ZL)/\VV)$ and $\FF_{\hp=0}(\dkf(\KK)\ot\OO(\ZL)/\WW)$ are naturally rings. It is clear that $\FF_{\hp=0}(\Phi_0)$ and $\FF_{\hp=0}(\Phi_1)$ are ring maps. By Remark \ref{addrmk1}, $\FF_{\hp=0}(\Phi_2)$ is also a ring map. Since $\Phi_3\circ\Phi_1=\Phi_2$ (Proposition \ref{finalphi3exist}) and $\FF_{\hp=0}(\Phi_1)$ is bijective (Proposition \ref{finalphi1phi1limitisbij}), $\FF_{\hp=0}(\Phi_3)$ and hence $\FF_{\hp=0}(\Mir)$ is a ring map.
\vspace{0cm}

\item \underline{\textit{$\Mir\circ\shift^B_{\lambda}=\shift^A_{\lambda}\circ\Mir$ for any $\lambda\in\Q$.}} By $\shift^{A/B}_{\lambda_1}\circ\shift^{A/B}_{\lambda_2}=\shift^{A/B}_{\lambda_1+\lambda_2}$ (Lemma \ref{Ashiftlemmamulti} and Lemma \ref{Bshiftlemmamulti}), we may assume $\lambda\in\cop$. The result follows from the analogous results for $\Phi_0$ and $\Phi_3$ (Lemma \ref{finalphi0shift} and Lemma \ref{finalphi3shift}).
\vspace{0cm}

\item \underline{\textit{$\Mir$ is graded.}} This is because $\Phi_0$ and $\Phi_3$ are graded (Lemma \ref{finalphi0graded} and Lemma \ref{finalphi3graded}).
\vspace{0cm}
\end{enumerate}

Finally, we prove that any $\utd\ot\OO(\ZL)$-linear map satisfying (3) and (5) must be equal to $\Mir$. Let $\Phi'$ be such a map. For any $\lambda\in\Q$, we have
\[ \Phi'(\shift^B_{\lambda}([\vol])) = \shift^A_{\lambda}(\Phi'([\vol])) = \shift^A_{\lambda}(1) = \shift^A_{\lambda}(\Mir([\vol])) = \Mir(\shift^B_{\lambda}([\vol])).\]
The result will be proved if we can show that $\bries$ is additively generated by $\shift^B_{\lambda}([\vol])$ ($\lambda\in\Q$) over $\fof(\utd)\ot\OO(\ZL)$. Since $\Mir$ satisfies (1), (3) and (5), it suffices to show that $QH_{\That}^{\bl}(G/P)[\ecc]$ is additively generated by $\shift^A_{\lambda}(1)$ ($\lambda\in\Q$) over $\fof(H_{\That}^{\bl}(\pt))\ot\CC[\eccc]$. This follows from Corollary \ref{GMPPLScor} and the fact that $H_{\bl}^{\That}(\ag)$ is additively generated by $[t^{\lambda}]$ ($\lambda\in\Q$) over $\fof (H_{\That}^{\bl}(\pt))$.
\end{myproof}

\bigskip
\begin{myproof}{Theorem}{\ref{main2}}
Notice that 
\[ \widetilde{\Phi}_R^{\hp}= \Phi_0^{-1}\circ\widetilde{\Phi}_1|_{\dkf(\KK)},\quad \Phi_{YZ}^{\hp} = \pbfd\quad \text{ and }\quad \Phi_{PLS}^{\hp}=\pgmpt.\]
The commutativity of the diagram follows from $\widetilde{\Phi}_2|_{\dkf(\KK)}=\pgmpt\circ\pbfd$ (Definition \ref{finalphi2phi2def}), $\Phi_3\circ\Phi_1=\Phi_2$ (Proposition \ref{finalphi3exist}) and $\Mir=\Phi_3\circ\Phi_0$ (Definition \ref{finalfinalphidef}). The uniqueness follows from the surjectivity of $\Phi_1$ (Proposition \ref{finalphi1phi1surj}).
\end{myproof}
%%%%%%%%%%%%%%%%%%%%%%%%%%%%%%%%%%%
%%%%%%%%%%%%%%%%%%%%%%%%%%%%%%%%%%%
%%%%%%%%%%%%%%%%%%%%%%%%%%%%%%%%%%%
\appendix  
\section{Proofs from preceding sections} \label{B}
We prove several lemmas which are used in the preceding sections.

\begin{lemma}\label{ApplemmaA}(Used in the proof of Lemma \ref{finalphi0descendlemma}) For any $\a\in -R^+$, $\rhod(\tt_{\a,\hh})$ is an extension of $\frac{\LL_{\zeta_{\ad}}\vol}{\vol}$.
\end{lemma}
\begin{proof} Define $\UUU:=\Umd(\wpd)^{-1}\Pmd/\Pmd$, the open Schubert cell in $\Gd/\Pmd$. Notice that $\UP\subseteq \UUU$ where $\UP$ is defined in Section \ref{Rietschmirror} (before Lemma \ref{Rietschfiblemma}). Let $f\in\OO(\UUU)$ be a defining function of the reduced closed subscheme $\UUU\setminus\UP$. By the assumption on $\w_{\UP}$ (see the paragraph before Definition \ref{Fiberwisevolumeformdef}), $f|_{\UP}\w_{\UP}$ extends to a volume form $\w_{\UUU}$ on $\UUU$. Define $\ol{\w}_{\UUU}:=(\pr_{\UP}\circ\nu)^*\w_{\UUU}$ and $\ol{f}:=f\circ\pr_{\UP}\circ\nu$ so that $\vol=\ol{f}^{-1}\ol{\w}_{\UUU}$. (Recall $\nu$ comes from Lemma \ref{Rietschfiblemma}.) Lemma \ref{ApplemmaA} will be proved if we can show 
\begin{align}
\LL_{\zeta_{\ad}}\ol{\w}_{\UUU}&= 0 \label{ApplemmaAeq1} \\
\LL_{\zeta_{\ad}}\ol{f} &= -\ol{f}\rhod(\tt_{\a,\hh})|_{\Xd} . \label{ApplemmaAeq2}
\end{align}

Let us prove \eqref{ApplemmaAeq1} first. Let $V_{\ad}\in\mathfrak{X}(\Gd/\Pmd)$ be the vector field generated by the $\ga$-action $s\mapsto \psi_{\ad}^s:= \exp(s e_{\ad})\cdot -$. Observe that $e_{\ad}^R$ is a lift of $V_{\ad}$ with respect to the projection $\pi_{\Gd/\Pmd}:\Gd\ra \Gd/\Pmd$. Since the left translates of $\mathfrak{n}^{\vee}_-$ are tangent to the fibers of $\pi_{\Gd/\Pmd}$, it follows that $\pr_{\mathfrak{b}^{\vee}}^L(e_{\ad}^R)$ is also a lift of $V_{\ad}$ with respect to $\pi_{\Gd/\Pmd}$, and hence $\nu_*\zeta_{\ad}=V_{\ad}|_{\UP}\oplus 0$. Thus, \eqref{ApplemmaAeq1} is equivalent to $\LL_{V_{\ad}}\w_{\UUU}=0$. Observe that for each $s$, $(\psi_{\ad}^s)^*\w_{\UUU}$ is a volume form on $\UUU$. Since $\UUU$ is an affine space, there is a morphism $\ga\ni s\mapsto c_s\in\gm$ such that $(\psi_{\ad}^s)^*\w_{\UUU}=c_s\w_{\UUU}$. Then $c_s$ is necessarily constant (in fact $\equiv 1$). This gives 
\[ \LL_{V_{\ad}}\w_{\UUU} = \left. \frac{d}{ds}(\psi_{\ad}^s)^*\w_{\UUU}\right|_{s=0} =0 \]
as desired.

It remains to prove \eqref{ApplemmaAeq2}. We first determine $f$ as follows. Identify $\GB$ with $\Gd_{sc}/\Bd_{sc,-}$ where $\Gd_{sc}$ is the universal covering of $\Gd$ and $\Bd_{sc,-}$ is the Borel subgroup of $\Gd_{sc}$ lying over $\Bmd$. Let $\{\wdd_1,\ldots,\wdd_r\}$ be the dual basis of $\{\a_1,\ldots,\a_r\}$. For any $1\leqslant i\leqslant r$, define $\LL(\wdd_i):=\Gd_{sc}\times^{\Bd_{sc,-}}\CC_{\wdd_i}$ and $S(\wdd_i):=H^0(\GB; \LL(\wdd_i))$. Then $\LL(\wdd_i)$ is a line bundle on $\GB$ and $S(\wdd_i)$ is a representation of $\Gd_{sc}$ (in fact the $i$-th fundamental representation). Take a non-zero vector $v_e\in S(\wdd_i)_{\wdd_i}$ (i.e. highest weight vector). Let $v_{\wdd_i}^*\in S(\wdd_i)^*$ be the unique vector satisfying $v_{\wdd_i}^*|_{S(\wdd_i)_{\ne\wdd_i}}=0$ and $\langle v_{\wdd_i}^*, v_e\rangle =1$. Define $\vp_i\in\OO(\Gd_{sc})$ by $\vp_i(g):=\langle v_{\wdd_i}^*, g^{-1}\cdot v_e\rangle$. Then $\vp_i$ descends to a section of $\LL(\wdd_i)$ whose scheme-theoretic zero locus is equal to $\ol{\Ud\dot{s}_{\a_i}\Bmd/\Bmd}$. Define $\UPd:=\exp\left(\bigoplus_{\b\in w_0(R^+\setminus R^+_P)}\ggd_{\beta^{\vee}}\right)\subseteq \Umd$. Notice that the morphism $\UPd\ra\UUU$ defined by $u\mapsto u(\wpd)^{-1}\Pmd$ is an isomorphism. By the definition of $\UP$ and the fact that every $\Bd_-$-orbit in $\GB$ intersects every $\Bd$-orbit transversely, we have, after possibly rescaling $f$,
\begin{equation}\label{ApplemmaAeq3}
f(u(\wpd)^{-1}\Pmd) = (\vp_1\cdots\vp_r)(u(\wpd)^{-1})
\end{equation}
for any $u\in \UPd$. (The points $u$, $\dot{w}_P$ and $\dot{w}_0$ have natural lifts in $\Gd_{sc}$. By abuse of notation, we denote these lifts by the same symbols.)

Next, we express $\ol{f}$ in terms of some known functions on $\Xd$ based on what we have obtained from the previous paragraph. Let $x=(g,t)\in\Xd$. There are $u_0\in\Ud$, $t_0\in\Td$ and $u_1,u_2\in\Umd$ such that 
\[ g= u_0t_0 =u_1(\wpd)^{-1}tu_2.\]
These points are unique if we require $u_1\in\UPd$. Put $u_2':=tu_2t^{-1}\in\Umd$. We have 
\begin{align*}
\vp_i(u_1(\wpd)^{-1}) &= \langle v_{\wdd_i}^*, (u_1(\wpd)^{-1})^{-1}\cdot v_e\rangle \\
& =  \langle v_{\wdd_i}^*,(u_2')^{-1} (u_1(\wpd)^{-1})^{-1} u_0\cdot v_e\rangle  \\
& =\langle v_{\wdd_i}^*, tt_0^{-1}\cdot v_e\rangle \\
& =\wdd_i(tt_0^{-1}) \\
& = (\wdd_i\circ(\pi/p))(x).
\end{align*}
(Recall we are abusing notation. The point $tt_0^{-1}$ is actually a lift of the corresponding point in $\Td$. It depends on the natural lifts of $u_0$, $u_1$, $u_2'$, $\dot{w}_P$ and $\dot{w}_0$.) Therefore, by \eqref{ApplemmaAeq3}, 
\[ \ol{f}(x) = \left(\prod_{i=1}^r\vp_i\right)(u_1(\wpd)^{-1}) = \prod_{i=1}^r(\wdd_i\circ (\pi/p))(x) = (\rhod\circ (\pi/p))(x).\]

Finally, we show $\LL_{\zeta_{\ad}}(\rhod\circ (\pi/p)) = -(\rhod\circ (\pi/p)) \rhod(\tt_{\a,\hh})|_{\Xd}$. This will give \eqref{ApplemmaAeq2}. It suffices to show $\iota_{\zeta_{\ad}}(\pi/p)^*\mccccc_{\Td}= -\tt_{\a,\hh}|_{\Xd}$. Clearly, we have $\iota_{\zeta_{\ad}}\pi^*\mccccc_{\Td}=0$. Thus we are done if we can show $\iota_{\zeta_{\ad}}p^*\mccccc_{\Td} = \tt_{\a,\hh}|_{\Xd}$. This follows from Lemma \ref{ApplemmaC} below.
\end{proof}

\begin{lemma}\label{ApplemmaB}(Lemma \ref{finalphi0descendlemma}) For any $\a\in -R^+$, $\cchh(e_{\ad})-\sum_{\b\in -R^+}\tt_{\a,\b}\cchh(e_{\bd})$ is an extension of $\LL_{\zeta_{\ad}}W$.
\end{lemma}
\begin{proof}
By definition, $\zeta_{\ad}=(\pr_{\bbd}^L(e_{\ad}^R)\oplus 0)|_{\Xd}$, and by \eqref{finalphi0descendlemmaeq1} we have
\[\pr_{\bbd}^L(e_{\ad}^R)=e_{\ad}^R - \sum_{\b\in -R^+}\tt_{\a,\b}e_{\bd}^L .\]
Observe that both $e_{\ad}^R$ and $e_{\bd}^L$ are tangent to $\Umd(\wpd)^{-1}\ZL\Umd$, and $W$ is the restriction of the regular function $W'\in \OO(\Umd(\wpd)^{-1}\ZL\Umd\times\ZL)$ defined by
\[ W'(x):= e^{\cchh}(u_1)+e^{\cchh}(u_2)\quad \text{for any }~x=(u_1(\wpd)^{-1}t_1u_2,t).\]
It is not difficult to see that 
\[ \LL_{(e_{\ad}^R\oplus 0)}W'= \cchh(e_{\ad})\quad\text{ and }\quad \LL_{(e_{\bd}^L\oplus 0)}W'= \cchh(e_{\bd}).\]
The rest is clear.
\end{proof}

\begin{lemma}\label{ApplemmaC}(Lemma \ref{finalphi0descendlemma}, Lemma \ref{ApplemmaA}) For any $\a\in -R^+$, $\tt_{\a,\hh}$ is an extension of $\iota_{\zeta_{\ad}}p^*\mccccc_{\Td}$.
\end{lemma}
\begin{proof}
By definition, $\zeta_{\ad}=(\pr_{\bbd}^L(e_{\ad}^R)\oplus 0)|_{\Xd}$, and by \eqref{finalphi0descendlemmaeq1} we have
\[\pr_{\bbd}^L(e_{\ad}^R) =\sum_i\langle h^i,\tt_{\a,\hh}\rangle h_i^L +\sum_{\b\in R^+}\tt_{\a,\b}e_{\bd}^L .\]
Observe that both $h_i^L$ and $e_{\bd}^L$ are tangent to $\Bd$, and $p$ is the restriction of the regular function $p'\in \OO(\Bd\times\ZL)$ defined by
\[ p'(x):= t_0\quad\text{for any }~x=(u_0t_0,t) .\]
It is not hard to see that 
\[ \LL_{(h_i^L\oplus 0)}(p')^*\mccccc_{\Td}=h_i\quad\text{ and }\quad \LL_{(e_{\bd}^L\oplus 0)}(p')^*\mccccc_{\Td}=0.\]
The rest is clear.
\end{proof}

\begin{lemma} \label{dunnowheretoplacelemmab} (Lemma \ref{dunnowheretoplacelemma1}) The closed subscheme of $\Gd\times\Td$ defined by $f_{\lambda,v_{\wp}}-q^{\wp(\lambda)}$ ($\lambda\in\cop$) and $f_{\lambda,v}$ ($\lambda\in\cop$, $v\in S(\lambda)_{<\wp(\lambda)}$) is equal to 
\[ \mathfrak{X}_P:=\left\{ (g,t)\in\Gd\times\Td\left|~g\in\Umd(\wpd)^{-1}t\Umd \right.\right\}.\]
\end{lemma}  
\begin{proof}
It is straightforward to see that these equations vanish on $\mathfrak{X}_P$. Let $\vp=\sum_{\lambda\in\Q}\vp_{\lambda}q^{\lambda}\in\OO(\Gd\times\Td)$ with $\vp_{\lambda}\in\OO(\Gd)$. Suppose $\vp|_{\mathfrak{X}_P}=0$. Let $S\subseteq\Q$ be the set of $\lambda$ for which $\vp_{\lambda}\ne 0$. Then $S$ is finite so there exists $\mu_0\in\Q$ such that $\lambda+\mu_0\in \wp\cop$ for any $\lambda\in S$. It follows that 
\[\vp= q^{-\mu_0}\sum_{\lambda\in S}\vp_{\lambda}(q^{\lambda+\mu_0}-f_{(\wp)^{-1}(\lambda+\mu_0),v_{\wp}}) +q^{-\mu_0}\sum_{\lambda\in S} \vp_{\lambda} f_{(\wp)^{-1}(\lambda+\mu_0),v_{\wp}}.\]
Put $\phi:=\sum_{\lambda\in S} \vp_{\lambda} f_{(\wp)^{-1}(\lambda+\mu_0),v_{\wp}}\in \OO(\Gd)$. Since $\vp|_{\mathfrak{X}_P}=0$, we have $\phi|_{\Umd (\wpd)^{-1}\Td\Umd}=0$. By looking at the line bundles on $\GB$ and using an argument from the proof of \cite[Lemma 6.2]{me2}, we see that there exists $\lambda_0\in\cop$ such that $\phi f_{\lambda_0,v_{\wp}}$ belongs to the ideal of $\OO(\Gd)$ generated by $f_{\lambda,v}$ with $\lambda\in\cop$ and $v\in S(\lambda)_{<\wp(\lambda)}$. We are done because $\phi=q^{-\wp(\lambda_0)}\phi f_{\lambda_0,v_{\wp}} - q^{-\wp(\lambda_0)}\phi (f_{\lambda_0,v_{\wp}}-q^{\wp(\lambda_0)})$.
\end{proof} 

\begin{lemma}\label{finalfinaladdlemma} (Proposition \ref{finalfinaldmlinear}, Lemma \ref{finalfinalAipreserveV}) $\ffff_i-\w_i^L\ffff_0$ and $\ffff_0$ represent some elements of $\kf(\KK)$. 
\end{lemma}
\begin{proof}
It is clear that both represent some elements of $\KK$ and the latter even represents an element of $\kf(\KK)$. It remains to show that $[\ffff_i-\w_i^L\ffff_0]\in\KK$ is $\Umd$-invariant, or equivalently $\nd$-invariant, modulo $\mathcal{I}^{\KK}_{-,\cchh}$ (see Definition \ref{BFenhanceKFdef}). Recall from Example \ref{BFenhanceHCeg2} we are using the right translation. Let $x\in\nd$. We have
\[ x\cdot \ffff_i = \ffff_{\lambda_0,x\cdot v_i}\quad\text{ and }\quad x\cdot (\w_i^L\ffff_0) = [x,\w_i]^L\ffff_0=\ffff_0([x,\w_i]^L-\cchh([x,\w_i])) + \cchh([x,\w_i])\ffff_0.\]
Hence $x\cdot [\ffff_i-\w_i^L\ffff_0]$ is equal to $[\ffff_{\lambda_0,x\cdot v_i}-\cchh([x,\w_i])\ffff_0]$ modulo $\mathcal{I}_{-,\cchh}^{\KK}$. 

By \cite[Theorem 5.4]{Acta}, \cite[Lemma A.1]{me2} and a straightforward computation, we have $e_i^{\vee}\cdot v_{w_0} = -\a_i(w_0(\lambda_0))v_i$. (Recall we have made a specific choice of $e_i^{\vee}$. See Remark \ref{Rietschmirrorrmk}.) It follows that 
\[x\cdot v_i = -\a_i(w_0(\lambda_0))^{-1}x\cdot (e_i^{\vee}\cdot v_{w_0}) =-\a_i(w_0(\lambda_0))^{-1} [x,e_i^{\vee}]\cdot v_{w_0}.\]
Notice that $x\in |\ad_i|^2\b(e_i^{\vee},x)f_i^{\vee}+\bigoplus_{\a\in R^+\setminus\{\a_i\}}\ggd_{-\ad}$. (See the paragraph before Remark \ref{Rietschmirrorrmk} for the definitions of $|\ad_i|^2$ and $\b(-,-)$.) It follows that 
\begin{align*}
x\cdot v_i &= |\ad_i|^2\b(e_i^{\vee},x)v_{w_0}\\
&= \left(\sum_{j=1}^r |\ad_j|^2\b(e_j^{\vee}, [x,\w_i] ) \right)  v_{w_0}\\
& = \b(e, [x,\w_i]) v_{w_0}\\
& = \cchh([x,\w_i]) v_{w_0}.
\end{align*}
Therefore, $f_{\lambda_0,x\cdot v_i}-\cchh([x,\w_i])f_0=0$. The result follows.
\end{proof} 

\begin{lemma}\label{ApplemmaD}(Lemma \ref{finalfinalAipreserveV}, Lemma \ref{finalfinalAicoincidexi}) The vector field 
\[\widetilde{\zzeta}_i:=q^{-[\ad_i]} \w_i^L+\partial_{q_i}\in\mathfrak{X}(\Gd\times\ZL) \]
is tangent to $\Xd$. The restriction
\[ \zzeta_i := \widetilde{\zzeta}_i|_{\Xd}\in \mathfrak{X}(\Xd)\]
is a lift of $\partial_{q_i}\in\mathfrak{X}(\ZL)$ with respect to $\pi$.
\end{lemma} 
\begin{proof}
The first assertion follows from the observation that the flow of $q^{[\ad_i]}\widetilde{\zzeta}_i$ is given by $(g,t)\mapsto (ge^{s\w_i},te^{s\w_i})$. The second assertion is clear from definition.
\end{proof}

\begin{lemma}\label{ApplemmaF}(Lemma \ref{finalfinalAicoincidexi}) $\LL_{\zzeta_i}\vol=0$.
\end{lemma}
\begin{proof}
By Definition \ref{Fiberwisevolumeformdef}, $\vol$ is the pull-back of a volume form on $\UP$ via $\pr_{\UP}\circ\nu$. The flow of $q^{[\ad_i]}\zzeta_i$ is given by $(g,t)\mapsto (ge^{s\w_i},te^{s\w_i})$. It is clear that $\pr_{\UP}\circ\nu$ is invariant under this flow, and hence $\LL_{q^{[\ad_i]} \zzeta_i}\vol=0$. Since $q^{[\ad_i]}$ is a regular function on the base scheme $\ZL$, we have $\LL_{\zzeta_i}\vol=0$. 
\end{proof}

\begin{lemma}\label{finalfinalextend} (Lemma \ref{finalfinalAicoincidexi}) $q^{-[w_0(\lambda_0)+\ad_i]}\ffff_i$ is an extension of $\LL_{\zzeta_i}W$.
\end{lemma}
\begin{proof}
Let $(g,t)\in\Xd$ be a point where $g=u_1(\wpd)^{-1}tu_2$ with $u_1,u_2\in\Umd$.

The flow of $q^{[\ad_i]}\zzeta_i$ is given by $(g,t)\mapsto (ge^{s\w_i},te^{s\w_i})$. By the paragraph before Remark \ref{Rietschmirrorrmk} and a straightforward computation, we see that $\LL_{\zzeta_i}W$ sends $(g,t)$ to $\ad_i(t)^{-1}|\ad_i|^2\b(e_i^{\vee},y_i(u_2))$ where $y_i:=\pr_{\ggd_{-\ad_i}}\circ\exp_{\Umd}^{-1}$ is the composition of the inverse of the exponential map $\exp_{\Umd}:\nd\ra\Umd$ and the projection $\pr_{\ggd_{-\ad_i}}:\nd\ra\ggd_{-\ad_i}$. 

On the other hand, we have
\begin{align*}
q^{-[w_0(\lambda_0)+\ad_i]} \ffff_i(g,t) &= \lambda_0(w_0(t))^{-1} \ad_i(t)^{-1} \langle v_{\lambda_0}^*, u_1(\wpd)^{-1}tu_2\cdot v_i\rangle\\
&= \ad_i(t)^{-1} \langle v_{\lambda_0}^*, (\wpd)^{-1}\cdot (y_i(u_2)\cdot v_i)\rangle.
\end{align*}
By the proof of Lemma \ref{finalfinaladdlemma}, we have
\[ y_i(u_2)\cdot v_i = |\ad_i|^2\b(e_i^{\vee},y_i(u_2))v_{w_0}.\]
By \cite[Proposition B.7]{me2}, $(\wpd)^{-1}\cdot v_{w_0}=(\wpd)^{-1}\cdot v_{\wp}=v_e$. Therefore,
\[ \ad_i(t)^{-1} \langle v_{\lambda_0}^*, (\wpd)^{-1}\cdot (y_i(u_2)\cdot v_i)\rangle  =  \ad_i(t)^{-1} |\ad_i|^2 \b(e_i^{\vee},y_i(u_2)).\]
The proof is complete. 
\end{proof}

\begin{lemma}\label{ApplemmaE}(Lemma \ref{finalfinalAicoincidexi}) $\iota_{\zzeta_i}p^*\langle h^j,\mccccc_{\Td}\rangle  = q^{-[\ad_i]}\langle h^j,\w_i\rangle$.
\end{lemma}
\begin{proof}
The flow of $q^{[\ad_i]}\zzeta_i$ is given by $(g,t)\mapsto (ge^{s\w_i},te^{s\w_i})$. Using this flow, it is not hard to see that 
\[\iota_{q^{[\ad_i]} \zzeta_i}p^*\langle h^j,\mccccc_{\Td}\rangle  = \langle h^j,\w_i\rangle \]
The result follows.
\end{proof}

%%%%%%%%%%%%%%%%%%%%%%%%%%%%%%%%%%%
%%%%%%%%%%%%%%%%%%%%%%%%%%%%%%%%%%%
%%%%%%%%%%%%%%%%%%%%%%%%%%%%%%%%%%%
%%%%%%%%%%%%%%%%%%%%%%%%%%%%%%%%%%%
%%%%%%%%%%%%%%%%%%%%%%%%%%%%%%%%%%%
%%%%%%%%%%%%%%%%%%%%%%%%%%%%%%%%%%%
\section{Technical results on Brieskorn lattice}\label{D}
We prove three technical results on $\bries$. They are used in Section \ref{finalphi3} and rely on some results which are proved prior to that subsection. Denote by $G_0(\hp,h,t)$ the fiber of $\bries$ at a given point 
\[(\hp,h,t)\in \spec\left(\utd\otimes\OO(\ZL)\right) \simeq \AA^1_{\hp}\times\hh\times\ZL.\]

\begin{proposition} \label{Rietschfiberlemma}
For any $(\hp_0,h_0,t_0)\in\AA^1_{\hp}\times\hh\times\ZL$ and $c\in\CC^{\times}$ such that $\hp_0\ne 0$, we have 
\[ \dim G_0(\hp_0,h_0,t_0) = \dim G_0(c\hp_0,ch_0,t_0) < +\infty.  \]
\end{proposition}
\begin{proof}
We proceed by modifying the proof for the case $h_0=0$ which is well-known. See e.g. \cite[Section 1.1]{Sabbah} where the superpotential is assumed to be projective. 

Let $(\hp,h,t)\in \AA^1_{\hp}\times\hh\times\ZL$. Define $\Xdt:=\pi^{-1}(t)$, $W_t:=W|_{\Xdt}$, $p_t:=p|_{\Xdt}$ and $\w_h:=\langle h,\mctd\rangle$. (Recall $\mctd$ is the Maurer-Cartan form of $\Td$.) We have 
\[  G_0(\hp,h,t) \simeq \coker\left(\O^{top-1}(\Xdt)\xrightarrow{~\hp d+dW_t-p_t^*\w_h~}\O^{top}(\Xdt)\right).\]
Let $\mathcal{E}$ denote the integrable connection $(\OO_{\Td},d-\w_h)$ with degree shifted suitably. Then 
\begin{equation}\label{Rietschfibereq1}
\mathcal{H}^0(\textstyle{\int}_{W_t}\circ p_t^{\dag})\mathcal{E} \simeq \coker\left(\O^{top-1}(\Xdt)[\partial_s]\xrightarrow{~ d-\partial_sdW_t-p_t^*\w_h~}\O^{top}(\Xdt)[\partial_s]\right)
\end{equation}
where $\int_{W_t}$ and $p_t^{\dag}$ are the direct and inverse image functors of $D$-modules respectively. See e.g. \cite[Section 1.5]{Japan} for more details.

Consider the Fourier transform functor
\[ \FF:D_{\AA^1_s}\text{--}\module \ra D_{\AA^1_{\tau}}\text{--}\module\]
defined by sending each $D_{\AA^1_s}$-module to the module with the same underlying vector space on which $\tau$ and $\partial_{\tau}$ act the same way as $\partial_s$ and $-s$ do respectively. Since $\mathcal{E}$ is regular holonomic, so is $\mathcal{H}^0(\int_{W_t}\circ p_t^{\dag})\mathcal{E} $ (see e.g. \cite[Theorem 6.1.5]{Japan}), and hence, by a standard result on differential equations in dimension one (see e.g. \cite[Chapter V Proposition 2.2]{Sabbahbook}), the localization $(\FF\circ \mathcal{H}^0(\int_{W_t}\circ p_t^{\dag})\mathcal{E} )\otimes_{\CC[\tau]}\CC[\tau,\tau^{-1}]$ is a free $\CC[\tau,\tau^{-1}]$-module of finite rank. It follows that, by \eqref{Rietschfibereq1}, the $\CC[\tau,\tau^{-1}]$-module
\[ N(h,t) := \coker\left(\O^{top-1}(\Xdt)[\tau,\tau^{-1}]\xrightarrow{~ d-\tau dW_t-p_t^*\w_h~}\O^{top}(\Xdt)[\tau,\tau^{-1}]\right)\]
is free of finite rank. Put $(h,t):=(\hp_0^{-1}h_0,t_0)$. Then we have
\[ \dim G_0(\hp_0,h_0,t_0) = \dim N(h,t)_{\tau=-\hp_0^{-1}}=\dim N(h,t)_{\tau=-c^{-1}\hp_0^{-1}}=\dim G_0(c\hp_0,ch_0,t_0)<+\infty. \]
\end{proof}

\begin{proposition}\label{hTF}
$\bries$ is $\hp$-torsion-free.
\end{proposition}
\begin{proof}
This is proved by Lam and Templier \cite[Proposition 16.13]{LT}. For the convenience of the reader, we reproduce their proof in terms of our notations.

Observe that $\bries$ is the top cohomology of the cochain complex $C^{\bl}:= (\utd\ot\O^{\bl}(\Xd/\ZL),\partial)$ where $\partial$ is defined similarly as in Definition \ref{RietschBrieskorndef}. Since each $C^i$ is $\hp$-torsion-free, the sequence 
\[ 0\ra C^{\bl} \xrightarrow{\hp} C^{\bl} \ra C^{\bl}/\hp C^{\bl}\ra 0\]
is exact, and hence we have an exact sequence
\[ H^{d-1}(C^{\bl}/\hp C^{\bl}) \ra \bries \xrightarrow{\hp} \bries \ra  H^{d}(C^{\bl}/\hp C^{\bl})  \ra 0\]
where $d$ is the top degree. The proof is complete if we can show $ H^{d-1}(C^{\bl}/\hp C^{\bl})=0$.

Take a global frame $\{\zeta_j\}$ of $\Xd$ relative to $\pi$. For each $j$, define
\[ r_j:= 1\ot\LL_{\zeta_j}W-\sum_i h_i\ot \iota_{\zeta_j}p^*\langle h^i,\mccccc_{\Td}\rangle \in R:= \sym^{\bl}(\ttd)\ot\OO(\Xd).\]
Observe that $C^{\bl}/\hp C^{\bl}$ is the Koszul complex associated to the sequence $\{r_j\}$. By the theory of Koszul complexes, the vanishing of $H^{d-1}(C^{\bl}/\hp C^{\bl})$ follows if we can show that $\{r_j\}$ is an $R_{\mathfrak{p}}$-regular sequence for any prime ideal $\mathfrak{p}$ of $R$ which contains $\langle r_j\rangle$. By a dimension argument, the latter condition holds if we can show that $\spec R/\langle r_j\rangle$ is quasi-finite over $\spec(\sym^{\bl}(\ttd)\ot\OO(\ZL))$. We have
\[R/\langle r_j\rangle \simeq H^d(C^{\bl}/\hp C^{\bl})\simeq \FF_{\hp=0}(\bries).\]
By the bijectivity of $\FF_{\hp=0}(\Phi_i)$ for $i=0,1,2$ (Proposition \ref{finalphi0bijective}, Proposition \ref{finalphi1phi1limitisbij} and Proposition \ref{finalphi2phi2limitisbij}), we have
\begin{equation}\label{hTFeq1}
\FF_{\hp=0}(\bries)\simeq QH_T^{\bl}(G/P)[\ecc]
\end{equation}
which is a finitely generated $\sym^{\bl}(\ttd)\ot\OO(\ZL)$-module. The quasi-finiteness follows.
\end{proof}

\begin{remark}\label{hTFrmk}
In the above proof, we deduce the quasi-finiteness from Proposition \ref{finalphi2phi2limitisbij} which depends on the \textit{Peterson variety presentation} for $QH_T^{\bl}(G/P)[\ecc]$ \cite[Theorem B]{me2}. It should be pointed out that what we actually need is its corollary: the quasi-finiteness of another scheme $\Bed\times_{\Gd}\Umd(\wpd)^{-1}\Bd_-$ over the same base scheme, which is proved in the course of the proof of the Peterson variety presentation (see the proof of \cite[Lemma 5.7]{me2}). We believe that this less non-trivial result is already known to experts. See e.g. the proof of \cite[Proposition 6.2]{LamRietsch}.
\end{remark}

\begin{proposition}\label{coherent}
$\bries$ is a finitely generated $\utd\ot\OO(\ZL)$-module.
\end{proposition}
\begin{proof}
Since $\Phi_0$ is bijective (Proposition \ref{finalphi0bijective}) and $\Phi_1$ is surjective (Proposition \ref{finalphi1phi1surj}), it suffices to show that $\dkf(\KK)\ot\OO(\ZL)/\WW$ is a finitely generated $\utd\ot\OO(\ZL)$-module. 

Consider first the case $P=B$. By the bijectivity of $\pbfd$ (Theorem \ref{BFqttheorem}) and Proposition \ref{GMPPLS}, the $\utd$-linear map $\pgmpt\circ\pbfd$ is injective and identifies $\dkf(\KK)$ with the submodule 
\[\mathcal{N}:=\bigoplus_{w\in W}\bigoplus_{\lambda\in\Lambda_w}H_{\That}^{\bl}(\pt)\cdot q^{\lambda}\s_w\subset QH_{\That}^{\bl}(G/B)[q_i^{-1}|~i\in I] \]
where $\Lambda_w:=\{\lambda\in \Q|~\wl\in W_{af}^{-}\}$. By Lemma \ref{coherentlemma} below, for any $w\in W$, there exist $\lambda_{w,1},\ldots, \lambda_{w,k_w}\in\Q$ such that $\Lambda_w=\bigcup_{i=1}^{k_w}(\lambda_{w,i}-\cop)$. Hence $\mathcal{N}$ is a finitely generated $H_{\That}^{\bl}(\pt)[-\cop]$-module. Since $\pbfd$ is a homomorphism of modules with respect to the ring isomorphism $\pbfk$ (Theorem \ref{BFqttheorem}), $\gmpg$ is a module action (Proposition \ref{GMPactionmodule}) and $\pgmpg\circ\pbfk([f_{\lambda,v_{w_0}}])=\pgmpt([\agll])=q^{w_0(\lambda)}$ for any $\lambda\in\cop$ (Proposition \ref{GMPPLS} and Proposition \ref{BFenhancedivisor}), it follows that $\dkf(\KK)$ is a finitely generated $\utd[\cop]$-module where the $\CC[\cop]$-module structure is given by the (right) multiplication by $[f_{\lambda,v_{w_0}}]\in\kf(\KK)$. Let $\{y_1,\ldots,y_N\}\subset \dkf(\KK)$ be a set of generators.

Now let us return to the case where $P$ is arbitrary. We will show that $[y_i\ot 1]$, $i=1,\ldots,N$, generate the $\utd\ot\OO(\ZL)$-module $\dkf(\KK)\ot\OO(\ZL)/\WW$. Every element of this module is a $\utd\ot\OO(\ZL)$-linear combination of $[y_i\cdot [f_{\lambda,v_{w_0}}]\ot 1]$ where $i=1,\ldots,N$ and $\lambda\in\cop$. Notice that $[f_{\lambda,v_{w_0}}]=a_{\lambda,v_{w_0}}$ by Lemma \ref{finaltwomodarmk}. If $w_0(\lambda)=\wp(\lambda)$, then $a_{\lambda, v_{w_0}}=a_{\lambda,v_{\wp}}$, and hence 
\[[y_i\cdot [f_{\lambda,v_{w_0}}]\ot 1]= [(y_i\ot 1)\cdot (a_{\lambda,v_{\wp}} - q^{[w_0(\lambda)]})] + q^{[w_0(\lambda)]}[y_i\ot 1] = q^{[w_0(\lambda)]}[y_i\ot 1].\]
Otherwise, we have $w_0(\lambda)<\wp(\lambda)$, and hence $[y_i\cdot [f_{\lambda,v_{w_0}}]\ot 1]= [(y_i\ot 1)\cdot a_{\lambda,v_{w_0}}]=0$. The proof is complete. 
\end{proof}

\begin{lemma}\label{coherentlemma} For any $w\in W$, there exist $\lambda_{w,1},\ldots, \lambda_{w,k_w}\in\Q$ such that 
\[\Lambda_w:=\{\lambda\in\Q|~\wl\in W_{af}^-\}=\bigcup_{i=1}^{k_w}(\lambda_{w,i}-\cop).\]
\end{lemma}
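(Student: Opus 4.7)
The plan is to express $\Lambda_w$ as the solution set of a finite system of linear inequalities in $\Q$ and then invoke Gordan's lemma. First, I would use the standard Coxeter-group criterion: $\wl\in W_{af}^-$ iff $\ell(\wl s_i)>\ell(\wl)$ for every finite simple reflection $s_i\in W$, i.e. iff $(\wl)(\a_i)$ is a positive affine root for every finite simple root $\a_i$. With the convention $t_\lambda(\a_i)=\a_i-\a_i(\lambda)\delta$ (where $\delta$ is the imaginary root and the positive affine roots are $R^+\cup\{\b+k\delta:k\geq 1,\ \b\in R\}$), we have $(\wl)(\a_i)=w(\a_i)-\a_i(\lambda)\delta$; this is positive iff either $\a_i(\lambda)\leq -1$, or $\a_i(\lambda)=0$ and $w(\a_i)\in R^+$. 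Setting $c_i(w):=0$ if $w(\a_i)\in R^+$ and $c_i(w):=-1$ otherwise, we obtain the description
\[ \Lambda_w=\{\lambda\in\Q:\a_i(\lambda)\leq c_i(w)\text{ for all }i=1,\ldots,r\}. \]

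Next, I would introduce the unique element $c_w\in\Q\otimes\QQ$ satisfying $\a_i(c_w)=c_i(w)$ for each $i$, which exists because the $\a_i$ form a basis of $(\Q\otimes\QQ)^*$. The inequalities rewrite as $c_w-\lambda\in\cop\otimes\RR$ (the real dominant cone), so $\Lambda_w=c_w-T_w$ where $T_w:=(c_w+\Q)\cap(\cop\otimes\RR)$. Observe that $T_w$ is closed under addition of elements of $\cop$; equivalently, $T_w$ is upward-closed under the partial order $t\leq t'$ iff $t'-t\in\cop$.

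The key step, and the only real obstacle, is to show that $T_w$ has finitely many minimal elements under this order. This is a standard application of Gordan's lemma: the intersection of the rational polyhedral cone $\cop\otimes\RR$ with a translate $c_w+\Q$ of a full-rank sublattice of $\Q\otimes\QQ$ is a finitely generated module over the semigroup $(\cop\otimes\RR)\cap\Q=\cop$. Concretely, one triangulates $\cop\otimes\RR$ into simplicial cones with extreme-ray generators $v_1,\ldots,v_r$, and in each simplicial piece parametrises the intersection with $c_w+\Q$ via the fundamental parallelepiped $\{\sum s_j v_j:s_j\in[0,1)\}$, which meets each coset of $\ZZ\{v_j\}$ in at most one point; this produces finitely many minimal representatives.

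Let $t_1,\ldots,t_{k_w}$ be the minimal elements of $T_w$; then $T_w=\bigcup_{i=1}^{k_w}(t_i+\cop)$. Setting $\lambda_{w,i}:=c_w-t_i$, which lies in $\Q$ since $t_i\in c_w+\Q$, we conclude
\[ \Lambda_w=c_w-T_w=\bigcup_{i=1}^{k_w}(c_w-t_i-\cop)=\bigcup_{i=1}^{k_w}(\lambda_{w,i}-\cop), \]
as desired.
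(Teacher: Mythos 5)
Your proof is correct, and the first half---deriving the description $\Lambda_w=\{\lambda\in\Q:\a_i(\lambda)\leqslant c_i(w)\}$ from the length criterion for minimal coset representatives---is the same characterization the paper uses (the paper only states the consequence that $\Lambda_w\subseteq -\cop$ and that $(-\cop)\setminus\Lambda_w$ is a union of faces, without spelling out the affine-root computation). Where you diverge is in the finiteness step. You pass to $T_w=(c_w+\Q)\cap(\cop\otimes\RR)$ and invoke Gordan's lemma for a lattice coset inside a rational polyhedral cone, proving it by triangulating into simplicial subcones and using fundamental parallelepipeds. The paper instead makes the problem simplicial from the start: it embeds $\Q\hookrightarrow\ZZ^r$ via $\lambda\mapsto(-\a_1(\lambda),\ldots,-\a_r(\lambda))$, so $-\cop$ becomes $\ZZ_{\geqslant 0}^r\cap\Q$, sets $n:=|\ZZ^r/\Q|$ (so $n\ZZ^r\subseteq\Q$), and takes the finite set $S:=[0,n]^r\cap\Lambda_w$; repeatedly subtracting the preimage of $ne_j$ for any coordinate exceeding $n$ stays inside $\Lambda_w$ and lands in $S$, giving $\Lambda_w=\bigcup_{x\in S}(x-\cop)$ directly. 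So both arguments are instances of the same polyhedral finiteness phenomenon; yours appeals to the general Gordan statement (and hence needs the triangulation subroutine), while the paper's choice of coordinates turns the cone into an orthant and lets an explicit index-bound argument replace the general machinery.
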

\begin{proof}
First observe that $\Lambda_w\subseteq -\cop$ and $(-\cop)\setminus \Lambda_w$ is equal to a union of the intersections of $-\cop$ and the facets of its convex hull. Embed $\Q$ into $\ZZ^r$ such that $-\cop = \ZZ_{\geqslant 0}^r\cap \Q$. Put $n:=|\ZZ^r/\Q|$. Define $S:= [0,n]^r\cap \Lambda_w$. It is not difficult to see that $\Lambda_w=\bigcup_{x\in S}(x-\cop)$.
\end{proof}

%%%%%%%%%%%%%%%%%%%%%%%%%%%%%%%%%%%
%%%%%%%%%%%%%%%%%%%%%%%%%%%%%%%%%%%
%%%%%%%%%%%%%%%%%%%%%%%%%%%%%%%%%%%
%%%%%%%%%%%%%%%%%%%%%%%%%%%%%%%%%%%
%%%%%%%%%%%%%%%%%%%%%%%%%%%%%%%%%%%
%%%%%%%%%%%%%%%%%%%%%%%%%%%%%%%%%%%
\section{A Remark on Kostant functor} \label{A}
In this appendix, we prove an extension of Lemma \ref{BFenhanceKFisom} which is used in Definition \ref{BFpbfdef} and the proof of Lemma \ref{finaltwomodarmk} as well as a closely related lemma which is used in the proof of Lemma \ref{dunnowheretoplacelemmaa}. The key ideas can be found in \cite{BF} and a paper of Kostant \cite{KW} cited therein. 

Let $\CCC$ be the category whose objects are graded left $\ugd\otimes_{\CC[\hp]}\unmd^{op}$-modules $M$ equipped with a linear algebraic $\Umd$-action satisfying 
\begin{enumerate}
\item $\s(\ggd_{\ad})\cdot M_i\subseteq M_{i+2\rho(\ad)}$ for any $\a\in -R^+$ and $i\in\ZZ$, where $\s:\nndm\ra\eendo_{\CC}(M) $ is the linearization of the $\Umd$-action and $M_i$ is the $i$-th graded piece of $M$;

\item $u\cdot ((x\otimes y)\hcdot m)=((u\cdot x)\otimes (u\cdot y))\hcdot (u\cdot m)$ for any $m\in M$, $x\in\ugd$, $y\in\unmd$ and $u\in\Umd$; and
\item $(x\otimes 1-1\otimes x)\hcdot m= \hp\s(x) m$ for any $m\in M$ and $x\in\nndm$,
\end{enumerate}
and whose morphisms are graded $\Umd$-equivariant $\ugd\otimes_{\CC[\hp]}\unmd^{op}$-linear maps of degree zero. Clearly, there is a natural functor $\hct\ra\CCC$, and the functors $\kf$, $\dkf$ (Definition \ref{BFenhanceKFdef}) and the natural transformation $M\mapsto \tt_M$ \eqref{BFenhancettmdef} extend to $\CCC$. Let $\FF_{\hp=0}$ be the functor $N\mapsto N/\hp N$. We have a canonical natural transformation
\[ \CCC\ni M\mapsto \a_M:\FF_{\hp=0}\circ\kf(M)\ra \kf\circ\FF_{\hp=0}(M).\]

\begin{lemma}\label{Alemma2}
$\tt_M$ is bijective for any $M\in\CCC$.
\end{lemma}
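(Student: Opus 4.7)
The plan is to extend the proof of Lemma \ref{BFenhanceKFisom} given in \cite{BF} for $M \in \hcf$. The key observation is that the bijectivity of $\tt_M$ in the $\hcf$-case is established using only three pieces of structure on $M$: the left $\ugd$-action, the right $\unmd$-action, and the compatible $\Umd$-linearization (together with the bimodule compatibility $(x \otimes 1 - 1 \otimes x) \cdot m = \hp \sigma(x) m$ for $x \in \nndm$). These are precisely the data defining the larger category $\CCC$, so the argument should transfer with minimal modification.

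Concretely, I would proceed as follows. First, note that the construction of $\tt_M$ itself makes sense verbatim for any $M \in \CCC$, since the ideals $\mathcal{I}_{-,\cchh}$, $\mathcal{I}_+$ and the $\Umd$-invariants are defined using only the data available in $\CCC$; one checks routinely that the homomorphism $\kf(M) \to \dkf(M)$ with respect to $\hci \otimes \id$ still exists and extends to $\tt_M$. Next, to prove bijectivity, the strategy is to reduce to the case $M = \ugd$ (with its natural left-regular $\ugd$-action, restricted right-regular $\unmd$-action, and adjoint $\Umd$-linearization), where the assertion becomes the $\hp$-graded Kostant theorem: $\kf(\ugd) \simeq \zgd$ via Harish-Chandra, $\dkf(\ugd) \simeq \utd$ via the PBW projection $\ugd = \unmd \cdot \utd \cdot \und \twoheadrightarrow \utd$, and $\tt_{\ugd}$ identifies with the identity on $\utd \otimes_{\zgd} \zgd \simeq \utd$ after the twist $\twist_{-\rhod}$. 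The reduction is carried out by considering the natural filtration of $M$ coming from the $\Td$-weight grading together with the PBW filtration on $\ugd$, and checking that $\tt_M$ is strict and becomes an isomorphism on associated graded pieces.

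The main obstacle is the passage from the baseline case $M = \ugd$ to arbitrary $M \in \CCC$. Unlike in $\hcf$, where $M$ has the explicit form $\ugd \otimes V$ and direct computation is possible, a general $M \in \CCC$ need not be free over $\ugd$. The approach is to present $M$ (locally in the $\Td$-weight grading) as a quotient of an induced object of the form $\ugd \otimes_{\unmd} N$, where $N$ is a $\Umd$-equivariant $\unmd$-module, observe that $\kf$ and $\dkf$ are right exact and commute with such induction in a compatible way (for both sides of $\tt$, the tensor product $\ugd \otimes_{\unmd} N$ collapses to $\zgd \otimes N^{\Umd}$ and $\utd \otimes N^{\Umd}$ respectively), and conclude by a five-lemma-style argument. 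This step requires some care because $\mathcal{I}_+$ involves only the left action of $\nnd$ (so it is not a two-sided ideal for a putative full right $\ugd$-action), but once one verifies stability of the filtrations under the actions that actually exist in $\CCC$, the BF argument applies. An alternative route, avoiding resolutions, is to directly construct the inverse of $\tt_M$ via a Whittaker averaging operator over the pro-unipotent group $\Umd$ and to verify well-definedness using condition (3) of Definition \ref{BFenhanceHCdef}; the verifications are somewhat tedious but formal, and ultimately rest on the same classical Kostant slice computation.
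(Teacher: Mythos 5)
Your overall outline — prove the free/baseline case directly, then pass to a general $M\in\CCC$ by presenting it as a quotient and invoking right exactness — is indeed the skeleton of the paper's argument. But you treat the crucial step as an observation when it is in fact where almost all of the work lives, and your way of phrasing it contains an error.

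You write: ``observe that $\kf$ and $\dkf$ are right exact and commute with such induction.'' This is not something one can observe. The functor $\kf(M)=(M/\mathcal{I}_{-,\cchh})^{\Umd}$ is a composite of a right-exact functor (quotient) with a left-exact functor (invariants), so its right exactness on the relevant category is a genuine theorem, not a formality — it is in fact statement (7) in the paper's proof, and establishing it occupies most of the appendix. The paper does not obtain it in isolation: it proves Lemmas \ref{Alemma2} and \ref{Alemma3} simultaneously, because right exactness of $\kf$ is deduced via a Nakayama-type argument that crucially uses injectivity/surjectivity of the comparison map $\a_M:\FF_{\hp=0}\circ\kf(M)\to\kf\circ\FF_{\hp=0}(M)$ (statements (5) and (6), which feed into (7) via an $\hp$-adic filtration of the target $M'$). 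Your proposal makes no mention of $\a_M$ at all, so this entire mechanism is missing. Without it, the ``five-lemma-style'' reduction you invoke has no foundation: you would be applying a right-exact $\kf$ that you have not shown to be right exact.

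Two smaller points. First, your baseline case $M=\ugd=\fr(\CC)$ is too special; the paper proves statement (1) for all $\fr(V)\in\CCCfr$, and the reduction from $\fr(V)$ to $\fr(\CC)$ itself requires a filtration argument plus a cohomology vanishing $H^1(\nd;Y_{\hp,\cchh})=0$, which is where the Kostant/Whittaker input actually enters — not merely ``the $\hp$-graded Kostant theorem'' on $\ugd$ itself. Second, your induced objects $\ugd\ot_{\unmd}N$ with $N$ an arbitrary $\Umd$-equivariant $\unmd$-module are more general than the paper's $\fr(V)=\ugd\ot V$, and the claimed collapse $\kf(\ugd\ot_{\unmd}N)\simeq\zgd\ot N^{\Umd}$ is not justified for non-free $N$; you should restrict to the free case as the paper does. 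Your alternative ``Whittaker averaging'' route is closer to what actually happens inside the proof of statement (1), but as sketched it addresses only the free case, not the passage to general $M$.
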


\begin{lemma}\label{Alemma3}
$\a_M$ is bijective for any $M\in\CCC$.
\end{lemma}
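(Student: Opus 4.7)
The plan is to reduce Lemma~\ref{Alemma3} to Lemma~\ref{Alemma2} by using the desymmetrized Kostant functor $\dkf$ as a bridge, together with the elementary observation that $\dkf$ commutes with $\FF_{\hp=0}$. To see the latter, note that for any $M\in\CCC$ both $\FF_{\hp=0}(\dkf(M))$ and $\dkf(\FF_{\hp=0}(M))$ are canonically identified with $M/(\hp M+\mathcal{I}_{-,\cchh}+\mathcal{I}_+)$ as $\sym^{\bl}(\ttd)$-modules: the twist $\twist_{-\rhod}$ acts on $\utd$ by $x\mapsto x-\hp\rhod(x)$, which is the identity modulo $\hp$, and the submodules $\mathcal{I}_{-,\cchh}$, $\mathcal{I}_+$ commute with reduction mod $\hp$ in the obvious way. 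This yields a natural isomorphism $\FF_{\hp=0}\circ\dkf\xrightarrow{\sim}\dkf\circ\FF_{\hp=0}$.

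Applying $\FF_{\hp=0}$ to the natural isomorphism $\tt_M:\utd\otimes_{\zgd}\kf(M)\xrightarrow{\sim}\dkf(M)$ from Lemma~\ref{Alemma2}, and using that $\utd$ is free over $\zgd$ (via $\hci$ and Chevalley--Shephard--Todd) so that base change commutes with reduction modulo $\hp$ and $\zgd/\hp\simeq\sym^{\bl}(\ttd)^W$, one obtains
\[
\sym^{\bl}(\ttd)\otimes_{\sym^{\bl}(\ttd)^W}\FF_{\hp=0}(\kf(M))\xrightarrow{\sim}\FF_{\hp=0}(\dkf(M)).
\]
Applying Lemma~\ref{Alemma2} a second time to $\FF_{\hp=0}(M)\in\CCC$ (on which $\hp$ acts as zero, so the tensor product over $\zgd$ collapses to one over $\sym^{\bl}(\ttd)^W$) gives
\[
\sym^{\bl}(\ttd)\otimes_{\sym^{\bl}(\ttd)^W}\kf(\FF_{\hp=0}(M))\xrightarrow{\sim}\dkf(\FF_{\hp=0}(M)).
\]
Splicing the first of these with the $\FF_{\hp=0}\circ\dkf\simeq \dkf\circ\FF_{\hp=0}$ isomorphism and with the inverse of the second produces a natural isomorphism
\[
\sym^{\bl}(\ttd)\otimes_{\sym^{\bl}(\ttd)^W}\FF_{\hp=0}(\kf(M))\xrightarrow{\sim}\sym^{\bl}(\ttd)\otimes_{\sym^{\bl}(\ttd)^W}\kf(\FF_{\hp=0}(M))
\]
which, by the naturality of $\tt$ together with the fact that $\alpha_M$ is the canonical map induced by the identity on representatives, will coincide with $\id\otimes\alpha_M$.

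Finally, by Chevalley--Shephard--Todd, $\sym^{\bl}(\ttd)$ is free of rank $|W|$ over $\sym^{\bl}(\ttd)^W$, hence faithfully flat; the bijectivity of $\id\otimes\alpha_M$ therefore forces $\alpha_M$ to be bijective. The main obstacle I expect is the final identification of the composed chain of isomorphisms with $\id\otimes\alpha_M$: although this is formal from the naturality of $\tt$ and the explicit description of $\tt_M$ as the extension of scalars of the composition $\kf(M)\hookrightarrow M/\mathcal{I}_{-,\cchh}\twoheadrightarrow M/(\mathcal{I}_{-,\cchh}+\mathcal{I}_+)$ (see the discussion preceding Lemma~\ref{BFenhanceKFisom}), verifying it will require a careful diagram chase aligning the identifications modulo $\hp$, the twist $\twist_{-\rhod}$, and the Harish--Chandra isomorphism $\hci$.
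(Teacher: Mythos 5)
Your argument is correct and is precisely the implication (2) $\Rightarrow$ (4) that Appendix~\ref{A} asserts and leaves to the reader: the two key ingredients — that $\dkf$ and $\utd\ot_{\zgd}(-)$ both commute with $\FF_{\hp=0}$ (the latter because $\utd$ is free over $\zgd$), together with faithful flatness of $\sym^{\bl}(\ttd)$ over $\sym^{\bl}(\ttd)^W$ — are exactly the right ones, and the naturality check you flag at the end is routine given the explicit description of $\tt_M$. One small simplification: for the bijectivity of $\tt_{\FF_{\hp=0}(M)}$ you do not need to invoke Lemma~\ref{Alemma2} in full generality; since $\hp$ acts as zero on $\FF_{\hp=0}(M)$, this is the base case already recorded at the start of the appendix (Kostant's slice theorem and descent), which keeps your reduction cleanly independent of the interlocking order in which statements (1)--(7) are established.
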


We will prove these lemmas simultaneously. First notice that they hold for any $M\in\CCC$ satisfying $\hp M=0$. (The bijectivity of $\tt_M$ follows from Kostant's slice theorem and descent, and the bijectivity of $\a_M$ is obvious.) For any finite dimensional graded $\Umd$-module $V=\bigoplus_{i\in\ZZ}V_i$ satisfying $\ggd_{\ad}\cdot V_i\subseteq V_{i+2\rho(\ad)}$ for any $\a\in -R^+$ and $i\in\ZZ$, define $\fr(V):=\ugd\ot V$. As in Example \ref{BFenhanceHCeg1}, we can define an analogous structure on $\fr(V)$ so that it is an object of $\CCC$. Define $\CCCfr$ to be the full subcategory of $\CCC$ consisting of objects of this form. Define also
\begin{align*}
\CCCp & := \{M\in\CCC|~M[m]\text{ is positively graded for some }m\in\ZZ\}\\
\CCChtf & := \{M\in\CCC|~M\text{ is }\hp\text{-torsion-free}\}.
\end{align*}
Consider the following statements.
\begin{enumerate}
\item $M\in\CCCfr$ $\Longrightarrow$ $\tt_M$ is bijective.

\item $M\in\CCC$ $\Longrightarrow$ $\tt_M$ is bijective.

\item $M\in\CCCfr$ $\Longrightarrow$ $\a_M$ is bijective.

\item $M\in\CCC$ $\Longrightarrow$ $\a_M$ is bijective.

\item $M, M'\in\CCC$, $M'/\mathcal{I}_{-,\cchh}\in \CCCp\cap\CCChtf$, $M\ra M'\ra 0$ exact $\Longrightarrow$ $\kf(M)\ra\kf(M')\ra 0$ exact.

\item $M, M'\in\CCC$, $\hp M'=0$, $M\ra M'\ra 0$ exact $\Longrightarrow$ $\kf(M)\ra\kf(M')\ra 0$ exact.

\item $M, M'\in\CCC$, $M\ra M'\ra 0$ exact $\Longrightarrow$ $\kf(M)\ra\kf(M')\ra 0$ exact.
\end{enumerate}
(Notice that if $N\in\CCC$, then $N/\mathcal{I}_{-,\cchh}\in\CCC$.) We are going to verify (1) and the implications
\[(1) \Rightarrow (3) \Rightarrow (5)\wedge (6) \Rightarrow (7)\quad\text{ and }\quad (1)\wedge (7) \Rightarrow (2) \Rightarrow (4).\]
This will complete the proof of Lemma \ref{Alemma2} and Lemma \ref{Alemma3} because they correspond to (2) and (4) respectively.

\bigskip
\underline{\textit{(1) is true.}} Let $M:= \fr(V)\in\CCCfr$. Since $\utd$ is a free $\zgd$-module of rank $|W|$, we can write $\utd\otimes_{\zgd}\kf(M)$ as $(\kf(M))^{\oplus |W|}$. We have the commutative diagram
\begin{equation}\nonumber
\begin{tikzpicture}
\tikzmath{\x1 = 5; \x2 = 2;}
\node (A) at (0,0) {$(\kf(M))^{\oplus |W|}$} ;
\node (B) at (\x1,0) {$ (\FF_{\hp=0}\circ\kf(M))^{\oplus |W|}$} ;
\node (C) at (2*\x1,0) {$(\kf\circ\FF_{\hp=0}(M))^{\oplus |W|}$} ;
\node (E) at (0,-\x2) {$\dkf(M)$} ;
\node (F) at (\x1,-\x2) {$\FF_{\hp=0}\circ\dkf(M)$} ;
\node (G) at (2*\x1,-\x2) {$\dkf\circ\FF_{\hp=0}(M)$} ;

\path[->>] (A) edge (B);
\path[->, font=\tiny] (A) edge node[left]{$\tt_M$} (E);
\path[->>] (E) edge (F);
\path[->, font=\tiny] (B) edge node[left]{$\FF_{\hp=0}(\tt_M)$} (F);
\path[->, font=\tiny] (C) edge[draw = white] node[left]{$\tt_{\FF_{\hp=0}(M)}$}(G);
\path[->, font=\tiny] (C) edge node[right]{$\simeq$}(G);

\path[->, font=\tiny] (B) edge node[above]{$\a_M^{\oplus |W|}$} (C);
\path[->, font=\tiny] (F) edge node[above]{$\simeq$}  (G);
\end{tikzpicture}.
\end{equation}   
Observe that $M/\mathcal{I}_{-,\cchh}$ is $\hp$-torsion-free. It follows that $\a_M$ is injective and so is $\a_M^{\oplus |W|}$. Thus $\tt_M$ is injective by a Nakayama-type argument. 

As for the surjectivity, consider the canonical filtration on $V$ by degrees which induces a filtration on $M$ by sub-objects, and then a filtration on $M/\mathcal{I}_{-,\cchh}$ by sub-$\Umd$-modules whose associated graded pieces are direct sums of finitely many copies of $Y_{\hp,\cchh}:=\fr(\CC)/\mathcal{I}_{-,\cchh}$. 
Using the long exact sequences for group cohomology, five lemma and induction, it suffices to show that (i) $\tt_{\fr(\CC)}$ is surjective (we have proved that $\tt_{\fr(\CC)}$ is injective) and (ii) $H^1(\Umd;Y_{\hp,\cchh})=0$.

(i) is obvious because $\dkf(\fr(\CC))\simeq\utd$. To prove (ii), it suffices to verify 
\[H^1(\Umd;\utd\ot_{\zgd} Y_{\hp,\cchh})=0\]
because $\utd$ is free over $\zgd$ (here $\Umd$ acts trivially on $\utd$). We have the action morphism $Y_{\hp,\cchh}\ra Y_{\hp,\cchh}\ot\OO(\Umd)$. Composing it with the quotient map $Y_{\hp,\cchh}\ra Y_{\hp,\cchh}/\mathcal{I}_{+}\simeq\utd$ yields a map $Y_{\hp,\cchh}\ra \utd\ot\OO(\Umd)$. Observe that the last map is $\zgd$-linear if we twist the target $\utd$-module by $\twist_{-\rhod}$, and hence by extension of scalars we obtain a $\utd$-linear map
\[\Theta: \utd\ot_{\zgd}Y_{\hp,\cchh}\ra \twist_{-\rhod}(\utd)\ot\OO(\Umd).\] 
It is not difficult to see that $\Theta$ is graded and $\Umd$-linear where the $\Umd$-action on $\OO(\Umd)$ is induced by right translation. Moreover, $\FF_{\hp=0}(\Theta)$ is induced by the canonical section $\ttd\ra \ttd\times_{\ttd/W}(e+\mathfrak{b}^{\vee}_-)$. Since $e+\mathfrak{b}^{\vee}_-$ is a $\Umd$-torsor over $\ttd/W$ by Kostant's slice theorem, $\FF_{\hp=0}(\Theta)$ is bijective. By a Nakayama-type argument, $\Theta$ is bijective as well. Now the vanishing of $H^1(\Umd;\utd\ot_{\zgd} Y_{\hp,\cchh})$ follows from the vanishing of $H^1(\Umd;\OO(\Umd))$. A proof of the latter result can be found in the proof of \cite[Lemma 4.2]{KW}.

\bigskip
\underline{\textit{(1) $\Rightarrow$ (3) and (2) $\Rightarrow$ (4).}} Let $M\in\CCC$. Suppose $\tt_M$ is bijective. Since $\utd$ is a free $\zgd$-module of rank $|W|$, we can write $\utd\ot_{\zgd}\kf(M)$ as $(\kf(M))^{\oplus |W|}$. We have the commutative diagram
\begin{equation}\nonumber
\begin{tikzpicture}
\tikzmath{\x1 = 7; \x2 = 2;}
\node (A) at (0,0) {$(\FF_{\hp=0}\circ\kf(M))^{\oplus |W|}$} ;
\node (B) at (\x1,0) {$(\kf\circ \FF_{\hp=0}(M))^{\oplus |W|}$} ;

\node (E) at (0,-\x2) {$\FF_{\hp=0}\circ\dkf(M)$} ;
\node (F) at (\x1,-\x2) {$\dkf\circ \FF_{\hp=0}(M)$} ;

\path[->, font=\tiny] (A) edge node[above]{$\a_M^{\oplus |W|}$} (B);
\path[->, font=\tiny] (A) edge node[left]{$\FF_{\hp=0}(\tt_M)$} (E);
\path[->, font=\tiny] (E) edge node[above]{$\simeq$} (F);

\path[->, font=\tiny] (B) edge[draw = white] node[left]{$\simeq$} (F);
\path[->, font=\tiny] (B) edge node[right]{$\tt_{\FF_{\hp=0}(M)}$} (F);
\end{tikzpicture}.
\end{equation} 
The bottom and right arrows are always bijective. By assumption, the left arrow is bijective. It follows that $\alpha_M^{\oplus |W|}$, and hence $\alpha_M$, is bijective.

\bigskip
\underline{\textit{(3) $\Rightarrow$ (5)$\wedge$(6).}} Suppose the conditions in (5) are satisfied. We may assume $M$ is a (possibly infinite) direct sum of objects of $\CCCfr$ because every object of $\CCC$ is a quotient of such an object. We have the commutative diagram
\begin{equation}\nonumber
\begin{tikzpicture}
\tikzmath{\x1 = 5; \x2 = 1.5;}
\node (A) at (0,0) {$\kf(M)$} ;
\node (B) at (0.8*\x1,0) {$\FF_{\hp=0}\circ\kf(M)$} ;
\node (C) at (1.8*\x1,0) {$\kf\circ\FF_{\hp=0}(M)$} ;

\node (E) at (0,-\x2) {$\kf(M')$} ;
\node (F) at (0.8*\x1,-\x2) {$\FF_{\hp=0}\circ\kf(M')$} ;
\node (G) at (1.8*\x1,-\x2) {$\kf\circ\FF_{\hp=0}(M')$} ;

\path[->>] (A) edge (B);
\path[->] (A) edge (E);
\path[->>] (E) edge (F);
\path[->] (B) edge (F);
\path[->>] (C) edge (G);

\path[->, font=\tiny] (B) edge node[above]{$\a_M$} (C);
\path[->, font=\tiny] (F) edge node[above]{$\a_{M'}$}  (G);
\end{tikzpicture}.
\end{equation}   
By Kostant's slice theorem and descent, the rightmost arrow is surjective. Since $M'/\mathcal{I}_{-,\cchh}\in\CCChtf$, $\a_{M'}$ is injective. By (3), $\a_M$ is surjective. Hence we can apply a Nakayama-type argument to get the surjectivity of $\kf(M)\ra\kf(M')$. (The condition $M'/\mathcal{I}_{-,\cchh}\in\CCCp$ guarantees that the inductive process will terminate.)

Now suppose the conditions in (6) are satisfied. Again, we assume $M$ is a direct sum of objects of $\CCCfr$  so that $\a_M$ is surjective by (3). Consider the above diagram. By $\hp M'=0$, the two arrows in the bottom row are bijective. This yields the surjectivity of $\kf(M)\ra\kf(M')$ immediately.

\bigskip
\underline{\textit{(5)$\wedge$(6) $\Rightarrow$ (7).}} Let $f:M\ra M'$ denote the given surjective map. We first show that we can assume $\mathcal{I}_{-,\cchh}^{M'}=0$ and $M'\in\CCCp$. Since $\kf(M')\simeq\kf(M'/\mathcal{I}_{-,\cchh}^{M'})$, we can replace $M'$ by $M'/\mathcal{I}_{-,\cchh}^{M'}$ so that $M'$ now satisfies $\mathcal{I}_{-,\cchh}^{M'}=0$. Let $y\in\kf(M')=(M')^{\Umd}$. There exists $g:\fr(V)\ra M'$ such that $M'':=\im(g)$ contains $y$. Since $\mathcal{I}_{-,\cchh}^{M'}=0$, $g$ factors through $\fr(V)/\mathcal{I}_{-,\cchh}^{\fr(V)}\in\CCCp$, and hence $M''\in\CCCp$. Then we can replace $f$ by $f|_{f^{-1}(M'')}:f^{-1}(M'')\rightarrow M''$.

From now on, we do assume $\mathcal{I}_{-,\cchh}^{M'}=0$ and $M'\in\CCCp$. For each $i\geqslant 0$, define $M'_i:=\ker(\hp^i:M'\ra M')$. Define also $M'_{\infty}:=\bigcup_{i\geqslant 0}^{\infty} M'_i$. Let $z\in \kf(M')=(M')^{\Umd}$. By applying (5) to the composition $M\xrightarrow{f} M'\ra M'/M'_{\infty}$, we obtain $x_0\in \kf(M)$ such that $z-\kf(f)(x_0)\in (M_k')^{\Umd}$ for some $k$. By applying (6) to the composition $f^{-1}(M_k')\xrightarrow{f|_{f^{-1}(M_k')}} M_k'\ra M_k'/M_{k-1}'$, we obtain $x_1\in \kf(M)$ such that $z-\kf(f)(x_0+x_1)\in (M_{k-1}')^{\Umd}$. Continuing, we get $x_2,\ldots, x_k$ such that $z-\kf(f)(x_0+\cdots+x_k)\in M_0'=\{0\}$, i.e. $z=\kf(f)(x_0+\cdots+x_k)$.

\bigskip
\underline{\textit{(1)$\wedge$(7) $\Rightarrow$ (2).}} Let $M\in\CCC$. There exists an exact sequence $L_1\ra L_2\ra M\ra 0$ where $L_1$ and $L_2$ are (possibly infinite) direct sums of objects of $\CCCfr$. We have the commutative diagram
\begin{equation}\nonumber
\begin{tikzpicture}
\tikzmath{\x1 = 5; \x2 = 1.8;}
\node (A) at (0,0) {$\utd\ot_{\zgd}\kf(L_1)$} ;
\node (B) at (\x1,0) {$\utd\ot_{\zgd}\kf(L_2)$} ;
\node (C) at (2*\x1,0) {$\utd\ot_{\zgd}\kf(M)$} ;
\node (D) at (2.6*\x1,0) {$0$} ;

\node (E) at (0,-\x2) {$\dkf(L_1)$} ;
\node (F) at (\x1,-\x2) {$\dkf(L_2)$} ;
\node (G) at (2*\x1,-\x2) {$\dkf(M)$} ;
\node (H) at (2.6*\x1,-\x2) {$0$} ;

\path[->] (A) edge (B);
\path[->] (B) edge (C);
\path[->] (C) edge (D);
\path[->] (E) edge (F);
\path[->] (F) edge (G);
\path[->] (G) edge (H);

\path[->, font=\tiny] (A) edge node[left]{$\tt_{L_1}$} (E);
\path[->, font=\tiny] (B) edge node[left]{$\tt_{L_2}$} (F);
\path[->, font=\tiny] (C) edge node[left]{$\tt_{M}$} (G);
\end{tikzpicture}.
\end{equation}
By (1), $\tt_{L_1}$ and $\tt_{L_2}$ are bijective. By (7), the sequence in the top row is exact at $\utd\ot_{\zgd}\kf(M)$. Since $\dkf$ is right exact, the sequence in the bottom row is exact. It follows that $\tt_M$ is bijective by the five lemma.
%%%%%%%%%%%%%%%%%%%%%%%%%%%%%%%%%%%
%%%%%%%%%%%%%%%%%%%%%%%%%%%%%%%%%%%
%%%%%%%%%%%%%%%%%%%%%%%%%%%%%%%%%%%
%%%%%%%%%%%%%%%%%%%%%%%%%%%%%%%%%%%
%%%%%%%%%%%%%%%%%%%%%%%%%%%%%%%%%%%
%%%%%%%%%%%%%%%%%%%%%%%%%%%%%%%%%%%
\section{Rietsch mirror: Lam-Templier's vs ours} \label{C}
We show that our version of the Rietsch mirror (Section \ref{Rietschmirror}) is equivalent to the one defined by Lam and Templier \cite[Section 6.4]{LT}. As explained in \textit{loc. cit.}, the latter is equivalent to Rietsch's original one \cite[Section 4]{Rietsch}.

First, we interchange the roles of $G$ and $\Gd$. In other words, $G$ (resp. $\Gd$) is now denoted by $\Gd$ (resp. $G$). We also relabel the related objects (e.g. subgroups, roots, etc) correspondingly. In particular, we now have
\[ \Xd \simeq U_-(\wpd)^{-1}Z(L)U_-\cap B.\]

In \cite[Section 6.4]{LT}, Lam and Templier defined the Rietsch mirror to be
\[ X_{LT}:= UZ(L)\dot{(w_Pw_0)}_{LT}U\cap B_-\]
where $\dot{(w_Pw_0)}_{LT}\in N(T)$ (which is $\dot{w}_P$ in \textit{loc. cit.}) is a representative of $\wp$, defined using
\[ \dot{s}_{\a_i,LT}:= x_i(-1)y_i(1)x_i(-1)\]
(see \cite[Section 2.2]{LT}) instead of $\dot{s}_{\a_i}= y_i(-1)x_i(1)y_i(-1)$ which is what we are using. The analogues of $W$, $\pi$ and $p$ are defined in \cite[Section 6.4]{LT}; the analogue of $\vol$ is defined in \cite[Section 6.6]{LT}; and the analogue of the $\gm$-action is defined in \cite[Section 6.21]{LT}.

Let $g\mapsto g^T$ be the unique involutive antiautomorphism of $G$ satisfying
\[ t^T=t,~t\in T\quad\text{ and }\quad x_i(a)^T=y_i(a),~a\in \AA^1.\]
Define $g^{-T}:=(g^{-1})^T=(g^T)^{-1}$.

\begin{lemma}\label{LTvsours}
The map $g\mapsto g^T$ induces an isomorphism
\[ \Xd\simeq X_{LT}.\]
It preserves the additional structures including $W$, $\pi$, $p$, $\vol$ and the $\gm$-action. 
\end{lemma}
\begin{proof}
To prove the first part, it suffices to show $\dot{(w_Pw_0)}_{LT}=(\wpd)^{-T}$. Observe that $\dot{s}_{\a_i,LT}=(\dot{s}_{\a_i})^T$ for any $1\leqslant i\leqslant r$. It follows that $\dot{(w_Pw_0)}_{LT}=\dot{(w_0w_P)}^T$. Since $\ell(w_0w_P)+\ell(w_P)=\ell(w_0)$, we have $\dot{(w_0w_P)}\dot{w}_P=\dot{w}_0$, and hence $\dot{(w_0w_P)}=\dot{w}_0\dot{w}_P^{-1}$. But $\dot{w}_0=\dot{w}_0^{-1}$ because $G$ (which was $\Gd$ originally) is of adjoint type. It follows that $\dot{(w_0w_P)} = \dot{w}_0^{-1}\dot{w}_P^{-1}=(\wpd)^{-1}$, and hence $\dot{(w_Pw_0)}_{LT}=\dot{(w_0w_P)}^T= (\wpd)^{-T}$ as desired.

It remains to show that the above isomorphism preserves the additional structures. Let us handle the fiberwise volume forms only. The others are straightforward and left to the reader. Recall $\vol$ is defined to be the pullback of a volume form on $\UP$ via the isomorphism $\nu:\Xd\xrightarrow{\sim} \UP\times Z(L)$. See Definition \ref{Fiberwisevolumeformdef}. The analogue is the isomorphism $\nu_{LT}:X_{LT} \xrightarrow{\sim}  \overset{\circ}{G/P}\times Z(L)$ defined by 
\[ \nu_{LT}(u_1t\dot{(w_Pw_0)}_{LT}u_2):= (u_2^{-1}\dot{w}_0P,t)\quad \text{ assuming }~u_1\in U\cap\dot{w}_PU\dot{w}_P^{-1},\]
where $\overset{\circ}{G/P}$ is the isomorphic image of the open Richardson variety $\mathcal{R}^{w_0}_{w_P}:=B_-\dot{w}_PB/B\cap B\dot{w}_0B/B\subseteq G/B$ under the projection $G/B\ra G/P$. See \cite[Section 6.6]{LT}. Consider the diagram
\begin{equation}\nonumber
\begin{tikzpicture}
\tikzmath{\x1 = 7; \x2 = 2;}
\node (A) at (0,0) {$\Xd$} ;
\node (B) at (\x1,0) {$X_{LT}$} ;
\node (C) at (0,-\x2) {$\UP\times Z(L)$} ;
\node (D) at (\x1,-\x2) {$\overset{\circ}{G/P}\times Z(L)$} ;
\node (E) at (0,-2*\x2) {$G/P_-\times Z(L)$};
\node (F) at (\x1,-2*\x2) {$G/P\times Z(L)$};

\path[->, font=\small] (A) edge node[above]{$g\mapsto g^T$} (B);
\path[->, font=\small] (A) edge node[left]{$\nu$} (C);
\path[->, font=\small] (B) edge node[right]{$\nu_{LT}$} (D);
\path[->, dashed, font=\small]
(C) edge node[above]{$\simeq$} (D);
\path[right hook->] (C) edge node[left]{} (E);
\path[right hook->] (D) edge node[left]{} (F);
\path[->, font=\small] (E) edge node[above]{$(gP_-,t)\mapsto (g^{-T}P,t)$} (F);
\end{tikzpicture}.
\end{equation}
It is not difficult to show that the bottom arrow is an isomorphism. By \cite[Lemma 6.7]{LT}, this diagram (without the middle arrow) is commutative. This induces the middle arrow which is also an isomorphism such that the whole diagram is commutative.  Since the volume forms from both definitions are uniquely determined (up to scalar) by the corresponding boundary divisors, we are done.
\end{proof}

%%%%%%%%%%%%%%%%%%%%%%%%%%%%%%%%%%%
%%%%%%%%%%%%%%%%%%%%%%%%%%%%%%%%%%%
%%%%%%%%%%%%%%%%%%%%%%%%%%%%%%%%%%%
%%%%%%%%%%%%%%%%%%%%%%%%%%%%%%%%%%%
%%%%%%%%%%%%%%%%%%%%%%%%%%%%%%%%%%%
%%%%%%%%%%%%%%%%%%%%%%%%%%%%%%%%%%%

\end{document}